\newtheorem{thm}{Theorem}[section]
\newtheorem{cor}[thm]{Corollary}
\newtheorem{lem}[thm]{Lemma}
\newtheorem{pro}[thm]{Proposition}
\theoremstyle{remark}
\newtheorem{rem}[thm]{Remark}
\theoremstyle{remark}
\newtheorem{exa}[thm]{Example}
\theoremstyle{definition}
\newtheorem{dfn}[thm]{Definition}
\theoremstyle{definition}
\theoremstyle{definition}
\newtheorem{cdn}[thm]{Condition}
\def\kalg{k\text{\textnormal{-alg}}}
\def\Hom{\text{\textnormal{Hom}}}
\def\kvar{k\text{\textnormal{-var}}}
\def\Ring{\text{\textnormal{Ring}}}
\def\Spec{\text{\textnormal{Spec}}}
\def\fsl{\mathfrak{sl}}
\def\R{\mathbb{ R}}
\def\N{\mathbb{ N}}
\def\F{\mathbb{ F}}
\def\Z{\mathbb{ Z}}
\def\Q{\mathbb{ Q}}
\def\C{\mathbb{ C}}
\def\P{\mathbb{ P}}
\def\A{\mathbb{ A}}
\def\H{\mathbb{ H}}
\def\U{\mathbb{ U}}
\def\*{{}^{*}}
\def\cP{\mathcal{P}}
\def\cU{\mathcal{U}}
\def\cL{\mathcal{L}}
\def\cM{\mathcal{M}}
\def\cC{\mathcal{C}}
\def\cH{\mathcal{H}}
\def\cF{\mathcal{F}}
\def\cK{\mathcal{K}}
\def\cN{\mathcal{N}}
\def\cO{\mathcal{O}}
\def\cE{\mathcal{E}}
\def\cS{\mathcal{S}}
\def\Ra{\Rightarrow}
\def\Lra{\Leftrightarrow}
\def\ra{\rightarrow}
\def\Cov{\text{\textnormal{Cov}}}
\def\Can{\text{\textnormal{Cov$^{an}$}}}
\def\lora{\longrightarrow}
\def\gl2q{\text{\textnormal{GL}}_2^+(\Q)}
\def\ggl2{\text{\textnormal{GL}}_2}
\def\GL{\text{\textnormal{GL}}}
\def\SL{\text{\textnormal{SL}}}
\def\sl2{\text{\textnormal{SL}}_2}
\def\psl2{\text{\textnormal{PSL}}_2}
\def\sv2{\text{\textnormal{SZ}}_2}
\def\z2{\text{\textnormal{Z}}_2}
\def\Hom{\text{\textnormal{Hom}}}
\def\pgl2{\text{\textnormal{PGL}}_2}
\def\m2z{\text{M}_2(\Z)}
\def\acl{\text{\textnormal{acl}}}
\def\cl{\text{\textnormal{cl}}}
\def\tp{\text{\textnormal{tp}}}
\def\ex{\text{\textnormal{ex}}}
\def\qftp{\text{\textnormal{qftp}}}
\def\cyc{\text{\textnormal{cyc}}}
\def\tor{\text{\textnormal{Tor}}}
\def\Loc{\text{\textnormal{Loc}}}
\def\Th{\text{\textnormal{Th}}}
\def\End{\text{\textnormal{End}}}
\def\trdeg{\text{\textnormal{trdeg}}}
\def\an{\text{\textnormal{an}}}
\def\Cov{\text{\textnormal{Cov}}}
\def\Spec{\text{\textnormal{Spec}}}
\def\Fib{\text{\textnormal{Fib}}}
\def\Gal{\text{\textnormal{Gal}}}
\def\Sets{\text{\textnormal{Sets}}}
\def\Aut{\text{\textnormal{Aut}}}
\def\Fet{\text{\textnormal{F\'et}}}
\def\Fin{\text{\textnormal{Fin}}}
\def\dim{\text{\textnormal{dim}}}
\def\dcl{\text{\textnormal{dcl}}}
\def\Fet{\text{\textnormal{F\'et}}}
\def\Feg{\text{\textnormal{F\'eG}}}
\def\Xx{\widetilde{X}_x}
\def\g0n{\Gamma_0(N)}
\def\lora{\longrightarrow}
\def\ggl2{\textrm{GL}_2}
\def\GL{\textrm{GL}}
\def\sl2{\textrm{SL}_2}
\def\sv2{\textrm{SZ}_2}
\def\z2{\textrm{Z}_2}
\def\pgl2{\textrm{PGL}_2}
\def\m2z{\textrm{M}_2(\Z)}
\title{Categoricity and covering spaces}
\author{Adam Harris}
\begin{document}

\begin{titlepage}
\newgeometry{top=2in}
\begin{center}
\textsc{\Huge Categoricity and covering spaces \\[2cm]}

% Upper part of the page
\includegraphics[width=0.3\textwidth]{./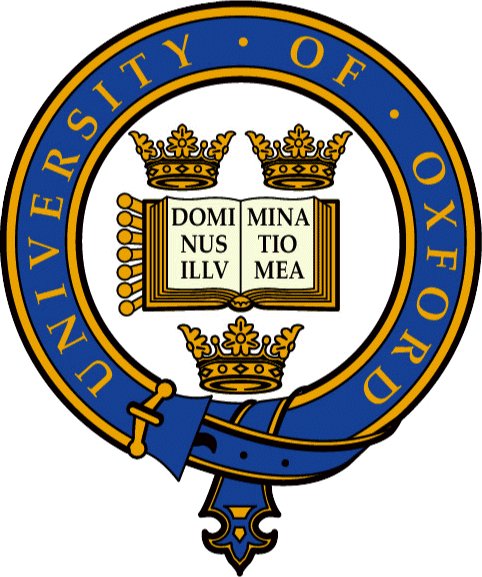}\\[2cm]

% Author and supervisor
\textsc{\Large Adam Harris\\
        Mathematical Institute\\
        Merton College} \\[2cm]

{A thesis submitted for the degree of\\
        \textsc{DPhil in Mathematics} }\\[1.5cm]

{\normalsize University of Oxford, 2014}\\
{\normalsize Funded by EPSRC}

%{\mathscr A B C D E F G H I J K L M N O P Q R S T U V W X Y Z a b c d e f g h i j k l m n o p q r s t u v w x y z}
\end{center}

\end{titlepage}
\newgeometry{top=1in, bottom=1.5in}

\begin{center}
{\Large \textrm{Acknowledgements} \\[2cm]}
\end{center}

First and foremost I would like to thank my mum, dad, sister, nanna, and the rest of my family and friends for everything they have done for me over the years. \newline

Secondly I would like to thank my supervisors Jonathan Pila and Boris Zilber, a McCartney-Lennonesque supervisory duo which could only possibly be bettered by the Beatles themselves.\newline

I would like to offer a special thank you to Martin Bays and Chris Daw for patiently explaining many mathematical things to me over the course of my DPhil. I am also very grateful to Misha Gavrilovich for inviting me to Vienna where we had some very valuable discussions. \newline

Next I would like to thank the following people, who have contributed in some way to my mathematical education: Will Anscombe, Daniel Bertrand, Jamshid Derakhshan, Nikolaos Diamantis, Netan Dogra, Bas Edixhoven, Bernhard Elsner, Philipp Habegger, Martin Hils, Keith Hopcraft, Franziska Jahnke, Peter Jossen, Moshe Kamensky, Jochen Koenigsmann, Angus Macintyre, Martin Orr,  Anand Pillay, Kristian Strommen, Marcus Tressl, Jacob Tsimerman, Alex Wilkie, George Wilmers and Andrei Yafaev. I would also like to give a general thank you to everyone involved with the MathOverflow community, which has been a fantastic source of knowledge, and I am sure it will continue to be so. \newline

Finally I would like to thank the EPSRC and Merton College for funding me throughout my studies.

\newgeometry{top=1in, bottom=1.5in}

%\maketitle

\tableofcontents

\chapter{Introduction}

One of the main areas of pure model-theoretic study is the abstract classification of theories. In the process of this classification, abstract properties of theories are isolated to create dividing lines between them. For example, these dividing lines can be in the form of stability properties, relating to the number of types of elements realised in models of the theory, or restrictions of the number of models of the theory. Armed with these abstract notions, we can look at natural structures arising in mainstream mathematics, and try to see where they lie in this classification. It may also be illuminating to know what good, abstract model-theoretic behaviour translates into in these more concrete situations.\newline

 In pure mathematics, we try and decide which statements hold in which mathematical structures. However, many of the most interesting of these statements are not easily translated into a reasonable model-theoretic setting. In some cases, showing that a particular problem can be embedded into a model-theoretic setting that is known to have good model-theoretic properties, can lead to spectacular results. Hrushovski's application of the theory of difference fields to the Manin-Mumford conjecture \cite{hrushovski2001manin}, and Pila's embedding of the Andr\'e Oort conjecture into an O-minimal setting \cite{pila2011minimality} are examples of such results. However, in addition to knowing that a statement of mathematics is true, it is also essential to have some conceptual justification of why it holds. The two applications above could be seen as providing proof and conceptual justification simultaneously. \newline

The aim of this thesis is to make a contribution to our understanding of the natural structures related to algebraic geometry. There is a particularly strong connection between model theory and algebraic geometry. One of the reasons for this is that the theory of algebraically closed fields of characteristic 0, i.e. the theory of the structure
$$\langle \C ,  + , \cdot, 0,1 \rangle ,$$
 lies right on the top of the hierarchy given by the model-theoretic classification of structures. It is what Zilber would describe as `logically perfect', that is, it has a unique model in every uncountable cardinality. Such structures are said to be \textit{`categorical in powers'}, and their defining characteristic is that you can recover such a mathematical structure uniquely from the data of its theory and its cardinality. \newline

The notion of categoricity was a guiding light in the early development of stability theory. Initially there was Morley's theorem, which says that if a first- order theory has a unique model in one uncountable cardinality, then it is categorical in all uncountable cardinalities. Then came the Baldwin-Lachlan characterisation of uncountably categorical theories, which says that a first-order theory is categorical in powers if and only if it is $\omega$-stable and has no Vaughtian pairs. Once this was all worked out, categoricity took a back seat as stability theory was developed by model theorists. However, soon Shelah started developing the model theory of more expressive extensions of first-order logic such as $\cL_{\omega_1,\omega}$, where you can form countable conjunctions and disjunctions, and Zilber realised that many interesting natural structures arising in mainstream mathematics can be studied in this infinitary setting. \newline

A complex algebraic variety may be viewed with its complex analytic topology, and any form of `comparison theorem', which allows the passage of information between the algebraic
and analytic categories is extremely useful. In the analytic category, by making use of the notion of path we can construct the universal covering space, which classifies all of the analytic covers of the complex analytic space. Grothendieck's functorial abstraction of the notion of universal cover (via the fibre functor), allows the construction of the analytic universal cover to be mimicked in the algebraic category, where the topology is too weak to talk about paths. This way of thinking is certainly not dissimilar to that of Zilber, whose outlook is that we should be able to capture essential properties of the analytic universal covering space by more algebraic means, and in some situations be able to recover the analytic object from this algebraic description. Zilber's so called `pseudo-analytic' structures lie in this gap between the analytic and algebraic worlds, and these kinds of structures occupy the content of this thesis. \newline

One of the most basic examples of one of these objects is the universal cover of the multiplicative group $\C^{\times}$ considered as a two-sorted structure
$$\langle \C , + ,0 \rangle \overset{\ex}{\lora} \langle \C, +  , \cdot , 0,1 \rangle,$$
where $\C^{\times}$ is considered as an algebraic variety embedded in the algebraically closed field $\langle \C, +  , \cdot , 0,1 \rangle$, and the covering sort is just a divisible, torsion-free, abelian group. That is, we have forgotten about nearly all of the complex analytic topology on the universal cover, leaving only its additive group structure. In \cite{zilber2006covers} and \cite{bays2011covers} it is shown that we can recover the analytic universal cover of $\C^{\times}$, as the unique model of an $\cL_{\omega_1,\omega}$-sentence stating that $\langle \C, +  , \cdot , 0,1 \rangle$ is an algebraically closed field of characteristic zero, and $\ex$ is a surjective homomorphism from the additive group $\langle \C, + ,0 \rangle$ onto the multiplicative group $\langle \C^{\times}, \cdot, 1\rangle$. The above statement may need some clarification for geometers. For example, we do not claim to be able recover a unique Riemann surface structure on either sort, because complex conjugation commutes with the exponential function and therefore lifts to an automorphism of the two sorted structure as a whole. What we mean is that we take the analytic universal cover and forget about all of the analytic structure except the Zariski topology on $\C^{\times}$, and the addition on the covering sort. The categoricity result then says that we may recover this model theoretic structure up to isomorphism from a purely algebraic description (i.e. the  $\cL_{\omega_1,\omega}$-sentence). The analogous result for the universal cover of an elliptic curve without complex multiplication has been exposed by Gavrilovich and Bays in \cite{gavrilovich2006model} and \cite{bays2009categoricity}. \newline

Unlike the first-order setting, there is no equivalent characterisation of categorical $\cL_{\omega_1,\omega}$-theories, and the situation is more delicate. The standard technique used demonstrate the categoricity of an $\cL_{\omega_1,\omega}$-sentence, is to show that it satisfies the conditions of `quasiminimal excellence'. This technique was pioneered by Zilber,  making use of Shelah's abstract notion of excellence. Quasiminimality can roughly be seen as saying that the definable sets in the models are countable or co-countable, or that the structure is homogeneous in the sense that you can extend a partial automorphism to one of the whole structure. The excellence condition is more difficult to describe, and to verify. However, recently there has been a major development in this area, and in \cite{bays2012quasiminimal} it is essentially shown the excellence condition follows from quasiminimality. In this thesis we will make use of this result, and one of the main themes here will be to axiomatise a natural mathematical structure in the logic $\cL_{\omega_1,\omega}$, and then show that we may recover this structure uniquely from this axiomatisation by showing that the theory is quasiminimal.\newline

The modular $j$-function is an analytic function
$$j: \H \lora \C,$$
mapping the upper half plane onto the complex numbers. It is invariant under the modular group $\psl2(\Z)$, and gives a complex analytic isomorphism
$$j: \psl2(\Z) \backslash \H \lora \C,$$
realising $\A^1(\C)$ as the moduli space of elliptic curves. In Chapter \ref{chapj} of this thesis, we will describe a model-theoretic setting for the study of the $j$-function, and verify that in this setting the $j$-function is another example of a `logically perfect' mathematical object. Furthermore, we show that the purely model-theoretic statement of the theory of the $j$-function being categorical is equivalent to deep algebro-geometric results regarding the openness of the images of certain Galois representations in Tate modules of products of elliptic curves. This demonstrates a tight interaction between model theory and geometry. For the case of a single elliptic curve, it is a celebrated result of Serre (\cite{serre1968abelian} and \cite{serre1971prop}) that these images are open, and the fact that categoricity is equivalent to these arithmo-geometric statements, might be seen as providing some model-theoretic justification as to why they hold.\newline

In the process of isolating algebraic conditions from which we may recover the $j$-function uniquely, it becomes clear that at the core of the $j$-function's good model-theoretic behaviour is the simple fact that it is a branched covering map. In Chapter \ref{chapcur} we investigate this further, and a more general model theoretic setting for the study of the universal cover of an arbitrary smooth complex algebraic curve is described. The motivation here was to provide a framework in which all of the previous categoricity results regarding covers of complex algebraic curves (and also that of the $j$-function) could be embedded. \newline

\section{Outline of this thesis}

In Chapter \ref{chapj}, a model-theoretic setting for the study of the $j$-function is described. An axiomatisation for the first-order theory of the $j$-function is given, and then shown to be complete and to have quantifier elimination. An object similar to a pro-\'etale cover is then defined, and shown to be a model of the same first-order theory. We are then able to embed our discussions in this object. 
We show that categoricity can be seen as the statement that this pro-\'etale cover contains the same types of `independent' tuples as the standard model. Categoricity then becomes equivalent to certain statements about Galois representations in the geometric \'etale fundamental group. In particular, it is close to an instance of the adelic Mumford-Tate conjecture regarding the images of Galois representations in the Tate-modules of products of elliptic curves, and is stronger in the sense that it is over fields bigger than number fields, but slightly weaker in another. After proving this equivalence, we then verify that these algebro-geometric conditions hold, mostly by putting together results already proven by geometers in the literature, and then checking the remaining conditions.\newline

In Chapter \ref{chapcur}, we describe a general setting for the model-theoretic study of the universal cover of an arbitrary smooth complex algebraic curve. Guided by Grothendieck's fibre functor, an axiomatisation is given for the general theory of the universal cover of a smooth curve. In this setting, we show that the analytic universal cover and the pro-\'etale cover may be seen as models of the same theory, which is shown to be complete and to have quantifier elimination. We then prove a model-theoretic comparison theorem in this setting, showing that the analytic universal cover embeds in the pro-\'etale cover. At this stage, we then focus on the specific case of the multiplicative group $\C^{\times}$, giving a categorical axiomatisation for the theory of the universal cover of $\C^{\times}$ within this new framework. We then give a new proof of the categoricity result of \cite{zilber2006covers}.
\newline

Finally, a large appendix is included. Since this thesis is aimed at both algebraic geometers and model-theorists, to help the exposition of the results flow more smoothly, many results which might not be assumed common knowledge for some subset of the target audience are included in this appendix. I expect that in terms of the background knowledge required to understand the work of this thesis, the algebraic geometer who is comfortable with the basic notions of model theory is in a better position than the pure model-theorist who knows the basics of algebraic geometry. For this reason, there is a large section of the appendix dedicated to the theory of algebraic curves, which will be nothing new for the geometers, but might be useful for the model-theorist. I would suggest that the best way of reading this thesis would be to refer to the appendix when required, but possibly with the exception of \S \ref{seccov} regarding the general theory of covering spaces. This general theory is at the heart of the thesis, and it will be difficult to get a good feel of what is going on without a basic knowledge of covering spaces.

\chapter{The $j$-function}\label{chapj}

In this chapter, a natural two-sorted structure for the model theoretic study of the modular $j$-function is defined, and a natural axiomatisation is given for the complete first-order theory of the $j$-function in this setting. This first-order theory is then augmented with a non first-order axiom, which fixes the fibres of $j$ to be orbits under the modular group $\psl2(\Z)$, and it is shown that the openness of Galois representations in the automorphism group of a pro-\'etale cover is equivalent to categoricity of this augmented theory. Finally, it is then shown that the non-elementary theory of the $j$-function has a unique model in each infinite cardinality, by demonstrating that certain Galois representations in the Tate modules of products of elliptic curves are open.

\section{Preliminaries}\label{sec prelim}

Consider the upper half plane $\H$ with the group $\gl2q$ acting on it via
$$\begin{pmatrix} a&b\\ c&d \end{pmatrix} \tau = \frac{a \tau + b}{c \tau + d},$$
along with the modular $j$-function mapping $\H$ onto the complex numbers $\C$. 
%We wish to show that the $j$-function is a `canonical mathematical object' by showing that it has a %natural, categorical axiomatization in some language. 
Some possible intuition behind this set-up is that $\H$ is nearly the universal cover of $\C$, which is complex analytically isomorphic to $\sl2(\Z) \backslash \H$ via $j$. A point in the upper half plane gives rise to a lattice, and taking the quotient of $\C$ by this lattice gives an elliptic curve. Two points in the upper half plane give rise to elliptic curves which are isomorphic over $\C$ if and only if they are conjugated by an element of $\sl2(\Z)$, so the $j$-function realises $\C$ as the moduli space of elliptic curves. The action of $\gl2q$ as a group of analytic automorphisms on the cover $\H$ gives information about isogenies between elliptic curves, however elliptic curves will not need to be mentioned again until \S \ref{secgalrep}. It is exactly the scalar matrices of $\gl2q$ which act trivially on the whole of $\H$, so to ensure we have a faithful action we will consider the group
$$G:=\gl2q^{ad}=\gl2q / Z(\gl2q)$$
i.e. $\gl2q$ modulo its centre.
 \newline

Let $\cL$ be a first-order language for two-sorted structures of the form
$$\cM =  \langle  H ; \{f_g \}_{g \in G}  \rangle \overset{j}{\lora} \langle F,+,\cdot ; C \rangle,$$ where the structure $$\cH:= \langle H ; \{f_g \}_{g \in G}\rangle$$ is a set $H$ with a unary function symbol $f_g:H \ra H$ for each $g \in G$. These unary function symbols will correspond a left action of $G$ on $H$, and we will usually just write $g$ instead of $f_g$, and $G$ instead of $ \{f_g \}_{g \in G}$. $$\cF := \langle F;+,\cdot; C \rangle$$ is an algebraically closed field $F$ of characteristic zero expanded with a set of constant symbols $C$, and the function $j$ goes from $H$ to $F$. We will refer to $H$ as the \textit{`covering sort'} and $F$ as the \textit{`field sort'}. \newline

Let $\Th(j)$ be the complete first-order theory of  standard $j$-function in the language described above i.e. the first-order theory of the \textit{`standard model'} $$\C_j:=  \langle \H, G \rangle \overset{j}{\lora} \langle \C , + , \cdot ; \Q(j(S)) \rangle ,$$
where $S$ is the set of special points (see \ref{sec not}).
Also define the $\cL_{\omega_1, \omega}$ axiom
$$\textrm{StandardFibres}: \ \ \ \forall x \forall y \ \left(  j(x)=j(y)\ra\bigvee_{\gamma \in \sl2(\Z)}x=\gamma(y) \right)$$
which fixes a fibre of $j$ to be an $\sl2(\Z)$-orbit.
We abbreviate $\textrm{StandardFibres}$ by $\textrm{SF}$. \newline

The main theme of this chapter is to consider a `pro-\'etale cover' as a model of $\Th(j)$, and to embed the model theoretic discussion in this algebro-geometric object. We will see that the categoricity of $\Th(j)\wedge SF$ may be interpreted as the statement that in this setting, the analytic universal cover realises the same `independent' types as the pro-\'etale cover (the notion of independence is defined in \ref{defgindep}).

\subsection{Notation}\label{sec not}
\begin{itemize}
%\item For an abelian variety $A$ define $A[N]$ to be the $N$-torsion, $\tor(A):=\cup_N A[N]$, $T_l(A)$ is the $l$-adic Tate module of $A$ and $T(A):=\prod_l %T_l$ is the Adelic Tate module. We also let $\MT(A)$ be the Mumford Tate group of $A$ and $\Hg (A)$ be the Hodge group (sometimes called the special %Mumford-Tate group - see \cite[5.8]{moonen2004introduction});
%\item For $\tau \in \H$ we define $E_{j(\tau)}$ to be (the $\C$ points of) an elliptic curve which has $j$-invariant $j(\tau)$ and is defined over $\Q(j(\tau))$. For %example, we may take the elliptic curve defined by 
%$$y^2=4x^3-c x - c \ \ \ \ \ c=\frac{27j(\tau)}{j(\tau)-1}$$
%(all fields are characteristic 0 unless stated otherwise). For $\bar{\tau}= ( \tau_1,...,\tau_n ) \in \H$ we associate the corresponding abelian variety %$$A_{j(\bar{\tau})}:=E_{j(\tau_1)}\times \cdots \times E_{j(\tau_n)}$$ defined over $\Q(j(\tau_1),...,j(\tau_n))$;

%\item We note that the centre of $\gl2q$ (scalar matrices) acts trivially on $\H$, so we will actually consider the group 
%$$G:=\gl2q^{ad}=\gl2q / Z(\gl2q).$$
%This is not essential, but this gives us a faithful action and makes our axiomatisation cleaner.
\item For a subset $G'$ of $G$ we will write $j(G' \tau)$ for $\{j(g \tau) \ | \ g \in G' \}$. For $\tau \in \H$, $j(G \tau)$ is called a \textit{`Hecke orbit'};
\item For a tuple $x = (x_1,...,x_n)$ and a function $f$, we define $f(x):=(f(x_1),...,f(x_n))$.
\item If $s \in H$ is (the unique element) fixed by some $g \in G$, then $s$ is called {\it`special'}. The set of all special points in $H$ is denoted $S$. In this situation, we also say that $j(s)$ is special;
%\item For a field $K$ we let $G_K$ be the absolute Galois group $\Gal(\bar{K} / K)$, and %$K^{cyc}:=K(\tor(\C^{\times}))$ the field obtained by adjoining all roots of unity to $K$;
\item Let $\Gamma: = \psl2(\Z)$, the image of $\sl2(\Z)$ in $G$. $\Gamma$ is a non-normal subgroup of $G$.
\item For a field $K$, let $G_K$ be its absolute Galois group.
\item \textit{Definable} means definable with parameters.
\item $\tp(x)$ is the complete type of $x$ and $\qftp(x)$ the quantifier-free type.
\end{itemize}

\section{Algebro-geometric background}

In this section we recall some of the background results in arithmetic geometry that are needed to set the scene.

\subsection{The $j$-function}\label{sec j}

\begin{pro}\cite[Chapter 5, \S2]{milne2006elliptic}
There exists a unique meromorphic function $J$ on $\H \cup \infty$, invariant under the modular group $\Gamma:=\psl2(\Z)$ which is holomorphic everywhere except for one simple pole at $\infty$, and takes the values
$$J(i)=1, \ \ \  J(e^{2\pi i / 3})=0.$$
\end{pro}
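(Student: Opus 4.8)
The plan is to dispose of existence and uniqueness separately. For \emph{existence} I would simply exhibit the classical $j$-invariant and renormalise it. Recall the weight-$4$ and weight-$6$ Eisenstein series $g_2(\tau)=60\sum_{(m,n)\ne(0,0)}(m+n\tau)^{-4}$ and $g_3(\tau)=140\sum_{(m,n)\ne(0,0)}(m+n\tau)^{-6}$ on $\H$, together with the discriminant $\Delta=g_2^3-27g_3^2$; these are modular forms of weights $4$, $6$ and $12$ for $\Gamma=\psl2(\Z)$. Then $j:=1728\,g_2^3/\Delta$ is a ratio of weight-$12$ objects, hence $\Gamma$-invariant, and I would take $J:=j/1728$.

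Next I would verify the three required properties of $J$. Writing $q=e^{2\pi i\tau}$, the product expansion $\Delta=c\,q\prod_{n\ge 1}(1-q^n)^{24}$ (with $c\ne 0$) shows on the one hand that $\Delta$ has no zero on $\H$, so that $J$ is holomorphic there, and on the other hand that $\Delta$ has a simple zero at the cusp $\infty$ while $g_2^3$ does not vanish there, so $J$ extends meromorphically to $\H\cup\infty$ with a single simple pole at $\infty$. For the values at the elliptic points: the order-$3$ element of $\Gamma$ fixing $\rho:=e^{2\pi i/3}$ makes $g_2$ transform by a nontrivial cube root of unity, forcing $g_2(\rho)=0$, hence $j(\rho)=0$ and $J(\rho)=0$; dually the order-$2$ element $\tau\mapsto-1/\tau$ fixing $i$ makes $g_3$ transform by $-1$, forcing $g_3(i)=0$, hence $\Delta(i)=g_2(i)^3$, which is nonzero by the non-vanishing of $\Delta$ on $\H$, so $j(i)=1728$ and $J(i)=1$.

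For \emph{uniqueness} the essential input is that the space $\cV$ of $\Gamma$-invariant functions which are holomorphic on $\H$, meromorphic on $\H\cup\infty$, and have at most a simple pole at $\infty$ is two-dimensional, spanned by $1$ and $j$. I would obtain this from the compactness of the modular curve $X(1)=\Gamma\backslash(\H\cup\infty)$ together with the fact that $j$ induces an analytic isomorphism $X(1)\cong\P^1(\C)$: under this identification $\cV$ becomes the space of rational functions on $\P^1$ with at most a simple pole at the single point $j=\infty$, which is $2$-dimensional. (Concretely: if $f\in\cV$ has $q$-expansion starting $a\,q^{-1}$, then $f-aj$ is $\Gamma$-invariant, holomorphic on $\H$ and bounded near $\infty$, hence constant by the maximum principle on the compact surface $X(1)$.) Granting this, an arbitrary admissible $J$ has the form $J=\alpha+\beta j$; the presence of a genuine pole at $\infty$ forces $\beta\ne 0$, and the two conditions $J(i)=1$, $J(\rho)=0$ amount to the linear system $\alpha+\beta j(i)=1$, $\alpha+\beta j(\rho)=0$, which has a unique solution since $j(i)=1728\ne 0=j(\rho)$. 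Hence $J$ is unique.

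The main obstacle is the uniqueness half, and inside it the two-dimensionality of $\cV$: this rests on knowing that $X(1)$ is compact and that $j$ is an analytic isomorphism onto $\P^1(\C)$ --- equivalently, on the valence ($k/12$) formula --- which is the one genuinely substantial ingredient. The non-vanishing of $\Delta$ on $\H$ and the vanishing of $g_2$ at $\rho$ and of $g_3$ at $i$ are the remaining analytic inputs, but these are short once the relevant $q$-expansion and the elliptic-point symmetries are in hand.
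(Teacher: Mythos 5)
The paper does not actually prove this proposition; it is stated as a citation to \cite[Chapter 5, \S2]{milne2006elliptic}. Your argument is correct and is essentially the standard proof one finds in that reference: existence via $j=1728\,g_2^3/\Delta$ (using the $q$-product for $\Delta$ to get non-vanishing on $\H$ and a simple zero at the cusp, and the elliptic-point symmetries to force $g_2(\rho)=0$ and $g_3(i)=0$), and uniqueness by reducing to the two-dimensionality of the space of $\Gamma$-invariant functions with at most a simple pole at $\infty$, which you correctly trace back to the compactness of $X(1)$ and the fact that $j$ is an analytic isomorphism onto $\P^1(\C)$. The details all check, so this fills in a proof that the paper leaves to the literature.
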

\begin{dfn}
Define the modular $j$-function (or $j$-invariant) as
$$j := 1278J.$$
\end{dfn}
So the $j$-function gives a complex analytic isomorphism from the Riemann surface $\Gamma \backslash \H^*$ (see below) onto the Riemann sphere. The two points $i, e^{2\pi i / 3} \in \H$ above, have non-trivial (finite) stabiliser in $\Gamma$ (and it is only points in the orbits $\Gamma i$ and $\Gamma e^{2\pi i / 3}$ which have non-trivial stabiliser). As a result, the projection map
$$p: \H \ra \Gamma \backslash \H$$
is a branched covering map, with the branching occurring exactly at the points $\Gamma i$ and $\Gamma e^{2\pi i / 3}$. 
 So $j$ is actually a true covering map of $\C \backslash \{0, 1728 \}$, and this will be at the heart of the discussion to follow. In fact, in the setting of this chapter, the theory of the special points is so rigid that we will not be interested in the fibres of special points under certain branched covering maps, and in particular we will not be interested in the branch locus $\{0, 1728 \}$ (which are indeed (very) special points), or its preimage under these maps.

\subsection{Quotients of the upper half plane}\label{sec quh}

In this subsection we recall some of the classical theory of the correspondence between Riemann surfaces and algebraic curves, in the setting of modular curves. See \ref{appmod} for a slightly more more detailed discussion around quotients of the upper half plane, and the algebraicity of such quotients.

\begin{dfn}
Let $\Gamma'$ be a discrete subgroup of $\psl2(\R)$. Then $c \in \P^1(\R)$ is called a cusp of $\Gamma'$ if there is an element $\alpha$ of $\psl2(\R)$ fixing $c$. Define the extended upper half plane to be
$$\H^*:= \H \cup \P^1(\Q).$$
\end{dfn}

The quotient $\Gamma \backslash \H^*$ is a compact, Hausdorff space  \cite[\S 1.4]{shimura1971introduction}, and given any discrete subgroup $\Gamma'$ of $\psl2(\R)$, the quotient $\Gamma' \backslash \H^*$ can be given the structure of a Riemann surface. Therefore if $\Gamma'$ is of finite index in $\Gamma$, the quotient $\Gamma' \backslash \H^*$ is a compact Riemann surface (since the natural map induced by the inclusion of groups is a finite branched covering map, so is proper), and is therefore algebraic by the Riemann existence theorem (\ref{thmrie}). Also, since $\Gamma' \backslash \H^*$ is compact there are finitely many $\Gamma'$-orbits of the cusps.

An arbitrary tuple $g=(g_1,...,g_n) \in G^n$ determines a subgroup of $G$
$$\Gamma_{g}  := g_1^{-1} \Gamma g_1 \cap \cdots \cap g_n^{-1} \Gamma g_n.$$

The quotient $\Gamma_g \backslash \H$ is a complex algebraic curve by the discussion in \cite[\S 1.4 and 1.5]{shimura1971introduction} (or see \ref{appmod}).

%If $\Gamma_g$ is torsion free then the quotient $\Gamma_g \backslash \H$ is algebraic by the theorem of Bailey and Borel \cite[3.12]{milne2004introduction}, and if $\Gamma_g$ is a congruence subgroup of $\Gamma$ then it is algebraic by \cite[\S 2]{diamond2005first} for example. 

\begin{dfn}
With $g$ as above, define ${}_{\C}Z_g$ to be the complex algebraic curve biholomorphic to $\Gamma_g \backslash \H$.
\end{dfn}

Given two discrete subgroups $\Gamma_g$ and $\Gamma_h$ of $\psl2(\R)$ with $\Gamma_g$ a subgroup of finite index in $\Gamma_h$, the natural map
$$p: \Gamma_h \backslash \H^* \ra \Gamma_g \backslash \H^*$$
induced by the inclusion of groups is holomorphic (\cite[\S 1.36]{shimura1971introduction}), so by the basic theory of Riemann surfaces is a branched covering map. The map also induces an algebraic map on the corresponding algebraic curves by the  Riemann existence theorem. \newline

Given $\alpha \in G$, the double coset has the form
$$\Gamma \alpha \Gamma = \sqcup_{i} \Gamma g_i$$
where the $g_i$ form a finite set of coset representatives \cite[5.29]{milnemodular}. For $N \in \N$, we will abbreviate the double coset 
$$\Gamma  \begin{pmatrix} N&0 \\ 0&1 \end{pmatrix} \Gamma$$
by $\Gamma N \Gamma$, and let
$$\Gamma_N:=  \Gamma \cap g_1^{-1} \Gamma g_1 \cdots \cap g_{\psi(N)}^{-1} \Gamma g_{\psi(N)}$$
where the $g_i$ form a set of coset representatives for $\Gamma$ in $\Gamma N \Gamma$ (the number of cosets is denoted $\psi(N)$). We also note that an explicit set of coset representatives for $\Gamma N \Gamma$ is
$$\left\{ \begin{pmatrix} a&b\\ 0&d \end{pmatrix}  \ \ | \ \ 0< a,\ 0 \leq b < d, \ ad=N \right\}$$
(see \cite[5, \S1]{lang1987elliptic}), and an easy computation then gives the following:

\begin{pro}\label{cong}
$$\Gamma_N = \left\{ \begin{pmatrix} a&b\\ c&d \end{pmatrix} \in \Gamma \ \ , \ \ b \equiv c \equiv 0 , \ a \equiv d \mod N \right\}.$$
\end{pro}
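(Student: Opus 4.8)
The plan is to compute the subgroup $\Gamma_N = \Gamma \cap g_1^{-1}\Gamma g_1 \cap \cdots \cap g_{\psi(N)}^{-1}\Gamma g_{\psi(N)}$ directly, using the explicit coset representatives
$$g = \begin{pmatrix} a & b \\ 0 & d \end{pmatrix}, \quad 0 < a,\ 0 \le b < d,\ ad = N,$$
that were recalled just before the statement. First I would fix one such representative $g$ and work out, for an element $\gamma = \begin{pmatrix} p & q \\ r & s \end{pmatrix} \in \Gamma$, exactly when the conjugate $g\gamma g^{-1}$ again lies in $\Gamma = \psl2(\Z)$ (equivalently, has integer entries, since $\det = 1$ is automatic). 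Writing $g^{-1} = \tfrac{1}{N}\begin{pmatrix} d & -b \\ 0 & a \end{pmatrix}$, a direct matrix multiplication gives $g\gamma g^{-1}$ as $\tfrac{1}{N}$ times an explicit integer matrix whose entries are linear combinations of $p,q,r,s$ with coefficients built from $a,b,d$; the integrality conditions then become a system of congruences mod $N$ (and mod the divisors $a,d$). I expect that, as the hint "an easy computation" suggests, these congruences simplify: the condition coming from each $g$ in the family forces $r \equiv 0$, and then $b \equiv 0$ requires $q \equiv 0 \bmod N$ and $p \equiv s \bmod N$ once one intersects over all the relevant $b$, with the off-diagonal coefficient $a$ (resp. $d$) contributing a divisibility that is absorbed because $ad = N$ and one ranges over all factorisations.

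Next, having shown that every element of $\Gamma_N$ has the form $\begin{pmatrix} p & q \\ r & s\end{pmatrix}$ with $r \equiv q \equiv 0$, $p \equiv s \bmod N$, I would check the reverse inclusion: that any $\gamma \in \Gamma$ of this shape is conjugated into $\Gamma$ by every $g_i$. This is the same computation run backwards — substituting $q = Nq'$, $r = Nr'$, $s = p + Nt$ into $g\gamma g^{-1}$ and verifying the entries are integers using $ad = N$ — so it costs nothing extra. One mild subtlety is that we are working in $\psl2$ rather than $\sl2$, so "$p \equiv s \bmod N$" should be read up to the global sign $\pm I$; I would note that $-I$ acts trivially and that the congruence description is sign-invariant (replacing $(p,q,r,s)$ by $(-p,-q,-r,-s)$ preserves all three conditions), so passing to $\psl2$ is harmless. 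It is also worth remarking that the answer $\Gamma_N = \Gamma(N)$, the principal congruence subgroup of level $N$ inside $\psl2(\Z)$, is independent of the particular choice of coset representatives, as it must be since $\Gamma_N$ was defined intrinsically from the double coset $\Gamma N \Gamma$.

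The main obstacle — really the only point requiring care — is bookkeeping: making sure that intersecting over the \emph{entire} family of $\psi(N)$ representatives, rather than just one, yields exactly the congruences claimed and not something weaker or stronger. Concretely, a single conjugate $g_i^{-1}\Gamma g_i$ will in general be a larger congruence-type subgroup (e.g. for $g = \operatorname{diag}(N,1)$ one gets a $\Gamma_0$/$\Gamma^0$-type condition), and it is the intersection that collapses to the full level-$N$ condition; I would present this by choosing a small explicit subfamily of representatives — typically $\operatorname{diag}(N,1)$, $\operatorname{diag}(1,N)$, and $\begin{pmatrix}1 & b \\ 0 & N\end{pmatrix}$ for a couple of values of $b$ — whose individual conditions already force $b \equiv c \equiv 0$ and $a \equiv d \bmod N$, and then observe that the remaining representatives impose nothing new (their conditions are implied). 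That reduces the whole proposition to a handful of $2\times 2$ integer matrix products, which I would not write out in full here.
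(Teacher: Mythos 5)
Your overall strategy matches the paper's, which offers no explicit proof beyond pointing at the displayed coset representatives and declaring it ``an easy computation.'' Your plan — write $g = \begin{pmatrix} a & b \\ 0 & d \end{pmatrix}$ with $ad = N$, compute $g\gamma g^{-1}$ as $\tfrac{1}{N}$ times an integer matrix, read off integrality conditions, and notice that the three representatives $\operatorname{diag}(N,1)$, $\operatorname{diag}(1,N)$, $\begin{pmatrix}1 & 1 \\ 0 & N\end{pmatrix}$ already force $c \equiv 0$, $b \equiv 0$, and $a \equiv d \pmod N$ respectively, with the remaining representatives adding nothing and the reverse inclusion following by back-substitution — is exactly right, and your handling of the $\psl2$ vs.\ $\sl2$ sign ambiguity is also correct.

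However, the parenthetical remark that ``the answer $\Gamma_N = \Gamma(N)$, the principal congruence subgroup of level $N$ inside $\psl2(\Z)$'' is \emph{false} in general, and contradicts the paper's own later Remark that ``if there are non-trivial square roots of unity mod $N$ then $\operatorname{SZ}_2(\Z/N\Z)$ is strictly bigger than $\pm$ and the curve $Y(N)$ is a proper cover of $Z_N$.'' The condition $a \equiv d \pmod N$ together with $ad - bc = 1$ and $b \equiv c \equiv 0$ forces only $a^2 \equiv 1 \pmod N$, not $a \equiv \pm 1 \pmod N$; when $N$ has non-trivial square roots of unity (e.g.\ $N = 8$, where $1,3,5,7$ all square to $1$), $\Gamma_N$ strictly contains the image of $\Gamma(N)$. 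Concretely, $\begin{pmatrix} 3 & 8 \\ 16 & 43\end{pmatrix}$ satisfies all three congruences mod $8$ but does not reduce to $\pm I$. The identification with $\Gamma(N)$ should be deleted; what you can correctly observe is that $\Gamma_N$ is the preimage in $\Gamma$ of the scalar subgroup $\operatorname{SZ}_2(\Z/N\Z) / \{\pm 1\}$ under reduction mod $N$, which is precisely what makes $\Gamma / \Gamma_N \cong \operatorname{PSL}_2(\Z/N\Z)$ later in the chapter. This does not affect the validity of the main calculation.
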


\begin{dfn}
Consider the quotient
$$\Gamma_N \backslash \H.$$
Then we denote the corresponding complex affine algebraic curve by ${}_{\C}Z_N$.
\end{dfn}

\begin{pro}\label{etcov}
Given an arbitrary tuple $g=(g_1,...,g_n) \in G^n$,  there is $N \in \N$ and finite, surjective (algebraic) morphism
$$p: {}_{\C}Z_N \ra {}_{\C}Z_g,$$
which is \'etale outside a finite branch locus (i.e. the points of ${}_{\C}Z_g$ lying above $\{0 ,1728 \}$ under the natural map $f: {}_{\C}Z_g \ra \A^1(\C) \cong \Gamma \backslash \H$).
\end{pro}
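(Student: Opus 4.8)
The plan is to reduce the statement to a containment of discrete groups: it suffices to produce $N\in\N$ with $\Gamma_N\subseteq\Gamma_g$ as subgroups of $G$. Granting this, the inclusion induces a holomorphic map of Riemann surfaces $\Gamma_N\backslash\H\to\Gamma_g\backslash\H$, which is a finite branched covering since $\Gamma_N$ and $\Gamma_g$ both have finite index in $\Gamma$; it extends to a finite holomorphic map of the smooth compactifications $\Gamma_N\backslash\H^*\to\Gamma_g\backslash\H^*$, which is algebraic by the Riemann existence theorem (\ref{thmrie}), and restricting to the affine parts yields the desired finite morphism $p\colon{}_{\C}Z_N\to{}_{\C}Z_g$. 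Surjectivity is automatic because the source is connected and dominates the target.

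The arithmetic input is the choice of $N$, for which I would use the explicit congruence description of $\Gamma_N$ in Proposition \ref{cong}. For each $i$ fix an integral representative $\tilde g_i\in\mathrm{M}_2(\Z)$ of $g_i$ and set $D_i:=\det\tilde g_i>0$, so that $\operatorname{adj}(\tilde g_i)=D_i\tilde g_i^{-1}\in\mathrm{M}_2(\Z)$; let $N$ be any common multiple of $D_1,\dots,D_n$. If $\gamma\in\Gamma_N$ then, by Proposition \ref{cong}, the off-diagonal entries of $\gamma$ are divisible by $N$ and its two diagonal entries are congruent modulo $N$; writing $a$ for the top-left entry we thus have $\gamma=aI+NM$ with $M\in\mathrm{M}_2(\Z)$. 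Then
$$\tilde g_i\gamma\tilde g_i^{-1}=aI+N\,\tilde g_i M\tilde g_i^{-1}=aI+\tfrac{N}{D_i}\,\tilde g_i M\operatorname{adj}(\tilde g_i)$$
is an integral matrix of determinant $\det\gamma=1$, hence lies in $\sl2(\Z)$; in $G$ this says $\gamma\in g_i^{-1}\Gamma g_i$. As this holds for every $i$, we obtain $\Gamma_N\subseteq\bigcap_i g_i^{-1}\Gamma g_i=\Gamma_g$.

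Finally I would determine the branch locus of $p$. The branched covering $\Gamma_N\backslash\H\to\Gamma_g\backslash\H$ is unramified at $[\tau]$ whenever $\operatorname{Stab}_{\Gamma_N}(\tau)=\operatorname{Stab}_{\Gamma_g}(\tau)$, so ramification forces some $\delta\in\Gamma_g\setminus\{1\}$ to fix $\tau$, and $\delta$ has finite order since $\Gamma_g$ is discrete and point-stabilizers in $\H$ are compact. Conjugating, $g_i\delta g_i^{-1}$ is then a nontrivial torsion element of $\Gamma=\psl2(\Z)$ fixing $g_i\tau$; since every torsion element of $\psl2(\Z)$ is elliptic with fixed point in the $\Gamma$-orbit of $i$ or of $e^{2\pi i/3}$, we get $j(g_i\tau)\in\{1728,0\}$, and this holds for every $i$. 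The natural map $f\colon{}_{\C}Z_g\to\A^1(\C)\cong\Gamma\backslash\H$ is a coordinate projection of the form $[\tau]\mapsto j(g_i\tau)$ (or, if some $g_i=1$, the quotient $[\tau]\mapsto j(\tau)$), so $f$ sends every branch point of $p$ into the finite set $\{0,1728\}$; hence $p$ is étale outside $f^{-1}(\{0,1728\})$, as required.

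The step I expect to be the crux is the arithmetic one — recognising that scaling $g_i$ to an integral matrix of determinant $D_i$ and requiring $D_i\mid N$ makes $g_i$ conjugate $\Gamma_N$ into $\Gamma$ — together with the bookkeeping that tracks torsion elements through the conjugations $g_i^{-1}\Gamma g_i$ and matches them to the definition of $f$; the passage from Riemann surfaces to algebraic curves is standard and is already recorded in the appendix.
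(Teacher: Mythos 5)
Your proof is correct, and it takes a genuinely different route from the paper's for the crucial group-theoretic step. The paper reduces the claim to the double-coset structure: it chooses $N_i$ so that $\Gamma g_i\Gamma = \Gamma N_i\Gamma = \sqcup_j\Gamma g_{i,j}$, sets $N$ to be the least common multiple of the $N_i$, and then observes (via Proposition~\ref{cong} and the fact that conjugation by a coset representative is independent of the choice within the coset) that $\Gamma_N = \bigcap_i\Gamma_{N_i} \leq \Gamma_{g'} \leq \Gamma_g$. You instead choose $N$ to be a common multiple of the determinants $D_i$ of integral representatives $\tilde g_i$, and verify $\Gamma_N\subseteq g_i^{-1}\Gamma g_i$ directly by the $\gamma = aI + NM$ trick together with $\operatorname{adj}(\tilde g_i) = D_i\tilde g_i^{-1}$. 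Your calculation is more elementary and self-contained — it does not need the double-coset decomposition or the explicit set of coset representatives — at the cost of possibly producing a larger $N$ than the paper's $N_g = \operatorname{lcm}(N_i)$ (since $N_i$ divides $D_i$, with equality only when $\tilde g_i$ is primitive), which is harmless for the statement. You also make explicit the determination of the branch locus via torsion in $\Gamma_g$, a point the paper leaves to the reader and to the background material on quotients of $\H$; that step is correct, and your observation that a nontrivial $\delta\in\operatorname{Stab}_{\Gamma_g}(\tau)$ must have finite order and hence conjugate into an elliptic element of $\Gamma$ is exactly the right bookkeeping.
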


\begin{proof}
Given the tuple $g=(g_1,...,g_n)$, consider $N_i \in \N$ such that 
$$\Gamma g_i \Gamma = \Gamma N_i \Gamma = \sqcup_j \Gamma g_{i_j},$$
and define
$$g' = (g_{1,1},...,g_{1, \psi(N_1)},...,g_{n,1},...,g_{n, \psi(N_n)}).$$
If we let $N_g$ be the lowest common multiple of the $N_i$, then viewing $\Gamma_{N_g}$ as a congruence subgroup as in \ref{cong}, we see that
$$\Gamma_{N_g} = \bigcap_{N_i | N} \Gamma_{N_i} \leq \Gamma_{g'} \leq \Gamma_g$$
and therefore the inclusion of groups gives us a holomorphic covering map
$$\Gamma_{N_g} \backslash \H \ra \Gamma_g \backslash \H$$
which induces regular map on the corresponding algebraic curves by the Riemann existence theorem.
\end{proof}

%\begin{rem}\label{rembad}
%As was mentioned in \ref{sec j}, technically the map
%$$p: Z_N \ra Z_g$$
%is only a finite \'etale covering map away from a finite ramification locus of special points.  To save ink, in this chapter, we will continue to make the %convention to call maps \'etale covering maps, when we actually mean their restrictions away from a finite ramification locus. We are justified in doing so %because we will ultimately be interested in Galois representations attached to non-special points (i.e. away from this ramification locus). \newline
%\end{rem}

\begin{dfn}
Let $\Fin$, be the category of \textit{`branched finite algebraic covers'} of $\A^1(\C)$ i.e. we consider finite, surjective algebraic morphisms preserving the covering maps between varieties.
\end{dfn}

Given a branched Galois cover
$$p: {}_{\C}Z_N \ra {}_{\C}Z_1,$$
and $x \in {}_{\C}Z_1 \cong \A^1(\C)$, there is a right action of $$\Aut_{\Fin}({}_{\C}Z_N / {}_{\C}Z_1) \cong  \Gamma  / \Gamma_N$$ on the fibre $p^{-1}(x)$. Since the cover is Galois, the action is transitive, and if $x$ is outside of the branch locus then the action is free and the fibre $p^{-1}(x)$ is a $  \Gamma / \Gamma_N$-torsor. 

\subsection{Linking geometry and model theory}

In the above discussion we used the fact that compact Riemann surfaces are algebraic to show that certain quotients of the upper half plane were algebraic. To see that an arbitrary compact Riemann surface is algebraic usually requires appealing to Riemann-Roch or something similar, but for the Riemann surfaces we are interested in here the situation is much simpler. The reason is that once you know that the $j$-function gives a complex analytic isomorphism between $ \Gamma  \backslash \H^*$ and $\P^1(\C)$, then we may use the general theory of covering spaces (as in Appendix \ref{app etale}) to explicitly see that the covering maps induced by inclusions of groups may be used to realise these quotients as a definable set in the field sort. This is the key fact linking the algebraic geometry with the model theory, and will eventually mean that types correspond to certain Galois representations. The general algebraicity results of subsection \ref{sec quh} are not actually essential for any of the proofs to follow, but were included to give the reader some background understanding of what is going on geometrically. It is a subtle, but important point, that we need to use the definablilty results of the current subsection, rather than the algebraicity results of the previous subsection to make model theoretic progress. \newline

%Firstly we note the following:
%\begin{pro}\cite[5, \S2]{lang1987elliptic}\label{mod}
%For $N \in \N$ there is a polynomial $\Phi_N(X,Y) \in \Z[X,Y]$ such that $\Phi_N(X,Y)$ is symmetric in $X$ and $Y$, and
%$$\Phi_N(j(x),j(y))=0 \textrm{  iff  } x=gy \textrm{ for } g \in \Gamma  N \Gamma.$$
%\end{pro}
%The polynomials in the above proposition are called \textit{modular polynomials}, and the proof rests on the fact that $j \circ g \in \C(j)$ since $\C(j)$ is the field of modular %functions.

Fix a basepoint $s_0 \in \H$ outside of the ramification locus of $j$.

\begin{lem}\label{ZN def}
Fix an enumeration $(g_1,...,g_{\psi(N)})$ of a set of coset representatives for $\Gamma$ in $\Gamma N \Gamma$ and consider the map
$$p_N : \tau \mapsto (j(\tau) , j(g_1 \tau),...,j(g_{\psi(N)} \tau )) \subseteq \C^{\psi(N)+1}.$$
Then $p_N$ is biholomorphic onto its image, which is definable in $\langle \C ; + , \cdot ; 0 , 1 \rangle$. 
\end{lem}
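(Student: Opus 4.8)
The plan is to exhibit $p_N$ as the composite of the natural analytic quotient maps and show that the image can be cut out by polynomial equations coming from modular polynomials. First I would recall that the coordinate functions $\tau \mapsto j(\tau)$ and $\tau \mapsto j(g_i\tau)$ are each invariant under the group $\Gamma_N = \bigcap_i g_i^{-1}\Gamma g_i$ (using Proposition \ref{cong} and the fact that the $g_i$ are coset representatives for $\Gamma$ in $\Gamma N \Gamma$, so the set $\{g_i\Gamma\}$ — equivalently the family of functions $j(g_i\,\cdot)$ — is permuted by the $\Gamma$-action and fixed by $\Gamma_N$). Hence $p_N$ factors through the quotient $\Gamma_N\backslash\H$, i.e. through the curve ${}_{\C}Z_N$. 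Then I would argue injectivity: if $p_N(\tau) = p_N(\tau')$ then in particular $j(\tau)=j(\tau')$, so $\tau' = \gamma\tau$ for some $\gamma\in\Gamma$; matching up the remaining coordinates $j(g_i\tau) = j(g_i\gamma\tau)$ for all $i$ forces $\gamma$ to lie in each $g_i^{-1}\Gamma g_i$ (after re-indexing the cosets), hence $\gamma\in\Gamma_N$ and $\tau,\tau'$ have the same image in $\Gamma_N\backslash\H$. Combined with the fact that $\Gamma_N\backslash\H \to \Gamma\backslash\H$ is a finite branched cover and $j$ is a biholomorphism $\Gamma\backslash\H^*\cong\P^1(\C)$, this shows $p_N$ is a holomorphic injection from the (smooth) curve $\Gamma_N\backslash\H$; since a holomorphic injective map between Riemann surfaces is a biholomorphism onto its image, $p_N$ is biholomorphic onto its image.

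The real content is the definability of the image $Y_N := p_N(\H) \subseteq \C^{\psi(N)+1}$ in the pure field $\langle\C;+,\cdot;0,1\rangle$. The key tool is the classical modular polynomial $\Phi_M(X,Y)\in\Z[X,Y]$, which has the defining property that for $\tau\in\H$ and any matrix $\alpha$ of determinant $M$ (up to the usual normalisation), $\Phi_M(j(\tau),j(\alpha\tau)) = 0$, and conversely $\Phi_M(x,y)=0$ forces $y = j(\alpha\tau)$ for some such $\alpha$ whenever $x=j(\tau)$. Writing each $g_i$ (a representative of a coset in $\Gamma N \Gamma$) with an associated integer $M_i = N$ (or the relevant determinant of an integral representative), the first coordinate $z_0 = j(\tau)$ and the $i$-th coordinate $z_i = j(g_i\tau)$ satisfy $\Phi_{N}(z_0, z_i) = 0$. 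So $Y_N$ is contained in the algebraic set $W_N$ defined by $\bigwedge_{i=1}^{\psi(N)}\Phi_{N}(z_0,z_i)=0$ inside $\A^{\psi(N)+1}$. I would then show $Y_N$ is exactly one irreducible component of $W_N$ of the correct dimension: $W_N$ has dimension $1$ (the first coordinate is a free parameter on each component, and fixing it leaves finitely many choices of each $z_i$), $Y_N$ is an irreducible curve inside it (being the image of the irreducible curve $\Gamma_N\backslash\H$ under a holomorphic, hence by Chow/Riemann-existence algebraic, map), and $Y_N$ is the unique component containing a Zariski-dense set of points of the form $(j(\tau),j(g_1\tau),\dots)$ — concretely, the component through the image of our chosen basepoint $s_0$. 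An irreducible component of an algebraic set is definable (by a first-order formula over $\C$ picking out that component, using parameters naming a generic point or using the decomposition of the radical ideal), which gives the desired definability of $Y_N$.

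The main obstacle I expect is the bookkeeping around exactly which modular relations pin down $Y_N$ among the components of $W_N$, and showing there are no "spurious" components sharing infinitely many $j$-points with $Y_N$. One clean way around this: rather than isolating a component by an ad hoc first-order formula, use that $\Gamma_N\backslash\H \cong {}_{\C}Z_N$ is an explicitly algebraic affine curve (by \S\ref{sec quh}) and $p_N$ is induced by an honest morphism of affine curves, so $Y_N$ is literally the image of a morphism of varieties, hence constructible by Chevalley's theorem and a fortiori definable in the field language; one then only needs that $p_N$ as a set-theoretic map agrees with this algebraic morphism, which follows from the factorisation through $\Gamma_N\backslash\H$ established above together with injectivity. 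This route trades the component-identification argument for an appeal to the algebraicity results of \S\ref{sec quh} and Chevalley's theorem, which is probably the least painful path. Either way, the biholomorphicity half of the statement is routine once the factorisation and injectivity are in place; it is the passage from "analytically cut out by modular equations" to "first-order definable in the bare field" that needs the care.
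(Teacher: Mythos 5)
Your factorisation through $\Gamma_N\backslash\H$ and your injectivity argument are essentially the paper's. For definability, though, you have diverged in a way worth flagging. The paper does not invoke modular polynomials, Chevalley's theorem, or the algebraicity results of \S\ref{sec quh} at all. Instead it first observes that each $\gamma\in\Gamma/\Gamma_N$ acts on $\C^{\psi(N)+1}$ by an explicit coordinate permutation (a $\emptyset$-definable map), and then invokes covering-space theory to characterise $p_N(\H)$ as \emph{the} subset of $\C^{\psi(N)+1}$ containing the basepoint $p_N(s_0)$ on which the projection to the first coordinate is a cover with fibres that are torsors under this explicit finite group of definable permutations (away from $\{0,1728\}$), agreeing with the analytic projection on the branch locus. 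This torsor condition is itself a first-order formula, so definability falls out directly.

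Your primary route via the modular polynomials $\Phi_N$ has exactly the gap you yourself identify: $W_N=\bigwedge_i\Phi_N(z_0,z_i)=0$ is a much larger variety than $Y_N$ (the fibre over a generic $z_0$ has $\psi(N)^{\psi(N)}$ points rather than $[\Gamma:\Gamma_N]$), and you would still need a first-order way of isolating the right component, which is precisely the work the paper's torsor characterisation does. Your fallback via Chevalley is mathematically sound for the lemma as stated, but notice that the paper explicitly warns, in the paragraph opening this subsection, that it is a ``subtle, but important point'' that the definability argument here must \emph{not} rest on the algebraicity results of \S\ref{sec quh}. The reason this matters beyond taste: the paper reuses the torsor-style characterisation in Definition~\ref{def covmap} to pin down the maps $q_{N_g,g}$ and $q_{N,M}$ in the inverse system as ``the unique map with such-and-such covering properties agreeing on the basepoint and branch locus.'' Chevalley gives you \emph{that} the image is constructible without giving you a canonical formula defining it, and so does not feed forward into that later uniqueness argument. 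So your Chevalley route proves Lemma~\ref{ZN def} but would require replacement machinery elsewhere; the paper's covering-space/torsor route is the one actually compatible with the rest of the chapter.
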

\begin{proof}
Since $p_N$ is invariant under $\Gamma_N$, we can view $p_N$ as a holomorphic function from $\Gamma_N \backslash \H$ to $\C^{\psi(N)+1}$. $p_N$ is injective since 
$$j(x)=j(y) \textrm{ iff } y \in \Gamma x,$$ 
so that $$p_N(x)=p_N(y) \textrm{ iff } y \in \Gamma_N x.$$

The group $\Gamma / \Gamma_N$ acts on the image $p_N(\H)$ via
$$(j,j g_1,...,j  g_{\psi(N)}) \mapsto (j  \gamma,j  g_1  \gamma ,...,j  g_{\psi(N)}  \gamma),$$
and each $\gamma \in \Gamma / \Gamma_N$ induces an automorphism of the complex analytic cover $$ p_N(\H) \ra \C,$$
compatible with the canonical projection
$$(j,j g_1,...,j  g_{\psi(N)}) \mapsto j.$$
Making the variable substitution
$$j \mapsto X , \ j \circ g_{i} \mapsto Y_i,$$
$\gamma$ induces a corresponding permutation of the variables giving a map
\begin{align*}
\gamma: \C^{\psi(N)+1} & \ra \C^{\psi(N)+1}\\
(X,Y_1,...,Y_{\psi(N)}) & \mapsto (X, \gamma(Y_1),...,\gamma(Y_{\psi(N)})).
\end{align*}
Denote the finite group of maps arising from $\gamma \in \Gamma / \Gamma_N$ in this manner as $\Aut_{\Fin}(Z_N / Z_1)$. Clearly each element of $\Aut_{\Fin}(Z_N / Z_1)$ is definable in the field sort. By the theory of covering spaces (see Appendix \ref{app etale} for example), the image $p_N(\H)$ is exactly the subset of $\C^{\psi(N)+1}$ containing $p_N(s_0)$ such that fibres of the projection onto the $X$-coordinate are $\Aut_{\Fin}(Z_N / Z_1)$-torsors outside of the branch locus $\{0,1728 \}$, and the projection onto the $X$-coordinate is equal to the canonical projection $ p_N(\H) \ra \C$ on the branch locus. This set is definable in $\langle \C; + \cdot ; 0 , 1 \rangle$. 
\end{proof}

\begin{dfn}
Denote this definable subset of $\langle \C; + \cdot ; 0 , 1 \rangle$ by $Z_N$. The projection
$$(j(\tau) , j(g_1 \tau),..., j(g_{\psi(N)} \tau))  \mapsto  j(\tau)$$
induces a covering map
$$q_N : Z_N \ra Z_1$$
which is also definable in the field sort. The group of covering automorphisms $\Aut_{\Fin}(Z_N / Z_1)$ has already been defined in the above proof. Each element of $\Aut_{\Fin}(Z_N / Z_1)$ is definable in the field sort.
\end{dfn}

\begin{thm}\label{lem_locus_irreducible}
Let $g \in G^n$. Consider the map
$$p_{g} : \tau \mapsto ( j(g_1 \tau),...,j(g_{n} \tau )) \subseteq \C^{n}.$$
Then $p_{g}$ is biholomorphic onto its image, which is definable in $\langle \C ; + , \cdot ; 0 , 1 \rangle$. 
\end{thm}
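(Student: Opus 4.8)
The plan is to reduce the statement to Lemma~\ref{ZN def}, which treats the ``congruence'' case, by passing to a suitably enlarged tuple built from $g$.

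\emph{Injectivity and biholomorphicity.} The map $p_g$ is invariant under $\Gamma_g=\bigcap_i g_i^{-1}\Gamma g_i$, hence descends to $\Gamma_g\backslash\H$. Since $j(x)=j(y)$ if and only if $y\in\Gamma x$, we get $p_g(x)=p_g(y)$ iff $g_iy\in\Gamma g_ix$ for every $i$, i.e. iff $y\in\Gamma_gx$, so $p_g$ is injective on $\Gamma_g\backslash\H$. As in Lemma~\ref{ZN def}, injectivity together with the fact that $p_g$ is an immersion then gives that $p_g$ is biholomorphic onto its image, which is moreover irreducible because $\H$ is connected. The one point needing care is the immersion claim above the two elliptic points of $\Gamma$: if $g_k\tau_0\in\Gamma i$ for some $k$, then no $g_l\tau_0$ can lie in $\Gamma e^{2\pi i/3}$, since an element of $G$ carries a point of $\Gamma i$ to a point with rational imaginary part whereas every point of $\Gamma e^{2\pi i/3}$ has irrational imaginary part; as the stabiliser of any point of $\H$ in $\psl2(\R)$ has a unique subgroup of each finite order, a short computation of ramification orders then shows that at least one coordinate of $p_g$ is unramified at each point of $\Gamma_g\backslash\H$.

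\emph{Definability.} Put $N:=N_g$ as in the proof of Proposition~\ref{etcov}, so $\Gamma g_i\Gamma=\Gamma N_i\Gamma=\sqcup_j\Gamma g_{i_j}$ with $N=\mathrm{lcm}(N_i)$ and $\Gamma_N\leq\Gamma_{g'}\leq\Gamma_g$; by Proposition~\ref{cong} the group $\Gamma_N$ is a normal congruence subgroup of $\Gamma$. Let $\hat g$ be the tuple obtained by listing first the identity, then a full set of coset representatives for $\Gamma N\Gamma$, then all of the $g_{i_j}$. Then $\Gamma_{\hat g}=\Gamma_N\cap\Gamma_{g'}=\Gamma_N$. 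Consequently the projection of $\mathrm{im}(p_{\hat g})$ onto the coordinate $j(\tau)$ realises the branched cover $\Gamma_N\backslash\H\to\Gamma\backslash\H\cong\A^1$, which is Galois with group $\Gamma/\Gamma_N$; moreover this group acts on the ambient affine space by permuting the remaining coordinates, because each block of arguments ($\{\mathrm{id}\}$, the representatives of $\Gamma N\Gamma$, and the representatives of each $\Gamma N_i\Gamma$) is a union of left $\Gamma$-cosets stable under right multiplication by $\Gamma$, and $j$ is $\Gamma$-invariant. Hence the argument of Lemma~\ref{ZN def} applies verbatim with $Z_N$ replaced by $\mathrm{im}(p_{\hat g})$: the latter is the subset of the ambient affine space containing $p_{\hat g}(s_0)$ on which the fibres of the $j(\tau)$-projection are torsors under the (coordinate-permuting) covering group away from $\{0,1728\}$ and which projects as prescribed onto the branch locus, and this is a definable set. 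Finally each $g_i$ lies in $\Gamma g_{i_{j(i)}}$ for a suitable representative, so $\Gamma$-invariance of $j$ gives $j(g_i\tau)=j(g_{i_{j(i)}}\tau)$; thus $p_g$ is obtained from $p_{\hat g}$ by a coordinate projection $\pi$ (possibly with repetitions), and $\mathrm{im}(p_g)=\pi(\mathrm{im}(p_{\hat g}))$ is definable as the image of a definable set under a definable map.

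I expect the main obstacle to be identifying the right enlargement $\hat g$. The map $p_g$ is not itself a coordinate projection of any of the maps $p_N$ of Lemma~\ref{ZN def}: its coordinates $j(g_i\tau)$ come from Hecke data of possibly smaller level $N_i$, whose coset representatives are not representatives of $\Gamma N\Gamma$. One must therefore adjoin simultaneously the base coordinate $j(\tau)$ and \emph{complete} sets of Hecke coset representatives, so that the relevant stabiliser subgroup becomes the small normal congruence group $\Gamma_N$ and the covering group of $\mathrm{im}(p_{\hat g})\to\A^1$ acts by permuting coordinates; this is exactly what makes Lemma~\ref{ZN def} transfer. Once $\hat g$ is in place the conclusion is a formal consequence of Lemma~\ref{ZN def} and Proposition~\ref{etcov}, the residual technical point being the ramification computation at the elliptic points indicated above.
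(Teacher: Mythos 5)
Your proof is correct, but it takes a genuinely different route from the paper's. For definability, the paper's proof only adjoins the identity, setting $\bar g = (1,g_1,\dots,g_n)$ so that $\Gamma_{\bar g} = \Gamma\cap\Gamma_g$ is a (possibly non-normal) finite-index subgroup of $\Gamma$; it then characterises $Z_{\bar g}$ directly as the unique subset of $\C^{n+1}$ containing $p_{\bar g}(s_0)$ on which every coordinate permutation $\gamma q_{\bar g}$ restricts to an \'etale cover of $Z_1$ outside $\{0,1728\}$, and agrees with the analytic projection on the branch locus, and finally recovers $Z_g$ as the projection forgetting the first coordinate. Your enlargement $\hat g$ is more aggressive: you adjoin the identity, a full set of representatives for $\Gamma N\Gamma$, and all the $g_{i_j}$, so that $\Gamma_{\hat g}=\Gamma_N$ is a \emph{normal} congruence subgroup and the cover $\mathrm{im}(p_{\hat g})\to Z_1$ is Galois with group $\Gamma/\Gamma_N$, which lets you invoke the torsor characterisation of Lemma~\ref{ZN def} verbatim before projecting. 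Both work; the paper's version is shorter and avoids the Hecke bookkeeping, while yours trades a bigger ambient space for the conceptual comfort of the Galois case and a genuine reduction to Lemma~\ref{ZN def}. Your ramification analysis at the elliptic points (observing that $\Gamma i$ and $\Gamma e^{2\pi i/3}$ cannot both meet the $G$-orbit $G\tau_0$, hence the local exponents $f_i$ of the coordinates all lie in $\{1,2\}$ or all in $\{1,3\}$, so $\gcd_i f_i=\min_i f_i$ and $p_g$ is an immersion on $\Gamma_g\backslash\H$) fills in a step the paper glosses over with ``similarly to the proof of Lemma~\ref{ZN def}''; the observation that the stabiliser in $\psl2(\R)$ is a circle and therefore has a unique subgroup of each finite order is exactly what is needed to make the $\gcd$ computation work.
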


\begin{proof}
Similarly to the proof of \ref{ZN def}, we may consider $p_g$ as an injection
$$p_g: \Gamma_g \backslash \H \hookrightarrow \C^n,$$
biholomorphic onto its image. Define the tuple
$$\bar{g}:=(1,g_1,...,g_n).$$
Then $\Gamma_{\bar{g}}$ is a subgroup of $\Gamma$, and also of $\Gamma_g$. We have
$$p_{\bar{g}} : \Gamma_{\bar{g}} \backslash \H \ra  \C^{n+1}$$
and $\Gamma$ acts on the image $p_{\bar{g}}(\H)$ via
$$\gamma: (j,j  g_1 ,..., j g_n) \mapsto (j  \gamma ,jg_1  \gamma , jg_n \gamma)$$
with $\Gamma_{\bar{g}}$ acting trivially. 
 As in the proof of \ref{ZN def}, each element of the finite set $\Gamma / \Gamma_{\bar{g}}$ induces a permutation of variables
$$\gamma: (X,Y_1,...,Y_n) \mapsto (X,\gamma (Y_1),..., \gamma( Y_n))$$
and we define $Z_{\bar{g}}$ to be the (unique) subset of $\C^{n+1}$  such that $Z_{\bar{g}}$ contains $p_{\bar{g}}(s_0)$, each of the projections 
$$\gamma q_{\bar{g}}: (X,\gamma(Y_1),..., \gamma(Y_n)) \ra X$$
make $Z_{\bar{g}}$ into an \'etale cover of $Z_1$ outside of the branch locus $\{0,1728 \}$, and $q_{\bar{g}}$ is equal to the canonical analytic projection on the branch locus. The image of the projection
\begin{align*}
Z_{\bar{g}} & \ra \C^n\\
(X,Y_1,...,Y_n) & \mapsto (Y_1,...,Y_n),
\end{align*}
is $p_g(\H)$, and is definable in $\langle \C ; + \cdot , 0, 1 \rangle$. 
\end{proof}

\begin{dfn}\label{def covmap}
Denote this definable subset of $\langle \C; +, \cdot ; 0 , 1 \rangle$ by $Z_g$. We see from the above proof that $Z_g$ comes with a canonical definable projection map
\begin{align*}
q_g: Z_g & \ra Z_1 \\
(Y_1,...,Y_N) & \mapsto X
\end{align*}
induced by the definable projection
$$q_{\bar{g}} : Z_{\bar{g} } \ra Z_1.$$

We now describe a inverse system of maps between the $Z_g$, with each map being definable in the field sort. As above, we may find $N_g$ such that there is an inclusion of groups
$$\Gamma_{N_g} \hookrightarrow \Gamma_g.$$ Define a map
$$q_{N_g,g}: Z_{N_g} \ra Z_g$$
to be the unique map taking $p_{N_g}(s_0)$ to $p_g(s_0)$, \'etale outside of the points above the branch locus $\{0,1728 \} \subset Z_1$, and agreeing with the corresponding analytic covering map on the branch locus, such that the diagram
$$
\xymatrix{
Z_{N_g} \ar@{->}[dr]^{q_{N_g,g}} \ar@{->}[dd]_{q_{N_g}} &  \\
&  Z_g \ar@{->}[dl]^{q_g} \\
 Z_1 &  
}
$$
commutes. So every $Z_g$ is (definably) covered by the corresponding $Z_{N_g}$. This will be important later on in \S \ref{secgalrep} because $\Gamma_{N_g}$ is a normal, congruence subgroup of $\Gamma$, and this endows $Z_{N_g}$ with easily accessible arithmetic and geometric information.
\newline

 Suppose that $M$ divides $N$. Define a map
$$q_{N,M}: Z_{N} \ra Z_M$$
to be the unique map taking $p_{N}(s_0)$ to $p_M(s_0)$, \'etale outside of the points above the branch locus $\{0,1728 \} \subset Z_1$ such that the diagram
$$
\xymatrix{
Z_{N} \ar@{->}[dr]^{q_{N,M}} \ar@{->}[dd]_{q_{N}} &  \\
&  Z_M \ar@{->}[dl]^{q_M} \\
 Z_1 &  
}
$$
commutes. This is definable in $\langle \C ; + , \cdot ; 0,1 \rangle$.

\end{dfn}

%The natural projection
%\begin{align*}
%(j(\tau), z) & \mapsto (j(\tau) , y)
%Z_{NM}
%\end{align*}
%induces a definable covering map
%$$q_{NM} : Z_{NM} \ra Z_N$$
%and this corresponds to the inclusion of groups $\Gamma_{NM} \hookrightarrow \Gamma_N$.

 %Recall that an explicit set of coset representatives for $\Gamma$ in $\Gamma N \Gamma$ is given by
%$$\left\{ \begin{pmatrix} a&b\\ 0&d \end{pmatrix}  \ \ | \ \ 0< a,\ 0 \leq b < d, \ ad=N \right\}.$$
%Consider the map
%$$\begin{pmatrix} a&b\\ 0&d \end{pmatrix} \mapsto \begin{pmatrix} aM&b\\ 0&d \end{pmatrix} $$
%embedding the set of coset representatives for $\Gamma$ in $\Gamma N \Gamma$ to the set of coset representatives for $\Gamma$ in $\Gamma NM \Gamma$. This
 %induces a definable covering map
%$$q_{NM}: Z_{NM} \ra Z_N$$
%corresponding to the inclusion of groups $\Gamma_{NM} \hookrightarrow \Gamma_N$ induced by reduction mod $N$.

Since the covers $q_N:Z_N \ra Z_1$ are Galois, if $M$ divides $N$, then the covering map
\begin{align*}
q_{N,M}: Z_N & \ra Z_M \\
p_N(s_0) & \mapsto p_M(s_0)
\end{align*}
induces a map
$$ \Aut_{\Fin}(Z_N / Z_1) \ra \Aut_{\Fin}(Z_M / Z_1)$$
where $f \in \Aut_{\Fin}(Z_N / Z_1)$ maps to the unique element of $\Aut_{\Fin}(Z_M / Z_1)$ sending $p_M(s_0)$ to $q_{N,M}(f(p_N(s_0)))$. 

\subsection{Special points}\label{sec special}

Suppose that $s \in \H$ is \textit{special} i.e. there exists $g_s \in G$ such that $g_ss=s$. Then everything in the $G$-orbit of $s$ is also special i.e. $gg_sg^{-1}$ fixes $gs$ for $g \in G$. This gives us the notion of a \textit{special orbit} of $G$ in $\H$. The following proposition describing the basic behaviour of the special points:

\begin{pro}\label{pro special}
Special points of $\H$ belong to imaginary quadratic fields. Given a special point $s$ in an imaginary quadratic field $K$ we have $Gs=K \cap \H$, and every special orbit is of this form. Two special points lie in the same $G$-orbit exactly when they lie in the same imaginary quadratic field. If a special point $s$ is fixed by $g_s \in G$, then $s$ is the unique fixpoint of $g_s$.
\end{pro}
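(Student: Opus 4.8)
The plan is to work with the action of $\gl2q$ (equivalently $G = \gl2q^{ad}$) on $\H$ via Möbius transformations, and exploit the standard dictionary between fixed points of matrices and their eigenvalues. First I would note that a non-scalar matrix $\gamma = \begin{pmatrix} a & b \\ c & d \end{pmatrix} \in \gl2q$ has a fixed point $s \in \H$ if and only if the quadratic equation $c s^2 + (d-a) s - b = 0$ has a root in $\H$; since the coefficients are rational, such a root lies in a quadratic field over $\Q$, and for it to be non-real (hence in $\H$) the discriminant $(d-a)^2 + 4bc = (a+d)^2 - 4(ad-bc)$ must be negative, forcing the field to be imaginary quadratic. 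This proves the first sentence. For uniqueness of the fixpoint: a non-scalar $\gamma$ fixing $s \in \H$ cannot also fix $\bar s$ unless... actually it does fix $\bar s$ too, but $\bar s \notin \H$, so within $\H$ the fixpoint is unique (the Möbius map has at most two fixpoints on $\P^1(\C)$, namely $s$ and $\bar s$, and only $s$ lies in $\H$). This handles the last sentence.

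Next I would establish $Gs = K \cap \H$ for a special point $s$ in the imaginary quadratic field $K = \Q(s)$. The inclusion $Gs \subseteq K \cap \H$ is clear since $\gl2q$ acts on $\H$ by Möbius transformations with rational coefficients, so it maps $\Q(s) \cap \H$ into itself. For the reverse inclusion, given any $\tau \in K \cap \H$, I would write $s$ and $\tau$ each as $\frac{p + q\sqrt{-D}}{r}$ for suitable rationals (with the same squarefree $D > 0$ since both generate $K$), and exhibit an explicit element of $\gl2q^+$ sending $s$ to $\tau$: since both lie in $\H$, by acting with $\psl2(\Z)$ or more simply with an upper-triangular rational matrix one can move $\sqrt{-D}$ itself to any chosen point, and $\gl2q^+$ acts transitively on $K \cap \H$. (Concretely, the $\gl2q$-orbit of $\sqrt{-D}$ is all of $K \cap \H$: given $\frac{p+q\sqrt{-D}}{r}$ with $q, r$ of appropriate sign, the matrix $\begin{pmatrix} q/r & p/r \\ 0 & 1 \end{pmatrix}$ sends $\sqrt{-D}$ there, adjusting signs to stay in $\gl2q^+$ as needed.) That every special orbit arises this way is immediate: every special $s$ lies in some imaginary quadratic $K$ by the first part, and then its orbit is $K \cap \H$.

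Finally, the statement that two special points are $G$-equivalent iff they lie in the same imaginary quadratic field follows by combining the two directions: if $Gs = K \cap \H$ and $Gs' = K' \cap \H$, these orbits coincide iff $K \cap \H = K' \cap \H$ iff $K = K'$ (an imaginary quadratic field is determined by $K \cap \H$, since it is generated over $\Q$ by any of its non-real elements).

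I expect the main obstacle to be the transitivity claim $Gs = K \cap \H$, specifically verifying that $\gl2q^+$ (not just $\SL_2(\Q)$) acts transitively on $K \cap \H$; this needs a small but genuine computation producing the explicit rational matrix and checking the positivity of the determinant, together with care about whether one needs $\gl2q$ or can get away with $\SL_2$. The eigenvalue/discriminant bookkeeping for the other parts is routine linear algebra over $\Q$. One should also be slightly careful that "special" was defined via $g \in G$ rather than $g \in \gl2q$, so I would lift $g$ to a non-scalar matrix before running the fixed-point computation, which is harmless since scalars act trivially.
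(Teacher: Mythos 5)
Your proof is correct and takes essentially the same approach as the paper's: the paper's proof consists solely of writing down the fixed-point equation $c s^2 + (d-a)s - b = 0$ and asserting "the rest is then easy to verify," and you have simply carried out that verification in full (discriminant negativity, uniqueness of the fixpoint in $\H$ via complex conjugation, and the explicit upper-triangular matrix for transitivity of $G$ on $K \cap \H$). One minor point of notation: $\gl2q$ in the paper already denotes $\GL_2^+(\Q)$, so the $+$ superscript you use is redundant but harmless.
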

\begin{proof}
Just note that for $g \in G$ and $x \in \H$, we have
$$g(x) = x \textrm{ iff } \begin{pmatrix} a&b\\ c&d \end{pmatrix} x = x \textrm{ iff } cx^2 + (d-a)x -b =0$$
for some $a,b,c,d \in \Q$. The rest is then easy to verify.
\end{proof}

\begin{dfn}
Let $g=(g_1,...,g_n) \in G^n$. A point $x$ of $Z_g$ is defined to be \textit{special} if all its coordinates are special i.e. if $x = p_g(s)$ for $s \in S$. If $q_g : Z_g \ra Z_1$ is a cover, then for $x \in Z_1$, we say that the fibre $q_g^{-1}(x)$ is a \textit{special fibre} of the covering map if all the points of the fibre are special. Note that by the above discussion, if one point in a fibre is special, then all points of the fibre are special.
\end{dfn}

\begin{lem}\label{lem inf}
If an irreducible quasi-affine curve $C \hookrightarrow \A^n(\C)$ contains infinitely many $K$-points, then the curve is defined over $K$.
\end{lem}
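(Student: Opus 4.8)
The plan is to reduce this to the standard fact that a variety over $\C$ defined by finitely many polynomials which happen to have algebraically dependent coefficients is actually cut out over the field generated by those coefficients. Let $C \hookrightarrow \A^n(\C)$ be our irreducible quasi-affine curve. First I would pass to the Zariski closure $\bar C$ in $\A^n$, an irreducible affine curve, and let $I = I(\bar C) \subseteq \C[X_1,\dots,X_n]$ be its ideal; it suffices to show $I$ is generated by polynomials with coefficients in $K$, equivalently that $\bar C$ descends to $K$ (the quasi-affine curve $C$ is then the base change of an open subscheme, since $C$ is obtained from $\bar C$ by removing finitely many points, and a cofinite subset of the infinitely many $K$-points forces those removed points to be Galois-stable over $\bar K$, hence defined over a finite extension — but in fact for the descent we only need the closure).

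The key step is a Galois-descent / specialization argument. Choose a finitely generated subfield $k_0 \subseteq \C$, containing $K$, over which $\bar C$ is defined, say $\bar C = (\bar C_0)_\C$ for an affine curve $\bar C_0$ over $k_0$. I want to show $\bar C_0$ descends to $K$. Consider the absolute Galois action: for $\sigma \in \Aut(\C/K)$, the conjugate $\bar C^\sigma$ is again an irreducible affine curve in $\A^n$, and since every $K$-point of $\bar C$ is fixed by $\sigma$, the curve $\bar C^\sigma$ contains all of the infinitely many $K$-points of $\bar C$. But two irreducible affine curves in $\A^n$ that share infinitely many points must coincide (their intersection is a closed subset of each, of dimension $< 1$ unless they are equal, hence finite if they differ — Bézout-type bound, or simply: the intersection is a proper closed subscheme of an irreducible $1$-dimensional scheme, so is $0$-dimensional, hence finite). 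Therefore $\bar C^\sigma = \bar C$ for all $\sigma \in \Aut(\C/K)$. Since $\C$ is algebraically closed of characteristic $0$ and hence $K$ is the fixed field of $\Aut(\C/K)$ inside any subfield algebraic over it — more carefully, one invokes that a subvariety of $\A^n_\C$ stable under $\Aut(\C/K)$ is defined over the perfect (here, since char $0$, automatically) field $K$ — we conclude $\bar C$, and hence $C$, is defined over $K$.

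The main obstacle is the descent statement "a subvariety of $\A^n_\C$ stable under all of $\Aut(\C/K)$ is defined over $K$," which is slightly delicate because $\C$ is not algebraic over $K$: one cannot directly quote finite Galois descent. The clean way around it is to first descend $\bar C$ to a finitely generated extension $k_0/K$ as above, enlarge $k_0$ to be a finite Galois extension of a purely transcendental extension $K(t_1,\dots,t_r)$ if necessary, and then run the argument in two stages — first showing $\bar C$ is stable under $\Gal(k_0 / K(t_1,\dots,t_r))$ and descending to $K(t_1,\dots,t_r)$, then observing that the infinitely many $K$-rational points prevent any genuine dependence on the $t_i$ (a curve over $K(t_1,\dots,t_r)$ with infinitely many $K$-points, after spreading out and specializing the transcendentals, must already be constant, i.e. come from $K$), so that the $t_i$ can be specialized away. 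Alternatively, and more slickly, one can phrase the whole thing model-theoretically inside the algebraically closed field $\C$: the locus of $C$ is defined by a formula over $K$ precisely because $\acl$ in ACF is field-theoretic algebraic closure, and the canonical parameter (code) of the variety $\bar C$ lies in $\dcl(K) = K$ since $\bar C$ is fixed setwise by $\Aut(\C/K)$ — this is exactly the kind of elimination-of-imaginaries argument that fits the paper's framework, and I would present it that way to keep the proof short.
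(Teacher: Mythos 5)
Your proof is correct and takes essentially the same route as the paper: conjugate $C$ by $\sigma \in \Gal(\C/K)$, observe that $C$ and $C^\sigma$ share the infinitely many $K$-points, conclude $C = C^\sigma$ because two irreducible curves (equivalently, strongly minimal definable sets in ACF) with infinite intersection coincide, and then descend. The paper's proof is just the two-line version of this, invoking strong minimality directly and leaving the final ``$\Aut(\C/K)$-stable implies $K$-definable'' step (elimination of imaginaries in $\mathrm{ACF}_0$) implicit; your detour through finitely generated subfields and transcendence bases is unnecessary, as you yourself note at the end when you converge on the same canonical-parameter argument.
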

\begin{proof}
For $\sigma \in \Gal(\C / K)$, $C$ and $C^\sigma$ are strongly minimal definable subsets of $\C^n$. Since $C$ contains infinitely many $K$-points,  $C \cap C^\sigma$ is infinite and therefore $C=C^{\sigma}$.
%Let $\C(C)$ be the function field of $C$. Then by the theorem of the primitive element $\C(C) = \C(x,y)$ for some $x,y \in \C(\C)$. Let $I$ be the %kernel of the natural homomorphism
%$$\C[X,Y] \lora \C[x,y],$$
%and let $C' \subseteq \A^2(\C)$ be the zero set of $I$.
%$$K(x',y') \hookrightarrow \C(x,y) \cong \C(x,y_1,...,y_n)$$
%If $\C(C)$ is the function field of $C$
% Then $C$ is birational with the plane $C'$ over $\C$. Since $C$ contains infinitely many $K$-points, so does $C'$. If $\sigma \in G_K$, then $C' %\cap C^\sigma$ is finite so $C'=C'^{\sigma}$. So $C'$ is defined over $K$. Now
\end{proof}

\begin{thm}\label{etdef}

$Z_g$ is an irreducible quasi-affine curve defined over $\Q(j(S))$. Given $f \in \Aut_{\Fin}(Z_N / Z_1)$, the graph of $f$ considered as an algebraic variety in $Z_N \times Z_N$ is defined over $\Q(j(S))$.
\end{thm}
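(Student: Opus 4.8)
The plan is to prove irreducibility and the field of definition statements together, using the density of special points plus Lemma~\ref{lem inf} (which says an irreducible quasi-affine curve containing infinitely many $K$-points is defined over $K$). First I would establish \emph{irreducibility} of $Z_g$: by Theorem~\ref{lem_locus_irreducible}, $Z_g$ is the biholomorphic image of the connected Riemann surface $\Gamma_g \backslash \H$ under $p_g$, and the image of an irreducible complex analytic curve under a biholomorphism is an irreducible (quasi-affine) algebraic curve — concretely, its Zariski closure is irreducible because a connected complex manifold cannot be covered by two proper closed analytic subsets, and the defining equations of $Z_g$ are polynomial by the construction in Lemma~\ref{ZN def}. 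So $Z_g$ is a strongly minimal (one-dimensional irreducible) definable set in the field sort.

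Next I would show $Z_g$ is \emph{defined over} $\Q(j(S))$. The key input is Proposition~\ref{pro special}: special points of $\H$ are dense (every imaginary quadratic field meets $\H$, and these accumulate), and for $s \in S$ each coordinate $j(g_i s)$ is again of the form $j$ of a special point, since $g_i s$ lies in an imaginary quadratic field whenever $s$ does. Hence $Z_g$ contains the infinitely many points $p_g(s)$, $s \in S$, all of whose coordinates lie in $\Q(j(S))$. By Lemma~\ref{lem inf}, $Z_g$ is defined over $\Q(j(S))$. (One should check there are genuinely infinitely many \emph{distinct} such points on $Z_g$: since $p_g$ is injective and there are infinitely many $G$-orbits of special points — one per imaginary quadratic field — this is immediate, provided $\Gamma_g$-orbits of special points remain infinite, which they do.)

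For the second assertion, consider $f \in \Aut_{\Fin}(Z_N/Z_1)$ and its graph $\graph(f) \subseteq Z_N \times Z_N$. Since $Z_N$ is an irreducible quasi-affine curve defined over $\Q(j(S))$ and $f$ is a morphism, $\graph(f)$ is again an irreducible quasi-affine curve; it suffices, by Lemma~\ref{lem inf} applied in $\A^{2(\psi(N)+1)}$, to exhibit infinitely many $\Q(j(S))$-points on it. The natural candidates are the pairs $(p_N(s), f(p_N(s)))$ for $s \in S$: the first coordinate has entries in $\Q(j(S))$ as above, and $f$ permutes the coordinates $Y_i = j(g_i s)$ among themselves — so $f(p_N(s))$ is just a reordering of special values $j(\cdot s)$, again in $\Q(j(S))$. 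Concretely, $f$ is induced by some $\gamma \in \Gamma/\Gamma_N$ acting by $j(g_i \tau) \mapsto j(g_i \gamma \tau)$, and $g_i \gamma s$ is special whenever $s$ is, so all coordinates of $f(p_N(s))$ lie in $\Q(j(S))$. Thus $\graph(f)$ carries infinitely many $\Q(j(S))$-points and Lemma~\ref{lem inf} finishes it.

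The main obstacle I anticipate is not any single hard step but making the density/infinitude argument airtight: one must ensure that the special points really give \emph{infinitely many distinct} points of $Z_g$ (resp.\ of $\graph(f)$), that every coordinate $j(g_i s)$ genuinely lies in $\Q(j(S))$ — which requires knowing $g_i s$ is special, i.e.\ lies in an imaginary quadratic field, a consequence of Proposition~\ref{pro special} since $g_i \in G$ acts by fractional linear transformations with rational coefficients and preserves imaginary quadratic fields — and that $Z_g$ (resp.\ $\graph(f)$) is irreducible so that Lemma~\ref{lem inf} applies verbatim. Given the infrastructure already built (Lemma~\ref{ZN def}, Theorem~\ref{lem_locus_irreducible}, Proposition~\ref{pro special}, Lemma~\ref{lem inf}), once these points are checked the proof is short.
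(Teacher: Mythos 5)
Your proposal matches the paper's proof in both structure and the key inputs: irreducibility of $Z_g$ comes from its being a constructible (quasi-affine) set biholomorphic to the connected Riemann surface $\Gamma_g\backslash\H$, the field of definition then follows from the infinitude of special points (Proposition~\ref{pro special}) together with Lemma~\ref{lem inf}, and the graph of $f$ is handled identically by observing $f$ maps special points to special points. You spell out the bookkeeping more carefully than the paper does, but the argument is the same one.
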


\begin{proof}
$Z_g$ is constructible by quantifier elimination in $\langle \C ; + \cdot ; 0,1 \rangle$, and is biholomorphic with $\Gamma_g \backslash \H$, so is an irreducible quasi-affine curve. By \ref{pro special} $Z_g$ contains infinitely many special points so the first statement follows from \ref{lem inf}. The second statement also follows from \ref{lem inf}, since if $y \in Z_N$ is special then $f(y)$ is also special.
\end{proof}

\begin{rem}\label{rem embedzg}
The curve $Z_g$ is $\emptyset$-definable in the structure $\langle \C ; + ; \cdot ; \Q(j(S)) \rangle$. Algebro-geometrically speaking, we have described a model of ${}_{\C}Z_g$ over $\Q(j(S))$, with a fixed embedding into some complex affine space. Similarly for each $f \in \Aut_{\Fin}(Z_N / Z_1)$.
\end{rem}

\section{Description of the types}\label{sec descrip type}

If $\langle H,F \rangle \models \Th(j)$, then we let $\tp_{H}(\tau)$ stand for the type of $\tau$ in the covering sort only, and $\tp_F(z)$ be the type of $z$ in the field sort only. \newline

The algebraic curves $Z_g$ encode information about geometric interactions in the structure $\langle H, G \rangle$ (in particular the trivial pregeometry), and this means that the two-sorted type of a non-special tuple $\tau \subset H$ may be studied down in the field sort alone:

\begin{pro}\label{fielddet}
For a finite tuple $\tau \subset H - S$, $\qftp(\tau )$ is determined by
$$\bigcup_{ g \subset G} \qftp_{\cF}(p_g(\tau) / \dcl(\emptyset) \cap F)$$
where $g$ ranges over all finite tuples $g \subset G^n$.
\end{pro}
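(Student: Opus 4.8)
The plan is to show that the quantifier-free type of a non-special tuple $\tau$ in the two-sorted structure is completely controlled by the field-sort data of all the tuples $p_g(\tau)$, together with the (empty-parameter) definable closure in the field sort. The one direction is trivial: any two-sorted quantifier-free formula satisfied by $\tau$ is "seen" by some $p_g(\tau)$, so the displayed union certainly \emph{determines} $\qftp(\tau)$ only if it carries enough information; it is the converse — that this union suffices — which is the content. So suppose $\tau, \tau' \subset H - S$ are finite tuples of the same length with
$$\qftp_{\cF}(p_g(\tau) / \dcl(\emptyset) \cap F) = \qftp_{\cF}(p_g(\tau') / \dcl(\emptyset) \cap F)$$
for every finite tuple $g \subset G^n$; I must produce a (quantifier-free, partial) isomorphism matching $\tau$ to $\tau'$, i.e. check that every atomic $\cL$-formula holds of $\tau$ iff it holds of $\tau'$.

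First I would enumerate the atomic formulas of the two-sorted language $\cL$ that can involve elements of the covering sort. Up to the function symbols $f_g$ and $j$, an atomic formula about a tuple from $H$ is built from: equalities $g_i(x_a) = g_k(x_b)$ between $G$-translates of coordinates in the covering sort, equalities $j(g_i(x_a)) = j(g_k(x_b))$ (and more generally field-sort polynomial relations among the $j(g_i(x_a))$ with coefficients among the constants $C = \Q(j(S))$, since $\dcl(\emptyset)\cap F \supseteq \Q(j(S))$ by \ref{etdef}/\ref{rem embedzg}), and equalities in the covering sort of the form $g_i(x_a) = x_b$. The key reduction is that since $\tau \notin S$, no coordinate $x_a$ is fixed by any nontrivial element of $G$, so an equality $g_i(x_a) = g_k(x_b)$ in the covering sort is equivalent to $g_k^{-1}g_i(x_a) = x_b$, and this happens iff $j(g_k^{-1}g_i(x_a)) = j(x_b)$ \emph{and} the two points lie in the same $\Gamma$-coset in the appropriate sense — but this last refinement is exactly the kind of thing recorded by the curves $Z_g$. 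Concretely, whether $g_i(x_a) = x_b$ holds is detectable from the field-sort type of $p_{\bar g}(\tau)$ for $\bar g = (1, g_i, \ldots)$: by Lemma \ref{ZN def} and Theorem \ref{lem_locus_irreducible} the map $p_{\bar g}$ is \emph{injective} on $\Gamma_{\bar g}\backslash \H$ and biholomorphic onto the definable curve $Z_{\bar g}$, and the $\Gamma/\Gamma_{\bar g}$-action on $Z_{\bar g}$ — which is definable over $\dcl(\emptyset)\cap F$ — encodes precisely the covering-sort incidences among $G$-translates of the coordinates. So I would argue that each atomic covering-sort relation among $G$-translates of $\tau$ is equivalent to a field-sort formula, with parameters in $\dcl(\emptyset) \cap F$, in the coordinates of $p_g(\tau)$ for a suitable enlargement $g$ of the tuple of group elements involved; matching these formulas for $\tau$ and $\tau'$ then follows from the hypothesis.

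The main obstacle — and the step deserving the most care — is the covering-sort equalities, i.e. showing that "$g_i(x_a) = x_b$" (equivalently, membership of a translate in a specific $\Gamma$-coset) is genuinely a \emph{field-sort} condition once special points are excluded. This is where one must use that $p_{\bar g}$ is injective, not merely $\Gamma$-invariant: two preimages $\tau, \tau'$ could have the same $j$-coordinates for every $g$ yet differ by covering-sort incidences, and it is the definability of $Z_{\bar g}$ together with its covering automorphism group $\Aut_{\Fin}(Z_{\bar g}/Z_1)$ over $\Q(j(S)) \subseteq \dcl(\emptyset) \cap F$ (Theorem \ref{etdef}) that rules this out. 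One also has to be slightly careful that adding the coordinate "$1$" (the identity of $G$) and passing from $g$ to $\bar g$ stays within the "ranging over all finite $g \subset G^n$" in the statement — it does. I expect the pure field-sort polynomial relations among the $j(g_i(x_a))$ to be immediate from the hypothesis, so essentially all the work is in reducing the covering-sort atomic formulas to the field sort via the injectivity and definability of the $Z_g$'s.
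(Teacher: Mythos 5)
You take essentially the same route as the paper (reduce covering-sort atomic relations to field-sort data via the curves $Z_{\bar g}$), and you correctly flag where the real work lives: recovering not merely \emph{that} two coordinates are $G$-related but \emph{which} specific element $g_0 \in G$ relates them. But your resolution of that obstacle has a gap. You claim that each covering-sort atomic relation is ``equivalent to a field-sort formula \ldots in the coordinates of $p_g(\tau)$ for a suitable enlargement $g$''. For any single finite tuple $g$ this is false: the map $p_{\bar g}$ is injective only on $\Gamma_{\bar g}\backslash\H$, so the field type of $p_{\bar g}(\tau)$ determines each $\tau_k$ only up to its $\Gamma_{\bar g}$-orbit, and $\Gamma_{\bar g}$ is a finite-index (hence infinite) subgroup of $\Gamma$. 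In particular the field type of $p_{\bar g}(\tau)$ cannot distinguish ``$g_0\tau_a = \tau_b$'' from ``$\gamma g_0\tau_a = \tau_b$'' for any $\gamma\in\Gamma_{\bar g}$ with $\gamma\neq 1$, and the $\Q(j(S))$-definability of $\Aut_{\Fin}(Z_{\bar g}/Z_1)$ from Theorem \ref{etdef} does nothing by itself to remove that ambiguity --- that fact is used later for Galois-theoretic arguments, not here.

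What actually makes the proposition true is that the displayed union ranges over \emph{all} finite tuples $g$. For each $\gamma\in\Gamma$ with $\gamma\neq 1$ there is some $h\in G$ with $h\gamma h^{-1}\notin\Gamma$ --- equivalently, the normal core $\bigcap_{h\in G}h^{-1}\Gamma h$ is trivial, which follows from simplicity of $\psl2(\Q)$ together with $\Gamma$ being a proper subgroup of it --- and including such an $h$ in the tuple produces a $j$-coordinate coincidence that holds for a pair related by $g_0$ but fails for one related by $\gamma g_0$. So the specific $g_0$ is pinned down only in the limit over all $g$, never by any one finite tuple. To be fair, the paper's own proof is no more explicit than yours on this: its displayed equivalence addresses only \emph{existence} of a $G$-relation, and the opening sentence (``all that can be said \ldots is whether or not any of the coordinates \ldots are related by elements of $G$'') glosses over exactly the same issue. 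So while your proposal is in the same spirit as the paper's, the one step you single out as the main obstacle is one that neither argument actually carries out, and the step as you state it (a single ``suitable enlargement'' $g$ sufficing) would fail.
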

\begin{proof}
If $\tau \cap S=\emptyset$ then all that can be said about $\tau$ with quantifier free formulae in the $H$ sort is whether or not any of the coordinates of $\tau$ are are related by elements of $G$. If so, then this is expressible down in the field sort by the corresponding images under $j$ sitting on a particular $\emptyset$-definable algebraic curve i.e.
$$\exists g \in G  \ g \tau_i = \tau_j \ {\rm iff } \ \exists g \in G, \ \ (j(\tau_i) , j(\tau_j  )) \in Z_{\bar{g}}.$$
\end{proof}

%If $s \in S \subset H$ is special then it may not be the case that the above holds. i.e. let $s \in S$ such that $g(s)=s$. Then we know that %$j(s)$ is an algebraic integer and one of finitely many solutions to the class polynomial $H_D(X)$ (the solutions to $H_D(X)$ are the $j$ %values of the other elliptic curves with CM by that order with discriminant $D$). However $H_D(j(s))=0$ only implies that $g(s)=s$ if the %class number is 1. For higher class numbers it will just imply that $s$ corresponds to one of the finitely many isomorphism classes of %elliptic curves with CM by the order with discriminant $D$. So if $s \in S$ such that $g(s)=s$, then the formula $g(x)=x$ determines %$\qftp(s)$. However, unlike in the non-special case, $\qftp_F(j(G s))$ may not determine $\qftp(s)$.

%\begin{pro}
%Let 
%$$p:Y \ra X$$
%be a finite \'etale cover, definable over $K$ in an algebraically closed field $$\cF:=\langle F,+,\cdot ,0,1 \rangle.$$
%Let $x \in X$, $y \in p^{-1}(x)$ and $K \subseteq L \subseteq F$. Then $\tp_{\cF}(x / L)$ is determined by $\tp_{\cF}(y / L)$.
%\end{pro}

\begin{pro}\label{nicesystem2}
For $\tau \in (H - S)^n$ and $L \subseteq F$, $\qftp(\tau )$ is determined by 
$$\bigcup_{N} \qftp_{\cF}(p_N ( \tau ) / \dcl(L) \cap F).$$
\end{pro}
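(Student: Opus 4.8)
The plan is to reduce the statement to Proposition \ref{fielddet}, which already tells us that $\qftp(\tau)$ is pinned down by the family $\bigcup_{g}\qftp_{\cF}(p_g(\tau)/\dcl(\emptyset)\cap F)$, with $g$ ranging over all finite tuples from $G$. So it is enough to recover each individual type $\qftp_{\cF}(p_g(\tau)/\dcl(\emptyset)\cap F)$ from $\bigcup_N\qftp_{\cF}(p_N(\tau)/\dcl(L)\cap F)$. The parameter set $L$ plays no real role: since $\dcl(\emptyset)\cap F\subseteq\dcl(L)\cap F$, the family $\bigcup_N\qftp_{\cF}(p_N(\tau)/\dcl(L)\cap F)$ determines the family $\bigcup_N\qftp_{\cF}(p_N(\tau)/\dcl(\emptyset)\cap F)$ (a richer quantifier-free type restricts to a poorer one), so I may work over $\dcl(\emptyset)\cap F$ from now on.

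The substantive step is that for each finite tuple $g$ of elements of $G$ there is a single $N$ together with a map, definable in the field sort, carrying $p_N(\tau)$ to $p_g(\tau)$. By the construction in Definition \ref{def covmap} there is $N_g\in\N$ with an inclusion $\Gamma_{N_g}\hookrightarrow\Gamma_g$ and a map $q_{N_g,g}:Z_{N_g}\to Z_g$ definable over $\dcl(\emptyset)\cap F$. By its characterising properties (the unique field-definable map sending $p_{N_g}(s_0)$ to $p_g(s_0)$, \'etale off the branch locus $\{0,1728\}$, agreeing with the analytic covering map on the branch locus, and compatible with the projections to $Z_1$), $q_{N_g,g}$ coincides with the analytic covering $Z_{N_g}\to Z_g$ induced by $\Gamma_{N_g}\hookrightarrow\Gamma_g$; and that analytic covering precomposed with $p_{N_g}$ is $p_g$. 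Hence $q_{N_g,g}\circ p_{N_g}=p_g$ on $\H$, so $p_g(\tau)=q_{N_g,g}(p_{N_g}(\tau))$ (applied coordinatewise when $\tau$ is a tuple). Since $\tau$ has no special coordinate, by Proposition \ref{pro special} it lies away from the ramification of $j$, so $p_{N_g}(\tau)$ sits over $Z_1\setminus\{0,1728\}$, where $q_{N_g,g}$ is \'etale and no ramification subtleties arise. As applying a $\dcl(\emptyset)\cap F$-definable function preserves quantifier-free type over $\dcl(\emptyset)\cap F$, it follows that $\qftp_{\cF}(p_g(\tau)/\dcl(\emptyset)\cap F)$ is determined by $\qftp_{\cF}(p_{N_g}(\tau)/\dcl(\emptyset)\cap F)$, hence by $\bigcup_N\qftp_{\cF}(p_N(\tau)/\dcl(\emptyset)\cap F)$.

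Assembling the chain: $\bigcup_N\qftp_{\cF}(p_N(\tau)/\dcl(L)\cap F)$ determines $\bigcup_N\qftp_{\cF}(p_N(\tau)/\dcl(\emptyset)\cap F)$, which determines $\bigcup_{g}\qftp_{\cF}(p_g(\tau)/\dcl(\emptyset)\cap F)$, which by Proposition \ref{fielddet} determines $\qftp(\tau)$. The only point that asks for genuine attention rather than bookkeeping is the identity $q_{N_g,g}\circ p_{N_g}=p_g$ — that the field-definable transition maps of Definition \ref{def covmap} really are the algebraic shadows of the analytic covering maps between the modular quotients $\Gamma_{N_g}\backslash\H$ and $\Gamma_g\backslash\H$; this rests on the uniqueness clause in that definition. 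The accompanying observation that non-special tuples avoid the branch locus $\{0,1728\}$, so that everything is defined and unramified where it needs to be, is routine given Proposition \ref{pro special}.
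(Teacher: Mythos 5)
Your proof is correct and follows essentially the same route as the paper's: the paper's own argument consists of noting that Definition \ref{def covmap} supplies, for each tuple $g$, an $N_g$ and a field-sort-definable covering map $q_{N_g,g}:Z_{N_g}\to Z_g$, and then citing \ref{fielddet}. You have simply filled in the bookkeeping the paper leaves implicit — the identity $q_{N_g,g}\circ p_{N_g}=p_g$ coming from the uniqueness clause in \ref{def covmap}, the fact that definable maps carry quantifier-free types forward, and the observation (via \ref{pro special}) that non-special tuples avoid the branch locus — all of which is correct.
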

\begin{proof}
Given $g=(g_1,...,g_n) \in G^n$, there is $N_g$ such that there is a definable covering map
$$q_{N_g,g}: Z_N \ra Z_g$$
(see \ref{def covmap}). By \ref{fielddet} the above generalises straightforwardly to give the result.
\end{proof}

\section{Axiomatisation}\label{ax}

\subsection{The covering sort}

Let $\Th \langle \H, G \rangle $ be the complete first order theory of $G$ acting on the upper half plane $\H$ in the language described in \S \ref{sec prelim}. Rather than writing down an axiomatisation, we just note that for any group $G$, the class of faithful $G$-sets is first order axiomatisable, and all that remains is to describe the stabilisers of points. All models of $\Th \langle \H , G \rangle$ are infinite, so the following proof provides a substitute for a more explicit set of axioms since categoricity implies completeness in this case:

\begin{pro}
The isomorphism class of a model of $\Th \langle \H, G \rangle $ is determined by the cardinality of the number of non-special orbits. In particular $\Th \langle \H , G \rangle $ has a unique model in each uncountable cardinality.
\end{pro}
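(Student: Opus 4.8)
The plan is to classify models of $\Th\langle \H, G\rangle$ by analysing the orbit structure of the $G$-action. The theory says, as noted in the text, that a model is a faithful $G$-set whose point stabilisers are prescribed; concretely, the stabiliser of a special point is the cyclic stabiliser of the corresponding special point in $\H$ (finite, by \ref{pro special}), and non-special points have trivial stabiliser. So a model decomposes as a disjoint union of $G$-orbits, each orbit being $G/G_s$ for some stabiliser subgroup $G_s$ occurring in $\H$. The isomorphism type of the model is therefore determined by the multiplicity (a cardinal) with which each orbit type $G/G_s$ appears.

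First I would make precise which orbit types occur. For special orbits: by \ref{pro special}, each special orbit corresponds to an imaginary quadratic field $K$, the orbit is $K\cap\H$, and it is a single $G$-orbit; moreover the number of such orbits is countably infinite (one per imaginary quadratic field), and each such orbit type is rigidly determined. Crucially, the axioms of $\Th\langle\H,G\rangle$ force \emph{exactly one} orbit of each special type to appear — this is a first-order statement (for each imaginary quadratic $K$, ``there is a point fixed by $g_s$, and any two such points are $G$-conjugate''), so every model has precisely the same special part, namely a copy of $S$ with its $G$-action, and this part is countable. For non-special orbits: each is a free $G$-orbit, i.e.\ a copy of $G$ as a $G$-set, all isomorphic to one another; the standard model $\H$ contains uncountably many (in fact $2^{\aleph_0}$) such orbits. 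The number of non-special orbits can be any infinite cardinal and is a complete invariant for the remaining part.

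Next I would assemble the classification. Given $\kappa$ infinite, there is up to isomorphism exactly one $G$-set consisting of (a copy of) $S$ together with $\kappa$ free $G$-orbits; call it $M_\kappa$. One checks $M_\kappa\models\Th\langle\H,G\rangle$: the faithfulness and stabiliser conditions hold orbit-by-orbit, and the standard model $\H$ is $M_{2^{\aleph_0}}$, so $M_\kappa$ satisfies every first-order consequence — here one can invoke that any two models with the same special part and infinitely many free orbits are $L_{\infty,\omega}$-equivalent, or simply observe that the first-order theory only constrains stabilisers and faithfulness. Conversely any model is some $M_\kappa$ by the orbit decomposition above. Hence the isomorphism class is determined by $\kappa=$ the number of non-special orbits, and for each uncountable $\kappa$ there is a unique model of size $\kappa$ (since $|M_\kappa|=\kappa+\aleph_0=\kappa$, and $\kappa$ is recovered from $|M_\kappa|$), giving uncountable categoricity. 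In countable cardinality there are $\aleph_0$ models ($M_n$ for finite $n$, and $M_{\aleph_0}$), so categoricity genuinely fails there, consistent with the statement.

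The main obstacle is not the orbit combinatorics but justifying that $M_\kappa$ is actually a model of the \emph{complete} theory $\Th\langle\H,G\rangle$, i.e.\ that the first-order theory really does not see anything beyond faithfulness and the list of stabilisers. The clean way to handle this is a back-and-forth / Ehrenfeucht–Fra\"iss\'e argument: show any two models with isomorphic special parts and both having $\geq\aleph_0$ free orbits are elementarily equivalent (indeed $L_{\infty,\omega}$-equivalent), by building partial isomorphisms that extend orbit-by-orbit — a partial finite map respecting the $G$-action always extends, since adding one new element either lands in an already-matched orbit (forced) or in a fresh free orbit on each side (available because there are infinitely many). This simultaneously yields completeness of the axiomatisation sketched in the text and the categoricity claim.
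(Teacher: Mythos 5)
Your proposal is correct and is essentially the argument the paper gives: decompose a model into $G$-orbits, note that the first-order theory rigidly determines the special orbits (for each $g\in G$ with a fixpoint the theory says there is a unique one, and these assignments are compatible under conjugation), and observe that the non-special orbits are free $G$-orbits, so any bijection between the sets of non-special orbits extends to an isomorphism of models. The Ehrenfeucht--Fra\"iss\'e detour in your final paragraph is overkill for the statement at hand, since you are comparing two models of the \emph{complete} theory $\Th\langle\H,G\rangle$ (existence in each infinite cardinality already follows from L\"owenheim--Skolem) and so never need to verify that a candidate $G$-set satisfies it; the paper simply builds the isomorphism directly between any two given models.
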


\begin{proof}
We explicitly construct an isomorphism $\sigma$ between two models
$$\sigma : \langle H, G \rangle \lora \langle H' , G \rangle $$
with the same number of non-special orbits: \newline
For each $g \in G$ there is a sentence of $\Th \langle \H , G \rangle$ stating that either $g$ fixes a unique element of $H$ or has no fixpoints. If $g$ has a fixpoint, denote by $s_g$ its unique fixpoint and let $\sigma(s_g)$ be the unique point of $H'$ fixed by $g$. For each pair $(g_1, g_2) \in G^2$ there is a sentence of $\Th \langle \H , G \rangle $ stating that if $g_1$ fixes $s_{g_1}$, then $g_2 g_1 g_2^{-1}$ fixes $g_2 s_{g_1} $. So
$$\sigma(g_2 s_{g_1} ) = \sigma(s_{g_2 g_1 g_2^{-1}} ) =  g_2 \sigma (s_{g_1})$$
and the map $\sigma$ is a partial isomorphism between the substructures generated by the special points.  \newline
To extend $\sigma$ to the non-special orbits fix a bijection between the quotients $G \backslash H $ and $G \backslash H' $. For each non-special orbit $Gh$, choose a point $\sigma(h)$ in the corresponding orbit of $H'$ under this bijection. Now let $\sigma(g h) = g \sigma (h)$ for all $g \in G$. 
\end{proof}

It follows that $\Th \langle \H, G \rangle $ is strongly minimal i.e. definable sets are finite or cofinite, uniformly in parameters. The model-theoretic algebraic closure operator gives a trivial pregeometry and we have a good notion of independence in models of the theory. 

\begin{dfn}
A tuple $( \tau_1,..., \tau_r) \subset H$ is said to be $G$-\textit{independent} if it is independent with respect to this trivial pregeometry i.e. if for all $i$, $\tau_i$ is non-special and for all $i \neq j$ $\tau_i \notin G \tau_j$.  
\end{dfn}

%It is possible that a similar theory relaxing the restriction that each special $g$ has a unique fixpoint could be interesting to study (see discussion in \S \ref{proet mod}). 

\subsection{The two-sorted theory}

For every finite tuple $g=(g_1,...,g_n) \subseteq G$ let $\Psi_g$ be a sentence saying that the map
$$p_g : \tau \mapsto (j(g_1 \tau),...,j(g_n \tau))$$
maps onto the $F$-points of $Z_g$. Note that this is a first order statement by the discussion leading up to remark \ref{rem embedzg}. \newline

%Even though (e.g.) it is the set $\{g_1,..,g_n\}$ and not the tuple which determines the curve, we still need all these things as axioms.

Now let $T$ be the following theory:
$$\Th \langle \H , G \rangle  \bigcup \Th \langle \C , + , \cdot ; \Q(j(S) \rangle \bigcup_{g \subset G} \Psi_{g} \bigcup \tp(s),$$
where by $\tp(s)$, we mean that we include the complete (two-sorted) type of a special point $s$ in the standard model $\C_j$.

\section{The theory of the special points}\label{sec thsp}

In the above axiomatisation includes $\tp(s)$, the complete type of a special point $s \in S$, which includes all formulas in the two-sorted language mentioning finitely many special points. This ultimately means that we work `modulo' the theory of the special points. More technically, we have the following:

\begin{pro}\label{pro isom empty}
Given two models $\cM$ and $\cM'$ of $T$ there is an isomorphism of the substructures generated by $\emptyset$.
\end{pro}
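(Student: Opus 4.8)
The plan is to show that the substructure of any model $\cM \models T$ generated by $\emptyset$ is completely pinned down, and then exhibit the isomorphism explicitly. The substructure generated by $\emptyset$ in the two-sorted language consists of: in the field sort, the subfield $\dcl(\emptyset) \cap F$, which contains the constants $\Q(j(S))$ together with whatever those generate; and in the covering sort, the closure under the $G$-action of the special points (these are the only $\emptyset$-definable elements of $H$, since by the axiom $\Th\langle\H,G\rangle$ every $g\in G$ either fixes a unique point or none, and $\tp(s)$ forces the special points to exist and sit in the right $G$-configuration). So first I would identify the $\emptyset$-generated substructure of $\cM$ with the data $\langle G\cdot S_\cM\, ;\, G\rangle \overset{j}{\lora} \langle \Q(j(S))^{\mathrm{alg\text{-}part}};+,\cdot;\Q(j(S))\rangle$, together with the values of $j$ on the special orbits.

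Next I would build the isomorphism in two stages, mirroring the proof for $\Th\langle\H,G\rangle$ above. On the covering sort, for each $g\in G$ with a (unique) fixpoint $s_g^{\cM}$ in $\cM$ and corresponding fixpoint $s_g^{\cM'}$ in $\cM'$, set $\sigma(s_g^{\cM})=s_g^{\cM'}$; the sentences of $\Th\langle\H,G\rangle$ concerning conjugation ($g_2 g_1 g_2^{-1}$ fixes $g_2 s_{g_1}$) guarantee this is well-defined and $G$-equivariant, exactly as in the earlier proposition. On the field sort, the key point is that $\tp(s)$ is included in $T$: this is the \emph{complete} two-sorted type of a special point in $\C_j$, so it dictates, for every special point $s$ and every tuple $g\subset G$, precisely which $\emptyset$-definable algebraic relations $j(g_1 s),\dots,j(g_n s)$ satisfy — equivalently, it fixes $\qftp_\cF$ of every tuple of the form $p_g(s)$ over $\Q(j(S))$. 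By Proposition~\ref{fielddet} (and \ref{nicesystem2}) the quantifier-free type of the special tuple, hence the isomorphism type of the field substructure it generates over $\Q(j(S))$, is thereby determined; and since $\Q(j(S))$ is interpreted by the same constant symbols in both models, the induced map on $\Q(j(S))(j(G\cdot S))$ is a well-defined field isomorphism commuting with $\sigma$ on the covering sort via $j$.

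Finally I would check compatibility: the two partial maps agree because the relation "$j(x)=z$" for $x$ special is controlled by $\tp(s)$ in both models, so $j(\sigma(x)) = \sigma_F(j(x))$ holds on all special points and extends $G$-equivariantly. This produces the required isomorphism of $\emptyset$-generated substructures.

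The main obstacle I anticipate is purely bookkeeping rather than conceptual: one must be careful that the field-sort substructure generated by $\emptyset$ really is no larger than the algebraic data visible in $\tp(s)$ — i.e. that there are no "extra" $\emptyset$-definable elements of $F$ beyond those generated by the constants $\Q(j(S))$ and the forced values $j(g\cdot s)$ — and that quantifier elimination (or at least the description of types in \S\ref{sec descrip type}) genuinely reduces the two-sorted quantifier-free type of the canonical special tuple to field-sort data over $\Q(j(S))$. Once that reduction is in hand, the construction of $\sigma$ is a routine back-and-forth-free direct definition, since the $\emptyset$-generated substructure has no non-special elements to extend over.
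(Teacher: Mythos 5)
Your proposal is correct and follows essentially the same approach as the paper's (much terser) proof: the $\emptyset$-generated substructure is $\langle S, G\rangle \overset{j}{\lora} \langle \Q(j(S));+,\cdot;\Q(j(S))\rangle$ with every field element named by a constant, and $T$ (via $\Th\langle\H,G\rangle$, the constant symbols, and $\tp(s)$) gives an identical full description of this substructure in both models. The only caveat is that your worry at the end about "extra" $\emptyset$-definable field elements is not actually relevant here: the substructure generated by $\emptyset$ is just the closure of the named constants under $+,\cdot$, which is exactly $\Q(j(S))$ regardless of what $\dcl(\emptyset)$ turns out to be (that stronger claim is only established later, after quantifier elimination).
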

\begin{proof}
In the model $\cM$, the substructure generated by $\emptyset$ is the two sorted structure
$$\langle S , G \rangle \overset{j}{\ra} \langle \Q(j(S)) ; + , \cdot ; \Q(j(S))  \rangle $$
where in the field sort every element is named with a constant symbol. Therefore the theory $T$ gives an identical, full description of every tuple of elements in both models and the result is clear.
\end{proof}

 So we are studying the model theory of the non-special points, and their interaction with the special points as a whole.  We also note that special points are definable, so model theoretically we are always forced to work over the set $S \cup \Q(j(S)) \subseteq \dcl(\emptyset)$.  \newline

The field $\Q(j(S))$ is a large field, which arises naturally in the model theoretic setting we have chosen here. In the model theoretic setting of this chapter, each special point of $\H$ in first order definable (note that the set $S$ of special points is not first order definable as a whole) and this is how the field arises. Including the theory of the special points, in some sense makes the categoricity problem easier by automatically giving us a starting point of building an isomorphism between two models. However, doing this will eventually put stronger restraints on the Galois representations later on (\ref{condition1}), since we have to work over this big field rather than a number field. This field is clearly very interesting, but in terms of its arithmetic, the only property needed for the model theory here is that the field $\Q(j(S))$ is an abelian extension of the compositum of all imaginary quadratic fields (\ref{lem abelian}).

\section{Quantifier elimination and completeness}

 \begin{pro}\label{realise}
Given two models $\cM$ and $\cM'$ of $T$,  non-special $\tau \in H$, $L \subseteq F$ and any finite subset $( g_1,...,g_n ) \subset G$, we may find $\tau' \in H'$ such that
$$\qftp_{\cF}(j(g_1 \tau),...,j(g_n \tau) / L) = \qftp_{\cF}(j'(g_1 \tau'),...,j'(g_n \tau') / L) .$$
\end{pro}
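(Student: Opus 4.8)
The plan is to reduce the realization statement to a fact about the definable sets $Z_{g}$ over the common substructure. Fix the tuple $(g_1,\dots,g_n)$ and write $\bar g = (1,g_1,\dots,g_n)$ as in Theorem~\ref{lem_locus_irreducible}, so that the image $p_{\bar g}(\H)$ is a subset of $\C^{n+1}$ which is $\emptyset$-definable in $\langle \C;+,\cdot;\Q(j(S))\rangle$ by Theorem~\ref{etdef} and Remark~\ref{rem embedzg}; call its realization in $\cM$ the curve $Z_{\bar g}$ and in $\cM'$ the curve $Z_{\bar g}'$. Since $\Psi_{\bar g}$ holds in both models, the map $\tau\mapsto(j(\tau),j(g_1\tau),\dots,j(g_n\tau))$ surjects onto the $F$-points of $Z_{\bar g}$ in $\cM$, and likewise for $\cM'$; so it suffices to produce a point of $Z_{\bar g}'(F')$ over $L$ realizing the same field-sort type as some point of $Z_{\bar g}(F)$ over $L$, and then pull it back to an element $\tau'\in H'$. (The extra first coordinate $j(\tau)$ is harmless: it is determined up to the $\Gamma$-action, which is part of the definable data of $Z_{\bar g}$.)

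First I would note that $Z_{\bar g}$ is, by Theorem~\ref{etdef}, an irreducible quasi-affine curve defined over $\Q(j(S))\subseteq\dcl(\emptyset)$, and this same defining formula cuts out $Z_{\bar g}'$ in $\cM'$; both field sorts $F,F'$ are algebraically closed of characteristic zero. Next, by Proposition~\ref{pro isom empty} the $\emptyset$-generated substructures of $\cM$ and $\cM'$ are isomorphic, compatibly with the inclusions of $L$ — more precisely, I would work with the field-sort reduct and use that $\Q(j(S))$ is named by constants, so any $L\subseteq F$ and its image in $F'$ sit over literally the same named subfield. Now take any point $a=(a_0,a_1,\dots,a_n)\in Z_{\bar g}(F)$ and consider $\qftp_{\cF}(a/L)$. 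Since $F'$ is a model of the (complete, with quantifier elimination) theory $\Th\langle\C,+,\cdot;\Q(j(S))\rangle$ and $Z_{\bar g}'$ is defined by the same formula over the same constants, the partial type expressing ``$x\in Z_{\bar g}'$ and $x$ realizes $\qftp_\cF(a/L)$'' is finitely satisfiable: each finite fragment describes a nonempty constructible condition on an irreducible curve over a subfield of $F'$, and nonemptiness transfers because $F'\equiv F$ over $\Q(j(S))$ and (if $L$ is infinite) by saturation of the monster / by the fact that $Z'_{\bar g}$ having an $L$-generic point is a first-order consequence. So $\qftp_\cF(a/L)$ is realized by some $b\in Z_{\bar g}'(F')$. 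Finally apply $\Psi_{\bar g}$ in $\cM'$: there is $\tau'\in H'$ with $(j'(\tau'),j'(g_1\tau'),\dots,j'(g_n\tau'))=b$, and $\tau'$ is non-special because $b$ is not a special point of $Z'_{\bar g}$ (special points have special coordinates, and $b$ shares the field-sort type of $a$, which can be chosen non-special since $Z_{\bar g}$ is a curve and its special points form a proper — countable — subset). Then $\qftp_\cF(j'(g_1\tau'),\dots,j'(g_n\tau')/L)=\qftp_\cF(j(g_1\tau),\dots,j(g_n\tau)/L)$ as required, where $\tau\in H$ is a $p_{\bar g}$-preimage of $a$.

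The main obstacle is the step realizing a given field-sort type on the curve $Z'_{\bar g}$ over a possibly large parameter set $L$: one must be careful that ``the same formula defines an irreducible curve in both models'' genuinely transfers the existence of points of every prescribed type, which uses completeness and quantifier elimination of the field-sort theory together with the fact that $Z_{\bar g}$ is $\emptyset$-definable over named constants, so that $L$-types on it are controlled by the theory plus the atomic diagram of $L$. A secondary point to handle cleanly is the bookkeeping between the $(n+1)$-coordinate curve $Z_{\bar g}$ and the $n$-coordinate image $p_g(\H)=Z_g$, i.e. checking that dropping the $j(\tau)$-coordinate does not lose or add type-information — this is routine given the projection $q_{\bar g}:Z_{\bar g}\to Z_1$ and the definability of $Z_g$ from $Z_{\bar g}$ established in Definition~\ref{def covmap}.
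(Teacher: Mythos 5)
There is a genuine gap, and it stems from not noticing that $\cM$ and $\cM'$ share the same field sort $\cF$. The statement writes $\qftp_{\cF}(\cdot / L)$ on both sides with $L\subseteq F$, and when the proposition is used in \ref{QE} the two models are explicitly $\langle\cH,\cF,j\rangle$ and $\langle\cH',\cF,j'\rangle$. You instead work with two potentially distinct fields $F,F'$, which already makes the statement fail to typecheck: a type over $L\subseteq F$ is not a type over a subset of $F'$ unless $L\subseteq F'$, and for arbitrary $L$ there is no canonical way to transport it (the remark about $\Q(j(S))$ being named by constants only handles $L\subseteq\dcl(\emptyset)$, not general $L$).

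Because of this, the core step of your argument — "$\qftp_{\cF}(a/L)$ is realized by some $b\in Z_{\bar g}'(F')$" — does not go through. Finite satisfiability plus $F'\equiv F$ over the named constants gives consistency of the type, not a realization in the specific model $F'$: if $L$ is infinite and $F'$ is not saturated over it (e.g.\ $F'=\acl(L)$ with the given tuple transcendental over $L$), the type need not be realized at all. Appealing to "saturation of the monster" realizes the type in an elementary extension, but then $\Psi_{\bar g}$ need not hold there, so you cannot pull back to $H'$. Your fallback that "$Z'_{\bar g}$ having an $L$-generic point is a first-order consequence" is false for the same reason.

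The paper's proof sidesteps all of this in one line precisely because $F=F'$: let $V=\Loc\bigl((j(g_1\tau),\dots,j(g_n\tau))/L\bigr)$, a subvariety of $Z_g$. The point $(j(g_1\tau),\dots,j(g_n\tau))$ lies in $V(F)\subseteq Z_g(F)$, and since $\cM'\models\Psi_g$, the map $p'_g$ surjects onto $Z_g(F)$; a $p'_g$-preimage $\tau'$ of that very point then has $\qftp_{\cF}(p'_g(\tau')/L)=\qftp_{\cF}(p_g(\tau)/L)$, since it is literally the same field-sort tuple. No transfer across models of $ACF_0$, no compactness, and no saturation is needed. Two smaller points: you should take $a=p_{\bar g}(\tau)$ for the $\tau$ given in the hypothesis rather than an arbitrary $a\in Z_{\bar g}(F)$ (the statement fixes $\tau$), and passing through $\bar g=(1,g_1,\dots,g_n)$ and $Z_{\bar g}$ is unnecessary — the paper works directly with $Z_g$ and $\Psi_g$.
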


\begin{proof}
$\qftp_{\cF}(j(g_1 \tau),...,j(g_n \tau) / L)$ is determined by the minimal algebraic variety over $L$ containing $(j(g_1 \tau),...,j(g_n \tau))$. This is a subvariety $V$ of $Z_g$, and since $\cM'$ is also a model of $T$, the map
$$p_g : \tau' \mapsto (j'(g_1\tau'),...,j'(g_n \tau'))$$
maps onto $Z_g$ and therefore also maps onto $V$.
\end{proof}

Now we use a standard method for showing quantifier elimination and completeness of a theory:
\begin{pro}\label{QE}
Let
$$\cM = \langle \cH, \cF , j \rangle \textrm{ and } \cM' = \langle \cH', \cF , j' \rangle$$
be $\aleph_0$-saturated models of $T$ and
$$\sigma : H\cup F \ra H' \cup F$$
a partial isomorphism with finitely generated domain $D$. Then given any $z \in H \cup F$, $\sigma$ extends to the substructure
generated by $D \cup \{z \}$.
\end{pro}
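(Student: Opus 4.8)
The plan is to prove quantifier elimination via the standard back-and-forth test: I must show that a partial isomorphism $\sigma\colon D\to H'\cup F$ with finitely generated domain extends to $D\cup\{z\}$ for an arbitrary element $z$. I would split into two cases according to which sort $z$ lives in. The substructure generated by $D\cup\{z\}$ will, in either case, be obtained by adjoining finitely many new elements: if $z$ is in the field sort one adjoins the field-sort elements generated by $D$ and $z$ under $+,\cdot$ together with nothing new in the covering sort; if $z$ is in the covering sort one must adjoin $z$, its full $G$-orbit $Gz$, and all the elements $j(gz)$ for $g\in G$ in the field sort. The special-point case is already handled by Proposition~\ref{pro isom empty}, so I may assume $z$ is non-special (and, in the covering-sort case, that $z$ lies in a new $G$-orbit, since if $z\in G\tau$ for some $\tau\in D$ the extension is forced and trivial).

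\emph{Case 1: $z\in F$.} Here the domain $D$ has some field part $L:=D\cap F$, and I just need to extend the field embedding $\sigma|_L$ to $L(z)$. Since $\cF$ is an algebraically closed field and $\cM,\cM'$ share the same field sort in the statement (or more generally since both field sorts are algebraically closed of characteristic zero with the named constants), this is the classical quantifier-elimination step for $\mathrm{ACF}_0$: if $z$ is algebraic over $L$ pick a conjugate root of its minimal polynomial in $H'\cup F$, and if $z$ is transcendental over $L$ use $\aleph_0$-saturation to find a field-sort element transcendental over $\sigma(L)$. No covering-sort elements are added, so there is nothing further to check.

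\emph{Case 2: $z\in H$, non-special, in a new orbit.} This is the main case. By Proposition~\ref{nicesystem2} (or \ref{fielddet}), the quantifier-free type of the relevant covering-sort tuple over $L:=D\cap F$ is completely determined by the field-sort data $\bigcup_N \qftp_{\cF}(p_N(\tau)/\dcl(L)\cap F)$, where $\tau$ enumerates $z$ together with the covering-sort part of $D$. So it suffices to produce $z'\in H'$ realising the matching field-sort type; then setting $\sigma(z)=z'$, $\sigma(gz)=g z'$, and $\sigma(j(gz))=j'(gz')$ extends $\sigma$ consistently. The existence of such a $z'$ realising a single $\qftp_{\cF}(j(g_1\tau),\dots,j(g_n\tau)/L)$ in $\cM'$ is exactly Proposition~\ref{realise}, which works because $\Psi_g\in T$ forces $p_g$ to map \emph{onto} $Z_g$ (hence onto any subvariety of $Z_g$ that is the locus of the given tuple). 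To get a $z'$ realising all of these simultaneously across all $g$ (equivalently all $N$), I would invoke $\aleph_0$-saturation of $\cM'$: the collection of field-sort formulas asserting that $(j'(g_1 z'),\dots)$ lands on the appropriate $L$-varieties, for finitely many $g$ at a time, is finitely satisfiable by Proposition~\ref{realise} and the compatibility of the inverse system $\{Z_N\}$ under the definable maps $q_{N,M}$, so the whole type is realised.

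\textbf{Main obstacle.} The delicate point is the \emph{coherence} across the inverse system: realising each finite piece $\qftp_{\cF}(p_N(\tau)/L)$ is easy by Proposition~\ref{realise}, but one needs these choices to fit together so that a single covering-sort element $z'$ works for all $N$ at once — i.e.\ the projections $q_{N,M}\colon Z_N\to Z_M$ must carry the locus of $p_N(\tau)$ onto the locus of $p_M(\tau)$, and the chosen realisations must respect this. This is where $\aleph_0$-saturation is essential, together with the fact that $Z_N$ for varying $N$ form a directed system (any finite tuple $g$ is dominated by some $Z_{N_g}$) whose transition maps are definable and surjective. I also need to double-check that adjoining $z'$ introduces no unexpected collapse in the covering sort — i.e.\ that $z'$ genuinely lies in a new $G'$-orbit in $\cM'$ and is non-special — but this follows because non-speciality and orbit-distinctness from $D$ are expressed by the field-sort type being realised (a special or old-orbit element would force $p_g(\tau)$ onto a proper subvariety corresponding to $Z_{\bar g}$, contrary to the type we are realising).
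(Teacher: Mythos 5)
Your proposal is correct and follows essentially the same route as the paper's proof: reduce to the covering-sort case, determine $\qftp(z/D)$ by the field-sort types of $p_g(z)$ over the (countable) field generated by $D$ and the special points via \ref{fielddet}/\ref{nicesystem2}, obtain finite satisfiability from \ref{realise} and the axioms $\Psi_g$, and conclude by compactness and $\aleph_0$-saturation of $\cM'$. The only cosmetic difference is that the paper folds your Case~1 into the covering-sort case by noting $j$ is surjective, whereas you treat the field sort directly; and the paper absorbs $j(GB)$ into the base field $L$ whereas you keep the covering-sort part of $D$ in the tuple $\tau$ — these are equivalent bookkeeping choices.
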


\begin{proof}By \ref{fielddet}, and since $j$ is surjective, we may assume that $z \in H$. We may also assume that $z$ is $G$-independent from $D$, or otherwise $z$ is already included in the domain of $\sigma$. So suppose that $z \in H-D$, let $A = D \cap F$, $B = D \cap H$, and consider the field
$$L:=\left( \dcl^{\cM}(\emptyset)\cap  F \right) (A,j(G B))\cong \left( \dcl^{\cM'}(\emptyset)\cap F \right) (\sigma(A),j'(G  \sigma(B))).$$
By \ref{fielddet}, $\qftp(z/ D))$ is determined by the union of all $\qftp_F( p_g(z) /L)$ over all finite tuples $g \subset G$, and by  \ref{realise}, every finite subset of this type is realisable in any model of $T$. Therefore, by compactness the type is consistent, and since $\cM'$ is $\aleph_0$-saturated it has a realisation $z' \in H'$. 
\end{proof}

\begin{cor}\label{cor stab}
$T$ is complete, has quantifier elimination and is superstable. 
\end{cor}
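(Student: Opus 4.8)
The plan is to deduce all three conclusions from the quantifier-elimination result just established in Proposition~\ref{QE}, together with the identification of $\emptyset$-generated substructures in Proposition~\ref{pro isom empty}. First I would argue completeness: Proposition~\ref{QE} shows that in any two $\aleph_0$-saturated models of $T$, a partial isomorphism with finitely generated domain extends one element at a time; running a back-and-forth argument (using $\aleph_0$-saturation on both sides alternately) starting from the canonical isomorphism between the $\emptyset$-generated substructures given by Proposition~\ref{pro isom empty} produces an isomorphism between the two saturated models. Hence all $\aleph_0$-saturated models of $T$ are elementarily equivalent, and since every model is elementarily equivalent to an $\aleph_0$-saturated elementary extension, $T$ is complete.

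Next, quantifier elimination. Proposition~\ref{QE} is exactly the standard extension-of-partial-isomorphisms criterion: if for all $\aleph_0$-saturated $\cM,\cM' \models T$ every partial isomorphism between finitely generated substructures extends to include an arbitrary additional element $z$, then $T$ admits quantifier elimination. I would simply cite the relevant model-theoretic test (e.g. the criterion in Marker or Tent--Ziegler) and note that Proposition~\ref{QE} verifies its hypotheses; here one also uses that there is at least one $\emptyset$-definable constant (the named special points) so that substructures and the empty structure behave well. One small point to check is that a "substructure" in this two-sorted language is closed under the function symbols $f_g$ and the ring operations, and that the $z$ handled in Proposition~\ref{QE} together with surjectivity of $j$ genuinely covers all cases of a new element in either sort --- but the proof of Proposition~\ref{QE} already reduces the field-sort case to the covering-sort case via \ref{fielddet} and surjectivity of $j$.

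Finally, superstability. With quantifier elimination in hand, every complete type over a set $L$ (inside a model) is determined by its quantifier-free part, and by \ref{nicesystem2} (or \ref{fielddet}) a quantifier-free type of a tuple $\tau$ from the covering sort is determined by the family of field-sort types $\qftp_{\cF}(p_N(\tau)/\dcl(L)\cap F)$ as $N$ ranges over $\N$. Since the field sort is an algebraically closed field, hence $\omega$-stable, it has at most $|L|+\aleph_0$ types over any parameter set $L$; taking the countable union over $N$ still gives at most $|L|+\aleph_0$ possibilities, and types in the covering sort mixed with field-sort coordinates are controlled the same way (a mixed tuple is handled by applying $j$ to the covering coordinates and adjoining the field coordinates). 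Therefore $T$ has at most $|L| + \aleph_0$ complete types over any set $L$ of size $\le |L|$, which is the definition of superstability (in fact this argument gives stability in all infinite cardinals, i.e.\ superstability).

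The main obstacle is really just bookkeeping rather than a deep difficulty: one must be careful that the two-sorted, multi-sorted-with-infinitely-many-function-symbols language does not cause the standard QE and counting-types criteria to fail, and that "type of a tuple spread across both sorts" is correctly reduced to the field-sort computation via $j$ and the curves $Z_g$; this reduction is exactly what \ref{fielddet} and \ref{nicesystem2} provide, so the counting of types goes through. One should also note explicitly that special points lie in $\dcl(\emptyset)$, so working over $\emptyset$ already means working over $S\cup\Q(j(S))$, which is harmless for the cardinality count since that set is countable.
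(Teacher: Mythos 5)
Your arguments for completeness and for quantifier elimination are correct and are essentially the same route the paper takes: Proposition~\ref{QE} is the standard extension-of-partial-isomorphisms criterion, and completeness then follows because Proposition~\ref{pro isom empty} provides a common $\emptyset$-generated substructure. The reduction of mixed-sort tuples to the field sort via $j$ and the curves $Z_g$ is exactly what \ref{fielddet} and \ref{nicesystem2} supply, as you note.

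Your superstability argument, however, contains a genuine error. You assert that ``taking the countable union over $N$ still gives at most $|L|+\aleph_0$ possibilities'' for a quantifier-free type. But by \ref{nicesystem2} the quantifier-free type of $\tau$ over $L$ is determined by the \emph{entire sequence} $\left(\qftp_{\cF}(p_N(\tau)/\dcl(L)\cap F)\right)_{N\in\N}$, not by any single member of it, and the number of such coherent sequences over a countable $L$ is bounded by $(|L|+\aleph_0)^{\aleph_0}=2^{\aleph_0}$, not by $|L|+\aleph_0$. Indeed, the theory is demonstrably \emph{not} $\omega$-stable: Proposition~\ref{number types} (and the discussion leading to Condition \ref{condition1}) shows that the fibre $\hat{j}^{-1}(z)$ in the pro-\'etale cover can carry $2^{\aleph_0}$ distinct types over $\emptyset$, and $\hat{\cU}$ is a model of $T$. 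If your count were correct there would be only countably many types over any countable set, contradicting this. The conclusion you should be aiming for is the one the paper states --- stability in cardinalities $\geq 2^{\aleph_0}$, i.e.\ superstability --- which follows from the fact that over a parameter set of size $\lambda\geq 2^{\aleph_0}$ there are at most $\lambda^{\aleph_0}=\lambda$ coherent sequences of loci, but your bound of $|L|+\aleph_0$ over arbitrary $L$ overstates this and would, if true, prove $\omega$-stability, which fails.
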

\begin{proof}
That $T$ has quantifier elimination follows from \ref{QE} by basic model theory (see for example Anand Pillay's online model theory notes), and completeness then follows by \ref{pro isom empty}. It is then easy to see that $T$ is stable in cardinalities continuum and above by the description of the quantifier-free types given in \ref{nicesystem2}.
\end{proof}

\begin{rem}
By quantifier elimination we now know that $\dcl(\emptyset)=S \cup \Q(j(S))$ in any model of $T$. This means that if we take any two models $\cM,  \cM' \models T$, then since $T$ includes first-order theory of the special points, we automatically get an isomorphism $\dcl_{\cM}(\emptyset)\cong \dcl_{\cM'}(\emptyset)$, and this is always the first step in showing categoricity.
\end{rem}

%We note the following definability of \'etale covering automorphisms.

%\begin{pro}
%Given the finite etale cover
%$$p:Z_N\ra Z_1,$$
%and $x \in \C$, then fibre $p^{-1}(x)$ is a $\Gamma_N \backslash \Gamma$-torsor, and the action of $\Gamma_N \backslash \Gamma$ on the fibre is definable %with just $x$ as a parameter.
%\end{pro}

%\begin{proof}
%There are two ways of seeing this. Firstly, by \ref{etdef}. Secondly, the action of $\gamma \Gamma$ on $H$ is definable (there is a function symbol for %$\gamma$) and it follows from a little bit of work.
%\end{proof}

\section{The pro-\'etale cover}

Define a `pro-\'etale cover'
$$\hat{\C}:= \varprojlim_{g \subset G} Z_g,$$
with the morphisms of the inverse limit being the definable maps described in \ref{def covmap}. $\hat{\C}$ is a pro-definable set in the structure $ \langle \C , + , \cdot ; \Q(j(S) \rangle $. We denote the projection map by 
$$\hat{j} : \hat{\C} \ra \C.$$
Note that this cover is not universal with respect to all finite branched covers, but we have taken a subsystem of covers corresponding to finite subsets $g \subset G$. The system of $Z_N$'s is cofinal in $\hat{\C}$, as was shown in \ref{def covmap}. $\hat{\C}$ comes with a named `basepoint lift' $(p_g(s_0))_g$, which was used in  \ref{def covmap} to get the explicit descriptions of the images of the maps $p_g$ as definable sets in the field.

\subsubsection{The action of Galois on $\hat{\C}$}\label{secgalpro}
Let $x \in \A^1(\C) - \{ 0 ,1728\}$. Then given a finite, Galois, branched cover
$$q_N : Z_N \ra Z_1$$
there is a natural left action of $$\Aut_{\Fin}(Z_N / Z_1) \cong  \Gamma / \Gamma_N $$ on the fibre $q_N^{-1}(x)$ and as was mentioned earlier, the fibre $q_N^{-1}(x)$ is a left $  \Aut_{\Fin}(Z_N / Z_1) $-torsor. We define $$\pi_1':= \varprojlim_N \Aut_{\Fin}(Z_N / Z_1),$$
so that in the limit, the fibre $\hat{j}^{-1}(x)$ is a $\pi_1'$-torsor. If $M$ divides $N$ then the map between $\Aut_{\Fin}(Z_N , Z_1)$ and $\Aut_{\Fin}(Z_M , Z_1)$ is as described in \ref{def covmap}, and depends on the choice of basepoint $s_0 \in \H$. 
\newline

There is a left action of $\Aut \langle \C ; + . \cdot ; 0,1  \rangle \cong \Gal(\C / \Q)$ on $\C^n$ via
$$(x_1,...,x_n)^\sigma := (x_1^\sigma,...,x_n^\sigma).$$
If $K$ is a field containing $\Q(j(S), x )$ then this induces left actions of $G_K$ on the fibre $q_N^{-1}(x)$, and on $\Aut_{\Fin}(Z_N / Z_1)$. The action of $G_K$ on $\Aut_{\Fin}(Z_N / Z_1)$ is given by conjugation i.e.
$$(y,f(y))^\sigma \mapsto ( y^\sigma , f(y)^\sigma) = (y^\sigma , f(y^{\sigma \sigma^{-1}})^\sigma)$$
for $y \in Z_N$ and $f \in \Aut_{\Fin}(Z_N / Z_1)$.
These actions are compatible with the action of $\Aut_{\Fin}(Z_N / Z_1)$, i.e. for $y \in \hat{p}^{-1}(x)$ we have
$$( \gamma y)^\sigma =  \gamma^\sigma y^\sigma.$$
Since $\hat{j}^{-1}(x)$ is a $\pi_1'$-torsor, given $y \in \hat{j}^{-1}(x)$ and $\sigma \in G_K$, there is a unique $\gamma_{\sigma} \in \pi_1'$, such that $y^\sigma = \gamma_{\sigma} y$. An easy calculation reveals that the pair $(x,y)$ gives rise to a continuous cocycle
\begin{align*}
\rho_{(x,y)}: G_K & \ra \pi_1' \\
 \sigma & \mapsto \gamma_{\sigma}.
\end{align*}

However, by \ref{etdef} the action of $G_K$ is trivial on $\Aut_{\Fin}(Z_N / Z_1)$, so we actually have a homomorphism
$$\rho_{(x,y)}: G_K\lora \pi_1'.$$
Note that we could have obtained this homomorphism more swiftly by noting that the fibre is a torsor for $\pi_1'$ and that the actions commute, but it is good to note that we are in a nice, specific case of a more general construction. \newline

Another point $ \gamma y$ in the fibre $\hat{j}^{-1}(x)$ gives rise to a conjugate representation $$\rho_{(x,\gamma y)} = \gamma \rho_{(x,y)} \gamma^{-1}.$$
\begin{pro}\label{progal}
In the situation above, for any $y \in \hat{j}^{-1}(x)$, the orbits of $G_K$ on the fibre $\hat{j}^{-1}(x)$ are in bijection with the index of the image $\rho_{(x,y)}(G_K)$.
\end{pro}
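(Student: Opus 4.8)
The plan is to translate orbits of $G_K$ on the $\pi_1'$-torsor $\hat{j}^{-1}(x)$ into double cosets, and then identify the set of such double cosets with a set of coset representatives. First I would fix the reference point $y \in \hat{j}^{-1}(x)$ so that every element of the fibre is uniquely of the form $\gamma y$ for $\gamma \in \pi_1'$; this identifies the fibre with $\pi_1'$ as a left $\pi_1'$-set via left translation. Under this identification, the $G_K$-action is computed from the homomorphism $\rho = \rho_{(x,y)}$: for $\sigma \in G_K$ we have $y^\sigma = \rho(\sigma) y$, and more generally $(\gamma y)^\sigma = \gamma^\sigma y^\sigma$; since by \ref{etdef} the Galois action on each $\Aut_{\Fin}(Z_N/Z_1)$ is trivial, $\gamma^\sigma = \gamma$, so $(\gamma y)^\sigma = \gamma \rho(\sigma) y$. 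Hence under the identification $\gamma y \leftrightarrow \gamma$, the $G_K$-action on the fibre is $\gamma \mapsto \gamma \rho(\sigma)$, i.e. \emph{right} multiplication by $\rho(\sigma)$ on $\pi_1'$.

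With this in hand, the $G_K$-orbit of $\gamma y$ corresponds to the set $\gamma \rho(G_K)$, a left coset of the subgroup $H := \rho(G_K) \le \pi_1'$. So the orbits of $G_K$ on $\hat{j}^{-1}(x)$ are in bijection with the left cosets $\pi_1' / \rho(G_K)$, and there are exactly $[\pi_1' : \rho(G_K)]$ of them, which is the index of the image $\rho_{(x,y)}(G_K)$. This is precisely the claimed statement. One technical point to handle carefully is continuity/profiniteness: $\pi_1'$ is a profinite group, $G_K$ is profinite, and $\rho$ is continuous, so $\rho(G_K)$ is a closed subgroup and the coset space is the inverse limit of the finite coset spaces $\Aut_{\Fin}(Z_N/Z_1) / \rho_N(G_K)$ at each finite level; the bijection above is then compatible with the projections, so nothing goes wrong in the limit. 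I would remark that the independence of the count from the choice of $y$ is automatic, since replacing $y$ by $\gamma_0 y$ replaces $\rho$ by the conjugate $\gamma_0 \rho \gamma_0^{-1}$, which has the same index.

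I do not expect a serious obstacle here: the entire content is the bookkeeping identity $(\gamma y)^\sigma = \gamma \rho(\sigma) y$, which reduces the torsor-orbit problem to elementary group theory (orbits of right translation by a subgroup are its left cosets). The only place requiring a little care is making sure the two commuting actions — the left $\pi_1'$-action defining the torsor structure and the $G_K$-action — are correctly lined up so that the $G_K$-action becomes right translation after trivialising the torsor; this is exactly where the triviality of the Galois action on $\Aut_{\Fin}(Z_N/Z_1)$ from \ref{etdef} is used, and without it one would only get an action by the cocycle rather than a clean right translation.
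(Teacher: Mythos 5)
Your argument is correct and follows essentially the same route as the paper: trivialise the torsor via $\gamma \mapsto \gamma y$, observe that the $G_K$-action becomes right translation by $\rho(\sigma)$ since the Galois action on the covering automorphisms is trivial, and conclude that orbits are left cosets of $\rho(G_K)$. The extra remarks on profiniteness and independence from the choice of $y$ are sound but not needed for the bijection itself; note also that your opening mention of double cosets is a slight misdescription of what you actually do (plain left cosets suffice).
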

\begin{proof}
Consider the bijection
\begin{align*}
f:  \pi_1' & \ra \hat{j}^{-1}(x) \\
 \gamma & \mapsto \gamma y.
\end{align*}
Then $f$ induces a well defined map on the quotients
$$  \pi_1' / \rho_{(x,y)}(G_K) \lora \hat{j}^{-1}(x) / G_K $$
since 
$$ \gamma_1 \rho_{(x,y)}(\sigma) = \gamma_2 \textrm{  iff  } (\gamma_1 y )^\sigma = \gamma_2 y .$$
This map is clearly a bijection.
\end{proof}

These representations will ultimately be used to count the number of orbits of $G_K$ on the fibre, so we will drop the $y$ and just write $\rho_x$.
\begin{rem}
For a tuple of elements $x \in \left(Z_1 - \{ 0 ,1728\} \right)^n$, the fibre $\hat{j}^{-1}(x)$ is a $\pi_1'^n$-torsor and clearly all of the above applies, giving a representation
$$\rho_x : G_{K} \lora \pi_1'^n.$$
\end{rem}

\subsection{The pro-\'etale cover as a model of $T$}\label{proet mod}
Now we would like to consider the pro-\'etale cover $\hat{\C}$
as model of $T$. \newline

Firstly, $\hat{\C}$ has an action of $G$ induced by the action of $G$ on $\H$. For $\alpha \in G$, this action is given as follows:
\begin{align*}
\alpha: \Gamma_g  \backslash \H & \ra \alpha \Gamma_g \alpha^{-1} \backslash \H \\
\Gamma_g x & \mapsto \alpha \Gamma_g \alpha^{-1} \alpha x.
\end{align*}
As you can see, the action is slightly unusual in the sense that it mixes components around. Now given an element $(x_{\Gamma_g})_{\Gamma_g} \in \hat{\C}$ we define 
\begin{align*}
\hat{j}: \hat{\C} & \ra \C \\
(x_{\Gamma_g})_{\Gamma_g}  & \mapsto j(x_{\Gamma})
\end{align*}
i.e. $\hat{j}$ takes the $j$ value of the $\Gamma$-entry. \newline

Even though $\hat{\C}$ has an action of $G$, if we want to view the pro-\'etale cover as a model of the first order theory $T$ we cannot just define the universe of the covering sort to be $\hat{\C}$ because special elements of $G$ can fix many elements of $\hat{\C}$. It would be interesting to look at a theory which relaxes the condition that special matrices have unique fixpoints, but sticking with the current theory we make the following definition:
\begin{dfn}
Define a set $\hat{U}$ to be the union of the special $G$-orbits of $\H$, and the non-special $G$-orbits of $\hat{\C}$. Now define the (model-theoretic) pro-\'etale cover to be the two-sorted structure
$$\hat{\cU}:= \langle \hat{U} , G \rangle \overset{\hat{j}}{\lora} \langle \C , + , \cdot ; \Q(j(S) \rangle $$
where $\hat{j}$ is defined to be $j$ on $\H$.
\end{dfn}

Now we need to check that $\hat{\cU} \models T$. For this we note that if 
$$\hat{x} = (\Gamma_g x_{\Gamma_g})_g \in \hat{\C} = \varprojlim_g Z_g$$
then for $\alpha \in G$
$$\hat{j}( \alpha \hat{x}) = j( \alpha (\alpha^{-1} \Gamma \alpha x_{\alpha^{-1} \Gamma \alpha })) = j(\Gamma \alpha x_{\alpha^{-1} \Gamma \alpha}) = j(\alpha x_{\alpha^{-1} \Gamma \alpha}).$$
So for $\alpha = (\alpha_1 ,...,\alpha_n) \in G^n$ we see that the map
$$\hat{x} \mapsto (\hat{j}( \alpha_1 \hat{x}),...,\hat{j}( \alpha_n \hat{x})) = (j(\alpha_1 x_{\alpha_1^{-1} \Gamma \alpha_1}),...,j(\alpha_n x_{\alpha_n^{-1} \Gamma \alpha_n}))$$
is onto the $\C$-points of the algebraic curve $Z_{\alpha}$, so $\hat{\cU}$ satisfies the axiom $\Psi_{\alpha}$, and therefore $\hat{\cU} \models T$. \newline
%$$
%\xymatrix{
%\sv2(\Z / N \Z) \ar@{^{(}->}[r] \ar@{^{(}->}[d] & \z2(\Z / N \Z) \ar@{->>}[r]^{\det} \ar@{^{(}->}[d]  & (\Z / N \Z)^{\times  2} \ar@{^{(}->}[d] \\
%\sl2(\Z / N \Z) \ar@{^{(}->}[r] \ar@{->>}[d] & \ggl2 (\Z / N \Z) \ar@{->>}[r]^{\det} \ar@{->>}[d] & (\Z / N \Z)^{\times} \ar@{->>}[d] \\
%\psl2(\Z / N \Z) \ar@{^{(}->}[r]  &       \pgl2(\Z / N \Z) \ar@{->>}[r] & (\Z / N \Z)^{\times}/(\Z / N \Z)^{\times  2}
%}
%$$

\begin{rem}
The pro-\'etale cover $\hat{\cU}$ may be seen as a model of $T$ which is $\omega$-saturated with respect to non-special types.
\end{rem}

We can embed the standard model $\C_j$ in the pro-\'etale cover $\hat{\cU}$ as follows: We embed $\H$ in $\hat{\cU}$ (as a set) via 
$$\i: x \mapsto (\Gamma_g x)_g$$
and we have the following commutative diagram
$$
\xymatrix{
\H \ar@{^{(}->}[r]^\i \ar@{->}[dr]^j & \hat{U} \ar@{->}[d]^{\hat{j}}  \\
 & \C 
}
$$
To see that the map is injective, note that if $\i(x) = \i(y)$, then $\Gamma_g x = \Gamma_g y$ for all $\Gamma_g$, and in particular $\Gamma_p x = \Gamma_p y$ for all primes $p$. Clearly nothing can be in $\Gamma_p$ for infinitely many $p$, and now it is clear since the stabilizer of any $x \in \H$ in $\Gamma$, is finite. If we let $\i$ act as the identity on the field sort, then it is easy to check that $\i$ is a model theoretic embedding of $\cL$-structures, and therefore is elementary by quantifier elimination.

\section{Necessary conditions}

%In this section we show that if $\Th(j) \wedge SF$ is $\aleph_1$-categorical, then the adelic Mumford-Tate conjecture for elliptic curves must hold. The argument goes %as follows:
%First we show that types of independent tuples (see definition below) in $\omega$-saturated models of $\Th(j)$, and in models of $\Th(j)\wedge SF$ are the same, and %that there are either finitely or uncountably many such types over a point in $\C^n$. Finally we use a result of Keisler saying that if we have $\aleph_1$-%categoricity, then there can be only countably many such types, and then we translate this model theoretic statement into arithmetic geometry.

\begin{pro}\label{type in etale realise SF}
Given an element $x \in \hat{U}$, there is a model of $T\wedge SF$ realising $\tp(x)$. 
%\fm (Maybe I want to realise $\tp_\C(j(G \bar{\tau})))$
\end{pro}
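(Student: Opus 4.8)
The plan is to realise $\tp(x)$ not inside a previously known model, but by building a fresh model of $T\wedge SF$ which literally contains $x$, obtained from the standard model $\C_j$ by surgery along the Hecke orbit of $x$.

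First I would dispose of the special case: if $x$ is special then $x\in S$, and since $T$ contains by construction the complete type of every finite tuple of special points, $\tp(x)$ is realised in \emph{every} model of $T$, in particular in $\C_j$, which satisfies $T\wedge SF$. So assume $x$ is non-special; by definition of $\hat U$ this means $Gx$ is a non-special $G$-orbit of $\hat\C$. The key structural input, and really the crux of the whole argument, is the $\hat\cU$-analogue of Proposition~\ref{pro special}: every element of $Gx$ has trivial $G$-stabiliser and lies over a non-special value of $\hat j$, so that the Hecke orbit $W:=\hat j(Gx)\subseteq\C$ is a countable set disjoint from the singular moduli, hence in particular from the branch locus $\{0,1728\}$. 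Granting this, if $g_1x,g_2x$ satisfy $\hat j(g_1x)=\hat j(g_2x)$ then the point $p_{(g_1,g_2)}(x)$ (which lies on $Z_{(g_1,g_2)}$ since $\hat\cU\models\Psi_{(g_1,g_2)}$) lies on the diagonal of that curve; away from the finitely many special points of $Z_{(g_1,g_2)}$ this forces $\Gamma g_1=\Gamma g_2$, so $Gx$ meets each fibre $\hat j^{-1}(w)$, $w\in W$, in exactly one $\Gamma$-orbit.

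Now set
$$H:=\bigl(\i(\H)\setminus\i(j^{-1}(W))\bigr)\ \sqcup\ Gx\ \subseteq\ \hat U,$$
a $G$-invariant subset, and let $\cM=\langle H,G\rangle\overset{\hat j}{\lora}\langle\C,+,\cdot;\Q(j(S))\rangle$ carry the structure induced from $\hat\cU$. I claim $\cM\models T\wedge SF$ and $x\in H$, which suffices: by quantifier elimination for $T$ (Corollary~\ref{cor stab}) the complete type of $x$ in any model of $T$ is determined by its quantifier-free type, and the latter is visibly the same computed in $\cM$ or in $\hat\cU$. Since $W$ avoids the singular moduli, $\i(j^{-1}(W))$ contains no special point, so $H$ retains all special points — indeed its special part is exactly that of $\C_j$ — whence $\langle H,G\rangle\models\Th\langle\H,G\rangle$, the axioms $\tp(s)$ hold, and the field-sort axioms are immediate; and $SF$ holds because every $\hat j$-fibre in $H$ is a single $\Gamma$-orbit (the standard $\i(\Gamma\tau_w)$ for $w\notin W$, and $Gx\cap\hat j^{-1}(w)$ for $w\in W$). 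The one axiom needing care is $\Psi_g$, which we may take in the form $p_{\bar g}(H)=Z_{\bar g}(\C)$ with $\bar g=(1,g_1,\dots,g_n)$: over $w\notin W$ this is inherited from $\C_j$ via $q_{\bar g}\circ p_{\bar g}=j$ and $p_{\bar g}(\H)=Z_{\bar g}(\C)$, while over $w\in W$ — disjoint from the branch locus by the claim — the single $\Gamma$-orbit $Gx\cap\hat j^{-1}(w)$ has trivial point-stabiliser, so $p_{\bar g}$ maps it injectively onto a subset of $q_{\bar g}^{-1}(w)$ of cardinality $[\Gamma:\Gamma_{\bar g}]=\deg q_{\bar g}=|q_{\bar g}^{-1}(w)|$, hence onto all of it.

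The hard part is the structural claim in the second paragraph. I would prove it by examining the fibres of the tower $(Z_g)$ over special points: above an elliptic point ($j\in\{0,1728\}$) the maps $Z_g\to Z_1$ are ramified, and this ramification forces every point of $\hat\C$ above it to be fixed by a conjugate of the corresponding elliptic element of $G$; above a non-elliptic CM point $s$, using that the points of each $Z_{\bar g}$ over $j(s)$ are exactly the special points $p_{\bar g}(\gamma s)$ ($\gamma\in\Gamma$) together with Proposition~\ref{pro special}, one should deduce that every point of $\hat\C$ over a singular modulus again lies in a special $G$-orbit, so that no such point ever enters $\hat U$ through its non-special part. Everything else is the same bookkeeping already carried out for $\C_j$ in the proofs of Lemma~\ref{ZN def} and Theorem~\ref{lem_locus_irreducible}.
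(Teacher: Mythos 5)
Your construction is exactly the paper's: form $H$ by deleting from $\i(\H)$ the $\hat j$-preimage of the Hecke orbit $\hat j(Gx)$ and substituting $Gx$ in its place. The paper's own proof is a one-liner that simply asserts $\cM\models SF$ and $\cM\models\Psi_g$; you go further, and rightly so, by noticing that for $\cM$ to satisfy $\Th\langle\H,G\rangle$ and $\tp(s)$ the surgery must not delete any special point of $\i(\H)$. This reduces to the structural claim you single out as the crux: for non-special $x\in\hat\C$, the Hecke orbit $\hat j(Gx)$ is disjoint from the set of singular moduli. You correctly show that, granted this, the SF and $\Psi_g$ verifications go through (the diagonal argument on $Z_{(g_1,g_2)}$ and the fibre-count for $q_{\bar g}$ are both sound). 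That you make this hypothesis explicit is a genuine improvement on the paper's terse proof, which silently depends on the same fact.

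However, you do not actually prove the structural claim, and the sketch you give does not close it. For $j_0\in\{0,1728\}$, ramification tells you that the covering-automorphism stabiliser of a point $\hat y\in\hat j^{-1}(j_0)$ is a conjugate $\hat\gamma\,\Gamma_{s}\,\hat\gamma^{-1}$ of the finite group $\Gamma_s$ with $\hat\gamma\in\pi_1'$; but for most $\hat\gamma$ this conjugate does not meet the diagonal copy of $\Gamma$, and the $G$-action on $\hat\C$ (which permutes components and is not the $\pi_1'$-action except on $\Gamma$) is what one must control, so "fixed by a conjugate of the elliptic element of $G$" needs a separate argument. For a non-elliptic CM value $j(s)$ the situation is worse: the fact that each level-$N$ component of $\hat y$ is a special point of $Z_{\bar g}$ (i.e.\ has singular-modulus coordinates) does not yield a nontrivial $G$-stabiliser of $\hat y$, because the fibre $\hat j^{-1}(j(s))$ has cardinality $2^{\aleph_0}$ while $\i(\Gamma s)$ is countable, and the conjugating elements $\gamma_N$ appearing in $\hat y_N=\Gamma_N\gamma_N s$ vary with $N$ and cannot in general be replaced by a single $\gamma\in\Gamma$ or $g\in G$. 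So the "hard part" you flag remains open in your write-up; as it stands, the proposition would fail precisely when a non-special $G$-orbit of $\hat\C$ sits over a singular modulus, and your heuristic does not rule this out.
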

\begin{proof}
The idea is to use the embedding
$$\i: \H \hookrightarrow \hat{U},$$
and replace a $G$-orbit in $\H$ with $Gx$.
So let 
$$H':= Gx \cup \{ y \in \i(\H)  \ \ | \ \ \ \hat{j}(y) \notin \hat{j}(G x) \},$$
and define a new model
$$\cM :=  \langle \langle H' , G \rangle , \langle \C , + ,\cdot , \Q(j(S)) \rangle , \hat{j}|_{H'} \rangle.$$
Clearly $\cM \models SF$, and it is also easy to see that $\cM \models \Psi_g$ for all $g \subset G$ and thus $\cM \models T$.

%The type of the special points is fixed in every model %since they are in $\dcl(\emptyset)$ (and non-standard special points don't appear since each special matrix fixes a unique $s \in \H$ and the theory knows this), %and the axiom schema H1-H3 are easily seen to be satisfied. Now consider a tuple $g=(g_1,...,g_n) \subset G$. Then the map from $H'$ to $L$
%$$\tau \mapsto (j'(g_1 \tau),...,j'(g_n \tau))$$
%goes onto the $L$-points of the curve $Z_g$ \fm, and so the axioms $\Psi_g$ are satisfied.
\end{proof}
It is important in the above that $x$ is a singleton and not an arbitrary tuple. To extend the above to tuples, we need an independence notion:

\begin{dfn}\label{defgindep}
Define a pair of elements $(x,y) \in F^2$ to be \textit{strongly $G$-independent}, if $\Phi_N(x, y ) \neq 0$ for all $N$. For $(x,y) \in \hat{U}^2$ we say that $(x,y)$ is \textit{strongly $G$-independent} if $(\hat{j}(x),\hat{j}(y))$ is strongly $G$-independent. We say a tuple is strongly $G$-independent if the coordinates are pairwise strongly $G$-independent. Also define two sets $X$ and $Y$ to be \textit{relatively strongly $G$-independent}, if for all $(x,y) \in X \times Y$, $(x,y)$ is strongly $G$-independent. Note that in models of $T \wedge  SF$, being $G$-independent is the same as being strongly $G$-independent.
\end{dfn}

Proposition \ref{type in etale realise SF} easily generalises to strongly $G$-independent tuples:
\begin{pro}\label{type in etale realise SF2}
Given a strongly $G$-independent tuple $x \in \hat{U}^n$, and $L \subset \C$ relatively strongly $G$-independent to $\hat{j}(x)$, there is a model of $T\wedge SF$ realising $\tp(x / L)$. 
%\fm (Maybe I want to realise $\tp_\C(j(G \bar{\tau})))$
\end{pro}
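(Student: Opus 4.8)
The plan is to mimic the proof of Proposition \ref{type in etale realise SF}, but now replacing the entire tuple of $G$-orbits $Gx_1, \ldots, Gx_n$ inside $\i(\H)$ with the orbits coming from the strongly $G$-independent tuple $x \in \hat{U}^n$, while being careful that the resulting set is still a legitimate model of $T$ and that no unwanted coincidences are introduced between these orbits, or between them and the parameter set $L$. First I would observe that, since $x$ is strongly $G$-independent, the orbits $Gx_1, \ldots, Gx_n$ are pairwise disjoint: if $x_i$ and $x_j$ were in the same $G$-orbit then $(\hat j(x_i), \hat j(x_j))$ would lie on some $Z_{\bar g}$, forcing $\Phi_N(\hat j(x_i), \hat j(x_j)) = 0$ for an appropriate $N$, contradicting strong $G$-independence. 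Likewise, relative strong $G$-independence of $L$ to $\hat j(x)$ guarantees that $\hat j(Gx_i) \cap \hat j(G\ell) = \emptyset$ for every $\ell \in L$ and every $i$.

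Next I would set
$$
H' := \bigcup_{i=1}^n Gx_i \ \cup\ \{\, y \in \i(\H) \ \mid\ \hat j(y) \notin \textstyle\bigcup_{i=1}^n \hat j(Gx_i) \,\},
$$
and define the candidate model
$$
\cM := \langle \langle H', G\rangle, \langle \C, +, \cdot, \Q(j(S))\rangle, \hat j|_{H'}\rangle.
$$
Since each $x_i$ is non-special (strong $G$-independence forces $\hat j(x_i) \notin$ the images of special orbits, as special fibres are detected by the same $\Phi_N$-type conditions), the $G$-set $\langle H', G\rangle$ has exactly the same fixpoint structure as $\langle \hat U, G\rangle$: the special orbits are untouched, and the swapped-in orbits $Gx_i$ are free $G$-orbits. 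Hence $\cM \models \Th\langle \H, G\rangle$. To check $\cM \models \Psi_g$ for each finite $g \subset G$, note that the image of $p_g$ on $\cM$ is the union of the image coming from the retained part of $\i(\H)$ (which is all of $Z_g$ already, by $\C_j \models \Psi_g$ and the embedding $\i$) together with contributions from the orbits $Gx_i$; since both land inside the $\C$-points of $Z_g$ and the retained part already surjects onto $Z_g$, we get $p_g(\cM)$ onto $Z_g$, so $\cM \models T$. Moreover $\cM \models SF$, because in $\cM$ each fibre of $\hat j$ over a non-special point is a single $G$-orbit — this is where disjointness of the orbits $Gx_i$ (and their disjointness from the retained $\i(\H)$-fibres) is essential. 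Finally, one needs $L \subseteq F$ to sit correctly: since $\hat j(L)$ avoids $\bigcup_i \hat j(Gx_i)$, the fibres over $\hat j(\ell)$ in $\cM$ are exactly the $\i(\H)$-fibres, so all of $L$ and its ambient algebraic relations are faithfully present, and by quantifier elimination (Corollary \ref{cor stab}) together with Proposition \ref{fielddet} the two-sorted type $\tp(x/L)$ in $\hat\cU$ is determined by the field-sort quantifier-free data $\bigcup_{g} \qftp_\cF(p_g(x)/\dcl(L)\cap F)$, which is literally unchanged in $\cM$ since $\hat j|_{H'}$ and the relevant $p_g$-images agree with those in $\hat\cU$ on the orbits $Gx_i$. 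Hence $\cM \models T \wedge SF$ and realises $\tp(x/L)$.

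The main obstacle I anticipate is the bookkeeping for $SF$ and for type-preservation simultaneously: one must be sure that excising the orbits $\hat j^{-1}(\hat j(Gx_i)) \cap \i(\H)$ and grafting in $Gx_i$ does not merge two formerly distinct $\hat j$-fibres, nor split one, and that no element of $L$ has its fibre disturbed — all of which rests on the two disjointness facts extracted from strong $G$-independence and relative strong $G$-independence. Once those disjointness statements are nailed down, the verification that $\cM \models T$ is routine (it is essentially the computation already done for $\hat\cU \models T$), and the type-preservation is immediate from quantifier elimination and the field-sort description of types in Propositions \ref{fielddet} and \ref{nicesystem2}.
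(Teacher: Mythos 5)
Your overall strategy — removing from $\i(\H)$ the fibres lying over the Hecke orbits $\hat{j}(Gx_i)$, grafting in the $G$-orbits $Gx_i$, and checking that the resulting two-sorted structure models $T\wedge SF$ while realising the type — is precisely the generalisation of Proposition \ref{type in etale realise SF} that the paper intends (the paper offers no proof, only the remark that the singleton case ``easily generalises''), and your deductions of pairwise disjointness of the $\hat{j}(Gx_i)$ and of $L\cap\hat{j}(Gx_i)=\emptyset$ from (relative) strong $G$-independence are correct and are exactly what makes the cut-and-paste well posed.

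However, the verification of $\Psi_g$ contains a genuine gap. You assert that the retained part $\{y\in\i(\H):\hat{j}(y)\notin\bigcup_i\hat{j}(Gx_i)\}$ ``already surjects onto $Z_g$'' by $\C_j\models\Psi_g$ and the embedding $\i$. But that argument only shows that the \emph{whole} of $\i(\H)$ surjects onto $Z_g$; the retained part is a proper subset and misses exactly those $v\in Z_g$ whose $p_g$-preimage in $\H$ (a single $\Gamma_g$-orbit, all of whose elements have $j$-value on the same Hecke orbit) sits above $\bigcup_i\hat{j}(Gx_i)$. Those $v$ are precisely the ones that the grafted orbits $Gx_i$ must supply, and showing they do is the nontrivial content of $\cM\models\Psi_g$; your stated reason bypasses it, so the check is not actually carried out. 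Two smaller points: the claim that $\cM\models SF$ because each non-special fibre of $\hat{j}$ is ``a single $G$-orbit'' should say $\Gamma$-orbit (a full $G$-orbit maps to an entire Hecke orbit, not a single fibre), and strong $G$-independence does not by itself force the individual $x_i$ to be non-special (though this is harmless, since a special $x_i$ already lies in $\i(\H)$, making that grafting vacuous).
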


\begin{rem}\label{rem1}  Note the stronger statement that if $x \in \hat{U}^n$ is a $G$-independent tuple, then we can realise $\tp(\tau)$ in a model of $T \wedge  SF$, is not true. To see this consider the type of $x = ( x_1, x_2 )$ such that $\hat{j}(x_1)=\hat{j}(x_2)$ but $x_1 \notin \Gamma x_2$. \newline
\end{rem}
%\fm So since a non-special type in the pro-\'etale cover is realised in a model of $\Th(j) \wedge  SF$, if the standard model $\C_j$ is the unique model of $\Th(j) \wedge  SF$ %of cardinality continuum, then all 1-types realised in the pro-\'etale cover are actually realised in $\C_j$. So we may interpret the categoricity statement as saying %that when considered as structures in this language, the analytic universal cover actually contains all the information about 1-types contained in the pro-\'etale %cover. However as we saw above, the property of a model having non-standard fibres may be expressed by realising a certain 2-type.

\begin{pro}\label{number types}
Let $z \in \C$ be non-special. Then the number of distinct types of elements in the fibre $\hat{j}^{-1}(z)$ is either finite or $2^{\aleph_0}$.
\end{pro}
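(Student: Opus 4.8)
The plan is to analyze the fibre $\hat{j}^{-1}(z)$ for a fixed non-special $z \in \C - \{0, 1728\}$ via the Galois representation $\rho_z : G_K \lora \pi_1'$ constructed in \S\ref{secgalpro}, where $K$ is some finitely generated field containing $\Q(j(S), z)$. By quantifier elimination (\ref{cor stab}) and the description of types in \ref{nicesystem2}, the type of an element $y \in \hat{j}^{-1}(z)$ over $\dcl(\emptyset)$ is determined by the quantifier-free data $\bigcup_N \qftp_\cF(p_N(y)/\dcl(\emptyset)\cap F)$; since the $Z_N$ are defined over $\Q(j(S))$ (\ref{etdef}) and the basepoint lift lets us identify $\hat{j}^{-1}(z)$ with $\pi_1'$, this type is controlled by which $\Q(j(S))$-variety the coordinates $p_N(y)$ land on, i.e. by the $G_{\Q(j(S))}$-orbit of $y$ in the fibre. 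More precisely, two elements of the fibre realise the same type over $\dcl(\emptyset)$ iff they lie in the same orbit under the closure of $\rho_z(G_{\Q(j(S),z)})$-action (after choosing an appropriate field $K$ over which to work). So I first reduce the counting of types to counting orbits of a profinite group acting on a profinite torsor, via an argument parallel to \ref{progal}.

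Next I invoke the topological dichotomy for profinite groups: if $H \leq \pi_1'$ is a closed subgroup, then the index $[\pi_1' : H]$ — equivalently the number of cosets, equivalently (by \ref{progal}-style reasoning) the number of $G_K$-orbits on the fibre — is either finite or uncountable, in fact of size $2^{\aleph_0}$ when infinite, because $\pi_1' = \varprojlim_N \Gamma/\Gamma_N$ is a (nonempty) profinite space and the coset space $\pi_1'/H$ is then a nonempty profinite space with no isolated points whenever it is infinite, hence homeomorphic to the Cantor set or at any rate of cardinality continuum. The key point is that $\pi_1'/\overline{\rho_z(G_K)}$ has cardinality at most continuum (it is a quotient of a profinite set), and if it is infinite it cannot be countably infinite: a nonempty compact Hausdorff space with no isolated points is uncountable, and a compact countable Hausdorff space always has isolated points, so a countably infinite compact coset space is impossible. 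I would make this precise by noting that $\pi_1'/\overline{\rho_z(G_K)}$ is a profinite quotient (it is $\varprojlim_N (\Gamma/\Gamma_N)/(\text{image of }G_K)$, a limit of finite sets), and a countably infinite profinite set is impossible by a Baire category argument.

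The one subtlety, which I expect to be the main obstacle, is the relationship between \emph{types over $\dcl(\emptyset)$} and \emph{$G_K$-orbits for a suitable $K$}: types are intrinsic to the two-sorted structure, whereas the Galois orbit structure depends on the chosen base field $K \supseteq \Q(j(S), z)$, and enlarging $K$ refines orbits. I would handle this by fixing $K = \Q(j(S), z)$ (or rather working over $\dcl(\emptyset)(z) = \Q(j(S), z) \cup S$) and arguing that the relevant "number of types" is insensitive to this choice because $z \in \dcl(\emptyset)(z)$ is already named, and then quoting \ref{nicesystem2} and \ref{fielddet} to convert "same variety over $\Q(j(S), z)$" into "same quantifier-free type" into "same type". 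Concretely: $y, y' \in \hat{j}^{-1}(z)$ have the same type over $z$ iff for every $N$ the tuples $p_N(y), p_N(y')$ satisfy the same formulas over $\Q(j(S), z)$ iff they lie in the same $G_{\Q(j(S),z)}$-orbit (using that each $Z_N$ is an irreducible curve over $\Q(j(S))$ and the fibre of $q_N$ over $z$ is a $\Gamma/\Gamma_N$-torsor, so orbits are cut out by subvarieties), and the number of such orbits is $[\pi_1' : \overline{\rho_z(G_{\Q(j(S),z)})}]$, which by the profinite dichotomy above is finite or $2^{\aleph_0}$. Since types over $z$ and types over $\dcl(\emptyset)$ differ only by naming $z$, which partitions each $\dcl(\emptyset)$-type into at most... well, there is exactly one fibre $\hat j^{-1}(z)$ to consider once $z$ is fixed, so the counts agree, completing the argument.
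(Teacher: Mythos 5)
Your proposal is correct in outline, but it takes a genuinely different route from the paper's. The paper proves the dichotomy \emph{combinatorially}: it organises the types in the fibre as a tree whose level $N$ nodes are the Weil loci $\Loc(\hat p_N(x)/\Q(j(S)))$, and shows the tree branches \emph{homogeneously} --- if one branch at level $N$ splits by level $NM$, so does every branch at that level, because elements of $\Aut_{\Fin}(Z_N/Z_1)$ are $\Q(j(S))$-definable and therefore give $\emptyset$-definable bijections carrying one locus onto another. A homogeneously branching tree has either finitely many or $2^{\aleph_0}$ branches. You instead pass through the Galois representation $\rho_z$ and reduce to a cardinality dichotomy for the coset space $\pi_1'/\rho_z(G_K)$, which is essentially the content of \ref{progal}/\ref{pro imgtyp} applied early; this is a valid route (and cleaner insofar as it makes the profinite structure explicit rather than working level by level with the tree). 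Two small points: you do not need the closure $\overline{\rho_z(G_K)}$, since $G_K$ is compact and $\rho_z$ continuous, so the image is already closed; more importantly, your ``making it precise'' step contains a false claim. A countably infinite profinite set \emph{is} possible --- e.g.\ $\{0\}\cup\{1/n : n\geq 1\}$, equivalently the ordinal $\omega+1$, is compact, Hausdorff, totally disconnected, and countable --- so ``a countably infinite profinite set is impossible by a Baire category argument'' is not a theorem. What you actually want, and what your earlier sentence gestures at, is that a coset space $G/H$ of a profinite group by a closed subgroup is a \emph{topologically homogeneous} profinite space (the $G$-action is transitive by homeomorphisms); hence if it has any isolated point it is discrete, and being compact it is then finite; so if infinite it is perfect, and a nonempty perfect compact Hausdorff space has cardinality $\geq 2^{\aleph_0}$. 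That homogeneity step is the exact profinite analogue of the ``branches homogeneously'' observation in the paper's tree argument, so the two proofs are really the same idea in different clothing.
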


\begin{proof} 
For $(x_1,...,x_n) \in \C^n$ and $K$ a subfield of $\C$, we let $\Loc( (x_1,...,x_n) / K)$ be the minimal (irreducible) algebraic variety over $K$ containing $(x_1,...,x_n)$ (i.e. the `Weil locus' of $(x_1,...,x_n)$ over $K$). Denote the field $\Q(j(S))$ by $\cS$, let $x \in \hat{j}^{-1}(z)$, and suppose that 
$$\Loc( \hat{p}_N(x) / \cS) \neq \Loc(\hat{p}_N(x') ) / \cS)$$
where $$\hat{p}_N: x \mapsto (\hat{j}(x), \hat{j}(g_1 x),...,\hat{j}(g_{\psi(N)} x)) $$
under the enumeration $(g_1,...,g_{\psi(N)})$ of cosets of $\Gamma$ in $\Gamma N \Gamma$ from earlier. Imagine a tree where branches are types of $x' \in \hat{j}^{-1}(z)$.
 Now also suppose there is $x''$ such that $$\Loc( \hat{p}_N(x'') / \cS) = \Loc(\hat{p}_N(x) / \cS) 
\textrm{ but } \Loc( \hat{p}_{NM}(x'') / \cS) \neq \Loc(\hat{p}_{NM}(x) / \cS) .$$ I claim that there exists $x'''$ such that $$\Loc( \hat{p}_{N}(x''') / \cS) = \Loc(\hat{p}_{N}(x') / \cS)  \textrm{ and }\Loc( \hat{p}_{NM}(x''') / \cS) \neq \Loc(\hat{p}_{NM}(x') / \cS):$$ Since the fibre above $z$ in $Z_N$ is an $\Aut_{\Fin}(Z_N / Z_1)$-torsor,
%could just use definable action of $\Gamma$
 and by \ref{etdef}, elements of $\Aut_{\Fin}(Z_N / Z_1)$ are defined over $\Q(j(S))$, there is a $\emptyset$-definable function sending $\Loc( \hat{p}_{N}(x) / \cS)$ to  $\Loc( \hat{p}_{N}(x') / \cS)$, and $\Loc( \hat{p}_{N}(x'') / \cS)$ follows $\Loc( \hat{p}_{N}(x) / \cS)$ across onto the other branch of the tree, proving the claim. Now we are done since our tree of types branches homogeneously.
\end{proof}
% Note that in the above proof, we could have used the fact that the action of elements of $\Gamma$ is definable up top.

The following theorem of Keisler allows us to relate categoricity to the number of types:
\begin{thm}[Keisler, \cite{keisler1970logic}]\label{keislerthm}
If an $\cL_{\omega_1 , \omega}$-sentence has less than the maximum number of models of cardinality $\aleph_1$ (e.g. is $\aleph_1$-categorical) then there are only countably many $\cL_{\omega_1 , \omega}$-types realisable over $\emptyset$.
\end{thm}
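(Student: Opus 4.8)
\emph{For completeness we sketch the argument.} We argue contrapositively: assuming that uncountably many $\cL_{\omega_1,\omega}$-types over $\emptyset$ are realised in models of the sentence $\phi$, we construct $2^{\aleph_1}$ pairwise non-isomorphic models of $\phi$ of cardinality $\aleph_1$ --- the maximal possible number. Two standard infinitary tools are used: every $\cL_{\omega_1,\omega}$-formula belongs to some countable \emph{fragment} (a countable set of formulas closed under subformulas, finitary connectives, and change of variables), and the downward L\"owenheim--Skolem theorem relative to a countable fragment $F$, which lets one pass from any model of $\phi$ to a countable $F$-elementary submodel containing a prescribed countable subset. First I would fix a countable fragment $F \ni \phi$ large enough that uncountably many complete $F$-types over $\emptyset$ are realised in models of $\phi$ (enlarging $F$ by a closure argument along an $\omega_1$-chain of fragments if necessary, so that the distinctions among uncountably many of the given types already appear inside $F$), and pick $\aleph_1$ distinct such types $\{p_\alpha : \alpha<\omega_1\}$. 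Since a complete $F$-type is a countable set of formulas, downward L\"owenheim--Skolem yields for each $\alpha$ a \emph{countable} model $N_\alpha \models \phi$ realising $p_\alpha$, and the same abundance of types persists over any countable parameter set inside a model of $\phi$.

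Next I would carry out a transfinite coding construction. For each $\sigma : \omega_1 \to 2$, build $M_\sigma \models \phi$ of cardinality $\aleph_1$ as the union of a continuous increasing $F$-elementary chain $\langle M^\xi_\sigma : \xi<\omega_1 \rangle$ of countable models of $\phi$, filtering $M_\sigma$. At a successor stage one extends $M^\xi_\sigma$, inside models of $\phi$, so as to \emph{code the bit $\sigma(\xi)$}: when $\sigma(\xi)=1$ one amalgamates in a copy of a suitable $N_\alpha$ so that a designated $F$-type is realised ``freshly over $M^\xi_\sigma$'', and when $\sigma(\xi)=0$ one takes a neutral extension that avoids doing so; limit stages are unions. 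A simultaneous bookkeeping, ranging over all countable subsets that will eventually appear, is needed both to ensure $M_\sigma \models \phi$ and, crucially, to guarantee that the coding at level $\xi$ is not accidentally spoiled by the remaining $\aleph_1$ stages. This ``local omitting'' requirement is the delicate heart of the argument: it is precisely what the hypothesis of uncountably many types makes feasible --- with only countably many types there would be no room to mark one independent bit per level --- and it is genuinely more subtle than an ordinary omitting-types construction.

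Finally, suppose $f : M_\sigma \to M_\tau$ is an isomorphism. A routine L\"owenheim--Skolem-and-continuity argument produces a club $C \subseteq \omega_1$ with $f[M^\xi_\sigma] = M^\xi_\tau$ for all $\xi \in C$; since the bit coded at a stage $\xi \in C$ is an isomorphism invariant of the pair $(M^{\xi+1}, M^\xi)$, it follows that $\sigma$ and $\tau$ agree on $C$, hence on a club. By Ulam's decomposition of $\omega_1$ into $\aleph_1$ pairwise disjoint stationary sets there are $2^{\aleph_1}$ functions $\omega_1 \to 2$ no two of which agree on any club, and the corresponding models $M_\sigma$ are therefore pairwise non-isomorphic. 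This produces $2^{\aleph_1}$ models of $\phi$ of size $\aleph_1$, establishing the contrapositive; as indicated, the essential difficulty is the robustness of the per-level coding in the second step.
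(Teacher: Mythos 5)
The paper does not prove this theorem: it is stated as a citation to Keisler \cite{keisler1970logic}, so there is no ``paper's own proof'' to compare against. Judged on its own terms, the overall shape of your sketch --- argue contrapositively, descend to a single countable fragment $F$ containing $\phi$, build $2^{\aleph_1}$ models of power $\aleph_1$ as unions of continuous $F$-elementary chains of countable models, and separate them via a club argument together with Ulam's decomposition of $\omega_1$ into $\aleph_1$ stationary sets --- is indeed the standard Keisler/Shelah non-structure template, and the third step (club, Ulam, non-isomorphism) is essentially right.

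The genuine gap is in the coding step. You propose that at a successor stage one ``amalgamates in a copy of a suitable $N_\alpha$'' so that a chosen $F$-type is realised freshly over $M^\xi_\sigma$. But amalgamation --- and even joint embedding --- is \emph{not} available for general classes of models of an $\cL_{\omega_1,\omega}$-sentence; it is an extra hypothesis to be verified, and indeed this thesis proves the amalgamation property for its specific theory precisely because it does not come for free. Keisler's theorem makes no such assumption, so your successor step may simply be impossible: there need be no model of $\phi$ containing both $M^\xi_\sigma$ and a realisation of $p_\alpha$. The actual proof has to route around this, e.g.\ via a dichotomy (failure of countable amalgamation already yields $2^{\aleph_1}$ models by a separate argument; when amalgamation is available one can code as you describe), or via a more delicate direct construction inside a single large model. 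There is also a smaller imprecision in the non-isomorphism step: an isomorphism $f$ with $f[M^\xi_\sigma]=M^\xi_\tau$ for club-many $\xi$ need not satisfy $f[M^{\xi+1}_\sigma]=M^{\xi+1}_\tau$, so the coded bit must be an invariant of the pair $(M_\sigma,M^\xi_\sigma)$ (stable under passing to a cofinal subfiltration), not of the immediate successor pair $(M^{\xi+1}_\sigma,M^\xi_\sigma)$. You gesture at this robustness requirement yourself, but the missing idea --- how to do the coding without assuming amalgamation --- is exactly where the substance of Keisler's proof lies, and it is left unaddressed.
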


We saw in \ref{type in etale realise SF2} that strongly $G$-independent types of tuples in the pro-\'etale cover $\hat{\cU}$ may be realised in models of $T \wedge SF$. 
By \ref{number types} there are finitely many, or uncountably many such types, so by Keisler's theorem, if $T\wedge  SF$ is to be $\aleph_1$-categorical, then there must be finitely many strongly $G$-independent types of tuples in $\hat{\cU}$. \newline

The fibre $\hat{j}^{-1}( x) \subset \hat{\C}$ above the point $x$ is a $\pi_1'$-torsor and given a field $K$ containing $\Q(j(S), x)$ we have a homomorphism
$$\rho_x: G_K \lora \pi_1'.$$
By \ref{progal}, the cosets of the representation are in bijection with the orbits of $G_K$ on the fibre, and by homogeneity in algebraically closed fields of characteristic 0, if $G_K$ does not conjugate two elements in the fibre, then they correspond to two distinct types. This extends to tuples, in summary:
\begin{pro}\label{pro imgtyp}
Given a non-special tuple $x \in F^n$, the set of types of tuples $u \in \hat{U}$ such that $\hat{j}(u)=x$ is in bijection with the cosets of the image of the representation
$$\rho_x : G_K \lora \pi_1'^n.$$
\end{pro}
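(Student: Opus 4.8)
The plan is to combine quantifier elimination for $T$ (Corollary~\ref{cor stab}) with the torsor structure on the pro-\'etale fibre and Proposition~\ref{progal}. Fix a field $K$ with $\Q(j(S),x)\subseteq K\subseteq F$, and read ``type of $u$'' as complete type over $K$ realised in $\hat{\cU}$. Since $x$ is non-special, each coordinate of $x$ avoids the branch locus $\{0,1728\}$, so $\hat{j}^{-1}(x)$ is a $\pi_1'^n$-torsor and $\rho_x:G_K\lora\pi_1'^n$ is defined as in the remark following Proposition~\ref{progal}.

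First I would reduce equality of two types to level-by-level conjugacy. Let $u,u'\in\hat{U}^n$ lie in $\hat{j}^{-1}(x)$; they are non-special because the $\hat{j}(u_i)=x_i$ are. By quantifier elimination $\tp(u/K)=\tp(u'/K)$ iff $\qftp(u/K)=\qftp(u'/K)$, and by Proposition~\ref{nicesystem2} applied with $L=K$ (note $\dcl(K)\cap F=K$) this holds iff $\qftp_{\cF}(p_N(u)/K)=\qftp_{\cF}(p_N(u')/K)$ for every $N$, where $p_N$ is the map of Proposition~\ref{nicesystem2} interpreted in $\hat{\cU}$. Each $p_N(u_i)$ lies in the fibre $q_N^{-1}(x_i)$, a finite subset of $Z_N(\bar K)$ because $Z_N$ and $q_N$ are defined over $\Q(j(S))\subseteq K$ (Theorem~\ref{etdef}); thus every coordinate of $p_N(u)$ is algebraic over $K$, so by the standard correspondence between quantifier-free types over a field and Weil loci, $\qftp_{\cF}(p_N(u)/K)=\qftp_{\cF}(p_N(u')/K)$ iff $p_N(u)$ and $p_N(u')$ lie in a single $G_K$-orbit on the finite level-$N$ fibre. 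Hence $\tp(u/K)=\tp(u'/K)$ iff $p_N(u)$ and $p_N(u')$ are $G_K$-conjugate for every $N$.

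Next I would pass from ``conjugate at every level'' to ``conjugate in the limit''. For each $N$ the set $S_N:=\{\sigma\in G_K:\sigma\cdot p_N(u)=p_N(u')\}$ is nonempty by the previous step, and is closed---indeed open of finite index---since $G_K$ is profinite acting continuously on a finite set; the $S_N$ are nested along divisibility, so by compactness of $G_K$ there is $\sigma\in\bigcap_N S_N$, and then $\sigma\cdot u=u'$ once $\hat{j}^{-1}(x)$ is viewed as the inverse limit of these finite level-$N$ fibres. Therefore $\tp(u/K)=\tp(u'/K)$ iff $u$ and $u'$ lie in the same $G_K$-orbit on $\hat{j}^{-1}(x)$. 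Proposition~\ref{progal}, in the tuple form of the remark following it, then puts these orbits in bijection with the cosets of $\rho_x(G_K)$ in $\pi_1'^n$; composing the two bijections gives the statement.

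I expect the main obstacle to be this compactness step: exhibiting one element of $G_K$ that witnesses the conjugacy at all levels simultaneously genuinely uses the profiniteness of $G_K$ and the finiteness of each fibre $q_N^{-1}(x_i)$. A subsidiary point to handle carefully is that ``type over $K$'' really does coincide with ``$\Aut(\hat{\cU}/K)$-orbit'' on the fibre --- this is quantifier elimination together with the homogeneity of algebraically closed fields of characteristic zero invoked in the text preceding the statement.
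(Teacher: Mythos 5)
Your proof is correct and follows essentially the same route the paper does: reduce, via quantifier elimination and Proposition~\ref{nicesystem2}, to equality of field-sort types of the level-$N$ projections, observe these are Galois orbit invariants, and then invoke Proposition~\ref{progal} to pass from orbits to cosets of $\rho_x(G_K)$. The one place you diverge technically is the passage from ``same $K$-type at every finite level'' to ``same $G_K$-orbit in the inverse limit'': you exhibit a single $\sigma\in G_K$ via compactness of $G_K$ (nested nonempty closed sets $S_N$), whereas the paper appeals directly to the homogeneity of $\C$ as a model of $\mathrm{ACF}_0$ to produce a field automorphism over $K$ identifying the two countable tuples $(p_N(u))_N$ and $(p_N(u'))_N$, and then restricts it to an element of $G_K$. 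Both mechanisms are correct and interchangeable here; your compactness argument is a little more self-contained in that it only uses finiteness of the level-$N$ fibres and profiniteness of $G_K$, rather than saturation of the continuum-sized field.
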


The following arithmetic condition, along with another more geometric condition \ref{condition2}, will turn out to be equivalent to categoricity. Furthermore, we will verify that these conditions do actually hold in \S \ref{secgalrep}.
\begin{cdn}\label{condition1}
Let $x \in \C^n$ be a strongly $G$-independent tuple, $L$ be a finitely generated extension of $\Q$, and let $$K := L(j(S) , x).$$ Then the image of the homomorphism
$$\rho_{x}: G_K \lora \pi_1'^n$$
is of finite index.
\end{cdn}

As a consequence of the preceding discussions, we have the following:

\begin{thm}\label{thm n1}
Suppose that $T\wedge SF$ is $\aleph_1$-categorical, then Condition \ref{condition1} holds.
\end{thm}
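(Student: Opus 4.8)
The plan is to run a contrapositive argument: assume Condition \ref{condition1} fails, and deduce that $T \wedge SF$ is not $\aleph_1$-categorical by producing uncountably many $\cL_{\omega_1,\omega}$-types over $\emptyset$ that are realised in models of $T \wedge SF$, which contradicts Keisler's theorem \ref{keislerthm}. First I would unwind what failure of the condition means: there is a strongly $G$-independent tuple $x \in \C^n$ and a finitely generated $L/\Q$ with $K = L(j(S),x)$ such that $\rho_x(G_K)$ has infinite index in $\pi_1'^n$. By \ref{pro imgtyp}, the set of types of elements $u \in \hat{U}$ with $\hat{j}(u) = x$ is in bijection with the cosets of $\rho_x(G_K)$, so there are infinitely many such types; combined with \ref{number types} (the number of types over a non-special point in a fibre of $\hat{j}$ is finite or $2^{\aleph_0}$, and the tuple version follows by the same homogeneous-branching argument) there are in fact $2^{\aleph_0}$ distinct types of tuples $u \in \hat{U}^n$ lying over $x$.

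The second key step is to observe that each such type is actually realised in a model of $T \wedge SF$. This is exactly the content of Proposition \ref{type in etale realise SF2}: since $x$ is strongly $G$-independent, and we may take the parameter set $L$ here to be (the algebraic-closure-relevant part of) the finitely generated field over which we are working — noting $\hat{j}(x) = x$ is relatively strongly $G$-independent from the empty set of extra parameters, or more carefully absorbing the finitely many parameters — every strongly $G$-independent type of a tuple in $\hat{\cU}$ over such an $L$ is realised in some model of $T \wedge SF$. So we obtain $2^{\aleph_0}$ pairwise distinct complete types (in the two-sorted language), all realised across models of $T \wedge SF$. To feed these into Keisler's theorem I need them to be types over $\emptyset$: here I would use the fact (Remark after \ref{cor stab}, and \ref{pro isom empty}) that $T$ includes $\tp(s)$ for special $s$, so $\dcl(\emptyset) = S \cup \Q(j(S))$ in every model; the finitely many parameters from $L$ and the coordinates of $x$ can be coded, so that distinct types over the relevant finite parameter set give rise to distinct $\cL_{\omega_1,\omega}$-types over $\emptyset$ (one can always name finitely many parameters and still have uncountably many types over $\emptyset$, since $\aleph_0 \cdot 2^{\aleph_0} = 2^{\aleph_0}$). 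Thus $T \wedge SF$ has uncountably many $\cL_{\omega_1,\omega}$-types over $\emptyset$, so by Keisler's theorem it is not $\aleph_1$-categorical, contradiction.

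Finally I would assemble these pieces in order: (i) negate the conclusion to get infinite index; (ii) invoke \ref{pro imgtyp} plus the branching argument of \ref{number types} to upgrade "infinitely many types in the fibre" to "$2^{\aleph_0}$ types"; (iii) invoke \ref{type in etale realise SF2} to realise all of these in models of $T \wedge SF$; (iv) reduce to types over $\emptyset$ using the structure of $\dcl(\emptyset)$; (v) apply \ref{keislerthm} to contradict $\aleph_1$-categoricity. I expect the main obstacle to be step (iv): being careful that the uncountable family of types genuinely descends to $\emptyset$ rather than merely to the finitely generated parameter field $K$, and checking that the strong $G$-independence hypothesis on $x$ together with the choice of $L$ is exactly what makes \ref{type in etale realise SF2} applicable — in particular that one does not accidentally need the false statement flagged in Remark \ref{rem1}. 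The genuinely substantive input is the equivalence, already packaged in \ref{pro imgtyp}, between Galois-orbit counting on the pro-\'etale fibre and type-counting; once that is in hand the argument is a routine combination of Keisler's theorem with the realisation result \ref{type in etale realise SF2}.
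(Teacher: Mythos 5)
Your argument is correct and is essentially the paper's own proof run in contrapositive form: you negate Condition \ref{condition1}, chain together \ref{pro imgtyp}, \ref{number types}, and \ref{type in etale realise SF2}, and contradict $\aleph_1$-categoricity via Keisler's theorem \ref{keislerthm}, whereas the paper runs the same implications forward from categoricity to finite index. The worry you flag in step (iv) is resolved exactly as the paper does it — naming finitely many field-sort constants (for $L$ and the coordinates of $x$) changes the $\emptyset$-type count by only a finite factor, since each type over those parameters determines a distinct $\emptyset$-type of a longer tuple, so Keisler still applies.
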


\begin{proof}
Assume $T\wedge SF$ is $\aleph_1$-categorical. Then by Keisler's Theorem (\ref{keislerthm}) there are only finitely many types of tuples over $\emptyset$ realisable in a model of $T \wedge SF$. However, in this setting, expanding the language by finitely many constant symbols of the field sort only increases the number of types by a finite amount, and we may apply Keisler's theorem to types of tuples over a finite set of constant symbols from the field sort. So consider $L / \Q$ a finitely generated field extension, and a strongly independent tuple $x \in \C^n$. By \ref{type in etale realise SF2}, for all $u$ in the fibre $\hat{j}^{-1}(x)$, the type of $u$ over $L(j(S),x)$ is realisable some model of $T+SF$, and so by categoricity all of these types are realisable in the unique model of cardinality $\aleph_1$. By \ref{number types}, the number of types of such tuples in the fibre is either finite or uncountable, so the number of types of such tuples must be finite. By \ref{pro imgtyp} these types are in bijection with the index of the image of the corresponding Galois representation, and this forces the image of the representation to have finite index.
\end{proof}

If an $\cL_{\omega_1,\omega}$-sentence also has the amalgamation property for countable models, then a stronger form of Keisler's theorem holds.

\begin{dfn}
A theory has the \textit{amalgamation property} if given three models of the theory $\cM_0,\cM_1,\cM_2$ and elementary embeddings $\cM_0 \preccurlyeq_{\pi_i} \cM_i$, $i\in \{ 1,2\}$, there is a model $\cM$ and elementary embeddings $\cM_i \preccurlyeq_{\phi_i} \cM$ such that $\phi_1 \circ \pi_1=\phi_2 \circ \pi_2$.
\end{dfn}

\begin{pro}
$T\wedge SF$ has the amalgamation property.
\end{pro}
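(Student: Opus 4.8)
The plan is to amalgamate the two field sorts first and then to build the covering sort of the amalgam one Hecke orbit at a time: over each Hecke orbit already ``visible'' in $\cM_1$ or $\cM_2$ we keep the $G$-orbit lying there, and over the remaining Hecke orbits of the amalgamated field we insert a copy of the ``standard-shaped'' universal cover. Since $T$ has quantifier elimination (\ref{cor stab}), any $\cL$-embedding between models of $T$ is automatically elementary, so it suffices to produce a \emph{model} of $T\wedge SF$ into which $\cM_1$ and $\cM_2$ embed as $\cL$-substructures, compatibly over $\cM_0$.

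Write $F_i, H_i$ for the field and covering sorts of $\cM_i$ and identify each $\cM_i$ with its image. First I would choose an amalgam of the fields: as $F_1,F_2$ are algebraically closed of characteristic zero over the algebraically closed field $F_0\supseteq\Q(j(S))$, there is an algebraically closed $F\supseteq\Q(j(S))$ with $F_0$-embeddings $F_1\hookrightarrow F\hookleftarrow F_2$ which are algebraically disjoint over $F_0$, whence $F_1\cap F_2=F_0$. Since two Hecke-related points of $F$ are algebraic over one another, every non-special Hecke orbit $\cO\subseteq F$ then satisfies exactly one of $\cO\subseteq F_0$, $\cO\subseteq F_1\setminus F_0$, $\cO\subseteq F_2\setminus F_0$, $\cO\subseteq F\setminus(F_1\cup F_2)$. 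Moreover, in any model of $T\wedge SF$ the axiom $SF$ forces a \emph{unique} $G$-orbit to sit over each non-special Hecke orbit of the field sort, and since $\cM_0\preccurlyeq\cM_1,\cM_2$ the $G$-orbit over a given $F_0$-Hecke orbit is literally the same in all three models.

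Now I would define $\cM=\langle H,G\rangle\overset{\hat j}{\lora}\langle F,+,\cdot;\Q(j(S))\rangle$ with field sort $F$ and covering sort $H$ the disjoint union of $S$ with, for each non-special Hecke orbit $\cO\subseteq F$, one $G$-orbit over $\cO$: namely the ($SF$-unique) $G$-orbit of $\cM_0$, $\cM_1$ or $\cM_2$ lying over $\cO$ when $\cO\subseteq F_0,F_1$ or $F_2$ (consistent by the previous paragraph), and otherwise a ``standard-shaped'' universal cover of $\cO$, i.e.\ a $G$-orbit $O$ with $j\colon O\to\cO$ having $\Gamma$-orbit fibres and satisfying every axiom $\Psi_g$. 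Granting that such an $O$ exists over an arbitrary non-special Hecke orbit of an algebraically closed field extending $\Q(j(S))$ — the abstract ``universal cover exists'' statement already underlying the construction of the pro-\'etale cover and the embedding of $\H$ into $\hat{\cU}$ in \S\ref{proet mod}, and used in essence in \ref{type in etale realise SF2} — one checks $\cM\models T\wedge SF$: the covering sort is a faithful $G$-set with the correct point-stabilisers and $S$ carries its standard structure ($\Th\langle\H,G\rangle$ and $\tp(s)$); each $\Psi_g$ is internal to a single Hecke orbit (the coordinates of a point of $Z_g$ are pairwise Hecke-related) and holds there by the orbit-wise choice; and every $\hat j$-fibre is a single $\Gamma$-orbit, giving $SF$. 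Finally $\cM_1,\cM_2$ are $\cL$-substructures of $\cM$ (the chosen embeddings on the field sort, and $H_i$ is a union of $G$-orbits all retained in $H$), hence elementary substructures by \ref{cor stab}, and since $F_1\cap F_2=F_0$ these inclusions restrict to the same embedding of $\cM_0$ — the required amalgam.

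The main obstacle is the orbit-wise construction of $H$: showing that the prescriptions coming from $\cM_1$ and from $\cM_2$ really coincide over every $F_0$-Hecke orbit and never clash elsewhere (handled by $\cM_0\preccurlyeq\cM_1,\cM_2$ together with the Hecke-orbit case analysis of the second paragraph), and, above all, producing over each ``fresh'' Hecke orbit of $F$ a single $G$-orbit that is at once compatible with $SF$ and with all of the axioms $\Psi_g$. This last point is exactly the abstract universal-cover construction implicit in \S\ref{proet mod} and Proposition \ref{type in etale realise SF2}; carrying it out over a general algebraically closed field, rather than over $\C$ as via the embedding of $\H$ into $\hat{\cU}$, is where the work lies.
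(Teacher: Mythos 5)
Your proof follows the same overall strategy as the paper's: amalgamate the field sorts freely over $F_0$ so that $F_1\cap F_2=F_0$, retain the existing covering-sort data over $F_1$ and $F_2$ (consistent over $F_0$ since $\cM_0$ is a common substructure), and then supply a covering sort over the new Hecke orbits of $F$. You also correctly observe that Hecke-relatedness is an algebraic relation, so each Hecke orbit lies wholly in $F_0$, in $F_1\setminus F_0$, in $F_2\setminus F_0$, or outside $F_1\cup F_2$, which justifies the orbit-by-orbit gluing.

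The one place you leave a real gap — and you flag it yourself — is the existence of a ``standard-shaped'' $G$-orbit over each fresh Hecke orbit of a general algebraically closed field $F\supseteq\Q(j(S))$, compatible with $SF$ and all $\Psi_g$ simultaneously. You appeal to this as an unproved abstract statement. The paper closes exactly this gap by a concrete device: it fixes an embedding of $F$ into $\C$ (possible since these are countable models, the only case needed for Theorem \ref{str kei}), sets $B:=j^{-1}\bigl(F\setminus(F_1\cup F_2)\bigr)\subseteq\H$, and takes the covering sort of the amalgam to be $(H_1+H_2)\sqcup B$ with $J:=j'\cup j|_B$. The analytic $j$-function over $\C$ then provides, for free, the $\Gamma$-orbit fibres and the surjectivity onto every $Z_g$ needed for $SF$ and $\Psi_g$ over the new orbits — no abstract existence lemma is required. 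So the two arguments are the same up to this point; what the paper buys by embedding $F$ into $\C$ is that the ``universal cover exists over $F$'' step you grant becomes a tautology about the standard model, rather than something one must construct and verify over an arbitrary algebraically closed field.
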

\begin{proof}

First notice that by quantifier elimination, all embeddings are elementary. Suppose that we have three models of $T\wedge SF$ 
$$\cM_i:= \langle H_i , F_i , j_i \rangle,      \ \ \ i \in \{0,1,2 \} $$
and embeddings $\cM_0 \preccurlyeq \cM_i$. Then there are corresponding embeddings of algebraically closed fields $F_0 \preccurlyeq F_1,F_2$. Let $F$ be an algebraically closed field which is the free amalgam of $F_1$ and $F_2$ over $F_0$ i.e. we may think of $F$ as $\acl(F_1 F_2)$ with $F_I \subset F$ and $F_1 \cap F_2 = F_0$. 

Define
$$H_1+H_2:=H_0 \sqcup (H_1 \backslash H_0) \sqcup (H_2 \backslash H_0).$$

There is a natural function
$$ j' : H_1+H_2 \ra F_1 \cup F_2, \ \ \ j'|_{H_i}:=j_i.$$

Now think of $F$ as embedded in $\C$ in the standard model of $T \wedge SF$, and let $$B:=j^{-1}\left( F \backslash (F_1 \cup F_2) \right).$$ 
Now the amalgam $\cM$ is defined as $(H_1+H_2) \sqcup B$ as a covering sort above the $F$-sort, and let the new function $J$ be the union of $j'$ and $j$ restricted to $B$
i.e.
$$J:H \ra F, \ \ \ J:= j' \cup j|_B.$$
To show that we do indeed have an amalgamation, we note that clearly the amalgamation works well with respect to the field sort so we just consider what is happening up in the covering sort. Now the special points are in $H_0$ already, and $G(H_1) \cap G(H_2) = G(H_0)$ since the $F_i$ are algebraically closed and if there was any intersection of $G$-orbits outside of $H_0$, this would be expressed in a modular relation in the field sort.
\end{proof}

As noted in \cite[\S4]{zilber2003model}, we have:

\begin{thm}\label{str kei}
Suppose that an $\aleph_1$-categorical $\cL_{\omega_1,\omega}$-sentence has the amalgamation property for countable models. Then the set of complete $n$-types over a countable model, realisable in a model of the sentence is at most countable.
\end{thm}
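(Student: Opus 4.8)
The statement to prove is Theorem \ref{str kei}: if an $\aleph_1$-categorical $\cL_{\omega_1,\omega}$-sentence has the amalgamation property for countable models, then over any countable model only countably many complete $n$-types are realised in models of the sentence. The plan is to reduce to Keisler's Theorem (\ref{keislerthm}) by a coding argument: if there were uncountably many complete $n$-types over some fixed countable model $\cM_0$, I would manufacture uncountably many $\cL_{\omega_1,\omega}$-types over $\emptyset$ in an expanded language (or, equivalently, in an expanded theory where $\cM_0$ is named), contradicting $\aleph_1$-categoricity via Keisler.

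First I would set up the expansion. Add countably many new constant symbols $(c_i)_{i<\omega}$ and form the $\cL_{\omega_1,\omega}$-sentence $\Phi'$ asserting our original sentence $\Phi$ together with a complete $\cL_{\omega_1,\omega}$-diagram of $\cM_0$ in the constants $c_i$ — this is expressible since $\cM_0$ is countable, so its isomorphism type is pinned down by a single $\cL_{\omega_1,\omega}$-sentence (Scott's theorem). The key point needing the amalgamation property is that $\Phi'$ is still $\aleph_1$-categorical: given two models of $\Phi'$ of size $\aleph_1$, their reducts to $\cL$ are isomorphic by $\aleph_1$-categoricity of $\Phi$, and amalgamation for countable models is exactly what lets us arrange (via a back-and-forth / amalgamation-of-embeddings argument over the common countable submodel $\cM_0$) that this isomorphism can be taken to fix the named copy of $\cM_0$ pointwise. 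This is the step I expect to be the main obstacle: one has to be careful that "amalgamation for countable models" genuinely upgrades to the statement that any two countable elementary extensions of $\cM_0$ embed compatibly, and then run a chain argument of length $\omega_1$ to get an isomorphism of the two $\aleph_1$-sized models over $\cM_0$. This is the standard argument alluded to in \cite[\S4]{zilber2003model}, and I would follow that reference.

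Once $\Phi'$ is known to be $\aleph_1$-categorical, Keisler's Theorem (\ref{keislerthm}) applied to $\Phi'$ gives that only countably many $\cL_{\omega_1,\omega}$-types over $\emptyset$ in the expanded language $\cL \cup \{c_i\}$ are realised in models of $\Phi'$. But a complete $n$-type over $\cM_0$ realised in a model of $\Phi$ corresponds exactly to a complete $n$-type over $\emptyset$ in the expanded language realised in a model of $\Phi'$ (substitute the constants $c_i$ for the elements of $\cM_0$). Hence there are only countably many complete $n$-types over $\cM_0$ realised in models of $\Phi$, as desired. The only genuine content beyond bookkeeping is the preservation of $\aleph_1$-categoricity under naming a countable model, which is precisely where amalgamation enters.
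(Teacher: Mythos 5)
The paper does not actually prove this theorem: it simply cites \cite[\S4]{zilber2003model} and records the statement. So there is no "paper proof" to compare against, and your proposal is offering substantially more detail than the text does. Your reduction strategy is a reasonable one: name the countable model $\cM_0$ by constants via its Scott sentence to get an expanded sentence $\Phi'$, observe that types over $\cM_0$ in models of $\Phi$ correspond bijectively to types over $\emptyset$ in models of $\Phi'$, and then apply Keisler's Theorem (\ref{keislerthm}) to $\Phi'$. That bookkeeping is correct. You have also correctly located the crux: one needs $\Phi'$ to have fewer than $2^{\aleph_1}$ models of size $\aleph_1$ (for instance to remain $\aleph_1$-categorical).

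That crux is where I think your sketch has a genuine gap, not merely an omitted-but-routine step. You claim the back-and-forth "chain argument of length $\omega_1$" gives a unique $\aleph_1$-model over $\cM_0$, using amalgamation to extend partial isomorphisms. But extending a partial isomorphism $f_\alpha : A_\alpha \to B_\alpha$ to cover a new element of $\cN_1'$ requires knowing that the amalgam of $A_{\alpha+1}$ with $B_\alpha$ over $A_\alpha$ embeds back into $\cN_2'$ over $B_\alpha$ — that is, you need the $\aleph_1$-sized models to be \emph{universal} (or model-homogeneous, or saturated) over their countable submodels, which is not immediate from amalgamation plus $\aleph_1$-categoricity of $\Phi$. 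The naive Jónsson-style construction of a homogeneous-universal model from amalgamation and joint embedding produces a model of size $2^{\aleph_0}$, not $\aleph_1$, so without CH one cannot simply "build the homogeneous model and invoke uniqueness." The actual proof (going back to Shelah, and the version Zilber uses) establishes $\aleph_0$-stability more directly, by showing that failure of stability lets one code stationary subsets of $\omega_1$ into models and so produce $2^{\aleph_1}$ pairwise non-isomorphic models of size $\aleph_1$; this construction does not pass through naming $\cM_0$ and reapplying Keisler. So while your Step 3 claim may well be \emph{true} as a consequence of the theorem, it cannot be used as an intermediate lemma in the way you propose without an independent argument, and the one you gesture at does not obviously close.
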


As was mentioned previously, the following condition is a more geometric analogue of \ref{condition1}.

\begin{cdn}\label{condition2}
Let $L \subseteq \C$ be a countable algebraically closed field, $x \in \C^n$ a strongly $G$-independent tuple with $x \cap L=\emptyset$, and let $K := L(x)$. Then the image of the homomorphism
$$\rho_{x}: G_K \lora \pi_1'^n$$
is of finite index.
\end{cdn}

Since $T+SF$ has the amalgamation property, we may apply the strong form of Keisler's theorem \ref{str kei}, and by the same proof as in \ref{thm n1} we get:

\begin{thm}\label{thm n2}
Suppose that $T+SF$ is $\aleph_1$-categorical, then condition \ref{condition2} holds.
\end{thm}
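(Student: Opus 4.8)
The plan is to mirror the proof of Theorem \ref{thm n1} almost verbatim, substituting the stronger Keisler-type statement \ref{str kei} (available because $T \wedge SF$ has the amalgamation property, as just established) for the weaker \ref{keislerthm}. The only real difference between Condition \ref{condition1} and Condition \ref{condition2} is cosmetic in the model-theoretic picture: in \ref{condition1} we enlarge the base by finitely many field constants (a finitely generated $L/\Q$, so $K = L(j(S),x)$), whereas in \ref{condition2} we work over a countable algebraically closed field $L$ (so $K = L(x)$). The strong form of Keisler's theorem controls types over a \emph{countable model}, and a countable algebraically closed field together with the full structure on the special points is precisely (an elementary submodel-sized) piece of data one can take as parameters.

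First I would assume $T \wedge SF$ is $\aleph_1$-categorical. By Theorem \ref{str kei}, since $T\wedge SF$ has the amalgamation property for countable models, there are only countably many complete $n$-types over a countable model realisable in a model of $T\wedge SF$. Fix a countable algebraically closed $L \subseteq \C$ and a strongly $G$-independent tuple $x \in \C^n$ with $x \cap L = \emptyset$; set $K := L(x)$. Observe that a countable model of $T\wedge SF$ contains $S \cup \Q(j(S))$ (which lies in $\dcl(\emptyset)$) and can be taken to contain $L$ and $x$; so types over $K$ (together with the fixed $\emptyset$-definable special data) are among the types over a countable model, hence there are only countably many of them realisable in a model of $T\wedge SF$.

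Next, by Proposition \ref{type in etale realise SF2}, for every $u$ in the fibre $\hat{j}^{-1}(x) \subset \hat{\cU}$ the type $\tp(u/K)$ is realised in some model of $T \wedge SF$ (here one uses that $L$, being strongly $G$-independent from $x$ in the relevant sense, plays the role of the parameter set $L$ in that proposition; if necessary one passes to the algebraic-closure version, which is exactly the reason Condition \ref{condition2} phrases $L$ as algebraically closed). By $\aleph_1$-categoricity all these types are realised in the unique model of size $\aleph_1$. By Proposition \ref{number types} the number of such types in the fibre is either finite or $2^{\aleph_0}$; since it is at most countable by the previous paragraph, it must be finite. Finally, by Proposition \ref{pro imgtyp} these types are in bijection with the cosets of the image of $\rho_x : G_K \lora \pi_1'^n$, so that image has finite index, which is Condition \ref{condition2}.

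The main obstacle — really the only point requiring care rather than transcription — is checking that the hypotheses of Proposition \ref{type in etale realise SF2} are genuinely met when the parameter set is an infinite countable algebraically closed field $L$ rather than a finite set of constants: one must ensure $L$ is relatively strongly $G$-independent from $\hat{j}(x)$, i.e. that no modular relation $\Phi_N$ vanishes between a coordinate of $\hat j(x)$ and an element of $L$, which follows from strong $G$-independence of $x$ over $L$ together with $x \cap L = \emptyset$ and the algebraic closedness of $L$. Everything else is a direct application of \ref{str kei}, \ref{number types}, and \ref{pro imgtyp}, exactly as in the proof of \ref{thm n1}.
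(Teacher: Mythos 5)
Your proposal is correct and takes essentially the same approach as the paper, which simply invokes the amalgamation property to upgrade from Keisler's theorem \ref{keislerthm} to the strong form \ref{str kei} and then says ``by the same proof as in \ref{thm n1}.'' Your extra paragraph checking that $L$ being algebraically closed with $x \cap L = \emptyset$ genuinely delivers the relative strong $G$-independence needed for \ref{type in etale realise SF2} is a worthwhile unpacking of a detail the paper glosses over.
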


\section{Sufficient conditions}\label{secsuf}

In this section we apply the theory of quasiminimal excellent classes to show that the conditions of the last section are also sufficient for categoricity. The main technical condition of quasiminimality is the following:

%Three things:

%\begin{itemize}
%\item $j$ values are in definable bijection with cyclic subgroups
%\item The action of the fundamental group $\psl2(\Z / N \Z)$ on the cover $\Gamma_0(N) \backslash \H$ is definable (in general for an galois finite cover)
%\item Galois moves cyclic subgroups iff it moves the corresponding j values

%\end{itemize}

%\begin{rem}
%There are two subtleties with the above lemma. Firstly, a similar looking statement for a universal cover of an elliptic curve doesn't hold since that theory %doesnt appear to have an equivalent statement to \ref{lem_locus_irreducible} and so models don't can't realise finite partial subtypes like models of %$\Th(j)\wedge \textnormal{SF}$ do. The second subtlety in the above is that $\sigma$ can be any element of $ \psl2(\widehat{\Z})$ and not just something in the %image of $\SL_2 (\Z)$. This is because for all $N$, the statement `if $j(\tau)=j(\tau')$ then $j(G_N \tau' ) \in \psl2( \Z / N \Z) \cdot j(G_N \tau)$' is in $\Th(j)$, %however the statement `if $j(\tau)=j(\tau')$ then $j(G \tau' ) \in (\sl2 (\Z) \cap \psl2( \widehat{\Z}) \cdot j(G_N \tau)$' is not apriori in $\Th(j)\wedge \textrm{SF}$. 
%\end{rem}

\begin{lem}[$\aleph_0$-homogeneity over $\dcl(\emptyset)$]\label{homo} Suppose that Condition \ref{condition1} holds. 
Let
$$\cM = \langle F,H,j  \rangle \textrm{ and } \cM' = \langle F,H',j'  \rangle$$
be models of $T\wedge SF$. Let $L \subset \C$ be a finitely generated extension of $\Q$, and let $h=(h_1,...,h_n ) \subset H$ and $h' \subset H'$ be such that $\qftp(h/ L)=\qftp(h'/L)$. Then for all non-special $\tau \in H$ there is $\tau' \in H'$ such that $\qftp(h\tau /L)=\qftp(h'\tau' /L)$.
\end{lem}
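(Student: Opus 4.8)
The plan is to extend the partial isomorphism sending $h$ to $h'$ by the single element $\tau$, reducing via quantifier elimination (\ref{QE}, \ref{cor stab}) and \ref{nicesystem2} to matching quantifier-free field-sort data, and using Condition \ref{condition1} to control the relevant Galois representations. Write $\cS:=\Q(j(S))$. First dispose of the trivial case: if $\tau=gh_i$ for some $i$ and $g\in G$, take $\tau':=gh_i'$; then $\qftp(h'\tau'/L)$ is determined by $\qftp(h'/L)$ together with the atomic fact $\tau'=gh_i'$, which matches the corresponding description of $h\tau$. So assume $\tau$ is non-special with $\tau\notin Gh_i$ for all $i$, so that $(h,\tau)$ is $G$-independent, hence --- as $\cM\models SF$ --- strongly $G$-independent; thus $x:=(j(h_1),\dots,j(h_n),j(\tau))$ is a strongly $G$-independent tuple. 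Put $z:=j(\tau)$, and let $\theta$ be the compatible system of field isomorphisms $\cS(L)(p_N(h))\cong\cS(L)(p_N(h'))$ furnished by $\qftp(h/L)=\qftp(h'/L)$ together with \ref{nicesystem2}. By \ref{nicesystem2} it then suffices to find $\tau'\in H'$ with $\Loc(p_N(h'\tau')/\cS(L))=\theta\big(\Loc(p_N(h\tau)/\cS(L))\big)$ for every $N$.

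Next choose $z':=j'(\tau')$: it must realise the $\theta$-transport of $\qftp_{\cF}(z/\cS(L)(j(Gh)))$. Such a $z'\in F$ exists since $F$ is algebraically closed, and --- when $z$ is transcendental over $\cS(L)(j(Gh))$ --- since $\trdeg(F/\cS(L)(j'(Gh')))=\trdeg(F/\cS(L)(j(Gh)))\ge 1$, the equality holding because $\qftp(h/L)=\qftp(h'/L)$ preserves transcendence degree and $\cS/\Q$ is algebraic. One checks that $z'$ is non-special, distinct from $0$ and $1728$, and strongly $G$-independent from $j'(h')$, these being quantifier-free conditions over $\dcl(\emptyset)$ preserved by $\theta$. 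In the transcendental case we are then essentially done: each $Z_N$ is a geometrically irreducible curve (\ref{etdef}), so the locus of $p_N(\tau)$ over $\cS(L)(p_N(h))$ is forced to be all of $Z_N$; the same holds for any lift $\tau'\in(j')^{-1}(z')$ of $z'$ over $\cS(L)(p_N(h'))$, and the loci of $p_N(h'\tau')$ over $\cS(L)$ therefore match those of $p_N(h\tau)$ under $\theta$.

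The substantial case is when $z$ is algebraic over $\cS(L)(j(Gh))$. Then each $\Loc(p_N(\tau)/\cS(L)(p_N(h)))$ is a single closed point of $Z_N$; the compatible system of these is a point of the $\pi_1'$-torsor $\hat{j}^{-1}(z)$, and by \ref{progal} and \ref{pro imgtyp} its Galois conjugates --- equivalently, the possible types over $h$ and $L$ of a lift of $z$ --- biject with the cosets in $\pi_1'$ of the image of the associated Galois representation on $G_K$, $K:=L(j(S),x)$. Condition \ref{condition1}, applied to the strongly $G$-independent tuple $x$ with $L$ as given, says this image is of finite index; being the continuous image of the profinite group $G_K$ it is closed, hence open. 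Now $(j')^{-1}(z')\subseteq H'$ is a single $\Gamma$-orbit by the axiom $SF$, and $\Gamma=\psl2(\Z)$ sits densely in $\pi_1'=\varprojlim_N\Gamma/\Gamma_N$; hence the $\Gamma$-orbit of $(p_N(\tau_0'))_N$, for any $\tau_0'\in(j')^{-1}(z')$, meets every coset of that open subgroup. Therefore every type of a lift of $z'$ over $h'$ and $L$ --- in particular the $\theta$-transport of $\qftp((h,\tau)/\cS(L))$ --- is realised by some $\tau'=\gamma\tau_0'\in H'$; this $\tau'$ is non-special and $G$-independent from $h'$ since $z'$ is non-special and strongly $G$-independent from $j'(h')$, and by \ref{nicesystem2} and the construction $\qftp(h'\tau'/L)=\qftp(h\tau/L)$ after identifying by $\theta$.

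I expect the main obstacle to be a bookkeeping point concealed in the last paragraph: the relative quantifier-free type of $\tau'$ over $h'$ genuinely involves the fields $\cS(L)(p_N(h'))$, whose union $\cS(L)(j'(Gh'))$ is an \emph{infinite} algebraic extension of the field $\cS(L)(j'(h'))$ over which Condition \ref{condition1} is phrased, so one must check that restricting the representation to $G_{\cS(L)(j'(Gh'),z')}$ still leaves the image of finite index in $\pi_1'^{\,n+1}$. I would handle this either by a direct argument or by invoking the explicit openness statements established in \S\ref{secgalrep} when Condition \ref{condition1} is verified; the remaining ingredients --- quantifier elimination, the cocycle/torsor formalism of \S\ref{secgalpro}, and the density of $\Gamma$ in $\pi_1'$ --- are already available.
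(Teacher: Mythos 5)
Your proposal follows essentially the same route as the paper's proof: reduce to the $G$-independent case, translate via \ref{nicesystem2} into matching field types over $\cS(L)(j(Gh))$, invoke Condition \ref{condition1} to obtain finite-index control of the relevant representation in $\pi_1'$, and realise the desired type in $\cM'$ using the surjectivity of $p_N'$ onto $Z_N$. Your appeal to the density of $\Gamma$ in $\pi_1'$ is the same fact in a different guise, for which the paper cites \ref{realise} instead. Your transcendental/algebraic split is unnecessary: in the transcendental case $\rho_{j(\tau)}\colon G_K \to \pi_1'$ is simply surjective --- which is exactly what the irreducibility of each $Z_N$ says --- and this is a special case of finite index, so the algebraic-case argument already subsumes it.

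The point you flag as a ``bookkeeping point concealed in the last paragraph'' is in fact the real content of the argument, and your framing of it is off. It is not true, nor is it needed, that the image of $G_{\cS(L)(j(Gh),j(\tau))}$ under $\rho_x$ has finite index in $\pi_1'^{n+1}$: the first $n$ coordinates of that restriction are trivial, since the group fixes $j(Gh)$ pointwise. What \ref{nicesystem2} actually requires is that $\rho_{j(\tau)}(G_K)$ have finite index in the \emph{single} factor $\pi_1'$, where $K := L(j(S),j(Gh_1),\dots,j(Gh_n),j(\tau))$, and this is exactly how the paper sets up the proof, defining $K$ to be this field from the outset. To obtain it from Condition \ref{condition1}: set $K_0 := L(j(S),j(h),j(\tau))$ and let $\rho_{j(h)}\colon G_{K_0}\to\pi_1'^n$ be the first $n$ coordinates of $\rho_x$. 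Since the coordinates of every point in every fibre $\hat{j}^{-1}(j(h_i))$ lie in $j(Gh_i)$, one has $G_K = \ker\bigl(\rho_{j(h)}\bigr)$, hence $\rho_x(G_K) = \rho_x(G_{K_0})\cap\bigl(\{1\}^n\times\pi_1'\bigr)$. By Condition \ref{condition1}, $\rho_x(G_{K_0})$ has finite index in $\pi_1'^{n+1}$, and therefore the intersection has finite index in $\{1\}^n\times\pi_1'\cong\pi_1'$, because for a finite-index subgroup $H$ of a group $A\times B$ the natural map $B/(H\cap B)\hookrightarrow (A\times B)/H$ is injective. The paper leaves this elementary deduction implicit; you should spell it out rather than defer it.
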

\begin{proof}
The idea is that by Condition \ref{condition1}, all of the information in $\qftp(h \tau /L)$ is contained in the field type of a finite subset of the Hecke orbit, and we can find $\tau' \in H'$ which contains this `finite' amount of information by \ref{realise} (so the theory is atomic up to the failure of atomicity for $ACF_0$). Note that in general $\tp(ab)$ contains the same information as $\tp(a)\cup \tp(b / a)$. \newline

We may assume that the $h_i$ and $\tau$ are $G$-independent. Let
$$K:= L(j(S),j(G h_1),...,j(G h_n) ,j(\tau) ).$$
By Condition \ref{condition1} there exists an $N_0$, and a cover
$$Z_{N_0} \ra Z_1$$
 such that every element of $ \pi_1'$ fixing $Z_{N_0}$ 
%(and therefore the whole fibre $p^{-1}(j(\tau))$ 
belongs to the image of $G_K$ under the representation
$$\rho_{j(\tau)}: G_K \ra \pi_1'.$$
By \ref{realise} we can find $\tau' \in H'$ such that $p_{N_0} (\tau) = p_{N_0}'( \tau')$ and the result follows from \ref{nicesystem2}.

\end{proof}

The same argument gives the following:
\begin{lem}[$\aleph_0$-homogeneity over countable models]\label{homo2}
Suppose that Condition \ref{condition2} holds. Let
$$\cM = \langle F,H,j  \rangle \textrm{ and } \cM' = \langle F,H',j'  \rangle$$
be models of $T\wedge SF$. Let $L \subset \C$ be a finitely generated extension of a countable algebraically closed field and let $h=(h_1,...,h_n )\subset H$ and $h' \subset H'$ be such that $\qftp(h/L)=\qftp(h'/L)$. Then for all non-special $\tau \in H$ there is $\tau' \in H'$ such that $\qftp(h\tau /L)=\qftp( h' \tau' /L)$.
\end{lem}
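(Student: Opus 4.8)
The plan is to mimic the proof of Lemma \ref{homo} almost verbatim, replacing the appeal to Condition \ref{condition1} with an appeal to Condition \ref{condition2} and adjusting the base field accordingly. The point is that the two lemmas differ only in which field plays the role of the ``small'' parameter set: in \ref{homo} it is a finitely generated extension of $\Q$, while here $L$ is a finitely generated extension of a countable algebraically closed field $L_0$. The algebraic input --- finiteness of the index of the image of $\rho_{j(\tau)}$ in $\pi_1'$ --- is exactly what Condition \ref{condition2} supplies once we note that $K := L(j(\tau), j(G h_1), \dots, j(G h_n))$ is of the form $L_1(x)$ with $L_1$ a countable algebraically closed field (take $L_1 = \acl(L)$ inside $\C$) and $x = j(\tau)$ a strongly $G$-independent coordinate over $L_1$ (here we use that in models of $T \wedge SF$, $G$-independence and strong $G$-independence coincide, as recorded in Definition \ref{defgindep}, together with the fact that $\tau$ is chosen $G$-independent from the $h_i$ and from $L$).

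Concretely, I would proceed as follows. First, as in \ref{homo}, reduce to the case where $h_1, \dots, h_n, \tau$ are pairwise $G$-independent (otherwise $\tau$ lies in $\dcl$ of the $h_i$ over the special points and there is nothing to prove). Next, set $L_1 := \acl(L) \cap \C$, a countable algebraically closed field containing $L$, and put $$K := L_1\bigl(j(S), j(G h_1), \dots, j(G h_n), j(\tau)\bigr).$$ Observe that $j(\tau)$ is strongly $G$-independent from the rest of the data, so Condition \ref{condition2} applies (with $x$ the appropriate strongly $G$-independent tuple and the base a countable algebraically closed field) and yields that $\rho_{j(\tau)}(G_K)$ has finite index in $\pi_1'$. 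Hence there is an $N_0$ and a cover $Z_{N_0} \to Z_1$ such that every element of $\pi_1'$ acting trivially on $Z_{N_0}$ already lies in $\rho_{j(\tau)}(G_K)$. Finally, apply Proposition \ref{realise} to find $\tau' \in H'$ with $p_{N_0}(\tau) = p_{N_0}'(\tau')$ over $L$, and conclude by Proposition \ref{nicesystem2} that $\qftp(h\tau / L) = \qftp(h'\tau'/L)$, since the quantifier-free type over $L$ is determined by the field types of the $p_N$-images and everything above $N_0$ is pinned down torsor-theoretically by the finite-index statement.

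There is essentially no new obstacle here: the only thing to be careful about is bookkeeping with the base field, namely checking that replacing ``finitely generated over $\Q$'' by ``finitely generated over a countable algebraically closed field'' still leaves the relevant tuple strongly $G$-independent over that base and still puts $K$ in the exact shape demanded by Condition \ref{condition2} (a countable algebraically closed field, adjoined a strongly $G$-independent tuple). Since $L \cap \{\text{coordinates of } j(\tau)\} = \emptyset$ is guaranteed by the $G$-independence of $\tau$ from $L$, and enlarging $L$ to its algebraic closure does not disturb this, the hypotheses of \ref{condition2} are met, and the argument of \ref{homo} goes through word for word. In the write-up I would simply say ``the same argument as in \ref{homo}, using \ref{condition2} in place of \ref{condition1},'' and spell out only the identification of $K$ with an instance of the field appearing in \ref{condition2}.
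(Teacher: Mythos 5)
Your proposal matches the paper's own treatment: the paper simply writes ``The same argument gives the following'' after the proof of Lemma~\ref{homo}, replacing the appeal to Condition~\ref{condition1} by one to Condition~\ref{condition2}. The bookkeeping you carry out (passing to the algebraic closure of $L$ so that $K$ takes the form $L_1(x)$ demanded by Condition~\ref{condition2}, then running the same $N_0$-and-\ref{realise}-and-\ref{nicesystem2} chain) is exactly the adaptation implicit in the paper's terse phrasing.
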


\begin{thm}\label{thmcat}
Suppose that Conditions \ref{condition1} and \ref{condition2} hold. Then the theory of the $j$-function $T\wedge SF \wedge \trdeg(F) \geq \aleph_0$ has a unique model (up to isomorphism) in each infinite cardinality. 
\end{thm}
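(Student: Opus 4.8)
The plan is to invoke the machinery of quasiminimal excellent classes, as developed by Zilber and streamlined in \cite{bays2012quasiminimal}, which reduces categoricity to a handful of checkable properties of the class of models of $T \wedge SF \wedge \trdeg(F) \geq \aleph_0$. We work with the pregeometry given by model-theoretic algebraic closure (which, by Corollary \ref{cor stab} and the description of types in \ref{nicesystem2}, restricts on the covering sort to the trivial $G$-pregeometry, while on the field sort it is the usual $\acl$ of $ACF_0$). The key point from \cite{bays2012quasiminimal} is that excellence is automatic, so it suffices to verify: (i) the closure operator defines a quasiminimal pregeometry (countable closures, exchange, uniformity); (ii) $\aleph_0$-homogeneity over $\emptyset$ (equivalently over $\dcl(\emptyset)$); and (iii) $\aleph_0$-homogeneity over countable closed submodels. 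These last two are exactly Lemmas \ref{homo} and \ref{homo2}, which hold under Conditions \ref{condition1} and \ref{condition2} respectively.

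First I would assemble the quasiminimality of the pregeometry. Countability of $\cl(X)$ for countable $X$ follows because a non-special $\tau$ is determined over a countable set by a single quantifier-free type, which in turn (by \ref{nicesystem2}) is pinned down by the $ACF_0$-type of the countably many coordinates $p_N(\tau)$ over a countable field, and each such locus contributes only countably many points; the special points form a countable $\dcl(\emptyset)$-definable set. Exchange and the uniformity (``quasiminimality'': every definable set is countable or co-countable uniformly) transfer from $ACF_0$ via the quantifier elimination of Corollary \ref{cor stab} and the description of types. Then I would record that $\dcl(\emptyset) = S \cup \Q(j(S))$ in every model, and that by \ref{pro isom empty} any two models have isomorphic $\emptyset$-generated substructures — this gives the base case of the back-and-forth.

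Next I would run the back-and-forth: given two models $\cM, \cM'$ of the theory of the same cardinality $\kappa$, start from the fixed isomorphism of $\dcl(\emptyset)$-substructures, and extend it along a well-ordering of $\cM$ using Lemma \ref{homo} to handle extensions over a finitely-generated-over-$\Q$ base (and Lemma \ref{homo2} for extensions over an already-constructed countable closed submodel, which is what excellence/the amalgamation over independent systems needs, but \cite{bays2012quasiminimal} lets us bypass the higher-dimensional amalgamation). Crucially one must check that a partial isomorphism between closed subsets extends first to include a new field-sort element (using that both models have the same, $\aleph_0$-saturated-in-the-relevant-sense, $ACF_0$ reduct of transcendence degree $\geq \aleph_0$, which is why the transcendence-degree axiom is needed to rule out the small exceptional models) and then to include the fibre $\hat j^{-1}(z)$ over a new transcendental $z$, where Conditions \ref{condition1}/\ref{condition2} guarantee the fibre splits into the same finitely many orbits in both models.

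The main obstacle I expect is bookkeeping the interaction between the two sorts in the back-and-forth, specifically ensuring at each stage that the field-sort part of the partial isomorphism is between algebraically closed subfields and that adding a new covering element only forces a ``finite'' amount of new field data — this is precisely the content of Lemmas \ref{homo} and \ref{homo2}, so once those are granted the remaining work is organizational. The only genuinely non-formal input beyond the cited $\cite{bays2012quasiminimal}$ result is the verification that the crossover case — extending over a countable \emph{closed} submodel rather than a finitely generated field — is covered by Condition \ref{condition2}; I would note that since $T \wedge SF$ has the amalgamation property (proved above) and excellence follows from quasiminimality by \cite{bays2012quasiminimal}, the class is a quasiminimal excellent class, hence categorical in every uncountable cardinal, and the count of countable models (together with the transcendence-degree axiom fixing the unique countable model) handles $\aleph_0$.
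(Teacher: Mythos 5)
Your overall strategy is the paper's: verify that the standard model with a suitable pregeometry is a quasiminimal pregeometry structure (in the sense of \cite{bays2012quasiminimal}), use Lemmas \ref{homo} and \ref{homo2} for the homogeneity conditions, and invoke the main theorem of \cite{bays2012quasiminimal} to get excellence for free. However, your identification of the pregeometry is off in a way that matters. You take the pregeometry to be model-theoretic $\acl$ in the two-sorted structure, which on the covering sort is (as you say) the trivial $G$-pregeometry: $\acl^{\cM}(\tau) \cap H = G\tau \cup S$. But the paper's closure operator is $\cl := j^{-1} \circ \acl \circ j$, which is much larger on the covering sort: it is the full preimage under $j$ of the field-theoretic algebraic closure of $j(\tau)$. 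The distinction is not cosmetic. Closed sets in the paper's $\cl$ are exactly the preimages of algebraically closed subfields, and hence are (two-sorted) models of $T \wedge SF$; closed sets in the trivial $G$-pregeometry are merely $G$-closed subsets of $H$ whose image under $j$ is far from being an algebraically closed field, so they are not substructures that are models, and the back-and-forth over ``countable closed submodels'' (which is what Lemma \ref{homo2} and condition QM5 in \cite{bays2012quasiminimal} are about) cannot even be stated. Your later remark that one must ``ensure at each stage that the field-sort part of the partial isomorphism is between algebraically closed subfields'' is exactly the requirement that forces the larger $\cl$, so the argument as you have phrased it is internally inconsistent on this point.

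The second, smaller issue is the passage to the full class of models of $T \wedge SF \wedge \trdeg(F) \ge \aleph_0$. The theorem of \cite{bays2012quasiminimal} gives uncountable categoricity of the quasiminimal class $\cK(\C_j)$ \emph{generated by the standard model}, but one still has to show that $\cK(\C_j)$ coincides with the class $\cK$ of all models of the $\cL_{\omega_1,\omega}$-sentence, and in particular that $\cK$ has a unique countable model. The paper does this by observing that $\cK$ with closed embeddings is an abstract elementary class with L\"owenheim–Skolem number $\aleph_0$, so every member of $\cK$ is a direct limit of copies of the (unique) countable model, and all such embeddings are closed with respect to $\cl$, forcing $\cK = \cK(\C_j)$. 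Your proposal waves at this (``the count of countable models \dots handles $\aleph_0$'') but does not supply the argument, and without it you have only shown categoricity for the possibly smaller class $\cK(\C_j)$, not for the class of models of the stated $\cL_{\omega_1,\omega}$-theory.
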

\begin{proof}
We appeal to the theory of quasiminimal excellence as in \cite[$\S1$]{kirby2010quasiminimal}. We define a closure operator $\cl:=j^{-1}\circ \acl \circ j$ which (by properties of $\acl$) is clearly a pregeometry with the countable closure property such that closed sets are models of $T\wedge  SF$. Lemma \ref{homo} gives us Condition II.2 (of \cite[$\S1$]{kirby2010quasiminimal}), and all other conditions of quasiminimality (i.e. $0$, I.1, I.2, I.3 and II.1) are then easily seen to be satisfied. $\aleph_1$-categoricity of $T \wedge SF$ follows (by \cite[Corollary 2.2]{kirby2010quasiminimal} for example). \newline

By \ref{condition1} and \ref{condition2}, the standard model $\C_j$ with the pregeometry $\cl$ is a quasiminimal pregeometry structure (as in \cite[\S2]{bays2012quasiminimal}) and  $\cK(\C_{j})$ is a quasiminimal class, so by the main result of \cite{bays2012quasiminimal}  (Theorem 2.2), $\cK(\C_j)$ has a unique model (up to isomorphism) in each infinite cardinality, and in particular  $\cK(\C_{j})$ contains a unique structure $\cC_{j}$ of cardinality $\aleph_0$. Let $\cK$ be the class of models of $T\wedge  SF \wedge  \trdeg \geq \aleph_0$. It is clear that $\cK(\C_j) \subseteq \cK$ since $\C_j \in \cK$, and to prove the theorem we want to show that $\cK = \cK(\C_{j})$. \newline

By \ref{condition1} and \ref{condition2}, $\cK$ has a unique model $\cM$ of cardinality $\aleph_0$. Since $\cK$ is the class of models of an $\cL_{\omega_1,\omega}$-sentence, $\cK$ together with closed embeddings is an abstract elementary class with Lowenheim-Skolem number $\aleph_0$, so by downward Lowenheim-Skolem (in $\cK$), everything in $\cK$ is a direct limit (with elementary embeddings as morphisms) of copies of the unique model of cardinality $\aleph_0$. Finally, all the embeddings in $\cK$ are closed with respect to the pregeometry, so $\cK =\cK(\cM)=\cK(\cC_j)= \cK(\C_{j})$.
\end{proof}

Define the \textit{arithmetic standard model} to be
$$\bar{\Q}_j:= \langle \langle j^{-1}(\bar{\Q}) , G \rangle , \langle \bar{\Q} , + , \cdot , 0,1 \rangle , j|_{j^{-1}(\bar{\Q})} \rangle.$$
We also note the following which just follows from the back and forth argument in \cite[$\S1$]{kirby2010quasiminimal} (the argument is also given in \ref{app quasi}).
\begin{thm}
Suppose that Condition \ref{condition1} holds. Then $\bar{\Q}_j$ is the unique (countable) model of the theory $T\wedge SF\wedge \trdeg(F) =0$.
\end{thm}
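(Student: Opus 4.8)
The strategy is to run the same quasiminimal back-and-forth argument used for Theorem~\ref{thmcat}, but now restricted to the algebraic points, using only Condition~\ref{condition1} (the version over finitely generated extensions of $\Q$). First I would observe that $\bar{\Q}_j$ is indeed a model of $T \wedge SF \wedge \trdeg(F)=0$: the field sort is $\bar\Q$, which is an algebraically closed field of characteristic zero of transcendence degree $0$; since $\Q(j(S)) \subseteq \bar\Q$, the constants are present; the axiom $SF$ holds because the fibres of $j$ restricted to $j^{-1}(\bar\Q)$ are still $\sl2(\Z)$-orbits; and each $\Psi_g$ holds because $Z_g$ is defined over $\Q(j(S)) \subseteq \bar\Q$, so $p_g$ maps $j^{-1}(\bar\Q)$ onto $Z_g(\bar\Q)$ (surjectivity onto $\bar\Q$-points follows from surjectivity onto $\C$-points together with the fact that a dominant morphism of $\bar\Q$-varieties is surjective on $\bar\Q$-points). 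Hence $\bar\Q_j \in \cK(\C_j)$ in the notation of Theorem~\ref{thmcat}.

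Next I would establish uniqueness via a direct back-and-forth between any two models $\cM, \cM'$ of $T \wedge SF \wedge \trdeg(F)=0$. Both have field sort (isomorphic to) $\bar\Q$ by the characterisation of algebraically closed fields, and by the Remark following Corollary~\ref{cor stab} we have $\dcl(\emptyset) = S \cup \Q(j(S))$ in each, giving an isomorphism $\dcl_{\cM}(\emptyset) \cong \dcl_{\cM'}(\emptyset)$ as the starting point. Now I enumerate the covering sort of $\cM$ (which is countable, since $\bar\Q$ is countable and each fibre of $j$ is a countable $\sl2(\Z)$-orbit) and build the isomorphism one element at a time, alternating sides. At each stage we have a partial isomorphism $\sigma$ with finitely generated domain and wish to extend it to accommodate a new non-special $\tau \in H$ (special points are already handled by the $\dcl(\emptyset)$-isomorphism, and $G$-dependent elements come for free). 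Write the domain as $D$, set $A = D \cap F$, $B = D \cap H$; then $L := \Q(A, j(GB)) \subseteq \bar\Q$ is a finitely generated extension of $\Q$, and $\qftp(\tau / D)$ is determined by $\bigcup_N \qftp_{\cF}(p_N(\tau)/\dcl(L)\cap F)$ by Proposition~\ref{nicesystem2}. By Lemma~\ref{homo} ($\aleph_0$-homogeneity over $\dcl(\emptyset)$, which holds under Condition~\ref{condition1}) there is $\tau' \in H'$ realising the corresponding type over the image of $D$, and $\sigma$ extends. Symmetrising gives the back-and-forth, hence an isomorphism $\cM \cong \cM'$.

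Finally, since $\bar\Q_j$ is itself such a model, it is the unique countable model, which is what we want; there are no uncountable models because $\trdeg(F)=0$ forces $F \cong \bar\Q$ and then the covering sort is forced to be countable as well. \textbf{The main obstacle} is really just verifying carefully that Lemma~\ref{homo} can be applied verbatim here: its proof uses Condition~\ref{condition1} to find, via Proposition~\ref{realise}, a $\tau'$ in $H'$ matching $p_{N_0}(\tau)$ for some level $N_0$, and one has to check that $H'$, being the covering sort of a model of $T$ with field sort $\bar\Q$, still surjects onto the $\bar\Q$-points of $Z_{N_0}$ — which it does by the axiom $\Psi_{g}$ — and that the resulting type, being over the finitely generated field $L$, is actually realised rather than merely finitely satisfiable. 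This last point is where countability of $\bar\Q$ and the explicit torsor structure of the fibres (Proposition~\ref{progal} together with Condition~\ref{condition1}) do the work: once the image of $\rho_{j(\tau)}$ over $K = L(j(S), j(Gh), j(\tau))$ contains everything fixing a suitable level $Z_{N_0}$, matching $p_{N_0}$ suffices to match the full type by Proposition~\ref{nicesystem2}, and $\aleph_0$-saturation with respect to non-special types (which holds automatically here since the models are the full algebraic covers) provides the realisation.
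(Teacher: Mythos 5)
Your proposal takes essentially the same route as the paper, which simply cites the back-and-forth argument of Kirby (also reproduced in the appendix): verify that $\bar{\Q}_j$ is a model, use the Remark after Corollary~\ref{cor stab} to start with an isomorphism on $\dcl(\emptyset)$, then extend step by step via Lemma~\ref{homo}, which is available under Condition~\ref{condition1}. You have just spelled out the details the paper leaves implicit.

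One small slip worth flagging: you set $L := \Q(A, j(GB))$ and call it a finitely generated extension of $\Q$, but if $B$ is a nonempty tuple of non-special elements then $GB$ is infinite (the action of $G$ on $H\setminus S$ is free), so $j(GB)$ is an infinite set and $\Q(A, j(GB))$ is not finitely generated. The correct reading of Lemma~\ref{homo} is to take the finitely generated field as $L := \Q(A)$, let $h = B$ be the already-matched covering-sort elements, and let $\tau$ be the new element; the lemma then produces $\tau'$ with $\qftp(h\tau/L) = \qftp(h'\tau'/L)$, and the large field $K = L(j(S), j(Gh_1),\dots, j(Gh_n), j(\tau))$ arises only inside the proof of the lemma, not as the parameter set of the back-and-forth. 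Your appeal to $\Psi_{g}$ to realise the finite-level point in $Z_{N_0}(\bar\Q)$ is exactly the right substitute for $\aleph_0$-saturation here, and is not circular since it uses only the axioms of $T$, not the conclusion. With that correction the argument is sound and matches the paper's intent.
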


\section{Equivalence results}\label{secequiv}

We may summarise everything so far with the following theorems, which demonstrate a tight interaction between model theory and artithmetic geometry:

\begin{thm}
The theory $T\wedge SF \wedge \trdeg(F) \geq \aleph_0$ is $\aleph_1$-categorical iff conditions \ref{condition1} and \ref{condition2} hold.
\end{thm}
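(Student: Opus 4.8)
The plan is to assemble the equivalence from results already established; the only genuinely new observation needed is that adjoining the axiom $\trdeg(F)\geq\aleph_0$ does not change the class of models in cardinality $\aleph_1$.

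First I would treat the ``only if'' direction. Suppose $T\wedge SF\wedge\trdeg(F)\geq\aleph_0$ is $\aleph_1$-categorical. I claim this is equivalent to $T\wedge SF$ being $\aleph_1$-categorical. Indeed, if $\cM\models T\wedge SF$ has cardinality $\aleph_1$, then since $j$ is surjective and, by $SF$, its fibres are $\sl2(\Z)$-orbits and hence countable, the field sort $F$ has cardinality $\aleph_1$; but an algebraically closed field of characteristic zero of cardinality $\aleph_1$ has transcendence degree $\aleph_1$ over $\Q$, so $\cM\models\trdeg(F)\geq\aleph_0$ automatically. Thus the two sentences have exactly the same models of cardinality $\aleph_1$, and $\aleph_1$-categoricity of one transfers to the other. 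Applying Theorems \ref{thm n1} and \ref{thm n2} to $T\wedge SF$ then yields Conditions \ref{condition1} and \ref{condition2} respectively.

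For the ``if'' direction, assume Conditions \ref{condition1} and \ref{condition2} hold. Then Theorem \ref{thmcat} asserts that $T\wedge SF\wedge\trdeg(F)\geq\aleph_0$ has a unique model, up to isomorphism, in each infinite cardinality; in particular it has a unique model of cardinality $\aleph_1$, i.e. it is $\aleph_1$-categorical.

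There is essentially no obstacle here beyond the short verification above: all of the real content has already been carried out, in Theorems \ref{thm n1} and \ref{thm n2} (necessity, via Keisler's theorem and its amalgamation-strengthened form, Theorems \ref{keislerthm} and \ref{str kei}) and in Theorem \ref{thmcat} (sufficiency, via the theory of quasiminimal excellent classes). The one point worth recording explicitly, and the closest thing to a subtlety, is precisely the cardinality-$\aleph_1$ model equivalence used above, since the $\aleph_1$-categoricity hypotheses in \ref{thm n1} and \ref{thm n2} are phrased for $T\wedge SF$ rather than for the sentence appearing in the present statement.
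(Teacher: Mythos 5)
Your proposal is correct and follows essentially the same route the paper intends: the theorem is stated as a summary of Theorems \ref{thm n1}, \ref{thm n2} and \ref{thmcat}, and the only bookkeeping needed is exactly the observation you make, namely that $T\wedge SF$ and $T\wedge SF\wedge\trdeg(F)\geq\aleph_0$ have the same models of cardinality $\aleph_1$ (since $SF$ forces $j$-fibres to be countable, a model of size $\aleph_1$ must have $|F|=\aleph_1$ and hence $\trdeg(F)=\aleph_1$).
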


\begin{thm}
Assuming the continuum hypothesis, the standard model $\C_j$ is the unique model of cardinality continuum of the theory $T\wedge SF\wedge \trdeg(F) \geq \aleph_0$ iff conditions \ref{condition1} and \ref{condition2} hold.
\end{thm}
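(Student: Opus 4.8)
The plan is to combine the previously established equivalence between $\aleph_1$-categoricity and Conditions~\ref{condition1}–\ref{condition2} with the strong form of Keisler's theorem (\ref{str kei}) and the quasiminimal excellence machinery of \ref{thmcat}. The statement to be proved is that, under CH, the standard model $\C_j$ is \emph{the} model of cardinality continuum of $T\wedge SF\wedge\trdeg(F)\geq\aleph_0$ if and only if Conditions~\ref{condition1} and \ref{condition2} hold.

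For the \textbf{($\Leftarrow$) direction}: assume Conditions~\ref{condition1} and \ref{condition2}. By \ref{thmcat}, $T\wedge SF\wedge\trdeg(F)\geq\aleph_0$ has a unique model in every infinite cardinality; in particular it has a unique model of cardinality $2^{\aleph_0}$. Since $\C_j$ itself is a model of this theory of cardinality continuum (the field $\C$ has transcendence degree $2^{\aleph_0}\geq\aleph_0$ over $\Q(j(S))$), it must be \emph{the} model of that cardinality. Note that this half does not even require CH.

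For the \textbf{($\Rightarrow$) direction}: assume that $\C_j$ is the unique model of cardinality continuum. Under CH, $2^{\aleph_0}=\aleph_1$, so uniqueness of the model of cardinality continuum is literally $\aleph_1$-categoricity of $T\wedge SF\wedge\trdeg(F)\geq\aleph_0$. Now the argument of \ref{thm n1} and \ref{thm n2} applies verbatim: from $\aleph_1$-categoricity we deduce, via Keisler's theorem \ref{keislerthm} (for Condition~\ref{condition1}) and its strong form \ref{str kei} together with the amalgamation property (for Condition~\ref{condition2}), that only countably many appropriate types are realised, hence — using \ref{type in etale realise SF2}, \ref{number types} and the bijection \ref{pro imgtyp} between types in a fibre and cosets of the Galois image — that the relevant representations $\rho_x$ have finite index. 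This yields both conditions.

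The only genuine subtlety, and the step I would be most careful about, is the clean translation ``unique model of cardinality continuum $\Leftrightarrow$ $\aleph_1$-categorical'': this is exactly where CH is used and where one must be sure the theory really has models of every infinite cardinality up to and including $2^{\aleph_0}$ so that ``categorical in $\aleph_1$'' and ``categorical in $2^{\aleph_0}$'' coincide as hypotheses. Everything else is an invocation of the machinery already assembled above, so the proof is short.

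\begin{proof}
($\Leftarrow$) Suppose Conditions \ref{condition1} and \ref{condition2} hold. By Theorem \ref{thmcat}, $T\wedge SF\wedge\trdeg(F)\geq\aleph_0$ has a unique model (up to isomorphism) in each infinite cardinality, in particular in cardinality $2^{\aleph_0}$. Since the standard model $\C_j$ has $\trdeg_{\Q(j(S))}(\C)=2^{\aleph_0}\geq\aleph_0$, it is a model of this theory of cardinality continuum, hence is the model of cardinality continuum. (This direction does not use the continuum hypothesis.)

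($\Rightarrow$) Assume the continuum hypothesis and suppose $\C_j$ is the unique model of cardinality continuum. Under CH, $2^{\aleph_0}=\aleph_1$, so this hypothesis states precisely that $T\wedge SF\wedge\trdeg(F)\geq\aleph_0$ is $\aleph_1$-categorical. Then Theorem \ref{thm n1} gives Condition \ref{condition1} and Theorem \ref{thm n2} gives Condition \ref{condition2}.
\end{proof}
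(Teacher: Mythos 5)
Your argument is correct, and the paper gives no explicit proof for this theorem — it is stated in \S\ref{secequiv} as an immediate summary of the preceding results — so your reconstruction is exactly what is intended. Two small remarks. First, in the ($\Rightarrow$) direction you invoke Theorems \ref{thm n1} and \ref{thm n2}, which are stated for $\aleph_1$-categoricity of $T\wedge SF$, whereas the hypothesis plus CH gives you $\aleph_1$-categoricity of $T\wedge SF\wedge\trdeg(F)\geq\aleph_0$; this is harmless because any model of $T\wedge SF$ of cardinality $\aleph_1$ has an uncountable field sort over the countable base $\Q(j(S))$ and so satisfies $\trdeg(F)\geq\aleph_0$ automatically, hence the two theories have exactly the same models of cardinality $\aleph_1$ — but this deserves a sentence. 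Second, the subtlety you flagged in your plan (making sure the theory has models at every cardinality up to and including $2^{\aleph_0}$) is not really the relevant one once CH is assumed, since $\aleph_1 = 2^{\aleph_0}$ identically; the theory mismatch just described is the point to check.
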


If we do not assume CH, then (using the notion of quasiminimal pregeometry structure as in \cite{bays2012quasiminimal}) we may say the following:
\begin{thm}
The standard model $\C_j$ is a quasiminimal pregeometry structure iff conditions \ref{condition1} and \ref{condition2} hold.
\end{thm}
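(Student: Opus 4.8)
The plan is to show that the two implications follow from results already established in the excerpt, combined with the characterisation of quasiminimal pregeometry structures. First I would recall the setup: we have the closure operator $\cl := j^{-1} \circ \acl \circ j$ on the standard model $\C_j$, which by general properties of $\acl$ in $ACF_0$ is a pregeometry with the countable closure property, and whose closed sets are models of $T \wedge SF$. The claim is that $(\C_j, \cl)$ is a \emph{quasiminimal pregeometry structure} in the sense of \cite{bays2012quasiminimal} precisely when Conditions \ref{condition1} and \ref{condition2} hold.

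For the direction $(\Leftarrow)$, I would simply invoke the argument already given inside the proof of Theorem \ref{thmcat}: assuming Conditions \ref{condition1} and \ref{condition2}, Lemmas \ref{homo} and \ref{homo2} supply the two $\aleph_0$-homogeneity axioms (homogeneity over $\dcl(\emptyset)$ and over countable closed submodels), while the remaining axioms of a quasiminimal pregeometry structure --- that $\cl$ is a pregeometry, countable closure, the technical uniqueness-of-$\qftp$ over the empty set, and closure under the pregeometry of unions of chains --- are immediate from quantifier elimination (Corollary \ref{cor stab}), the description of quantifier-free types (Proposition \ref{nicesystem2}), and Proposition \ref{pro isom empty}. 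This is exactly the bookkeeping already carried out in the proof of Theorem \ref{thmcat}, so the forward-reference is legitimate.

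For the direction $(\Rightarrow)$, I would argue contrapositively, mirroring the proofs of Theorems \ref{thm n1} and \ref{thm n2} but reading them through the lens of quasiminimality rather than $\aleph_1$-categoricity. Suppose Condition \ref{condition1} fails: there is a strongly $G$-independent tuple $x \in \C^n$ and a finitely generated $L/\Q$ with $K = L(j(S),x)$ such that $\rho_x : G_K \to \pi_1'^n$ has infinite index image. By Proposition \ref{type in etale realise SF2} every type of an element $u$ of the fibre $\hat{j}^{-1}(x)$ over $K$ is realised in some model of $T \wedge SF$, by Proposition \ref{pro imgtyp} these types biject with the (infinitely many) cosets of the image, and by Proposition \ref{number types} there must then be $2^{\aleph_0}$ such types over the finite parameter set inside a model of $T\wedge SF$. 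This directly contradicts the countable-closure and $\aleph_0$-homogeneity requirements of a quasiminimal pregeometry structure: over the closure of a finite set one would have uncountably many types, so the back-and-forth that characterises such structures fails. The failure of Condition \ref{condition2} is handled identically but working over a countable algebraically closed base field, using the amalgamation property (which forces the relevant types to be realised in a single model) exactly as in Theorem \ref{thm n2}.

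The main obstacle is making the $(\Rightarrow)$ direction genuinely independent of the continuum hypothesis: one cannot route through $\aleph_1$-categoricity and Keisler's theorem as in Theorems \ref{thm n1} and \ref{thm n2}, since being a quasiminimal pregeometry structure is a statement about the pregeometry itself, not about counting models in a fixed cardinal. So the argument must pinpoint which \emph{axiom} of a quasiminimal pregeometry structure is violated --- concretely, the uniqueness of the isomorphism type of $\qftp$ of an independent tuple over a closed set of the appropriate kind, which is precisely what the finite-index condition on the Galois image encodes via Propositions \ref{pro imgtyp} and \ref{number types}. Getting this correspondence stated cleanly (independent tuples $\leftrightarrow$ strongly $G$-independent tuples in models of $T\wedge SF$, using the last sentence of Definition \ref{defgindep}) is the only delicate point; everything else is assembled from the propositions above.
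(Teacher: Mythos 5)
Your $(\Leftarrow)$ direction is correct and is exactly what the paper does: the proof of Theorem \ref{thmcat} establishes precisely that Conditions \ref{condition1} and \ref{condition2} make $(\C_j,\cl)$ a quasiminimal pregeometry structure, with Lemmas \ref{homo} and \ref{homo2} supplying the homogeneity axioms and quantifier elimination (Corollary \ref{cor stab}) supplying the rest.

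The $(\Rightarrow)$ direction has a genuine gap, and moreover your stated reason for abandoning the route through $\aleph_1$-categoricity is mistaken. You argue that one ``cannot route through $\aleph_1$-categoricity and Keisler's theorem'' because being a quasiminimal pregeometry structure is a statement about the pregeometry itself, not about counting models. But the main theorem of \cite{bays2012quasiminimal} (reproduced at the end of Appendix \ref{app bhhkk}) is a ZFC theorem: if $\C_j$ is a quasiminimal pregeometry structure then $\cK(\C_j)$ has exactly one structure of each cardinal dimension, hence is $\aleph_1$-categorical, with no appeal to CH whatsoever. So the natural route is: $\C_j$ QMPS $\Rightarrow$ (Bays et al.) $\cK(\C_j)$ is $\aleph_1$-categorical $\Rightarrow$ the class of models of $T\wedge SF\wedge\trdeg\geq\aleph_0$ is $\aleph_1$-categorical $\Rightarrow$ (Theorems \ref{thm n1}, \ref{thm n2}) Conditions \ref{condition1} and \ref{condition2} hold. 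The one step requiring care is identifying $\cK(\C_j)$ with the class of models of the theory, but this is essentially the bookkeeping already done in the proof of \ref{thmcat} (closed substructures and unions of closed chains of models of $T\wedge SF$ are again such models, and conversely any model of cardinality $\aleph_1$ is a union of a chain of closed countable elementary submodels).

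Your attempted direct argument does not close the gap. You pass from the failure of Condition \ref{condition1} to ``$2^{\aleph_0}$ such types over the finite parameter set inside a model of $T\wedge SF$,'' and then assert this ``directly contradicts the countable-closure and $\aleph_0$-homogeneity requirements.'' Two problems. First, the $2^{\aleph_0}$ types live in the fibre $\hat{j}^{-1}(x)$ of the pro-\'etale cover and are each realisable in \emph{some} model of $T\wedge SF$; but in any single model satisfying SF the fibre $j^{-1}(x_i)$ is a single $\Gamma$-orbit, hence countable, so at most countably many of these types are realised in any one model --- they cannot all sit ``inside a model of $T\wedge SF$.'' Second, even granting uncountably many types distributed across various models, this does not automatically violate an axiom QM1--QM5 for the \emph{specific} structure $\C_j$; those axioms constrain $\C_j$ and (via Bays et al.) members of $\cK(\C_j)$, and you have not shown that the models of $T\wedge SF$ realising the offending types belong to $\cK(\C_j)$. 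Without that bridge --- which is exactly what the detour through $\aleph_1$-categoricity supplies --- the contradiction is not established.
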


\section{Galois representations on Tate modules}\label{secgalrep}

In this section we show that conditions \ref{condition1} and \ref{condition2} hold, thus giving the categoricity result. So far, we have neglected the fact that $j$ realises $\A^1$ as the moduli space of elliptic curves (see for example \cite[8.4]{milnemodular}), and I have been careful not even to mention an elliptic curve in the previous sections. However, in this section we will use this extra arithmetic and geometric information, and this is where things become more interesting. \newline

Given an elliptic curve $E$ defined over a field $K$, there is a continuous Galois representation on the Tate module $T(E)$ (the inverse limit of the automorphisms of the $N$-torsion of $E$, see \ref{app tate}):
$$\rho : G_K \ra T(E) \cong \GL_2(\hat{\Z}).$$
 Taking determinants gives an exact sequence
$$\SL_2(\hat{\Z}) \hookrightarrow \GL_2(\hat{\Z}) \overset{\det}{\ra} \hat{\Z}^{\times},$$
and via the Weil pairing, we may identify $\hat{\Z}^{\times}$ with the `cyclotomic character' i.e. the action of $G_K$ on $K^{\textrm{cyc}}$ (the field obtained by adjoining the roots of unity to $K$). So we have a representation
$$\rho : G_{K^{\textrm{cyc}}} \ra \sl2(\hat{\Z}),$$
and for a product of elliptic curves over $K$ this gives a representation
$$\rho : G_{K^{\textrm{cyc}}} \ra \sl2(\hat{\Z}) \times \cdots \times \sl2(\hat{\Z}).$$

There is a tight relationship between the images of these representations and our conditions for categoricity \ref{condition1} and \ref{condition2}. By \ref{cong},  $\Gamma_N$ is a congruence subgroup of the modular group $\Gamma$, and therefore the curve $Z_N$ is the moduli space of elliptic curves, with some additional data regarding the $N$-torsion.

\subsection{The curve $Z_N$ as a moduli space}

To see the curve $Z_N$ as a moduli space, it is useful to first look at the modular curve $$Y(N):= \Gamma(N) \backslash \H $$ where 
$$\Gamma(N) = \left\{ \begin{pmatrix} a&b\\ c&d \end{pmatrix} \in \sl2(\Z)\ \ , \ \ b \equiv c \equiv 0 , \ a \equiv d \equiv 1 \mod N \right\}$$
 is the principal congruence subgroup of level $N$. Since $-1$ acts trivially on $\H$ we have 
$$\Aut_{\Fin}(Y(N) / Y(1)) \cong  \SL_2(\Z / N \Z) / \pm.$$
$Y(N)$ parametrises equivalence classes of triples $(E, b_1,b_2)$ where $E$ is an elliptic curve, $(b_1,b_2)$ is a basis for $E[N]$ with Weil pairing a  fixed primitive $N^{th}$ root of unity $\zeta_N$, and two triples $(E, b_1, b_2)$ and $(E' , b_1', b_2')$ are equivalent if there is an isomorphism $E \cong E'$ sending $(b_1 , b_2)$ to $(b_1', b_2')$. Note that $-1 \in \Aut(E)$ so $(E , b_1, b_2)$ is always equivalent to $(E, -b_1 , -b_2)$. 
All modular curves have models over $\Q^{\cyc}$, and the $\Q(\zeta_N)$-model $Y(N)_{\Q(\zeta_N)}$.
It is enlightening to note that if you want to remove the restriction on the Weil pairing, then you need to take a disjoint union of $\varphi(N)$ copies of $Y(N)$, or look at the disconnected Shimura curve
$$\SL_2(\Z) \backslash (\H \times \GL_2(\Z / N \Z))$$
which has model $Y(N)_{\Q}$ over $\Q$, parametrising elliptic curves with `full level $N$ structure'. A detailed discussion of  modular curves as moduli spaces is given in \cite[\S 8]{milnemodular} for example.
 \newline

Let $F$ be an algebraically closed field of characteristic zero. For $\tau \in H$ let $E_{\tau}$ be an elliptic curve over $\Q(j(\tau))$ and fix a bijection $\tilde{\alpha}$ between a set of coset representatives for $\Gamma$ in $\Gamma N \Gamma$ and the set of cyclic subgroups of order $N$ in $E_{\tau}[N](F)$ (recall the cardinality of these sets is denoted $\psi(N)$).  Given a pair of disjoint  (except for the identity) cyclic subgroups of order $N$ in $E[N](F)$, choosing a generator of each subgroup gives a basis of $E[N](F)$. 

\begin{dfn}
Let $\cE_N(F)$ be the set of triples $(E , C_1 , C_2)$, where $E$ is an elliptic curve over $F$ and $C_1 , C_2$ are disjoint (except for the identity) cyclic subgroups of $E[N](F)$ of order $N$ containing a basis with Weil pairing $\zeta_N$. Let $\sim$ be the equivalence relation where $(E, C_1 , C_2)$ is equivalent to $(E' ,  C_1' , C_2')$ if there is an isomorphism $E \cong E'$ over $F$, taking $(C_1 , C_2)$ to $(C_1' , C_2')$.
\end{dfn}

\begin{pro}\label{pro moduli}
The natural map
\begin{align*}
\alpha : \cE_N(F)   & \ra Z_N(F) \\
(E_{\tau} , C_1,...,C_{\psi(N)}) & \mapsto (j(\tau) , j(g_1 \tau) ,... j(g_{\psi(N)} \tau) )
\end{align*}
induced by $\tilde{\alpha}$ gives a bijection
$$ \cE_N(F) / \sim   \ra Z_N(F) .$$
\end{pro}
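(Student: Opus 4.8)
The plan is to show that the map $\alpha$ is well-defined on equivalence classes, surjective, and injective, using the classical interpretation of $Z_N$ as a quotient of $\H$ together with the moduli interpretation of points $\tau \in \H$ as lattices (equivalently, elliptic curves with extra torsion data). The key bridge is that a point $\tau \in \H$ gives the elliptic curve $E_\tau = \C/(\Z + \Z\tau)$ (or over $\Q(j(\tau))$ after choosing a model), that $j(g_i\tau)$ records the $j$-invariant of the quotient of $E_\tau$ by the cyclic subgroup $C_i$ corresponding to $g_i$ under $\tilde\alpha$, and that the coset representatives for $\Gamma$ in $\Gamma N \Gamma$ are in natural bijection with the cyclic order-$N$ subgroups of $E_\tau[N]$ (this is the content of the explicit coset representatives $\left\{ \begin{pmatrix} a&b\\ 0&d\end{pmatrix} : 0<a,\ 0\le b<d,\ ad=N\right\}$ recalled earlier).

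First I would check well-definedness: if $(E,C_1,\dots,C_{\psi(N)}) \sim (E',C_1',\dots)$ via an isomorphism $\phi: E \to E'$ over $F$, then $\phi$ carries $E[N]$ to $E'[N]$ preserving the Weil pairing, hence induces a compatible relabelling; because an isomorphism of elliptic curves induces an isomorphism on all the relevant quotients $E/C_i \cong E'/C_i'$, the $j$-invariants agree and the two tuples have the same image. (Here one must be slightly careful that the isomorphism $E\cong E'$ is only determined up to $\pm 1$, which is exactly accounted for because $-1$ acts trivially on $\H$, so $Z_N$ is a quotient by $\Gamma/\Gamma_N$ rather than by a lift to $\sl2(\Z)$.) Second, surjectivity: every point of $Z_N(F)$ is of the form $(j(\tau),j(g_1\tau),\dots,j(g_{\psi(N)}\tau))$ for some $\tau \in H$ by the very definition of $Z_N$ as the image of $p_N$ (Lemma \ref{ZN def}); then $E_\tau$ together with the cyclic subgroups indexed by $\tilde\alpha$ gives a preimage in $\cE_N(F)$.

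Third, and this is where the real work lies, injectivity: suppose $\alpha(E,C_1,\dots) = \alpha(E',C_1',\dots)$, i.e. the two tuples of $j$-invariants coincide. Since $j(\tau)=j(\tau')$ forces $\tau' \in \Gamma\tau$, we may move $E'$ so that $\tau'=\gamma\tau$ for some $\gamma\in\Gamma$; the remaining coordinate-wise equalities $j(g_i\tau)=j(g_i'\gamma\tau)$ together with the compatibility of the $\Gamma/\Gamma_N$-action with the indexing of cyclic subgroups should force $\gamma\in\Gamma_N$, i.e. $\tau=\tau'$ as points of $Z_N$. The main obstacle I anticipate is precisely this last bookkeeping step: tracking how $\gamma\in\Gamma$ permutes the cyclic subgroups $C_i \leftrightarrow C_{\gamma i}$ and the Weil-pairing normalisation under this permutation, and showing that fixing all $j(g_i\tau)$ simultaneously is rigid enough to conclude the triples are equivalent. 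I would handle this by working in the lattice model $E_\tau = \C/L_\tau$ where cyclic order-$N$ subgroups are $\frac{1}{N}L'/L_\tau$ for sublattices $L_\tau \subseteq L' \subseteq \frac{1}{N}L_\tau$ of index $N$, making the $\Gamma$-action and the quotients $E_\tau/C_i$ completely explicit, and then invoking the standard fact that $\Gamma_N$ is exactly the stabiliser of the full collection of these subgroups (equivalently, Proposition \ref{cong}). The Weil-pairing condition selecting which pairs $(C_i,C_j)$ are allowed matches the definition of $\cE_N(F)$, so no extra copies arise, and the bijection follows.
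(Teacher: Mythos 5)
Your approach is genuinely different from the paper's. You run a direct three-part verification (well-definedness, surjectivity, injectivity) using the analytic uniformization $E_\tau = \C/L_\tau$ and explicit lattice bookkeeping, together with the fact that $\Gamma_N$ stabilises the indexed collection of cyclic subgroups (Proposition \ref{cong}). The paper instead descends from the standard moduli interpretation of $Y(N)$: it observes that $\Gamma(N)\hookrightarrow\Gamma_N$ gives a covering $Y(N)_{\Q(\zeta_N)}(F)\ra Z_N(F)$, that the stabiliser in $\Aut(E[N])\cong\GL_2(\Z/N\Z)$ of a pair of disjoint order-$N$ cyclic subgroups is exactly the scalars, and that $\Aut_{\Fin}(Z_N/Z_1)\cong\psl2(\Z/N\Z)$; the statement then follows by taking the quotient by $\sv2(\Z/N\Z)$, i.e.\ by forgetting the choice of generators of $(C_1,C_2)$. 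The paper's route delegates all the real work to the known moduli description of $Y(N)$ and a single group-theoretic computation, and in particular gets the statement uniformly for all algebraically closed $F$ of characteristic zero; your route is more hands-on and makes the bookkeeping explicit, which is illuminating, but it is also more fragile.

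The concrete gap is that your surjectivity and injectivity arguments both pass through $\tau\in\H$: ``every point of $Z_N(F)$ is of the form $p_N(\tau)$'', ``$j(\tau)=j(\tau')$ forces $\tau'\in\Gamma\tau$'', and the injectivity bookkeeping is carried out in the lattice model $\C/L_\tau$. This is only literally true for $F=\C$. For arbitrary algebraically closed $F$ of characteristic zero you would need to either invoke a transfer (Lefschetz / the definability of $Z_N$ over $\Q(j(S))$ and completeness of $ACF_0$) or replace the lattice model throughout with a purely algebraic argument via modular polynomials and cyclic isogenies; the paper sidesteps this by phrasing everything as a statement about the $F$-points of a variety and its covering by $Y(N)_{\Q(\zeta_N)}$. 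Secondly, the step you flag yourself --- that the coordinate-wise equalities ``should force $\gamma\in\Gamma_N$'', and that $\gamma\in\Gamma_N$ then yields an isomorphism $E_\tau\cong E_{\gamma\tau}$ carrying each $C_i$ to $C_i'$ --- is exactly where the proof lives, and is left as a promissory note. The idea you propose (use Proposition \ref{cong} to identify $\Gamma_N$ as the simultaneous stabiliser) is correct and will close the gap, but as written the argument is a plan rather than a proof, whereas the paper's version, once the moduli description of $Y(N)$ is granted, is complete in three sentences.
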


Before giving the proof we remind the reader of some groups: \newline
$\ggl2 (\Z / N \Z)$ is the set of $2 \times 2$ matrices with coefficients in $\Z / N \Z$ and invertible determinant. Define
\begin{align*}
\z2(\Z / N \Z) & := \left\{ \begin{pmatrix} e&0 \\ 0&e \end{pmatrix} \ \ | \ \ e \in (\Z / N \Z)^{\times}  \right\} \\
\sv2(\Z / N \Z) & :=  \left\{ \begin{pmatrix} e&0 \\ 0&e \end{pmatrix} \ \ | \ \ e \in (\Z / N \Z)^{\times}, e^2\equiv 1 \mod N \right\}\\
\psl2(\Z / N \Z) & := \sl2(\Z / N \Z) / \sv2(\Z / N \Z).
\end{align*}

Then we have the following diagram of exact sequences
$$
\xymatrix{
\sv2(\Z / N \Z) \ar@{^{(}->}[r] \ar@{^{(}->}[d] & \z2(\Z / N \Z) \ar@{->>}[r]^{\det} \ar@{^{(}->}[d]  & (\Z / N \Z)^{\times  2} \ar@{^{(}->}[d] \\
\sl2(\Z / N \Z) \ar@{^{(}->}[r] \ar@{->>}[d] & \ggl2 (\Z / N \Z) \ar@{->>}[r]^{\det} \ar@{->>}[d] & (\Z / N \Z)^{\times} \ar@{->>}[d] \\
\psl2(\Z / N \Z) \ar@{^{(}->}[r]  &       \pgl2(\Z / N \Z) \ar@{->>}[r] & (\Z / N \Z)^{\times}/(\Z / N \Z)^{\times  2}
}
$$
where $(\Z / N \Z)^{\times  2}$ is the multiplicative group of quadratic residues mod $N$.

\begin{proof}[Proof of \ref{pro moduli}]
The inclusion of groups
$$\Gamma(N) \hookrightarrow \Gamma_N$$
induces a covering map
$$p: Y(N)_{\Q(\zeta_N)}(F) \ra Z_N(F),$$
where $Y(N)_{\Q(\zeta_N)}(F)$ is considered as a definable set in $\langle F ; \cdot, + ; ,0,1 \rangle$. Every pair of disjoint cyclic subgroups of order $N$ in $E[N](F)$ contains a basis, and elements of $\Aut(E[N]) \cong \GL_2(\Z / N \Z)$ fixing such a pair of cyclic subgroups are exactly the scalars. We have
$$\Aut_{\Fin}(Z_N / Z_1) \cong \psl2(\Z / N \Z)$$
and the result follows.
\end{proof}

\begin{rem}
If there are non-trivial square roots of unity mod $N$ then $\sv2(\Z / N \Z)$ is strictly bigger than $\pm$ and the curve $Y(N)$ is a proper cover of $Z_N$.
\end{rem}

The curve $Z_N$ is defined over $\Q(j(S)) \cap \Q^{\cyc}$, and therefore carries an action of $G_K$ which induces an action of $G_K$ on $\cE_N(F) $ via $\alpha$. 
\begin{pro}\label{pro functorialmod}
Let $K$ be a field containing $\Q(j(S)) \cap \Q^{\cyc}$. Then the diagram
$$
\xymatrix{
\cE_N(F) \ar@{->}[r]^\sigma \ar@{->}[d]^{\alpha} & \cE_N(F) \ar@{->}[d]^{\alpha} \\
Z_N(F)  \ar@{->}[r]^{\sigma} & Z_N(F).
}
$$
commutes.
\end{pro}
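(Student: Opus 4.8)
The plan is to unwind the definition of $\alpha$ from \ref{pro moduli} and reduce the commutativity of the square to two standard instances of Galois-functoriality for elliptic curves: that the $j$-invariant commutes with the $G_K$-action, and that the formation of a quotient isogeny does too.

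First I would make explicit what $\alpha$ does. Via the fixed bijection $\tilde\alpha$ between a set of coset representatives $g_1,\dots,g_{\psi(N)}$ for $\Gamma$ in $\Gamma N\Gamma$ (the upper-triangular representatives recalled in \S\ref{sec quh}) and the set of cyclic order-$N$ subgroups of $E[N](F)$, the map $\alpha$ is — up to this relabelling — the assignment
$$(E;\,C_1,\dots,C_{\psi(N)}) \longmapsto \bigl(j(E);\, j(E/C_1),\dots,j(E/C_{\psi(N)})\bigr),$$
where $\tilde\alpha$ carries $g_i$ to $C_i$. This is precisely the classical statement that for $\tau\in\H$ the point $g_i\tau$ represents the $N$-isogenous quotient $E_\tau/C_i$ (the modular/Hecke correspondence; see \cite[\S8]{milnemodular} or \cite[5,\S1]{lang1987elliptic}).

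Next come the two functoriality inputs. Since $j(E)$ is a fixed rational function of the coefficients of a Weierstrass model, $j(E)^\sigma=j(E^\sigma)$. And for a finite subgroup $C$ of $E$, applying $\sigma$ to a Weierstrass model of $E$ together with a set of generators of $C$ carries the isogeny $E\to E/C$ to an isogeny $E^\sigma\to (E/C)^\sigma$ with kernel $C^\sigma$, so by the universal property of the quotient $(E/C)^\sigma\cong E^\sigma/C^\sigma$. Feeding these two facts into the displayed description of $\alpha$ gives, coordinate by coordinate,
$$\sigma\cdot\bigl(j(E);\,j(E/C_1),\dots\bigr)=\bigl(j(E^\sigma);\,j(E^\sigma/C_1^\sigma),\dots\bigr)=\alpha\bigl(E^\sigma;\,C_1^\sigma,\dots\bigr)=\alpha\bigl(\sigma\cdot(E;C_1,\dots)\bigr),$$
which is exactly commutativity of the square.

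The step that needs care — and the main obstacle — is the combinatorial and cyclotomic bookkeeping hidden in ``coordinate by coordinate'' above. Concretely, $\sigma$ permutes the set of cyclic $N$-subgroups of $E[N]$, hence permutes the $\psi(N)$ trailing coordinates of the tuple, and one must check that this permutation is the same as the one induced by the coordinate action of $\sigma$ on the variety $Z_N$; this is forced by the fact that $Z_N$ is defined over $K$ (by \ref{etdef}, together with its being cut out already over $\Q(j(S))\cap\Q^{\cyc}\subseteq K$), so the $\sigma$-permutation of the coordinates $j(E/C_i)$ is determined and must agree with the $\sigma$-action on the quotients $E/C_i$. Likewise $\cE_N(F)$ was defined with the constraint that $(C_1,C_2)$ contain a basis with Weil pairing the fixed root of unity $\zeta_N$; by Galois-equivariance of the Weil pairing, $e_N(P^\sigma,Q^\sigma)=e_N(P,Q)^\sigma$, this constraint is preserved precisely because $\sigma$ acts trivially on $\mu_N$, which is where the hypothesis $\Q(j(S))\cap\Q^{\cyc}\subseteq K$ enters. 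Once these two compatibilities are recorded, the displayed computation is literally the proof.
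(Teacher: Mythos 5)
Your proof follows the same route as the paper's: reduce the commutativity to the two Galois-equivariance facts $j(E)^\sigma=j(E^\sigma)$ (because $j$ is a rational function of the Weierstrass coefficients) and $j(E/C)^\sigma=j(E^\sigma/C^\sigma)$, and then read them off coordinate by coordinate against the definition of $\alpha$. That part is correct and is exactly what the paper does.

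One remark in your closing paragraph is wrong, though it happens to be harmless. You claim that $\sigma\in G_K$ acts trivially on $\mu_N$ because $K\supseteq\Q(j(S))\cap\Q^{\cyc}$, and you present this as the place where the hypothesis on $K$ enters. But $\Q(j(S))\cap\Q^{\cyc}$ is only some subfield of $\Q^{\cyc}$ and there is no reason it should contain $\mu_N$; an arbitrary $\sigma\in G_K$ can move $\zeta_N$. The reason this does no damage is that the Weil-pairing clause in the definition of $\cE_N(F)$ is vacuous: given any pair of disjoint cyclic subgroups $C_1,C_2$ of order $N$, choosing a generator of each gives a basis, and rescaling the generator of $C_1$ by a unit in $\Z/N\Z$ lets one realise any prescribed primitive $N$-th root of unity as the Weil pairing. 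So there is no constraint on $\cE_N(F)$ that $\sigma$ needs to preserve. The actual role of the hypothesis $K\supseteq\Q(j(S))\cap\Q^{\cyc}$ is the one you mention first: it is the field of definition of $Z_N$ and its covering automorphisms, which is what makes the coordinate action of $G_K$ on $Z_N(F)$ well-defined and guarantees that the image of $\alpha$ is $G_K$-stable.
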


\begin{proof}
If an elliptic curve $E$ is considered as the set defined by a Weierstrass polynomial in the structure $\langle F , \cdot , + , 0,1 \rangle$, then $j(E)$ is a rational function of the coefficients of this polynomial. So for a cyclic subgroup $C$ of $E$ and $\sigma \in G_{\Q}$ we have
$$j(E / C)^\sigma = j(E^\sigma / C^\sigma)$$
and the result follows.
\end{proof}

\subsection{Relating Galois representations in $\pi_1'$ with representations on Tate modules}

In the last subsection, we saw that $Z_N$ can be seen as the moduli space of elliptic curves with some additional data involving cyclic subgroups. The functoriality with respect to the action of Galois (\ref{pro functorialmod}) is the key to comparing the Galois representations in $\pi_1'$ arising from model theory, with Galois representations on Tate modules of elliptic curves. \ref{pro functorialmod} says that when Galois moves points on $Z_N$ it moves the corresponding cyclic subgroups in an identical manner, and vice versa. From \ref{pro functorialmod} we immediately get:

\begin{pro}
Let $(\tau_1,...,\tau_r) \in \H^r$ and  $$K := \Q^{\textrm{cyc}}(j(\tau_1),...,j(\tau_r)).$$ Let $E_i$ be an elliptic curve, defined over $\Q(j(E_i))$ such that $j(E_i) = j(\tau_i)$, and let $L$ be a field containing $K$. Then if the image of the homomorphism
$$\rho: G_{L} \lora \SL_2(\hat{\Z})^r$$
in the product of the Tate modules has finite index, then so does the representation
$$\rho': G_L \lora \pi_1'^r.$$
\end{pro}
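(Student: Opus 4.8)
The strategy is to build a surjection from the Tate-module representation to the $\pi_1'$-representation and deduce finite index of the image of $\rho'$ from finite index of the image of $\rho$. The key input is the moduli interpretation of $Z_N$ established in Propositions~\ref{pro moduli} and~\ref{pro functorialmod}, together with the group-theoretic diagram displayed before the proof of~\ref{pro moduli}, which expresses $\Aut_{\Fin}(Z_N / Z_1) \cong \psl2(\Z/N\Z)$ as a quotient of $\SL_2(\Z/N\Z)/\pm$ (equivalently, of $Y(N)$'s covering group). First I would recall that, by~\ref{def covmap}, the system $(Z_N)_N$ is cofinal in $\hat{\C}$, so $\pi_1' = \varprojlim_N \Aut_{\Fin}(Z_N/Z_1) \cong \varprojlim_N \psl2(\Z/N\Z)$; taking inverse limits over $N$ in the exact sequences $\sv2(\Z/N\Z) \hookrightarrow \SL_2(\Z/N\Z) \twoheadrightarrow \psl2(\Z/N\Z)$ yields a continuous surjection $\SL_2(\hat{\Z}) \twoheadrightarrow \pi_1'$ whose kernel is the projective limit of the $\sv2(\Z/N\Z)$.

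Next I would pin down the diagram relating the two Galois actions. Fix $\tau_1,\dots,\tau_r$ and elliptic curves $E_i/\Q(j(E_i))$ with $j(E_i)=j(\tau_i)$. A basepoint lift of $\hat{j}^{-1}(j(\tau_1),\dots,j(\tau_r))$ in $\hat{\C}$ corresponds, via the bijection $\alpha$ of~\ref{pro moduli} at each finite level, to compatible choices of bases (up to $\pm$) of the $E_i[N]$; this is exactly a point of the limit $\varprojlim_N \bigl(Y(N)_i\text{-data}\bigr)$, i.e. a basis of the product Tate module. The functoriality statement~\ref{pro functorialmod} says that for $\sigma \in G_L$ (with $L \supseteq K \supseteq \Q^{\cyc}(j(\tau_i))$) the action of $\sigma$ on the $\hat{\C}$-fibre, read through $\alpha$, agrees with its action on the moduli data, i.e. on bases of the Tate modules. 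Therefore the cocycle/homomorphism $\rho_{(j(\tau_1),\dots,j(\tau_r))} = \rho'\colon G_L \to \pi_1'^r$ factors as
\begin{align*}
G_L \xrightarrow{\ \rho\ } \SL_2(\hat{\Z})^r \twoheadrightarrow \pi_1'^r,
\end{align*}
the second map being the $r$-fold product of the surjection constructed in the first step. (One must use $\Q^{\cyc}$ in the base field precisely so that the $\det$-part, i.e. the cyclotomic character, is killed and one genuinely lands in $\SL_2$; this is built into the hypothesis.)

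Once this factorisation is in place, the conclusion is immediate: if $\rho(G_L)$ has finite index in $\SL_2(\hat{\Z})^r$, then its image $\rho'(G_L)$ under a continuous surjection onto $\pi_1'^r$ has finite index there as well, since a surjective homomorphism sends finite-index subgroups to finite-index subgroups. The main obstacle, and the step deserving the most care, is the compatibility of basepoints and of the $\pm$-quotients across all levels $N$ simultaneously: one needs the bijections $\tilde\alpha$ of~\ref{pro moduli} to be chosen coherently in the inverse system so that the identification of $\hat{j}^{-1}$ with a product of Tate modules is a genuine homomorphism-of-torsors compatible with the transition maps of~\ref{def covmap}, and one must check that the discrepancy between $\SL_2(\Z/N\Z)/\pm$ and $\psl2(\Z/N\Z) = \SL_2(\Z/N\Z)/\sv2(\Z/N\Z)$ (nontrivial when there are extra square roots of unity mod $N$) only ever makes the map $\SL_2(\hat{\Z})^r \to \pi_1'^r$ a further quotient, never an obstruction to surjectivity — which it does, by the remark following~\ref{pro moduli}.
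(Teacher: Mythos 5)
Your proposal is correct and follows the same route the paper intends: the paper derives this proposition "immediately" from Proposition~\ref{pro functorialmod} and, in the paragraph directly after, makes the identical commutative diagram explicit with the projections $\sl2(\Z/N\Z)\to\psl2(\Z/N\Z)$ inducing the surjection $\SL_2(\hat{\Z})\twoheadrightarrow\psl2(\hat{\Z})\cong\pi_1'$. You have simply spelled out the factorisation, the cofinality of the $Z_N$-system, and the $\sv2$-versus-$\pm$ caveat that the paper leaves implicit.
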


Now we would like to go the other way, and see that the assumption of categoricity has arithmetic, and geometric consequences with respect to Galois representations on elliptic curves. Given a representation $\rho$ into $\sl2(\hat{\Z}) \cong \varprojlim_N \sl2(\Z / N \Z)$ corresponding to an elliptic curve as above, we get an induced representation $\bar{\rho}$ and a commutative diagram
$$
\xymatrix{
G_K \ar@{->}^{\bar{\rho}}[dr] \ar@{->}^{\rho}[r] & \sl2(\hat{\Z})  \ar@{->}[d] \\
& \psl2(\hat{\Z})  
}$$
via the projections $\sl2(\Z / N \Z) \ra \psl2(\Z / N \Z)$. 
By \ref{pro functorialmod}, \ref{thm n1} and \ref{thm n2} we have the following:
\begin{thm}
Suppose the $T\wedge SF$ is $\aleph_1$-categorical. Then the images of the induced representations $\bar{\rho}$ corresponding to the representations of Theorems \ref{amtapp} and \ref{mooo} are open.
\end{thm}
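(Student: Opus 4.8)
The plan is to combine the arithmetic consequences of categoricity already isolated in Theorems \ref{thm n1} and \ref{thm n2} with the moduli-theoretic dictionary of the previous subsection, and then to upgrade ``finite index'' to ``open'' using compactness of Galois groups.

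First I would note that if $T\wedge SF$ is $\aleph_1$-categorical then, by Theorems \ref{thm n1} and \ref{thm n2} (which themselves rest on Keisler's theorem \ref{keislerthm} and, in the second case, the amalgamation property), Conditions \ref{condition1} and \ref{condition2} both hold. Concretely, for every strongly $G$-independent tuple $x$ and every base field $K$ of the shape allowed in those conditions, the image of the representation $\rho_x\colon G_K \lora (\pi_1')^n$ has finite index in $(\pi_1')^n$.

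Next I would transport this statement to the Tate-module picture. Identify $\pi_1' = \varprojlim_N \Aut_{\Fin}(Z_N/Z_1)$ with $\psl2(\hat{\Z}) = \varprojlim_N \psl2(\Z/N\Z)$, and fix elliptic curves $E_i$ with $j(E_i) = j(\tau_i)$. Using the moduli description of $Z_N$ in Proposition \ref{pro moduli} together with the Galois-equivariance of Proposition \ref{pro functorialmod}, the model-theoretic representation $\rho_x$ becomes, under this identification, exactly the composite of the Tate-module representation $\rho\colon G_K \lora \SL_2(\hat{\Z})^n$ with the projection $\SL_2(\hat{\Z})^n \to \psl2(\hat{\Z})^n$; in other words $\rho_x = \bar{\rho}$. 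Hence $\bar{\rho}(G_K)$ has finite index in $\psl2(\hat{\Z})^n$.

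Finally I would observe that $G_K$ is profinite, hence compact, so $\bar{\rho}(G_K)$ is a closed subgroup of the profinite group $\psl2(\hat{\Z})^n$; a closed subgroup of finite index of a profinite group is open, so $\bar{\rho}(G_K)$ is open, which is exactly the assertion of Theorems \ref{amtapp} and \ref{mooo} for the corresponding products of elliptic curves. It remains only to check that the fields occurring in Conditions \ref{condition1} and \ref{condition2} match those in the statements of \ref{amtapp} and \ref{mooo}: since $Z_N$ is defined over $\Q(j(S))\cap\Q^{\cyc}$ and the quotient to $\psl2$ kills the determinant --- equivalently, by the Weil pairing, the cyclotomic character --- passing between $K$ and $K^{\cyc}$ is harmless, so the two sets of hypotheses coincide. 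The step I expect to be the main obstacle is the identification $\rho_x = \bar{\rho}$ in the third paragraph: one must match the $(\pi_1')^n$-torsor $\hat{j}^{-1}(x)$ with the set of level structures on the $E_i$, keep the $\SL_2$ versus $\GL_2$ versus $\psl2$ (equivalently, the $\sv2$ versus $\pm 1$) bookkeeping straight, and verify that the two Galois actions agree on the nose rather than merely up to conjugacy. Everything else --- the two invocations of Keisler's theorem, already carried out, and the passage from closed finite index to open --- is routine.
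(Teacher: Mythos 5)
Your proof is correct and follows essentially the same route as the paper, which simply cites Propositions \ref{pro functorialmod}, \ref{thm n1} and \ref{thm n2}; you have spelled out the content of that citation (categoricity yields finite index of $\rho_x$ via Conditions \ref{condition1} and \ref{condition2}, the moduli dictionary identifies $\rho_x$ with $\bar\rho$, and compactness upgrades finite index to openness). The step you flag as the potential obstacle --- matching $\rho_x$ with $\bar\rho$ on the nose --- is precisely what Proposition \ref{pro functorialmod} provides, so there is no real gap.
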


\subsection{Images of Galois representations}

Now we go about proving that the images of certain Galois representations in the product of Tate modules of elliptic curves are of finite index. Note that for a profinite group, open is equivalent to being closed and of finite index.

\begin{thm}\label{amt}
Let $A$ be an abelian variety defined over $K$, a finitely generated extension of $\Q$, such that $A$ is a product of $r$ non-isogenous elliptic curves, all without complex multiplication. Then the image of the Galois representation
$$\rho: G_{K^{\rm{cyc}}} \ra \SL_2(\hat{\Z})^r$$
is open.
\end{thm}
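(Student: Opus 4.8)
The plan is to deduce this from the known results on products of non-isogenous CM-free elliptic curves combined with the independence (Goursat-type) argument. The statement to prove is that $\rho: G_{K^{\mathrm{cyc}}} \to \SL_2(\hat{\Z})^r$ has open image when $A = E_1 \times \cdots \times E_r$ with the $E_i$ pairwise non-isogenous and without complex multiplication, over $K$ finitely generated over $\Q$. First I would recall Serre's theorem (\cite{serre1968abelian}, \cite{serre1971prop}): for a single CM-free elliptic curve $E_i/K$, the image of $\rho_i: G_K \to \GL_2(\hat{\Z})$ is open, and taking determinants and comparing with the cyclotomic character via the Weil pairing, the image of $\rho_i: G_{K^{\mathrm{cyc}}} \to \SL_2(\hat{\Z})$ is open. (For $K$ merely finitely generated over $\Q$ rather than a number field, one uses the generalisations due to Serre and the Mordell--Weil / specialisation arguments, or reduces to the number field case by a standard spreading-out argument; this is routine and I would cite it.)

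The core step is the independence of the images across the $r$ factors. Let $H$ be the closure of the image of $\rho: G_{K^{\mathrm{cyc}}} \to \prod_i \SL_2(\hat{\Z})$. Each projection $\pi_i(H)$ is open in $\SL_2(\hat{\Z})$ by the single-curve case, so it suffices to show $H$ is open in $\prod_i \pi_i(H)$, equivalently that the $r$ representations are "independent" in the appropriate sense. I would argue prime-by-prime first: for a fixed prime $\ell$, the image in $\prod_i \SL_2(\Z_\ell)$ is open by a Goursat--Ribet argument — the only obstruction to surjectivity onto the product of the open images is a common $\ell$-adic quotient shared by two factors, and a common quotient forces, after passing to a suitable finite level, an isomorphism of mod-$\ell^n$ representations, which via faithful flatness / the theory of the $\ell$-adic Tate module and Faltings' isogeny theorem (\cite{faltings1983endlichkeitssatze}, or Zarhin in the function-field case) would force $E_i$ and $E_j$ to be isogenous, contrary to hypothesis (one must be slightly careful that only the nonabelian part of $\SL_2$ matters, since abelian quotients have already been killed by passing to $K^{\mathrm{cyc}}$ — the quotients of $\PSL_2(\Z_\ell)$ are essentially simple, so a common quotient really does give an isomorphism of the associated $2$-dimensional representations up to twist, and the twist is trivial after going to $K^{\mathrm{cyc}}$). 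Then I would patch across primes: the image in $\prod_{\ell \le B} \prod_i \SL_2(\Z_\ell)$ being open for every finite $B$, together with the fact that $\SL_2(\Z_\ell)$ has no nontrivial abelian quotients for $\ell \ge 5$ and the mod-$\ell$ images are eventually all of $\SL_2(\F_\ell)$ (Serre), gives openness in the full adelic product by a standard lemma on products of almost-simple groups (the relevant statement is in \cite{serre1971prop}, using that distinct such simple quotients cannot be amalgamated).

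The main obstacle I anticipate is precisely this independence step, and within it the reduction from "the $\ell$-adic images share a common open subquotient" to "the elliptic curves are isogenous". Getting this right requires knowing that any common quotient, after removing the abelian part (handled by the passage to $K^{\mathrm{cyc}}$), yields compatible systems of $\ell$-adic representations that are isomorphic up to finite data, and then invoking the isogeny theorem over finitely generated fields. A secondary technical point is the passage from number fields to finitely generated fields $K$ throughout; I would handle this by a Hilbert-irreducibility / specialisation argument reducing openness over $K$ to openness over a number field, or simply cite the finitely-generated versions of Serre's results directly. I would also remark that the CM-free hypothesis on each factor and the pairwise non-isogeny hypothesis are both essential and are each used exactly once: CM-freeness for the single-curve openness, non-isogeny for the independence across factors. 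Once openness of the image in $\prod_i \SL_2(\hat\Z)$ is established, closedness is automatic (it is the image of a compact group), and open plus closed is the same as open and of finite index, which is the form needed to verify Conditions \ref{condition1} and \ref{condition2} via the comparison proposition relating $\pi_1'$ with the Tate-module representations.
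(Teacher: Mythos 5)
Your proposal is correct and takes essentially the same route as the paper: Serre's openness results for a single CM-free curve and for a pair of non-isogenous CM-free curves over a number field, Ribet's commutator-subgroup/Goursat reduction from general $r$ to $r=2$, and a specialisation argument to pass from number fields to finitely generated $K$. You unpack the Goursat step and the prime-by-prime patching that the paper handles by citation (to \cite{serre1971prop}, \cite{ribet1975}, and \cite{pink2005combination}), but the underlying strategy is identical.
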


Before embarking on the proof we need a definition.
\begin{dfn}\label{csc }
As in \cite[3.4]{ribet1975}, we say that a profinite group satisfies the \textit{`commutator subgroup condition'} if for every open subgroup $U$, the closure of the commutator subgroup $[U:U]$ of $U$, is open in $U$.
\end{dfn}

\begin{proof} [Proof of \ref{amt}] For $K$ a number field and $r \leq 2$, this was done by Serre (\cite[\S 6]{serre1971prop}), and Ribet reduced the problem to the case $r=2$ by noting that $\SL_2(\hat{\Z}$) satisfies the commutator subgroup condition, see \cite[3.4]{ribet1975}. The case where we have a product $A=E_1 \times \cdots \times E_r$ of $r$ non-isogenous elliptic curves with transcendental $j$-invariants, follows from the number field case and a standard specialisation argument \cite[Remark 6.12]{pink2005combination}. 
\end{proof}

Now we want to increase the base field of the representations to the compositum with $\Q(j(S))$. This will be possible due to the non-abelianness of $\SL_2(\hat{\Z})$ (i.e. it satisfies the commutator subgroup condition), and the following:

\begin{lem} \label{lem abelian}
The extension
$$\Q^{\textrm{cyc}}(j(S)) / \Q^{\textrm{cyc}}$$
is abelian.
\end{lem}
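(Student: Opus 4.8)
The plan is to reduce the statement to classical complex multiplication theory. First I would recall, using Proposition \ref{pro special}, that every special point $\tau \in S$ lies in some imaginary quadratic field $K_\tau$, so the lattice $\Z + \Z\tau$ has complex multiplication by an order $\cO_\tau$ in $K_\tau$. By the main theorem of complex multiplication (see e.g. Cox, \emph{Primes of the form $x^2 + ny^2$}, or Silverman, \emph{Advanced Topics in the Arithmetic of Elliptic Curves}, Ch.~II), the value $j(\tau) = j(\Z+\Z\tau)$ generates over $K_\tau$ the ring class field of $\cO_\tau$; in particular $K_\tau(j(\tau))/K_\tau$ is a finite abelian extension.

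Next I would observe that every imaginary quadratic field is contained in a cyclotomic field, since $\Q(\sqrt{-d}) \subseteq \Q(\zeta_{4d})$, so $K_\tau \subseteq \Q^{\textrm{cyc}}$ for every $\tau \in S$. As $K_\tau(j(\tau))/K_\tau$ is Galois, the extension $\Q^{\textrm{cyc}}(j(\tau))/\Q^{\textrm{cyc}}$ is Galois as well, and the restriction homomorphism
$$\Gal\!\left(\Q^{\textrm{cyc}}(j(\tau))/\Q^{\textrm{cyc}}\right) \lora \Gal\!\left(K_\tau(j(\tau))/K_\tau\right)$$
is injective; hence $\Q^{\textrm{cyc}}(j(\tau))/\Q^{\textrm{cyc}}$ is abelian.

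Finally, $\Q^{\textrm{cyc}}(j(S))$ is the compositum, over all $\tau \in S$, of the fields $\Q^{\textrm{cyc}}(j(\tau))$. A (possibly infinite) compositum of abelian extensions of a fixed base field is again Galois, with Galois group embedding into the product of the individual Galois groups, hence abelian. This yields the claim.

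The only genuine subtlety — and the step I would be most careful about — is invoking the right form of the CM input: one must use that the $j$-value of a CM point generates a \emph{ring class field} (abelian over the quadratic field), not merely that it is algebraic, and one should record that this holds for an arbitrary $\tau \in K_\tau \cap \H$, i.e. also for non-maximal orders, not only for $\tau$ generating $\cO_{K_\tau}$. Everything else is formal Galois theory, so beyond citing the CM theory correctly there is no real obstacle.
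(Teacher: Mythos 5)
Your proof is correct and follows the same basic strategy as the paper's: reduce to classical complex multiplication theory and then close off with a Galois-theoretic compositum argument, using $\Q(\sqrt{-d}) \subseteq \Q(\zeta_{4d}) \subseteq \Q^{\textrm{cyc}}$. The one genuine difference is the precise CM input. You handle each special $j$-value individually, invoking the theorem that $j(\tau)$ generates the ring class field of the order $\cO_\tau$ over $K_\tau$, and you are right to flag that this must hold for non-maximal orders as well as $\cO_{K_\tau}$ (which it does — it is the first main theorem of CM in Cox, Theorem 11.1). The paper instead works one $G$-orbit at a time: by Proposition \ref{pro special} the orbit $G\tau$ is all of $K_\tau \cap \H$, and the paper shows that the whole Hecke orbit $j(G\tau)$ lies inside $K_\tau\bigl(j(E), \text{$x$-coordinates of torsion of }E\bigr)$, which is abelian over $K_\tau$ by the second main CM theorem (Silverman, Adv.\ Topics, Ch.~II). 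Both are standard facts, and the conclusions coincide; your per-point ring-class-field version is a little cleaner in that it avoids having to argue that $j$-invariants of curves isogenous to $E$ lie in the torsion field of $E$, at the cost of citing the theorem once per order rather than once per imaginary quadratic field. The restriction-map injectivity step and the compositum step at the end are both correct.
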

\begin{proof}
Let $\tau \in \H$ lie in an imaginary quadratic field $K$ (all elements of $S$ lie in imaginary quadratic fields), and $E$ be an elliptic curve defined over $\Q (j(E))$ with complex multiplication by $K$. Then $\Q(j(G \tau))$ contains $K$, and is a subset of the field obtained by adjoining $j(\tau)$ and the $x$-coordinates of the torsion of $E$ to $\Q$ (which is an abelian extension of $K$ by the theory of complex multiplication - see for example \cite[p135]{silverman1994advanced}). The result now follows since $\Q^{\textrm{cyc}}$ contains all imaginary quadratic fields and the compositum of abelian extensions is abelian.
\end{proof}

\begin{cor}
Given an abelian variety $A \cong E_1 \times \cdots \times E_r$ defined over $K$, a finitely generated extension of $\Q$, where the $E_i$ are non-isogenous elliptic curves with $\End(E_i)\cong \Z$ we have
$$[\Q^{\textrm{cyc}}(j(S))\cap K(\tor(A)) : K^{\textrm{cyc}} ] \textrm{ is finite.}$$
\end{cor}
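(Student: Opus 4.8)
The plan is to combine the two main ingredients already at hand: the abelianness of $\Q^{\textrm{cyc}}(j(S))/\Q^{\textrm{cyc}}$ from Lemma \ref{lem abelian}, and the commutator subgroup condition (together with openness) for the image of the Tate-module representation attached to $A$, which is Theorem \ref{amt}. Writing $\rho: G_{K^{\textrm{cyc}}} \to \SL_2(\hat{\Z})^r$ for that representation and $H := \rho(G_{K^{\textrm{cyc}}})$, Theorem \ref{amt} tells us $H$ is open, hence of finite index, in $\SL_2(\hat{\Z})^r$; I want to deduce that the further restriction to $G_{K^{\textrm{cyc}}(j(S))}$ still has open image. Equivalently, letting $M := \Q^{\textrm{cyc}}(j(S))\cap K(\tor(A))$, I must show $[M : K^{\textrm{cyc}}]$ is finite.

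First I would observe that $M/K^{\textrm{cyc}}$ is abelian: it is a subextension of $\Q^{\textrm{cyc}}(j(S))/\Q^{\textrm{cyc}}$ (base-changed along $\Q^{\textrm{cyc}} \subseteq K^{\textrm{cyc}}$), and Lemma \ref{lem abelian} gives that the latter is abelian, so any intermediate field over $K^{\textrm{cyc}}$ inside it is Galois with abelian Galois group. On the other hand, $M \subseteq K(\tor(A))$, so $\Gal(K(\tor(A))/M)$ corresponds under $\rho$ to a closed (open, once we know finiteness isn't needed here) subgroup $N \leq H$, and the quotient $H/N \cong \Gal(M/K^{\textrm{cyc}})$ must be abelian. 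Thus $N$ contains the closure of the commutator subgroup $\overline{[H,H]}$. By the commutator subgroup condition for $\SL_2(\hat{\Z})^r$ (recorded in the proof of Theorem \ref{amt}, following Ribet \cite[3.4]{ribet1975}), applied to the open subgroup $U = H$, the group $\overline{[H,H]}$ is open in $H$, hence of finite index in $H$. Therefore $N$ has finite index in $H$, so $[M : K^{\textrm{cyc}}] = [H:N] < \infty$, which is the assertion.

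The main obstacle, such as it is, is purely bookkeeping: making sure the base fields line up, i.e. that $j(S) \subseteq \Q^{\textrm{cyc}}(j(S))$ really is a subfield of $K(\tor(A))$ in the relevant sense so that the Galois-theoretic picture over $K^{\textrm{cyc}}$ is internally consistent, and that $\SL_2(\hat{\Z})^r$ genuinely inherits the commutator subgroup condition (it does, since the condition is closed under finite products and passes to $\SL_2(\hat{\Z})$ by Ribet, and an open subgroup $H$ of a group with the commutator subgroup condition again has the property we need by definition). No serious analytic or arithmetic input beyond Lemma \ref{lem abelian} and Theorem \ref{amt} is required; the corollary is essentially a formal consequence of ``abelian quotient meets non-abelian image in something of finite index.''
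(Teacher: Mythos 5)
Your proof is correct and takes essentially the same route as the paper's: both arguments combine Lemma \ref{lem abelian} (abelianness of $\Q^{\textrm{cyc}}(j(S))/\Q^{\textrm{cyc}}$) with the commutator subgroup condition for $\SL_2(\hat{\Z})^r$ (which the paper records via the Lie algebra $\fsl^r$ being its own derived algebra, and you via Ribet and stability of the condition under finite products) to conclude that an abelian quotient of the open image must have finite index. Your version merely spells out the Galois-theoretic translation that the paper compresses into a single sentence.
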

\begin{proof}
$(\SL_2 \times \SL_2)(\hat{\Z})$ satisfies the commutator subgroup condition, since the Lie algebra $\fsl \times \fsl$ is its own derived algebra. The result follows since the intersection $K^{\textrm{cyc}}(j(S))\cap K(\tor(A))$ has to be an abelian extension of $K^{\textrm{cyc}}$ by \ref{lem abelian}.
\end{proof}

The following is now immediate, and implies condition \ref{condition1}:

\begin{thm}\label{amtapp}
Let $A$ be an abelian variety as in \ref{amt}, and let $L:=K^{\textrm{cyc}}(j(S))$. Then the image of the Galois representation
$$\rho: G_{L} \ra \SL_2(\hat{\Z})^r$$
is open.
\end{thm}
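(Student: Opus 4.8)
The plan is to deduce Theorem~\ref{amtapp} directly from Theorem~\ref{amt} together with the Corollary immediately preceding it, which is precisely the statement needed to bound how much the image of $\rho$ can shrink when the base field is enlarged from $K^{\textrm{cyc}}$ to $L=K^{\textrm{cyc}}(j(S))$.

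First I would record that the restriction of the full representation $G_K\to\GL_2(\hat\Z)^r$ to $G_{K^{\textrm{cyc}}}$ has kernel $G_{K(\tor A)}$, and that since the Weil pairing forces $\Q^{\textrm{cyc}}\subseteq K(\tor A)$ we in fact have $K^{\textrm{cyc}}(\tor A)=K(\tor A)$. Hence $\rho(G_{K^{\textrm{cyc}}})\cong\Gal\big(K(\tor A)/K^{\textrm{cyc}}\big)$, and under this identification $\rho(G_L)$ is the image of $G_L$ in $\Gal\big(K(\tor A)/K^{\textrm{cyc}}\big)$, namely $\Gal\big(K(\tor A)/(L\cap K(\tor A))\big)$. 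Therefore
$$[\rho(G_{K^{\textrm{cyc}}}):\rho(G_L)]=[\,L\cap K(\tor A):K^{\textrm{cyc}}\,].$$
By the Corollary (whose proof uses Lemma~\ref{lem abelian} to see that $L\cap K(\tor A)$ is abelian over $K^{\textrm{cyc}}$, and the commutator subgroup condition for $\SL_2(\hat\Z)^r$ together with the openness supplied by Theorem~\ref{amt}) this index is finite. So $\rho(G_L)$ has finite index in $\rho(G_{K^{\textrm{cyc}}})$, which is open in $\SL_2(\hat\Z)^r$ by Theorem~\ref{amt}; consequently $\rho(G_L)$ has finite index in $\SL_2(\hat\Z)^r$. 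Finally, $\rho(G_L)$ is the image of a profinite (hence compact) group under a continuous map, so it is closed, and a closed finite-index subgroup of a profinite group is open. This gives the theorem, and hence, via Proposition~\ref{pro imgtyp} and the necessary-conditions section, Condition~\ref{condition1}.

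There is essentially no genuine obstacle at this stage: the substantive input — the Serre/Ribet/Pink theorem on products of non-CM elliptic curves — is Theorem~\ref{amt}, and the abelianness-plus-commutator-subgroup-condition mechanism is the Corollary. The only point requiring a little care is the bookkeeping identifying $\rho(G_L)$ with a Galois group of $K(\tor A)$ over an intermediate field, which rests on the observation $K^{\textrm{cyc}}(\tor A)=K(\tor A)$.
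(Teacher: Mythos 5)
Your proposal is correct and follows exactly the route the paper intends when it calls the result "immediate": use Theorem~\ref{amt} to get openness over $K^{\textrm{cyc}}$, identify $[\rho(G_{K^{\textrm{cyc}}}):\rho(G_L)]$ with $[L\cap K(\tor A):K^{\textrm{cyc}}]$, and invoke the preceding Corollary (abelianness of $\Q^{\textrm{cyc}}(j(S))/\Q^{\textrm{cyc}}$ plus the commutator subgroup condition for $\SL_2(\hat{\Z})^r$) to see that this index is finite, then conclude by the closed-plus-finite-index-implies-open fact. The Galois-theoretic bookkeeping you supply is precisely the omitted content of the paper's "immediate."
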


Now we want to verify the more geometric condition \ref{condition2}, which is implied by the following:

\begin{thm}\label{mooo}
Let $L \subset \C$ be an algebraically closed field. Let $A$ be a product of $r$ non-isogenous elliptic curves $E_1\times \cdots \times E_r$ defined over $\Q(j(E_1),...,j(E_r))$, and let $K:=L(j(E_1),...,j(E_r))$. Then the image of the Galois representation on the Tate module of $A$ is open in $\SL_2(\hat{\Z})^r$.
\end{thm}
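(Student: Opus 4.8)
The plan is to reduce Theorem~\ref{mooo} to Theorem~\ref{amt} by a specialization argument analogous to the one invoked at the end of the proof of~\ref{amt}, but now in the situation where the base field $L$ is an \emph{arbitrary} algebraically closed subfield of $\C$ rather than a number field. First I would choose a finitely generated subfield $L_0 \subseteq L$ over which the elliptic curves $E_1,\dots,E_r$ (and the relevant isogenies witnessing non-isogeny, or rather their absence) are already defined; since each $E_i$ is given over $\Q(j(E_i))$ this reduces to taking $L_0$ to contain the finitely many $j(E_i)$ together with a finite set of parameters. Write $K_0 := L_0$ as a finitely generated extension of $\Q$, so that $A$ is defined over $K_0$, and by Theorem~\ref{amt} (and the reduction to the case of transcendental $j$-invariants via a standard specialization, exactly as cited there from~\cite[Remark 6.12]{pink2005combination}) the image of
$$\rho_0 : G_{K_0^{\mathrm{cyc}}} \ra \SL_2(\hat{\Z})^r$$
is open.

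The main step is then to pass from $K_0^{\mathrm{cyc}}$ up to $K = L(j(E_1),\dots,j(E_r)) = L$ (since $L$ is algebraically closed and already contains the $j(E_i)$) and check that the image does not shrink below finite index. The point is that $K$ is obtained from $K_0$ by a (possibly huge) purely transcendental-then-algebraic extension inside $\C$, but the restriction map $G_{K} \ra G_{K_0}$ has image which, after intersecting with $G_{K_0^{\mathrm{cyc}}}$, still surjects onto a finite-index subgroup of $\rho_0(G_{K_0^{\mathrm{cyc}}})$: concretely, $K \cap \overline{K_0} = \overline{\Q} \cap L$ is an algebraic extension of $\Q$, hence the fields $K$ and $\overline{K_0}$ are linearly disjoint over $K \cap \overline{K_0}$, so $\Gal(\overline{K_0}/K\cap\overline{K_0})$ is a quotient of $G_K$ via restriction and the image of $G_K$ in $G_{K_0}$ is exactly $\Gal(\overline{K_0} / (K\cap \overline{K_0}))$, which is of finite index in $G_{K_0}$ because $K \cap \overline{K_0}$ is a \emph{finite} extension of $\Q$ (it embeds in $L$, and the $j(E_i)$ generate only finitely much algebraic information over $\Q$ — more carefully, one shrinks $K_0$ so that $K \cap \overline{K_0} = \Q$, which is possible since any given element of $\overline{\Q} \cap L$ algebraic over $K_0$ can be absorbed into $K_0$). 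Granting this, $\rho(G_K) \supseteq \rho_0\bigl(\Gal(\overline{K_0}/(K\cap\overline{K_0}))^{\mathrm{cyc-part}}\bigr)$ contains a finite-index subgroup of the open group $\rho_0(G_{K_0^{\mathrm{cyc}}})$, hence is itself open in $\SL_2(\hat\Z)^r$.

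The hard part will be handling the cyclotomic twist correctly when enlarging the base field: one must ensure that adjoining all of $L$ (which may contain transcendentals, but also may contain roots of unity, i.e. parts of $\Q^{\mathrm{cyc}}$ already) does not interfere with the determinant/Weil-pairing bookkeeping that produced the $\SL_2$ (rather than $\GL_2$) statement. I would deal with this exactly as in the proof of the corollary preceding~\ref{amtapp}: the relevant obstruction lives in an \emph{abelian} extension, whereas $\SL_2(\hat\Z)^r$ satisfies the commutator subgroup condition (Definition~\ref{csc }), so any finite-index-up-to-abelian discrepancy is automatically finite index. Finally I would note that Theorem~\ref{mooo} as stated asks for openness in $\SL_2(\hat\Z)^r$ over $K = L(j(E_i))$ directly, so one does not even need to separately adjoin $j(S)$ here — that refinement is Theorem~\ref{amtapp} and its corollary for condition~\ref{condition1}; for condition~\ref{condition2} the algebraically closed field $L$ already contains all imaginary quadratic fields' worth of special data needed, so the argument closes.
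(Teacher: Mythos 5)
There is a genuine gap, and it appears at the very heart of your argument. You write that $K = L(j(E_1),\dots,j(E_r)) = L$ ``since $L$ is algebraically closed and already contains the $j(E_i)$'' — but the $j(E_i)$ are not assumed to lie in $L$. Indeed the whole point of Condition~\ref{condition2} (which this theorem is meant to establish) is that the tuple $x = (j(E_1),\dots,j(E_r))$ satisfies $x \cap L = \emptyset$, so the interesting case is precisely when the $j(E_i)$ are transcendental over $L$ and $K$ is a proper transcendental extension of $L$. After the paper's reduction, the hard case is when the $j(E_i)$ are algebraically \emph{dependent}, so that $\trdeg(K/L) = 1$.

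The more serious failure is the finite-index claim. You assert that the restriction map $G_K \to G_{K_0}$ has image $\Gal(\overline{K_0}/K\cap\overline{K_0})$ of finite index in $G_{K_0}$, because $K\cap\overline{K_0}$ is a finite extension of $\Q$. This is false: since $L$ is algebraically closed, $\bar\Q \subset L \subset K$, and of course $\bar\Q \subset \overline{K_0}$, so $K\cap\overline{K_0} \supseteq \bar\Q$. Thus $K\cap\overline{K_0}$ is an infinite algebraic extension of $\Q$, and $\Gal(\overline{K_0}/K\cap\overline{K_0})$ has infinite index in $G_{K_0}$ (and in $G_{K_0^{\mathrm{cyc}}}$). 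Your parenthetical fix — shrinking $K_0$ so that $K\cap\overline{K_0} = \Q$ — cannot be carried out: no finitely generated $K_0$ can satisfy this, since the containment $\bar\Q \subset K\cap\overline{K_0}$ holds for every choice of $K_0$ containing $\Q$. So Theorem~\ref{amt}, which gives openness over $G_{K_0^{\mathrm{cyc}}}$, does not transfer to $G_K$ by a pure restriction/linear-disjointness argument: the subgroup of $G_{K_0^{\mathrm{cyc}}}$ you land in is exactly the one you are trying to control, and the argument is circular.

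The paper's own proof takes an entirely different route, and the difference is structural, not cosmetic. Rather than restricting to a finitely generated subfield, it exploits that $L/\Q$ is normal (since $L$ is algebraically closed), hence $K/\Q(j(E_1),\dots,j(E_r))$ is normal, hence $G_K$ is a closed \emph{normal} subgroup of $\Gal(\bar K / \Q^{\mathrm{cyc}}(j(E_1),\dots,j(E_r)))$, for which Theorem~\ref{amt} already gives an open image. Openness of each single-factor projection for $G_K$ itself comes from the theory of modular function fields over $L$ (\cite[Chapter 6, \S3]{lang1987elliptic}), not from \ref{amt}. The content then lies in ruling out ``graph'' (diagonal) subgroups in pairs of factors: mod $p$ via Goursat's lemma together with simplicity of $\SL_2(\F_p)/\{\pm 1\}$ and the normality of $\rho(G_K)$; $p$-adically via Serre's $\Q_p$-Lie algebra lemmas (\cite[\S6, Lemma 7 and Lemma 10]{serre1971prop}); and finally Ribet's lemma and the commutator subgroup condition assemble the pairwise statements into openness in $\SL_2(\hat\Z)^r$. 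None of these normality/Goursat/Lie-algebra steps appear in your proposal, and they are exactly what is needed to compensate for the fact that passing to the algebraically closed base field cannot be reduced to the finitely generated case by restriction.
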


The following proof is joint work with Chris Daw, a student of Andrei Yafaev, who works on the geometry of Shimura varieties at UCL. Recently, we have been able to generalise the equivalence results of \S \ref{secequiv} to an arbitrary Shimura Curve, and the results of this section to an arbitrary modular curve.
\begin{proof} [Proof of \ref{mooo}]
We may assume that the transcendence degree of $K / L$ is one, since if the $j(E_i)$ are algebraically independent then the result follows immediately from the case $r=1$, which follows from the theory of modular function fields (see for example \cite[Chapter 6, \S3]{lang1987elliptic}) and is much easier than Serre's result for an elliptic curve over a number field. We make the following deductions:

\begin{enumerate}

\item Note that $K/\Q(j(E_1),...,j(E_r))$ is a normal extension, since its Galois group is isomorphic to that of the extension $L/L\cap\Q(j(E_1),...,j(E_r))$, which is an intermediate extension of $L/\Q$. Therefore, $G_K$ is a normal subgroup of $\Gal(\bar{K} /\Q^{\rm cyc}(j(E_1),...,j(E_r)))$.

\item Recall that the image of the projection to any of the individual factors of the homomorphism
\begin{align*}
G_K \rightarrow\SL_2(\hat{\Z})^r
\end{align*}
has finite index (see \cite[Chapter 6, \S3, Corollary 2]{lang1987elliptic}). It is closed since it is a compact subset of a Hausdorff topological space. Therefore, it is also open.

\item We claim that the projection to the product of any two factors of the reduced homomorphism
\begin{align*}
G_K \rightarrow\SL_2(\F_p)^r
\end{align*}
is surjective for almost all primes $p$. To see this note that the projection to any single factor is surjective for almost all primes (see 2). Therefore, let
\begin{align*}
G_K \rightarrow\SL_2(\F_p)^2
\end{align*}
be a projection as in the claim. If $p$ is outside a finite set of primes, Goursat's Lemma states that the image is either full or its image in
\begin{align*}
\SL_2(\F_p)/\ker(\pi_2)\times\SL_2(\F_p)/\ker(\pi_1)
\end{align*}
is the graph of an isomorphism
\begin{align*}
\SL_2(\F_p)/\ker(\pi_2)\cong\SL_2(\F_p)/\ker(\pi_1),
\end{align*}
where $\pi_1$ and $\pi_2$ denote the first and second projections respectively. However, if $p>3$ then $\SL_2(\F_p)/\{\pm\}$ is simple. In which case, $\ker(\pi_1)$ and $\ker(\pi_2)$ are equal to $\SL_2(\F_p)$, $\{\pm\}$ or $\{1\}$. However, the latter two possibilities can occur for only finitely many primes since for almost all primes the projection of the homomorphism
\begin{align*}
\Gal(\bar{K}/\Q^{\rm cyc}(j(E_1),...,j(E_r))\rightarrow\SL_2(\F_p)^r
\end{align*}
to the product of any two factors is surjective by \ref{amt}, and the fact that $G_K$ is a normal subgroup of $\Gal(\bar{K}/\Q^{\rm cyc}(j(E_1),...,j(E_r))$ (see 1).

\item For almost all primes $p$, the projection of the homomorphism
\begin{align*}
G_K \rightarrow\SL_2(\Z_p)^r,
\end{align*}
to the product of any two factors is surjective by \cite[ \S6, Lemma 10]{serre1971prop}, (see 3).

\item We claim that for all primes $p$, the projection of the homomorphism
\begin{align*}
G_K \rightarrow\SL_2(\Z_p)^r,
\end{align*}
to the product of any two factors has open image. Indeed, for any such projection, let $\mathfrak{a}_p$ denote the $\Q_p$-Lie algebra of the image of $G_K$ in $\SL_2(\Z_p)\times\SL_2(\Z_p)$. It is a $\Q_p$-Lie subalgebra of $\mathfrak{sl}_2\times\mathfrak{sl}_2$ that surjects on to each factor (see 2). Therefore, as in \cite[\S6, Lemma 7]{serre1971prop}, either $\mathfrak{a}_p=\mathfrak{sl}_2\times\mathfrak{sl}_2$ or it is the graph of a $\Q_p$-algebra isomorphism $\mathfrak{sl}_2\cong\mathfrak{sl}_2$. However, the latter is impossible by \ref{amt}, and the fact that $G_K$ is a normal subgroup of $\Gal(\bar{K} / \Q^{\rm cyc}(j(E_1),...,j(E_r)))$.   
\end{enumerate}

Therefore, by 4 and 5, the image of the projection, to the product of any two factors, of the homomorphism
\begin{align*}
G_K \rightarrow\SL_2(\hat{\Z})^r
\end{align*}
is open. Therefore, by \cite[Lemma 3.4 and Remark 3]{ribet1975} the image of this homomorphism itself is open. 
\end{proof}
So we have the categoricity result:
\begin{thm}
Conditions \ref{condition1} and \ref{condition2} hold, and the theory of the $j$-function $T\wedge SF\wedge \trdeg(F) \geq \aleph_0$ has a unique model (up to isomorphism) in each infinite cardinality. The standard model $\C_j$ is the unique model of cardinality continuum.
\end{thm}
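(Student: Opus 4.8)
The plan is to observe that the statement is a corollary assembled from everything established above. I would first deduce Condition \ref{condition1} from Theorem \ref{amtapp}. Let $x=(x_1,\dots,x_n)\in\C^n$ be a strongly $G$-independent non-special tuple, let $L/\Q$ be finitely generated, and put $K:=L(j(S),x)$. Writing $x_i=j(E_i)$ for an elliptic curve $E_i$ over $\Q(j(E_i))$, non-specialness forces $E_i$ to have no complex multiplication by \ref{pro special}, and strong $G$-independence (Definition \ref{defgindep}, i.e. $\Phi_N(x_i,x_j)\neq 0$ for all $N$) forces the $E_i$ to be pairwise non-isogenous, so $A:=E_1\times\cdots\times E_n$ is an abelian variety of the type demanded by \ref{amt} and \ref{amtapp}, defined over the finitely generated field $L(x)$. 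Theorem \ref{amtapp} then gives that the image of $G_{L(x)^{\cyc}(j(S))}$ in $\SL_2(\hat{\Z})^n$ is open; since $L(x)^{\cyc}(j(S))$ contains $\Q^{\cyc}$ and all the $j(E_i)$, the proposition comparing representations on Tate modules with representations into $\pi_1'$ applies and shows that the restriction of $\rho_x$ to $G_{L(x)^{\cyc}(j(S))}$ has finite index in $\pi_1'^n$. As $L(x)^{\cyc}(j(S))\supseteq K$, this group is a subgroup of $G_K$, so $\rho_x(G_K)$ is a superset of a finite-index subgroup and therefore itself has finite index in $\pi_1'^n$, which is exactly Condition \ref{condition1}.

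Next I would deduce Condition \ref{condition2} from Theorem \ref{mooo} in the same way, with a shorter field-bookkeeping step because the base field is already algebraically closed. Given a countable algebraically closed $L\subseteq\C$, a strongly $G$-independent tuple $x\in\C^n$ with $x\cap L=\emptyset$, and $K:=L(x)$, the associated curves $E_i$ with $j(E_i)=x_i$ are again pairwise non-isogenous and $K=L(j(E_1),\dots,j(E_n))$, so \ref{mooo} gives that the Galois representation on the Tate module of $A=E_1\times\cdots\times E_n$ has open image in $\SL_2(\hat{\Z})^n$; since $L$ algebraically closed implies $\Q^{\cyc}\subseteq K$, the same comparison proposition yields that $\rho_x:G_K\to\pi_1'^n$ has finite index, i.e. Condition \ref{condition2}. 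With both conditions verified, Theorem \ref{thmcat} gives at once that $T\wedge SF\wedge\trdeg(F)\geq\aleph_0$ has a unique model up to isomorphism in every infinite cardinality.

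Finally, for the last sentence I would check that the standard model $\C_j$ is itself a model of this theory of cardinality continuum. By construction $\C_j\models T$: its covering and field reducts are $\langle\H,G\rangle$ and $\langle\C,+,\cdot;\Q(j(S))\rangle$, each $Z_g$ was defined as the image $p_g(\H)$ so every $\Psi_g$ holds, and $\tp(s)$ is by definition the type of a special point in $\C_j$; moreover $\C_j\models SF$ since the fibres of $j$ are precisely the $\sl2(\Z)$-orbits. The field $\C$ has transcendence degree $2^{\aleph_0}$ over the countable field $\Q(j(S))$, so $\C_j\models\trdeg(F)\geq\aleph_0$, and $|\C_j|=2^{\aleph_0}$. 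Hence by the uniqueness just proved, $\C_j$ is the unique model of cardinality continuum.

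The only genuinely delicate point in this assembly is the bookkeeping with base fields: matching the field $K$ of Conditions \ref{condition1} and \ref{condition2} with the fields over which \ref{amtapp} and \ref{mooo} are stated, and checking that openness of a Galois image transfers correctly to the larger field $K$ (using that a superset of an open subgroup is open, and that $\rho_x$ restricts to the representation one obtains over any subfield). The substantive input --- the arithmetic-geometric openness results behind \ref{amt}, \ref{amtapp} and \ref{mooo}, together with the quasiminimal-excellence argument behind \ref{thmcat} --- has already been supplied, so beyond this routine matching there is no real obstacle.
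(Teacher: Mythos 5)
Your assembly is correct and matches the paper's (implicit) argument exactly: the paper states this theorem without a separate proof, as a corollary of Theorems \ref{amtapp} and \ref{mooo} (which it announces as "implying" Conditions \ref{condition1} and \ref{condition2}, via the unnumbered comparison proposition translating Tate-module representations into $\pi_1'$-representations) together with Theorem \ref{thmcat}. Your write-up supplies precisely the bookkeeping the paper leaves tacit --- the non-special/non-CM and strongly $G$-independent/non-isogenous dictionary, and the containments $L(x)^{\cyc}(j(S))\supseteq K$ and $\Q^{\cyc}\subseteq L\subseteq K$ needed to restrict the openness results to the fields of Conditions \ref{condition1} and \ref{condition2} --- and correctly closes with the observation that $\C_j$ itself is a continuum-sized model.
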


On one hand, the results of this chapter are very satisfying, in that we started with the purely model theoretic assumption of categoricity, and ended up making deep geometric and arithmetic conjectures (which turned out to be true) regarding Galois representations on the Tate modules of certain abelian varieties. On the other hand, the fact that the conjectures do not quite correspond to the natural ones coming from geometry, indicates that it may be beneficial to construct a model-theoretic setting where there is a more direct correspondence. It is also desirable to have a setting for the $j$-function where the Galois action on Tate modules of CM elliptic curves is seen (i.e. special points would have to not be $\emptyset$-definable), and I try to address these issues in the next chapter.

\begin{rem}
There is another issue, in that I am not sure whether categoricity has anything to say about Galois action on special points in this setting. That is, for special $\tau$ in an imaginary quadratic field $K$, is it essential that the extension $K(j(G \tau)) / K$ is abelian for categoricity to hold?
\end{rem}

\section{Extensions of this work}
There are three main directions in which this work can be extended:

\begin{itemize}
\item Firstly, we can stick with the same language, and consider covers of a Shimura variety. As was previously mentioned, in joint work with Chris Daw we have extended the equivalence results of \S \ref{secequiv} to arbitrary Shimura curves, and the results of \S \ref{secgalrep} to arbitrary modular curves. The fact that the required results regarding Galois representations are not known in the setting of a Shimura curve, realises the techniques of the chapter as a tool for making conjectures. I am unable to give a reference for this work at the moment, since we are in the final stages of writing up the results.
\item Secondly, we can go one-sorted, and try and construct a `pseudo$j$-function' in analogy with Zilber's pseudo-exponential function. The categoricity result of this chapter is the first step of this construction, in analogy with Zilber's categoricity result for the universal cover of the multiplicative group $\C^{\times}$ \cite{zilber2006covers} (with a mistake fixed by Bays \cite{bays2011covers}).
\item Lastly, we can try and look for more general model theoretic setting for the study the analytic universal cover of an arbitrary variety. We pursue this direction for a smooth curve in the next chapter. 
\end{itemize}

Finally, I should note that I originally had very clunky exposition of the material this chapter, where everything was viewed in terms of elliptic curves. I ended up talking about images of Galois representations in what I was calling `projective Tate modules' of elliptic curves (i.e. in $\psl2(\hat{\Z})$). It was through trying to extend this work with Chris Daw to the general Shimura curve case, that the presentation of this work given here came about. I am very thankful to Chris for the many hours of conversations we have had regarding this work, and for making me begin to realise how powerful, and clean the Shimura way of looking at things is. Hopefully, as a result, the exposition given here is much more accessible to geometers and number theorists than my original one. I would also like to thank Martin Bays for explaining the connection between quasiminimality and quasiminimal excellence provided by the recent result of the five author paper \cite{bays2012quasiminimal}, as in the proof of Theorem \ref{thmcat}.

\chapter{The universal cover of a smooth complex curve}\label{chapcur}

The aim of this chapter is to describe a general model theoretic framework for the study of the universal cover of a smooth complex algebraic curve. In particular my aim was to describe a setting in which all previous results regarding the categoricity of `universal covers' of smooth curves, i.e. the multiplicative group (\cite{zilber2006covers}), an elliptic curve (\cite{gavrilovich2006model} and \cite{bays2009categoricity}), and the $j$-function (Chapter \ref{chapj}) may in some sense be embedded. The term `universal covers' is used loosely here, since we want it to include the work of the previous chapter regarding the $j$-function also. This would require not taking the full category of finite, Galois \'etale covers of $\A^1(\C) \backslash \{ 0,1728 \} $, but taking the subsystem of Galois covers as in the last chapter (corresponding to tuples $g \subset \gl2q$). The general construction of this chapter also works for suitable subsystems of covers, however to ease notation we write things out for the full category of finite \'etale Galois covers. \newline

In the case of the $j$-function I was guided by trying to construct a setting where to have a categorical infinitary theory the openness of Galois representations on Tate modules of arbitrary elliptic curves would be essential (i.e. including curves with complex multiplication). This means that the action of $\gl2q$ cannot be in the language but must be seen somehow in the infinitary theory, and this points to putting very little structure on the covering sort. This makes the setting of this chapter very different to that of Gavrilovich (\cite{gavrilovich2006model}) where he defines a topology on the covering sort, however the work here is still strongly influenced by Gavrilovich's. \newline

After describing a general framework where the analytic universal cover and the pro-\'etale cover may be seen as models of the same first-order theory (which is shown to have quantifier elimination), we prove a model-theoretic `comparison theorem' stating that the analytic universal cover may be elementarily embedded in the pro-\'etale cover. From \S \ref{multgpet} onwards the material should be viewed as work in progress because we have to break away from the general situation and restrict our focus on the multiplicative group. Here, aiming for categoricity we restrict the models of the general theory of covering spaces with an $\cL_{\omega_1,\omega}$-sentence stating that the models have a divisible, torsion-free, abelian group on the covering sort. I am hopeful that there is an analogous version of the infinitary axiomatisation of the additive group given in \S \ref{multgpet} for (something like) $\gl2q$, and that the framework described in this chapter will ultimately give a more general, and less restrictive framework for the study of the model theory of $j$-function. I am also hopeful that this axiomatisation is a special case of a more general `independence notion' which will apply to a larger class of curves.

% (\cite{}, \cite{}, \ref{}) can be embedded. By this I mean that although the structures involved in the %previous results may not necessarily be interpreted in the language they have previously been described %in, the main arithmo-geometric results required for categoricity in the previous languages are equivalent %to categoricity in the language described here.

\section{Background and model-theoretic framework}

\subsection{Comparing $\pi_1^{an}$ and $\pi_1^{et}$}

Given a smooth complex algebraic variety $V$, let $V(\C)^{an}$ be the complex analytic variety obtained by considering the $\C$-points of $V$ with its complex analytic topology. The main `comparison theorem' linking the categories of finite \'etale covers of $V(\C)$ and finite analytic covers of $V(\C)^{an}$, allowing us to compare the analytic and geometric \'etale fundamental groups is the Riemann existence theorem:

\begin{thm}[Riemann existence theorem]
Let $V$ be a smooth variety over $\C$. Then the functor sending a finite \'etale covering $f:Y \ra V$ to the finite covering space $f^{an}: Y(\C)^{an} \ra X(\C)^{an}$ is an equivalence of categories.
\end{thm}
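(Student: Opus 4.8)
The plan is to establish the two halves of the asserted equivalence separately: full faithfulness of the analytification functor $F$, and essential surjectivity. Write $\Fet(V)$ for the category of finite \'etale covers of $V$ and $\Cov(V(\C)^{an})$ for the category of finite topological covering spaces, so that $F:\Fet(V)\to\Cov(V(\C)^{an})$ sends $f:Y\to V$ to $f^{an}:Y(\C)^{an}\to V(\C)^{an}$.

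\emph{Full faithfulness.} Given $Y_1,Y_2\in\Fet(V)$, a $V$-morphism $Y_1\to Y_2$ is the same datum as a connected component of the fibre product $Y_1\times_V Y_2$ that maps isomorphically onto $Y_1$: indeed $Y_1\times_V Y_2\to Y_1$ is again finite \'etale, hence a finite disjoint union of covers, and one picks out the ``graph'' piece. The same description holds verbatim in $\Cov(V(\C)^{an})$. Hence full faithfulness follows once one knows that for every finite \'etale $W\to V$ the connected components of $W$ biject naturally with those of $W^{an}$, and that each algebraic component has connected analytification. This in turn reduces to two facts: a smooth connected variety is irreducible, so its analytification is connected; and conversely a connected analytic component of $W^{an}$, being open and closed, is a union of analytifications of algebraic irreducible components, so a dimension count shows it is a single one. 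This argument also shows that $F$ reflects isomorphisms, which is what one actually uses when transporting torsor and monodromy data between the analytic and \'etale pictures.

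\emph{Essential surjectivity.} This is the substantive point. One must show every finite covering space $T\to V(\C)^{an}$ lies in the image of $F$, and I would argue by cases. If $V$ is proper, then $V(\C)^{an}$ is a compact complex manifold; the structure sheaf $\cO_T$ pushes forward to a coherent sheaf of $\cO_{V(\C)^{an}}$-algebras, which by Serre's GAGA is the analytification of a coherent sheaf of $\cO_V$-algebras $\cA$. Then $Y:=\mathbf{Spec}$ of $\cA$ over $V$ is a finite $V$-scheme with $Y^{an}\cong T$, and $Y\to V$ is \'etale because the map is a local analytic isomorphism on $\C$-points and \'etaleness can be tested on the analytification. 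In the general (affine) case one reduces to curves: shrinking, one may assume $V$ smooth affine, and by the Zariski--Nagata purity theorem a finite \'etale cover is determined by, and extends uniquely from, the complement of any closed set of codimension $\ge 2$, so one may arrange $V$ to be fibred by a Lefschetz pencil over a lower-dimensional base with generic fibre a smooth affine curve. Over such a curve the classical Riemann existence theorem applies (a finite cover of a Riemann surface extends to a branched cover of its smooth compactification, which is algebraic by Riemann--Roch, hence the cover is algebraic and \'etale away from the finite branch locus), algebraizing $T$ fibrewise; flat descent then glues these into an algebraic structure on all of $T$. Equivalently one may invoke the Grauert--Remmert finiteness theorem to algebraize the normalization of a projective compactification of $V$ in the function field of the connected analytic cover, then apply Chow's theorem.

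The main obstacle is precisely this algebraization step: there is no purely formal argument, and one is forced either through GAGA (proper case) or through purity together with the one-dimensional Riemann existence theorem (affine case); in a thesis it is legitimate to cite SGA1, Expos\'e XII, for the general statement. Combining full faithfulness with essential surjectivity gives that $F$ is an equivalence, and the corollary actually used in what follows is the induced isomorphism $\pi_1^{et}(V)\cong\widehat{\pi_1^{an}(V(\C)^{an})}$ between the \'etale fundamental group and the profinite completion of the topological one.
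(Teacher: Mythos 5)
The paper does not prove this statement: it is cited as background (both here and, in the curve case, as Theorem~\ref{thmrie} in the appendix, attributed to standard references such as Szamuely). So there is no paper proof to compare against; I can only evaluate your sketch on its own terms.

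Your full-faithfulness argument is correct and is the standard one: a $V$-morphism between finite \'etale covers is identified with a section of the fibre product, i.e.\ a connected component mapping isomorphically onto the source, and the same description holds analytically, so the problem reduces to matching connected components of $W$ and $W^{\an}$, which you handle correctly via irreducibility of smooth connected varieties. For essential surjectivity, the proper case via GAGA is clean, including the observation that \'etaleness can be checked after analytification. In the general case you offer two routes, and only one of them is sound as written. The Grauert--Remmert route (normalize a projective compactification in the function field of the connected analytic cover, algebraize, then restrict) is the argument of SGA1, Expos\'e~XII, and the citation you give is the appropriate level of rigour for a thesis. The alternative via Zariski--Nagata purity and Lefschetz pencils has a real gap: after algebraizing the cover fibrewise over a pencil, the assertion that ``flat descent then glues these into an algebraic structure on all of $T$'' is not justified. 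Fibrewise algebraic structures on an analytic family do not automatically assemble into a relative algebraic structure; one needs to produce a coherent algebraic sheaf of algebras on the total space, which is precisely where the difficulty lies, and descent requires a descent datum that you have not exhibited. This is not fatal since you do not rely on that route, but you should either delete it or flag it as heuristic. Finally, since the paper only ever uses the corollary $\pi_1^{\rm et}(V)\cong\widehat{\pi_1^{\an}(V(\C)^{\an})}$ as a black box, citing SGA1 (or Szamuely, as the paper does for the curve case) is exactly what the context calls for.
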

The interesting part of the above is clearly the fact that the finite analytic coverings $f^{an}: Y(\C)^{an} \ra X(\C)^{an}$ have a natural algebraic structure.

\begin{dfn}
A group is said to be \textit{residually finite}, if the intersection of all normal subgroups of finite index is trivial. 
\end{dfn}

A group is residually finite if and only if it embeds in its profinite completion via the diagonal embedding. $\Z$ and $\sl2(\Z)$ are residually finite, also free groups, and the direct product of a set of residually finite groups are residually finite. Importantly for the discussion here, we have the following:

%Thanks Qiouchu?

\begin{thm}
The analytic fundamental group of a smooth, complex algebraic curve $V$ (considered as a Riemann surface) is residually finite.
\end{thm}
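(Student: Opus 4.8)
The plan is to reduce the statement to a classification of the fundamental group and then treat each case separately. First I would invoke the classical dictionary between smooth algebraic curves and Riemann surfaces: if $V$ is a (connected) smooth complex algebraic curve, then $V(\C)^{\an}$ is analytically isomorphic to $\bar X \setminus P$, where $\bar X$ is a compact Riemann surface of some genus $g$ and $P \subset \bar X$ is a finite (possibly empty) set. This uses that the function field of $V$ has a unique smooth projective model $\bar X$, which is a compact Riemann surface, and that $V(\C)^{\an}$ sits inside $\bar X$ as the complement of a finite set (see the appendix on algebraic curves). In particular $\pi_1(V(\C)^{\an})$ is finitely generated in all cases.

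Next I would split into cases according to the pair $(g, P)$. If $P \neq \emptyset$, or if $\bar X = \P^1$, then $V(\C)^{\an}$ is a non-compact (respectively simply connected) surface, hence homotopy equivalent to a finite wedge of circles, so $\pi_1(V(\C)^{\an})$ is a free group of finite rank (possibly rank $0$). Free groups are residually finite --- as already recorded in the text above --- so the claim holds here. If $\bar X$ is compact of genus $1$ with $P = \emptyset$, then $\pi_1(V(\C)^{\an}) \cong \Z^2$, a direct product of copies of the residually finite group $\Z$, hence residually finite by the closure remark above. (It is worth noting that all curves actually relevant to this thesis --- $\C^\times$, elliptic curves, $\A^1$ minus finitely many points, the $Z_N$ --- fall into one of these two easy cases.)

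The remaining and genuinely harder case is $\bar X$ compact of genus $g \geq 2$ with $P = \emptyset$, so that $\pi_1(V(\C)^{\an})$ is a closed orientable surface group. Here I would use uniformization of surfaces: $V(\C)^{\an}$ carries a complete hyperbolic metric, whence $V(\C)^{\an} \cong \H / \Gamma$ for a discrete, torsion-free, cocompact subgroup $\Gamma \leq \psl2(\R)$ with $\Gamma \cong \pi_1(V(\C)^{\an})$. Composing with the faithful adjoint representation $\psl2(\R) \hookrightarrow \GL(\fsl_2(\R)) \cong \GL_3(\R)$ exhibits $\pi_1(V(\C)^{\an})$ as a finitely generated linear group over a field of characteristic zero, so Mal'cev's theorem (finitely generated linear groups in characteristic $0$ are residually finite) finishes the argument. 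An alternative route for this case would be to quote that orientable surface groups are residually free, and appeal once more to residual finiteness of free groups.

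I expect the main obstacle to be exactly this last case. A surface group of genus $\geq 2$ is neither free (it has cohomological dimension $2$) nor abelian, and there is no cheap way to realise it inside a manifestly residually finite group; one genuinely needs some external input such as uniformization together with Mal'cev's theorem (or the residual freeness of surface groups). By contrast, the reduction to the classification and the free and abelian cases are purely formal once the correspondence between algebraic curves and Riemann surfaces is in hand.
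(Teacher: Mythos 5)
Your proof is correct and rests on the same crux as the paper's: uniformization realises the fundamental group as a finitely generated discrete subgroup of $\psl2(\R)$, and one then quotes residual finiteness of finitely generated linear groups (Mal'cev). The difference is in how the casework is organised. The paper splits off only the four non-hyperbolic pairs $(g,n)\in\{(0,0),(0,1),(0,2),(1,0)\}$ as "easy to check" and runs the uniformization argument for every hyperbolic surface, compact or not; you instead observe that every non-compact case is already a free group of finite rank (and genus $1$ compact is $\Z^2$), so that the uniformization-plus-Mal'cev machinery is only needed for closed surfaces of genus $\geq 2$. Your decomposition is a little cleaner and, as you note, it makes visible that every curve actually used in this thesis falls into the elementary cases; but both routes lean on the same external fact at the one genuinely hard step, and neither buys you a proof that avoids Mal'cev (or an equivalent, such as residual freeness of surface groups) for closed hyperbolic surfaces.
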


\begin{proof}
Topologically, the Riemann surface corresponding to the curve looks like a $g$-holed torus minus $n$-points. Unless $(g,n) \in \{ (1,0),(0,0),(0,1),(0,2) \}$, such a surface has negative curvature and therefore by the uniformisation theorem has the upper half plane $\H$ as its universal cover. So $\pi_1^{an}(V(\C)^{an})$ embeds into $\psl2(\R) = \Aut(\H)$ as a finitely generated subgroup and it is well known that these groups are residually finite. The other cases are easy to check.
\end{proof}

Fix a smooth, connected affine algebraic curve $V$, defined over a subfield $k$ of $\C$, and let $\Can (V)$ be the category of Galois covers of $V \times_k \C$ considered with their analytic topology. Let $\Feg(V)$ be the category of finite, Galois \'etale covers of $V \times_k \C$.  $\Feg(V)$ is equivalent to the subcategory of finite Galois covers in $\Cov^{an}(V)$ by the Riemann existence theorem. We just consider affine curves here for convenience, but by elimination of imaginaries in the theory of algebraically closed fields of characteristic 0, projective curves may be considered also. \newline

Let $K$ be a field containing the minimal fields of definition of all finite, Galois, \'etale covers $q: Y \ra V \times_k \C$ (i.e. this includes the fields of definition of all the finite \'etale covering automorphisms also).

\begin{dfn}
Fix an embedding $K \hookrightarrow \C$, and define the $K$-enriched algebraically closed field
$$\C_K:= \langle \C; +,\cdot; 0 ,1, \{c \}_{c \in K} \rangle$$
where we have named every element of $K$ with a constant symbol. For each finite \'etale Galois cover $q : Y \ra V \times_k \C$, fix an embedding of the $\C$-points of $Y$ (and its covering maps) as definable sets in the structure $\C_K$. Under this embedding, choose a directed system, $$(Y_i , q_{j,i} : Y_j \ra Y_i)_{i < j \in I}$$
with indexing set $I$, of all covers in $\Feg(V)$.
\end{dfn}

From now on, for a finite, \'etale, Galois cover $q:Y \ra V \times_k \C$, we will just think of $Y$ as a definable set in $\C_K$, i.e. we have identified $Y$ with its $\C$-points, with some fixed embedding into affine space. Instead of $Y(\C)$ I will just write $Y$.

\subsection{The analytic universal cover}

Now take any $x \in V$ and consider the analytic universal cover $p: \U_x \ra V$  i.e. an object representing the fibre functor 
$$\Fib_x : \Cov^{an}(V) \ra \Sets.$$
We identify the fundamental group $\pi_1^{an}(V)$ with the covering automorphisms of $\U_x$ i.e. define
$$\pi_1^{an} := \Aut_{\Cov^{an}(V)}(\U_x / V),$$
 where we let $\pi_1^{an}$ act on $\U_x$ from the left.
Consider the structure
$$\cU : = \langle \langle \U_x ,  \{ \gamma \}_{\gamma \in \pi_1^{an}} \rangle , \C_K , \{p_Y : U \ra Y \}_{Y \in \Feg(V)} \rangle $$
where $p_Y \in \Hom_{\Cov^{an}(V)}(\U_x , Y)$, and we have a unary function symbol for every element of $\pi_1^{an}$ acting on $\U_x$, which now is just considered as a set (i.e. we now forget about the complex topology). Since all analytic universal covers are isomorphic in the complex topology, we will just denote the set $\U_x$ by $\U$. There are as many covering maps $p_Y : \U \ra Y$ as the degree of the cover $q: Y \ra V$, but we just choose one map for each $Y$. \newline

To get some a idea of where we are headed with this, it is good to note the following:
\begin{pro}
Let $Y$ and $Z$ be either finite, Galois covers in $\Cov^{an}(V)$, or $\U$. Then all elements of $\Hom_{Cov^{an}(V)}(Y,Z)$ are definable in $\cU$.
\end{pro}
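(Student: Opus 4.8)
The plan is to show that any morphism $\phi \in \Hom_{\Cov^{an}(V)}(Y,Z)$ is pinned down by finitely many conditions that are expressible in the structure $\cU$, and hence is definable there. The key observation is that when $Y$ and $Z$ are finite Galois covers of $V$ (or $\U$, which we may treat as a ``limit'' cover), a morphism of covers over $V$ is a highly rigid object: it is determined by the image of a single point in a fibre, because covering-space morphisms are unique once their value at one point of a connected cover is specified. So the strategy splits into two parts: first, reduce every case to the case where $Y = \U$; second, handle $\Hom_{\Cov^{an}(V)}(\U,Z)$ directly.

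First I would treat the case $Y = \U$. An element $\psi \in \Hom_{\Cov^{an}(V)}(\U,Z)$ is, up to the action of $\pi_1^{an}$, one of the chosen maps $p_Z : \U \to Z$; more precisely, since $Z$ is Galois and $\U$ is the universal cover, $\Hom_{\Cov^{an}(V)}(\U,Z)$ is a torsor under $\Aut_{\Cov^{an}(V)}(Z/V)$, and every such $\psi$ equals $\delta \circ p_Z$ for a unique covering automorphism $\delta$ of $Z$. Now $p_Z$ is one of the function symbols in the language of $\cU$, and by the Riemann existence theorem every covering automorphism $\delta$ of $Z$ is algebraic and already definable in $\C_K$ (indeed, $Z$ and its automorphisms were fixed as definable sets in $\C_K$ in the setup). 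Composition of a definable map with a language function symbol is definable, so $\psi$ is definable in $\cU$. The same argument gives the case $Z = \U$ by considering a section-type splitting: a morphism $\U \to \U$ over $V$ is exactly an element of $\pi_1^{an}$, each of which is a function symbol of $\cU$, hence definable.

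Next I would reduce the case of two finite Galois covers $Y, Z$ to the previous one. Given $\phi : Y \to Z$ over $V$, precompose with the chosen map $p_Y : \U \to Y$ to get $\phi \circ p_Y \in \Hom_{\Cov^{an}(V)}(\U,Z)$, which is definable by the first step; call its graph $G$. Since $p_Y : \U \to Y$ is surjective, the graph of $\phi$ is the image of $G$ under the map $(\text{graph of } p_Y) \times \mathrm{id}_Z$, i.e. $\graph(\phi) = \{\, (y,z) : \exists u \in \U,\ p_Y(u) = y \ \wedge\ (u,z) \in G \,\}$. This is an $\exists$-formula over the already-established definable sets, and because the covering sort $\U$ is part of the structure $\cU$ we are allowed to quantify over it; hence $\graph(\phi)$ is definable in $\cU$, and therefore $\phi$ is definable. (One can also avoid the existential quantifier over $\U$: since $p_Y$ is a finite-degree cover, one can pick a definable section of $p_Y$ over a suitable definable open piece of $Y$ and patch, but the quantified version is cleaner.)

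The main obstacle I anticipate is purely bookkeeping rather than conceptual: one must be careful that the reduction to ``one chosen map per cover'' really does capture \emph{all} morphisms and not just the chosen ones — this is where the Galois hypothesis on $Y$ and $Z$ is essential, since it makes $\Hom_{\Cov^{an}(V)}(\U, -)$ into a torsor under a finite automorphism group all of whose elements are visibly definable in $\C_K$. Without the Galois assumption the set of morphisms could fail to be transitive under the automorphism group, and one would have to argue more carefully about connected components of fibre products. A secondary point to check is that $\U$ is allowed to play the role of ``$Y$'' or ``$Z$'' in the statement; there the relevant $\Hom$-sets are, respectively, $\pi_1^{an}$ acting on $\U$ and the torsor of maps $p_Y$, both of which are literally named in the language of $\cU$, so definability is immediate and no quantifier over the covering sort is even needed.
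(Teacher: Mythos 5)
Your proposal is correct and takes essentially the same route as the paper: both arguments hinge on representability of the fibre functor $\Fib_x$ to identify $\Hom_{\Cov^{an}(V)}(\U,Z)$ with a transitive $G$-set whose acting group consists of maps already definable, so that once the chosen $p_Z$ is named in the language every other morphism $\U\to Z$ is a composition of definables. The minor difference is which torsor structure you invoke: you use the left $\Aut_{\Cov^{an}(V)}(Z/V)$-torsor structure by \emph{post}-composition, whereas the paper exhibits $\Hom(\U,Y)$ as a transitive right $\pi_1^{an}$-set by \emph{pre}-composition $p_Y\mapsto p_Y\circ\gamma$; both suffice, since the $\gamma$'s are function symbols and the finite covering automorphisms are definable in $\C_K$ by the Riemann existence theorem, and the two descriptions are interchangeable precisely because $Z$ is Galois. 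You are more explicit than the paper about the case of two finite covers $Y,Z$: the paper leaves that reduction implicit, while you write down the formula $\graph(\phi)=\{(y,z):\exists u\in\U,\ p_Y(u)=y\wedge(u,z)\in G\}$, which is exactly the right way to turn surjectivity of $p_Y$ into definability of $\graph(\phi)$ by an existential quantifier over the covering sort. Two small remarks. First, your parenthetical alternative -- avoiding the quantifier by choosing ``a definable section of $p_Y$ over a suitable definable open piece of $Y$'' because ``$p_Y$ is a finite-degree cover'' -- is not right: $p_Y:\U\to Y$ has fibres which are $\pi_1^{an}(Y)$-orbits and these are infinite in general, so $p_Y$ is not finite-degree and no such finite section data exists; fortunately you do not rely on this. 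Second, you do not explicitly address $\Hom(Y,\U)$ when $Y$ is a proper finite cover, but this set is empty (any morphism of connected covers is itself a covering map, and a finite-sheeted cover of $V$ cannot cover the infinite-sheeted $\U$), so the claim is vacuous there.
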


\begin{proof}
The fibre functor
\begin{align*}
\Fib_x : \Cov^{an}(V) & \ra \Sets \\
(q:Y \ra X ) & \mapsto q^{-1}(x)
\end{align*}
gives an equivalence of categories between $\Cov^{an}(V)$, and $\pi_1^{an}$-sets.
The fundamental group $\pi_1^{an}$ acts on $\Hom_{\Cov^{an}(V)}(\U , Y)$ on the right via composition of maps i.e.
$$p_Y \mapsto p_Y \circ \gamma$$
for $\gamma \in \pi_1^{an}$. By definition, since $\U$ represents the functor $\Fib_x$, for all $q: Y \ra X$ there is a functorial isomorphism
$$\Hom_{\Cov^{an}(V)}(\U , Y) \cong \Fib_x(Y).$$
Combining the two facts above gives a transitive action of $\pi_1^{an}$ on $\Hom_{\Cov^{an}(V)}(\U , Y)$. Finally, all elements of $\Aut_{\Feg(V)}(Y / V)$ are definable in the structure $\C_K$, and the result follows by the Riemann existence theorem.
\end{proof}

%Equivalently we could let $\cY:=\langle Y , \{ R\}\rangle$ where we have a relation symbol for every %sub-$K$-variety of $Y^n$ for all $n$. 

\section{The language $\cL_{\Cov(V)}$}

Consider a language $\cL_{\Cov(V)}$ for two-sorted structures of the form 
$$\cM=\langle \langle U ,  \{ \gamma \}_{\gamma \in \pi_1^{an}} \rangle ,  \C_K , \{q_Y : U \ra Y \}_{Y \in \Feg(V)} \rangle.$$
We will usually just denote the universal covering map $p_V$ by $p$. \newline

%So we have in our language:
%\begin{itemize}
%\item a sort for every Galois cover in $\Cov^{av}(V)$.
%\item one map between every pair of finite Galois covers in $\Cov^{an}(V)$
%\item one map from the universal cover to every finite Galois cover in $\Cov^{an}(V)$
%\item a map for each element of $\pi_1^{an}$ acting on $U$.
%\end{itemize}

%\begin{rem}
%Note that instead of taking $\Can(V)$, we may use a reasonable subcategory of $\Can(V)$, and this is %what we do in the case of the application to the $j$-function \ref{} where we take a system of covers %corresponding to the action of $\gl2q$ on $\H$, and we get a corresponding quotient of $\pi_1$ which we %call $\pi_1'$. However, in this section, to simplify exposition we will stick with $\Can(V)$.
%\end{rem}

We extend all of the maps above to tuples component-wise, e.g. for a tuple $u=(u_1,..,u_n)$, define
$$p_{Y}(u):=(p_{Y}(u_1),...,p_{Y}(u_n)).$$

\subsection{The theory of universal covers}

The aim now is to describe a complete first-order theory in the language $\cL_{\Cov(V)}$, of which both the analytic universal cover and the pro-\'etale cover of $V$ are models. The first-order theory will then be augmented with an $\cL_{\Cov(V), \omega_1, \omega}$-sentence from which we would ideally like to be able to recover the analytic universal cover uniquely up to isomorphism.

\subsubsection{The first-order theory}

Consider the following schemes of axioms in the language $\cL_{\Cov(V)}$:
\begin{itemize}
\item [Cov1] (The action of $\pi_1^{an}$): The unary function symbols $\gamma$ give a left action of $\pi_1^{an}$ on $U$ which is free. \\
\item [Cov2] (Compatibility of the covering maps): For all $\gamma \in \pi_1^{an}$ and $i<j \in I$ $$p_{Y_i} \circ \gamma =  q_{Y_{j,i}} \circ p_{Y_j} \circ \gamma.$$
\item [Cov3] (The covering maps are surjective): For all $\gamma \in \pi_1^{an}$ and every $i \in I$,
$$p_{Y_i} \circ \gamma (U)=Y_i.$$
%\item (Compatibility of $\pi_1^{an}$ the universal covering map):  For each $\gamma \in \pi_1^{an}$, let %$\Phi_{\gamma}$ be the sentence $$\forall x \forall y \ \ x = \gamma y \ra p(x) = p(y),$$
%and let $\Cov3$ be the union of $\Phi_{\gamma}$ for all $\gamma \in \pi_1^{an}$.
\item [Cov4] (Representability of the fibre functors): Let $G$ be a normal subgroup of finite index in $\pi_1^{an}$  corresponding to a finite Galois cover $Y \in \Feg(V)$. Then for all $v \in V$, and $u \in U$ such that $p(u) = v$, the set $\{p_Y( \gamma_i u)\}$ is in bijection with the fibre $q_Y^{-1}(v)$, as $\gamma_i$ runs through any set of coset representatives for $G$ in $\pi_1^{an}$. Here we include $V$ as a cover of itself with the identity as a covering map
\end{itemize}

\begin{dfn}
Let $\Th(\C_K)$ be the complete first-order theory of $\C_K$ and define $T_{\Cov(V)}$ to be the union of $\Cov1$ to $\Cov4$, and $\Th(\C_K)$.
\end{dfn}
\subsubsection{The $\cL_{\omega_1,\omega}$-theory}

Let SF be an $\cL_{\Cov(V), \omega_1,\omega}$-sentence stating that fibres are $\pi_1^{an}$-torsors 
i.e. 
$$SF:=p(x)=p(y) \ra \bigvee_{\gamma \in \pi_1^{an} } \gamma x =y.$$
Clearly $\cU$ is a model of $T_{\Cov(V)}\wedge SF$, and we will refer to $\cU$ as the \textit{`standard model'}.

%\begin{rem}
%By the Riemann existence theorem, any analytic universal cover $\U_x$ (with any basepoint $x \in %V(\C)^{an}$) i.e. an object representing the fibre functor 
%$$\Fib_x : \Cov^{an}(V) \ra \Sets,$$
%is a model of $T\wedge SF$, and any pro-\'etale cover $\hat{\U}_{x}$ is a model of $T$ \fm.
%\end{rem}

\subsection{Analysis of the types}

Since we are looking at structures with a covering sort $U$ which maps onto the structure $\C_K$ and we included the complete first-order theory of $\C_K$ in the theory $T_{\Cov(V)}$, the situation is fairly rigid. In particular, the theory of all finite Galois \'etale covers
$$q_{j,i}: Y_j \ra Y_i$$
in $\Feg(V)$ is already included in the theory. 
%This makes definable sets straightforward, since as the maps $q_{i,j}$ are \'etale, they are continuous, %open and closed, so no extra definable sets appear when we pass up and down between $Y_i$ and %$Y_j$ using the covering map $q_{j,i}: Y_j \ra Y_i$.  
%Also, using a different sort for each element of $\Feg(V)$ means, for example, that we do not have to %take into account $p(x)$ and $p_Y(x)$ being algebraically related in some sort. For example in the case %of $j$ in the language of chapter \ref{}, we have an algebraic relation between $j(x)$ and $j(g x)$ for $g %\in \gl2q$. 
This makes the types easy to study.

\begin{dfn}
For a $\cM \models T_{\Cov(V)}$, and a tuple $u = (u_1,...,u_n) \in U$, define $u$ to be $\pi_1^{an}$-\textit{independent} if $$u_i \notin \pi_1^{an}u_j$$ for $i\neq j$, and define $u$ to be $\pi_1$-\textit{independent} if $$p(u_i) \neq p(u_j)$$ for $i\neq j$.
\end{dfn}

\begin{pro}\label{typesgeneral} For a $\pi_1^{an}$-independent tuple $u \in U^n$ and $L \subset \C$, $\qftp_{\cM}(u / L)$ is determined by the formulae expressing the $\pi_1^{an}$-independence of the tuple, and the formulae
$$p_{Y}(u) \in W \cup \{ p_{Y}(u) \notin W' \ | \ W' \subset W, \ \dim W'< \dim W\}$$
where $W$ is the minimal variety over $L$ containing $p_{Y}(u)$ and $Y$ ranges over $\Feg(V)$. 
\end{pro}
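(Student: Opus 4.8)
The plan is to reduce the two-sorted quantifier-free type of a $\pi_1^{an}$-independent tuple $u$ to data visible purely in the field sort $\C_K$, by the same mechanism as Proposition \ref{fielddet} in the $j$-function chapter. First I would observe that, because $u$ is $\pi_1^{an}$-independent, the only quantifier-free information carried by the covering sort alone is which pairs of coordinates are related by an element of $\pi_1^{an}$ --- and for an independent tuple all these relations are negated, so this part of the type is exactly "the formulae expressing $\pi_1^{an}$-independence". Any remaining quantifier-free information mentions the covering maps $p_Y$ and the constants in $L$, hence is a statement about the tuples $p_Y(u) \in Y^n \subset \C^{(\dim Y)n}$ for $Y$ ranging over $\Feg(V)$ (together with the finitely many definable covering automorphisms, which are already interpreted in $\C_K$).

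Next I would pin down that the field-sort type of each $p_Y(u)$ over $L$ is determined by the minimal variety $W$ over $L$ containing $p_Y(u)$: this is just quantifier elimination and the theory of algebraically closed fields --- the locus $W := \Loc(p_Y(u)/L)$ is irreducible, and $\qftp_{\cF}(p_Y(u)/L)$ is axiomatised by "$p_Y(u) \in W$" together with "$p_Y(u) \notin W'$" for every proper subvariety $W' \subsetneq W$ defined over $L$, equivalently for those of strictly smaller dimension since $W$ is a curve-power locus and these generate. So the collection of formulae listed in the statement, as $Y$ varies, records precisely $\bigcup_Y \qftp_{\cF}(p_Y(u)/L)$.

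Finally I would assemble the pieces: given two models $\cM, \cM'$ of $T_{\Cov(V)}$ and independent tuples $u \in U^n$, $u' \in U'^n$ satisfying all the listed formulae over $L$, I claim $\qftp_{\cM}(u/L) = \qftp_{\cM'}(u'/L)$. Any atomic $\cL_{\Cov(V)}$-formula $\varphi(u)$ over $L$ is, after unwinding the function symbols, either (i) a statement $\gamma u_i = u_j$ about the covering sort, settled identically in both models by $\pi_1^{an}$-independence, or (ii) a field-sort statement about some $p_Y(u)$ and the named covering automorphisms (all definable in $\C_K$ by the Proposition preceding the definition of $\cU$), which holds in $\cM$ iff it holds in the locus $W = \Loc(p_Y(u)/L)$, iff --- by hypothesis on $u'$ --- it holds of $p_Y(u')$ in $\cM'$. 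Here one uses that $W$ is the \emph{same} variety over $L$ in both models: this is exactly what the formulae "$p_Y(u) \in W$, $p_Y(u) \notin W'$" encode, since $W$ is definable over $L$ in the common structure $\C_K$ whose complete theory is part of $T_{\Cov(V)}$. Hence the two quantifier-free types agree.

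The main obstacle is the bookkeeping in step three: one must be careful that \emph{every} atomic formula genuinely decomposes as claimed --- in particular that compositions $q_{Y_{j,i}} \circ p_{Y_j}$ collapse to $p_{Y_i}$ via Cov2, that the action symbols $\gamma$ interact with the $p_Y$ only through the finitely many field-definable covering automorphisms (Cov4 and the representability of fibre functors), and that adding the negative formulae "$p_Y(u) \notin W'$" for $\dim W' < \dim W$ really suffices to isolate the field type rather than needing all proper subvarieties. The latter holds because $p_Y(u)$ generates the function field of $W$, so any proper $L$-subvariety containing $p_Y(u)$ would have to equal $W$, forcing a dimension drop; thus excluding the lower-dimensional ones is enough. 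None of this is deep, but it is the part where an error could hide, so I would write it out carefully, essentially as a back-and-forth / isolation argument mirroring Proposition \ref{QE}.
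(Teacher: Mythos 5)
Your proposal is correct and takes essentially the same approach as the paper: reduce via Cov1 (free action, so no structure on the covering sort beyond $\pi_1^{an}$-orbit relations), Cov2 (collapse compositions of covering maps), and Cov4 plus definability of the covering automorphisms to show $p_Y(\gamma u_k) \in \dcl(p_Y(u_k))$, then invoke the description of quantifier-free types in $\C_K$. The paper states this more tersely; your added bookkeeping in step three is just a careful spelling-out of the same reduction.
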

\begin{proof}
First note that by $\Cov1$ the action of $\pi_1^{an}$ on $U$ is free, so there are no interesting definable sets in the sort $\langle U, \pi_1^{an} \rangle$ considered as a structure on its own. By $\Cov2$, if $i<j$ then $q_{j,i}(p_{Y_j}( \gamma u_k))=p_{Y_i}( \gamma u_k)$ for $u_k \in u$, and these maps $q_{i,j}$ are $\emptyset$-definable so we only have to consider quantifier free formulae of the form $\varphi(p_Y(\gamma_1 u_1),...,p_Y(\gamma_n u_n))$ where $\varphi$ is some formula of $\C_K$. But elements of $\Aut_{\Feg(V)}(Y / V)$ are definable also, so by $\Cov4$ we have $p_Y(\gamma u_k) \in \dcl( p_Y(u_k))$ for all $\gamma \in \pi_1^{\an}$, and the result follows by the description of the types in $\C_K$.

 %For a finite \'etale cover $f:Y \ra V$, for all $u_i \in u$ the fibre $f^{-1}(q(u_i))$ is a torsor for %$\Aut_{\Feg(V)}(Y / V)$ and, elements of this automorphism group are $\emptyset$-definable. The result %then follows by thinking about exactly what the quantifier free formulas are in $\cL_{\Cov(V)}$, since by %$\Cov2$, $q_Y (\gamma u_i)$ maps to a point in the fibre $f^{-1}(q(u_i))$ for all $\gamma \in \pi_1^{an}$.
\end{proof}

So we may identify $\qftp(u)$ with this countable collection of varieties $W$.

%\begin{lem}\label{finsat}
%Suppose that $\cM, \cM' \models T_{\Cov(V)}$ and $u \in U$. Then if $f:Y \ra X$ is a finite \'etale cover then there is %$u' \in U'$ such that $p_{Y}(u) = p_{Y}'(u')$.
%\end{lem}
%\begin{proof}
 %$\cM$ and $\cM'$ both satisfy the axiom $\Psi_Y$ so $q_{Y}(u)$ sits on some subvariety $W \subset Y$. %But $q_{Y}'$ surjects onto $Y$ and therefore surjects onto $W$ and the result follows.
%\end{proof}

\subsection{Quantifier elimination and completeness of the first-order theory}

\begin{dfn}
For an $\cL_{\Cov(V)}$-structure $\cM$, a subset $S$ of the sorts of $\cM$ and a tuple belonging to the sorts of $S$, denote by $\tp_S(x)$ ($\qftp_S(x)$) the formulae (quantifier free formulae) satisfied by $x$ in the sorts in $S$. 
\end{dfn}

\begin{pro}\label{QEgen}
Let
$$\cM  \textrm{ and } \cM' $$
be $\aleph_0$-saturated models of the first-order theory $T_{\Cov(V)}$ and
$$\sigma : \cM \ra \cM'$$
a partial isomorphism with finitely generated domain $D$. Then given any $z \in \cM$, $\sigma$ extends to the substructure
generated by $D \cup \{z \}$.
\end{pro}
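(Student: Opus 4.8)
The plan is to run a standard back-and-forth argument, mirroring the proof of Proposition \ref{QE} in the $j$-function chapter. Let $\sigma:\cM\to\cM'$ be a partial isomorphism with finitely generated domain $D$, and let $z\in\cM$ be arbitrary. The first reduction is to the case $z\in U$: if $z$ lies in the field sort, then since the covering maps $p_Y$ are surjective (by $\Cov3$) we may instead extend $\sigma$ along a preimage of $z$ in $U$, which will in particular put $z$ into the domain. So assume $z\in U$. If $z\in\pi_1^{an}u$ for some $u$ already in $D$, then $z$ is in the substructure generated by $D$ and there is nothing to do, so we may further assume $z$ is $\pi_1^{an}$-independent from the covering-sort part of $D$.

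Next I would set up the relevant field. Write $B:=D\cap U$ and $A:=D\cap\C$, and let
$$L:=\left(\dcl^{\cM}(\emptyset)\cap\C\right)\bigl(A,\ \{p_Y(b):b\in B,\ Y\in\Feg(V)\}\bigr),$$
which is carried isomorphically by $\sigma$ to the corresponding field $L'$ in $\cM'$. By Proposition \ref{typesgeneral}, $\qftp_{\cM}(z/D)$ is determined by the $\pi_1^{an}$-independence formulas together with, for each $Y\in\Feg(V)$, the data of the minimal variety $W_Y$ over $L$ containing $p_Y(z)$. So it suffices to show that this type is realised in $\cM'$: the $\pi_1^{an}$-independence formulas are trivially satisfiable (pick $z'$ in a fresh $\pi_1^{an}$-orbit — possible since $\cM'$ is $\aleph_0$-saturated, hence has models realising plenty of orbits, and in fact realising a new orbit only requires the type saying $p(x)$ avoids the finitely many field-points $p(b)$, which is consistent because $\Th(\C_K)$ has models of every infinite transcendence degree), and then I must arrange $p_{Y}(z')$ to lie on (the $\sigma$-image of) $W_Y$ for all $Y$ simultaneously, compatibly with the maps $q_{j,i}$.

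The key point making the simultaneous realisation work is that, by $\Cov4$, once $p_{Y_i}(z')$ is chosen for some cover $Y_i$, the values $p_{Y_j}(z')$ for $j>i$ lying above it form a $\pi_1^{an}$-torsor-like fibre and, more importantly, for a single chosen lift all the $p_Y(\gamma z')$ are $\emptyset$-definable functions of one another — so the constraint is really a constraint on the compatible system $(p_Y(z'))_Y$, i.e. on a point of the inverse limit. Thus I reduce to: the system of varieties $(\sigma(W_Y))_Y$ over $L'$, with the compatibility imposed by the $q_{j,i}$, has a point in $\cM'$. Each finite piece of this system is realisable because in any model $\cM'$ of $T_{\Cov(V)}$ the covering maps $p_Y$ are surjective onto $Y$ and $Y$ contains $\sigma(W_Y)$ (quantifier-free type data transfers since $\Th(\C_K)$ is complete and the curves $Y$, being $\emptyset$-definable, are the same in both models); then by $\aleph_0$-saturation of $\cM'$ and compactness the whole type is realised. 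I expect the main obstacle to be bookkeeping the compatibility across the whole directed system $I$ and checking that choosing one lift at a cofinal/initial stage genuinely pins down the rest via $\Cov2$ and $\Cov4$ — i.e. that the inverse-limit constraint decomposes into a finitely-satisfiable type — rather than any deep new idea; this is exactly the analogue of how Proposition \ref{nicesystem2} let the argument in Proposition \ref{QE} go through using only the cofinal system.
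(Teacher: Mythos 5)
Your proof follows the same route as the paper's: reduce to $z\in U$ with $z$ $\pi_1^{an}$-independent over $D\cap U$, characterise $\qftp(z/D)$ via the minimal varieties over $L$ using Proposition \ref{typesgeneral}, invoke $\Cov3$ so each finite piece of the type is realisable, and then close with compactness and $\aleph_0$-saturation. The compatibility bookkeeping you flag as the main obstacle is dispatched in the paper in one line via $\Cov2$ (the type of $p_{Y_j}(u)$ over $L$ determines that of $p_{Y_i}(u)$ for $i\le j$), which is the cleaner formulation of your inverse-limit observation; $\Cov4$ is not needed here directly since it has already been absorbed into Proposition \ref{typesgeneral}.
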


\begin{proof}
Let $u \in U^n$ generate $D\cap U$. We may assume that $z \in U$ and that the tuple $(u,z)$ is $\pi_1^{an}$-independent. Let $C$ be a finite subset of $\C_K$ such that $C \cup u$ generates $D$. We need to show that there is $z' \in U'$ such that for all $Y \in \Feg(V)$ we have
$$\Loc(p_{Y}'(z') / \sigma(L))=\Loc(p_{Y}(z) / L)$$
where
$$L:=K \left( C, \bigcup_{i \in I, \gamma \in \pi_1^{an}}p_{Y_i}( \gamma u) \right)$$
and $\Loc(x /L)$ denotes the minimal variety over $L$ containing $x$. Since $\cM'$ satisfies \Cov3, for all $Y$, $p_Y'$ surjects onto $Y$ and therefore onto $\Loc(p_Y(z) /L)$ and there is $z' \in U'$ such that $p_Y'(z')  = p_Y(z) $. If $i \leq j$ in the directed system $I$, then by $\Cov2$, $\qftp_{\C_K}(p_{Y_j}(u) / L)$ determines $\qftp_{\C_K}(p_{Y_i}(u) /L)$, so $\qftp(z / D)$ is finitely satisfiable, and therefore by the  compactness theorem is satisfiable and is realised in $\cM'$ by $\aleph_0$-saturation. The case where $z$ is in $\C_K$ is covered by the above.
\end{proof}

\begin{cor}
$T_{\Cov(V)}$ is complete, has quantifier elimination and is superstable.
\end{cor}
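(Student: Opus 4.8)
The plan is to follow the pattern of Corollary~\ref{cor stab}: extract quantifier elimination from the back-and-forth statement of Proposition~\ref{QEgen}, deduce completeness from the shape of the language $\cL_{\Cov(V)}$, and read off superstability from the type-analysis of \ref{typesgeneral}.

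\emph{Quantifier elimination.} Proposition~\ref{QEgen} is precisely the hypothesis of the standard criterion for quantifier elimination: between any two $\aleph_0$-saturated models of $T_{\Cov(V)}$, a partial isomorphism with finitely generated domain extends so as to contain any prescribed further element. Applying this criterion in the two-sorted language $\cL_{\Cov(V)}$ (with ``finitely generated substructure'' understood in the many-sorted sense, as in \ref{QEgen}) gives that $T_{\Cov(V)}$ has quantifier elimination.

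\emph{Completeness.} Given quantifier elimination it suffices to check that $T_{\Cov(V)}$ decides every quantifier-free sentence. But every closed $\cL_{\Cov(V)}$-term lies in the field sort and is built from the constants naming $K$ (the covering sort carries no constant symbols, and the symbols $\gamma$ and $p_Y$ require arguments), so every quantifier-free $\cL_{\Cov(V)}$-sentence is equivalent to a quantifier-free sentence in the language of $\C_K$; since $\Th(\C_K)\subseteq T_{\Cov(V)}$ is complete, so is $T_{\Cov(V)}$. (Equivalently: by \ref{typesgeneral} any nonempty $\pi_1^{an}$-independent tuple has a unique quantifier-free type over $\emptyset$, since each $p_Y$ of it is a generic point of the irreducible cover $Y$ over $K$; hence all models share a common substructure and quantifier elimination concludes.)

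\emph{Superstability.} By quantifier elimination and \ref{typesgeneral}, a complete type over a parameter set $L$ of a $\pi_1^{an}$-independent tuple $u$ is coded by the compatible system of loci $\bigl(\Loc(p_Y(u)/L)\bigr)_{Y\in\Feg(V)}$, and this system is pinned down by the single subvariety $\Loc(p(u)/L)\subseteq V$ together with the class of the coherent lift $(p_Y(u))_Y$ in the fibre of the pro-\'etale cover over the generic point of that subvariety, a fibre that is a torsor under (a quotient of) $\pi_1^{et}$. Counting these gives $|S_n(L)|\le |L|+2^{\aleph_0}$, so $T_{\Cov(V)}$ is stable. Moreover, for each fixed $Y$ the locus $\Loc(p_Y(u)/L)$ only shrinks as $L$ grows, and since $Y$ is a curve it can drop dimension at most twice; combined with the compatibility axiom $\Cov2$, which makes the locus at a finer cover determine the loci at all coarser covers, this shows that every complete type does not fork over a finite subset of its domain, i.e.\ $\kappa(T_{\Cov(V)})=\aleph_0$. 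Stability together with $\kappa(T_{\Cov(V)})=\aleph_0$ yields superstability.

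The step I expect to require the most care is the superstability claim, specifically justifying that although $\Feg(V)$ is infinite the type of $u$ is controlled by only finitely much non-field data. The two facts making this work are that each cover $Y$ is one-dimensional (so loci can only drop dimension finitely often) and that $\Feg(V)$ is directed with finer covers refining coarser ones (so one need not independently specify infinitely many loci); modulo these, it is the routine observation that a totally transcendental reduct — here $\C_K$ — enriched by bounded profinite-torsor fibre data remains superstable.
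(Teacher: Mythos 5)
Your proof is correct and follows the same route the paper implicitly intends, given that the paper omits the proof and leaves it as the analogue of Corollary~\ref{cor stab}: quantifier elimination by the standard back-and-forth criterion from Proposition~\ref{QEgen}, completeness because the covering sort has no closed terms so every quantifier-free sentence lives in the field sublanguage where $\Th(\C_K)$ already decides it, and superstability by the type-count $|S_n(L)|\le|L|+2^{\aleph_0}$ read off from Proposition~\ref{typesgeneral}. Two small remarks: the counting argument already yields superstability on its own (a countable theory stable in every $\kappa\geq 2^{\aleph_0}$ is superstable by definition), so the subsequent forking discussion, while heuristically sound, is not needed; and ``can drop dimension at most twice'' should read ``at most $n$ times'' for a length-$n$ tuple in the curve $Y$ (the point being only that it is finite), which does not affect the conclusion.
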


\section{The pro-\'etale cover}

Keeping the same directed system $ (Y_i , q_{j,i} : Y_j \ra Y_i)_{i < j \in I}  $ as earlier, consider a pro-\'etale cover of $V$
$$\hat{\U}: = \varprojlim_{I} Y_i$$
and the \textit{geometric \'etale fundamental group}
$$\pi_1^{et}(V) := \varprojlim_{I} \Aut_{\Feg(V)} Y_i,$$
which comes equipped with its natural topology as a projective limit of finite discrete groups. Denote the pro-universal covering map by $\hat{p}$, i.e.
$$\hat{p}: \hat{\U} \ra V$$
and the projections to the $Y_i$ by $\hat{p}_{Y_i}$. Note that the construction of $\hat{\U}$ is dependent on the choice of embedding $K \hookrightarrow \C$, how we embedded $Y$ in each sort $\C_K$ as a definable set, and the choice of system of maps $(q_{j,i})_I$. \newline

Since $\pi_1^{an}$ is residually finite, we may (diagonally) embed $\pi_1^{an}$ in $\widehat{\pi_1^{an}}$, which by the Riemann existence theorem is isomorphic to the geometric \'etale fundamental group. Using this embedding we have an action of $\pi_1^{an}$ on $\hat{\U}$.

\begin{dfn}
Using the above embedding of $\pi_1^{an}$ in $\pi_1^{et}$ we may define the pro-\'etale cover (as an $\cL_{\Cov(V)}$-structure)
$$\hat{\cU}:=\langle \langle \hat{\U} ,  \{ \gamma \}_{\gamma \in \pi_1^{an}} \rangle , \C_K , \{\hat{p}_Y : U \ra Y \}_{Y \in \Feg(V)} \rangle.$$
\end{dfn}
The following is then immediate:
\begin{pro}
$\hat{\cU}$ is a model of $T_{\Cov(V)}$.
\end{pro}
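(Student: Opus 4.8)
The plan is to verify the four axiom schemes $\Cov1$--$\Cov4$ together with $\Th(\C_K)$ for $\hat{\cU}$, one at a time. The last of these is immediate: the field sort of $\hat{\cU}$ is literally the structure $\C_K$, hence satisfies its own complete first-order theory. Before the rest I would isolate three structural facts. (i) By the Riemann existence theorem $\pi_1^{et}(V)\cong\widehat{\pi_1^{an}}$, and since $\pi_1^{an}$ is residually finite the diagonal embedding $\pi_1^{an}\hookrightarrow\pi_1^{et}(V)$ is injective; this is precisely the action used to define $\hat{\cU}$. (ii) For every $v\in V$ the fibre $\hat{p}^{-1}(v)=\varprojlim_I q_{Y_i}^{-1}(v)$ is a cofiltered limit of finite nonempty sets along surjective transition maps, hence is itself nonempty with every projection onto $q_{Y_i}^{-1}(v)$ surjective; and, because every object of $\Feg(V)$ is a \emph{Galois} cover, the componentwise action of $\pi_1^{et}(V)=\varprojlim_I\Aut_{\Feg(V)}Y_i$ makes $\hat{p}^{-1}(v)$ a $\pi_1^{et}(V)$-torsor. (iii) A finite-index normal subgroup $G\trianglelefteq\pi_1^{an}$ and the open normal subgroup $\overline{G}\trianglelefteq\pi_1^{et}(V)$ corresponding to the same finite Galois cover $Y$ satisfy $G=\pi_1^{an}\cap\overline{G}$, and the natural map $\pi_1^{an}/G\to\pi_1^{et}(V)/\overline{G}$ is a bijection; in particular a set of coset representatives for $G$ in $\pi_1^{an}$ is also a set of coset representatives for $\overline{G}$ in $\pi_1^{et}(V)$.

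Granting (i)--(iii): $\Cov1$ holds because the $\pi_1^{et}(V)$-action on $\hat{\U}$ is free, being free on each torsor fibre by (ii); restricting along the injection $\pi_1^{an}\hookrightarrow\pi_1^{et}(V)$ keeps it free. $\Cov2$ amounts to $\hat{p}_{Y_i}=q_{Y_{j,i}}\circ\hat{p}_{Y_j}$ (as $\gamma$ is a bijection of $\hat{\U}$), which holds by the very definition of $\hat{\U}$ as $\varprojlim_I Y_i$. $\Cov3$ amounts to the surjectivity of each $\hat{p}_{Y_i}$, which is part of (ii); the case $Y_i=V$ is the surjectivity of $\hat{p}$.

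The only scheme needing an argument is $\Cov4$. Fix $Y\in\Feg(V)$ corresponding to the open normal $\overline{G}\trianglelefteq\pi_1^{et}(V)$, a point $v\in V$, and $\hat{u}\in\hat{p}^{-1}(v)$. By (ii) the map $g\mapsto g\hat{u}$ is a bijection $\pi_1^{et}(V)\to\hat{p}^{-1}(v)$, and the restriction of $\hat{p}_Y$ to $\hat{p}^{-1}(v)$ is the quotient $\hat{p}^{-1}(v)\to q_Y^{-1}(v)$ induced by $\pi_1^{et}(V)\to\pi_1^{et}(V)/\overline{G}\cong\Aut_{\Feg(V)}(Y/V)$; hence $\hat{p}_Y(g_1\hat{u})=\hat{p}_Y(g_2\hat{u})$ if and only if $g_1\overline{G}=g_2\overline{G}$. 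By (iii) a set $\{\gamma_i\}$ of coset representatives for $G$ in $\pi_1^{an}$ represents exactly the $[\pi_1^{et}(V):\overline{G}]=\deg(Y/V)$ distinct cosets of $\overline{G}$ in $\pi_1^{et}(V)$, so the points $\hat{p}_Y(\gamma_i\hat{u})$ are pairwise distinct and there are $|q_Y^{-1}(v)|$ of them; since $Y/V$ is Galois, $\Aut_{\Feg(V)}(Y/V)$ acts transitively on $q_Y^{-1}(v)$, so $\{\hat{p}_Y(\gamma_i\hat{u})\}$ exhausts $q_Y^{-1}(v)$, which is the assertion of $\Cov4$ ($Y=V$ being trivial).

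I do not expect a genuine obstacle here: the entire content sits in facts (ii) and (iii), which are standard consequences of the general theory of covering spaces (Appendix \ref{app etale}) together with the Riemann existence theorem and the residual finiteness of $\pi_1^{an}$ proved earlier; everything else is bookkeeping, and this is why the statement can reasonably be called immediate. The one point I would take care to make explicit is that the torsor structures on the fibres of $\hat{p}$ are, in the inverse limit, compatible with the componentwise $\pi_1^{et}(V)$-action and with the projections $\hat{p}_{Y_i}$ — this compatibility is exactly what lets the identifications used in the verification of $\Cov4$ go through uniformly in $Y$.
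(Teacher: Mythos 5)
The paper states this as ``immediate'' and offers no proof; your verification is correct and fills in precisely the bookkeeping the paper leaves to the reader, using exactly the ingredients the paper has set up (residual finiteness to get the embedding $\pi_1^{an}\hookrightarrow\pi_1^{et}(V)$, the fibres of $\hat{p}$ being $\pi_1^{et}(V)$-torsors, and the coset-representative correspondence coming from $\pi_1^{et}(V)\cong\widehat{\pi_1^{an}}$). Nothing to add beyond the compatibility caveat you already flag, which is indeed the only place care is needed.
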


%\begin{pro}
%Let $\cM$ be an $\aleph_0$-saturated model of $T$. Then for any $v \in V$, $p^{-1}(v)$ is a $\pi_1^{et}%(V)$-torsor.
%\end{pro}

%\begin{proof}
%This is immediate from the axioms of $T$.
%\end{proof}

%\begin{pro}
%The pro-\'etale cover $\hat{\cU}$ is a saturated model of $T$. 
%\end{pro}
%\begin{proof}
%This follows immediately from quantifier elimination \ref{QEgen}, the description of the quantifier free %types \ref{typesgeneral}, and the construction of the pro-\'etale cover.
%\end{proof}
%So the pro-\'etale cover $\hat{\cU}$ is the unique saturated model of $T$ of cardinality continuum, and 

So we have a setting where the analytic universal and pro-\'etale covers are models of the same first-order theory $T_{\Cov(V)}$. The following could be described as a `comparison theorem' in our model theoretic setting (in a similar fashion to the interesting direction to the Riemann existence theorem being called a comparison theorem by geometers), and it is here that we make crucial use of the residual finiteness of  $\pi_1^{an}$.

\begin{pro} Let $\cM$ be a model of $T_{\Cov(V)}\wedge SF$. Then there is an elementary embedding of $\cL_{\Cov(V)}$-structures
$$\i: \cM \hookrightarrow \hat{\cU}.$$
\end{pro}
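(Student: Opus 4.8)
The plan is to build the embedding $\i$ sort by sort, starting with the field sort and then extending to the covering sort using the residual finiteness of $\pi_1^{an}$ together with the fact (Cov4 plus $SF$) that fibres in $\cM$ are genuine $\pi_1^{an}$-torsors. First, since $\cM \models \Th(\C_K)$ and $\hat{\cU}$ contains the same field sort $\C_K$, and both are models of the same complete first-order theory $T_{\Cov(V)}$, by \ref{QEgen} (quantifier elimination) it suffices to produce \emph{any} embedding of $\cL_{\Cov(V)}$-structures: it will automatically be elementary. So the task reduces to constructing an embedding of the covering sorts that is compatible with all the covering maps $p_{Y_i}$.

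The key construction is as follows. Fix $u \in \U^{\cM}$ lying over the basepoint $x$ (or over any point; the basepoint is convenient). I want to send $u$ to some $\hat{u} \in \hat{\U} = \varprojlim_I Y_i$, i.e. to a compatible system $(\hat{u}_i)_{i \in I}$ with $\hat{u}_i \in Y_i$. The natural choice is $\hat{u}_i := p_{Y_i}(u)$. Compatibility under the transition maps $q_{j,i}$ is exactly axiom $\Cov2$ (applied with $\gamma = 1$), so $(p_{Y_i}(u))_i$ is indeed an element of $\hat{\U}$. Now extend to all of $\U^{\cM}$: given any $v \in \U^{\cM}$, because $\cM \models SF$ there is a unique $\gamma \in \pi_1^{an}$ with $v = \gamma w$ for a chosen representative $w$ in each $\pi_1^{an}$-orbit; but more uniformly, one picks for each $\pi_1$-fibre (each value $p(v) \in V$) a representative, maps it as above, and then sets $\i(\gamma v) := \gamma \cdot \i(v)$, using the embedding $\pi_1^{an} \hookrightarrow \widehat{\pi_1^{an}} \cong \pi_1^{et}$ to act on $\hat{\U}$. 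One checks $\i$ commutes with all $\gamma \in \pi_1^{an}$ by construction, and commutes with each $p_{Y_i}$ by the definition $\hat{u}_i = p_{Y_i}(u)$ on representatives and by $\Cov2$-type compatibility on the rest.

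Two things must then be verified. First, \emph{injectivity} of $\i$ on the covering sort: if $\i(v) = \i(v')$ then $p_{Y_i}(v) = p_{Y_i}(v')$ for all $i$. Since $\cM \models SF$ and $\Cov4$, if $p(v) = p(v')$ then $v' = \gamma v$ for some $\gamma \in \pi_1^{an}$, and then $p_{Y_i}(v') = \gamma_i \cdot p_{Y_i}(v)$ where $\gamma_i$ is the image of $\gamma$ in $\Aut_{\Feg(V)}(Y_i/V)$; equality for all $i$ forces $\gamma$ to lie in every finite-index normal subgroup corresponding to the $Y_i$, hence $\gamma = 1$ by residual finiteness of $\pi_1^{an}$ (this is precisely where residual finiteness is essential, as flagged in the remark preceding the statement). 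If $p(v) \neq p(v')$ then already $p_V(v) \neq p_V(v')$, contradicting $\i(v)=\i(v')$ on the $V$-component. Second, that $\i$ respects the full $\cL_{\Cov(V)}$-structure: it is the identity on $\C_K$, it intertwines the $\pi_1^{an}$-action by construction, and it intertwines the covering maps as above, so it is an embedding of $\cL_{\Cov(V)}$-structures; by quantifier elimination (corollary to \ref{QEgen}) it is elementary.

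The main obstacle, and the only genuinely non-formal point, is getting the definition of $\i$ on the covering sort to be simultaneously well-defined (independent of the choice of orbit representatives) and compatible with \emph{all} the covering maps at once; this is exactly what the torsor axioms $\Cov4$ and $SF$ buy us, and the failure-of-injectivity worry is precisely resolved by residual finiteness of $\pi_1^{an}$. Everything else — elementarity, the field sort, compatibility of the inverse system — is immediate from quantifier elimination and the axioms $\Cov1$–$\Cov3$.
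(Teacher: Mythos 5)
Your proof is correct and follows essentially the same route as the paper: define the map into the inverse limit coordinatewise via the $p_{Y_i}$'s, use residual finiteness of $\pi_1^{an}$ (i.e.\ that $\bigcap_i G_i$ is trivial, where $G_i$ is the kernel corresponding to $Y_i$) together with freeness of the action to get injectivity, and invoke quantifier elimination (Proposition \ref{QEgen}) to upgrade the embedding to an elementary one. The detour through orbit representatives is unnecessary, and in fact buys you nothing: the paper simply sets $\i(u) := (p_{Y_i}(u))_{i \in I}$ for \emph{every} $u$ at once, which is well-defined immediately by $\Cov2$ (no representative has to be chosen, so no independence-of-choice needs checking), and then verifies $\i(\gamma u)=\gamma\,\i(u)$ and $\hat{p}_{Y_i}(\i(u))=p_{Y_i}(u)$ directly. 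Your construction ends up defining the same map, but the extension-by-$\pi_1^{an}$ step means you still have to check that $\i$ intertwines the covering maps on non-representatives, which is exactly the same compatibility $p_{Y_i}(\gamma w)=\gamma_i\,p_{Y_i}(w)$ you would have had to verify for the direct definition anyway; so the representatives only add bookkeeping without removing any obligation. Apart from that stylistic inefficiency, the content — and in particular the two genuinely load-bearing observations, residual finiteness for injectivity and QE for elementarity — is identical to the paper's proof.
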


\begin{proof}
Define the map
\begin{align*}
\i: \cM & \hookrightarrow \hat{\cU} \\
u \in U & \mapsto (p_Y(u))_{Y \in \Feg(V)},
\end{align*}
letting $\i$ be the identity on $\cF$. Given a finite, Galois \'etale cover
$$f: Y \ra V,$$
 by the Galois correspondence for covers and the Riemann existence theorem, $Y$ is isomorphic to $G \backslash \U$ for a normal subgroup $G$ of finite index in $\pi_1^{an}$. 
For all $i \in I$ choose an isomorphism $Y_i \cong G_i \backslash \U$ such that we have 
$$\i(u) = (G_i u)_I.$$
Now we see that the map $\i$ is injective, since $\pi_1^{an}$ acts freely, and the intersection of all the $G$'s is trivial (since $\pi_1^{an}$ is residually finite). The left action of $\pi_1^{an}$ on $U$ induces a natural left action on the fibres $f^{-1}(p(u))$ via $Gu \mapsto G \gamma u$, and under this action $\i(\gamma u) = (G \gamma u)_G = \gamma \i(u)$. By definition $\i(p_Y(u)) = p_Y(u) =\hat{q}_Y( \i ( u))$ so $\i$ is an embedding. The embedding is elementary by quantifier elimination.
\end{proof}

We have the following analogue of \ref{type in etale realise SF}:
\begin{pro}\label{realisegenet}
Given a $\pi_1$-independent tuple $u \subset \hat{\U} $, there is a model of $T_{\Cov(V)}\wedge SF$ realising $\tp(u)$. 
\end{pro}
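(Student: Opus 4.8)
The plan is to imitate the surgery used in Proposition \ref{type in etale realise SF}: start from the elementary embedding $\i\colon \cU \hookrightarrow \hat{\cU}$ of the standard model into the pro-\'etale cover, delete the fibres of $\i(\U)$ lying over the images of the coordinates of $u$, and glue in the $\pi_1^{an}$-orbits of those coordinates instead.

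Write $v_i := \hat p(u_i)$ for $1 \le i \le n$; since $u$ is $\pi_1$-independent the points $v_1,\dots,v_n \in V$ are pairwise distinct. Set
\[
U' \ := \ \bigcup_{i=1}^{n} \pi_1^{an} u_i \ \cup \ \{\, y \in \i(\U) \ \mid \ \hat p(y) \notin \{v_1,\dots,v_n\} \,\},
\]
and let $\cM := \langle\, \langle U' , \{\gamma\}_{\gamma \in \pi_1^{an}} \rangle , \C_K , \{\hat p_Y|_{U'}\}_{Y \in \Feg(V)} \,\rangle$. Every fibre of $\hat p$ meeting $U'$ is a single $\pi_1^{an}$-orbit — it is $\i(p^{-1}(v))$ when $v \notin \{v_1,\dots,v_n\}$ and $\pi_1^{an} u_i$ when $v = v_i$ — so $U'$ is visibly closed under the $\pi_1^{an}$-action, $\cM$ is a substructure of $\hat{\cU}$, and $u \subset U'$.

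The bulk of the work is to check $\cM \models T_{\Cov(V)} \wedge SF$. Axioms Cov1, Cov2 and $\Th(\C_K)$, together with Cov4 (representability of the fibre functors), pass from $\hat{\cU}$ to $\cM$ at once, since $U'$ is a union of $\pi_1^{an}$-orbits and every covering map of $\cM$ is a restriction of one of $\hat{\cU}$. The sentence SF holds because, as noted, each non-empty fibre of $\hat p$ in $U'$ is a single $\pi_1^{an}$-orbit on which $\pi_1^{an}$ acts freely (Cov1 for $\hat{\cU}$), hence a $\pi_1^{an}$-torsor. The only delicate axiom is Cov3: removing the fibres over $v_1,\dots,v_n$ deletes the entire fibre $q_{Y_i}^{-1}(v_j)$ from each cover $Y_i$, and one must see that the re-inserted orbit $\pi_1^{an} u_j$ already surjects onto $q_{Y_i}^{-1}(v_j)$ under $\hat p_{Y_i}$. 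This is where residual finiteness enters: the restriction of $\hat p_{Y_i}$ to $\hat p^{-1}(v_j)$ is the quotient of the $\pi_1^{et}$-torsor $\hat p^{-1}(v_j)$ by $\ker(\pi_1^{et} \to \Aut_{\Feg(V)}(Y_i/V))$, and the composite $\pi_1^{an} \to \pi_1^{et} \to \Aut_{\Feg(V)}(Y_i/V)$ is onto because every finite quotient of $\pi_1^{et} = \widehat{\pi_1^{an}}$ is a quotient of $\pi_1^{an}$; so $\pi_1^{an} u_j$ already fills the fibre. Taking $V$ as a cover of itself likewise gives $\hat p(U') = V$.

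Finally, $\cM$ is a substructure of the model $\hat{\cU}$ of $T_{\Cov(V)}$, and $T_{\Cov(V)}$ has quantifier elimination (by \ref{QEgen}), so the inclusion $\cM \hookrightarrow \hat{\cU}$ is elementary; since $u \subset \cM$ this yields $\tp^{\cM}(u) = \tp^{\hat{\cU}}(u)$, so $\cM$ is a model of $T_{\Cov(V)} \wedge SF$ realising $\tp(u)$. I expect the main obstacle to be exactly the verification of Cov3 above, and it is this step that forces the hypothesis to be $\pi_1$-independence rather than mere $\pi_1^{an}$-independence: with two coordinates in a common $\hat p$-fibre one could not make that fibre a $\pi_1^{an}$-torsor while keeping both, which is the phenomenon flagged in Remark \ref{rem1}.
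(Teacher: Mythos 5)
Your construction is exactly the one the paper uses: replace the fibres of $\i(\U)$ over $\hat p(u)$ by the $\pi_1^{an}$-orbits of the coordinates of $u$, restrict the maps, and appeal to quantifier elimination to get that the inclusion into $\hat{\cU}$ is elementary. The paper simply cites the analogy with \ref{type in etale realise SF} and leaves the axiom checks implicit; your verification of Cov3 via the density of $\pi_1^{an}$ in $\pi_1^{et}$ and your remark on why $\pi_1$-independence rather than $\pi_1^{an}$-independence is required are correct elaborations of the same argument.
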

\begin{proof}
Similarly to the proof of  \ref{type in etale realise SF}, let $$U':=\pi_1^{an} u \cup \{x \in \i(\U) \ | \ \hat{p}(x) \neq \hat{p}(u) \}$$
and restrict the covering map $\hat{p}$ to $U'$.
\end{proof}

\subsection{The action of Galois on fibres in the pro-\'etale cover}
Let $v \in V$ and let $L / K$ be the minimal subfield of $\C$ containing the coordinates of $v$. Since a finite \'etale cover $f:Y \ra V$ is defined over $K$, there is a concrete left action of $G_{L}: = \Gal(\bar{L} / L)$ on the fibre $f^{-1}(v)$ via its action on coordinates in the structure $\C_K$. There is also a left action of $\Aut_{\Feg(V)}(Y / V)$ on the fibre, and in the limit this gives a left action of the geometric \'etale fundamental group $\pi_1^{et}(V)$ on the fibre $\hat{p}^{-1}(v)$ in $\hat{\U}$. The fibre $\hat{p}^{-1}(v)$ is a left $\pi_1^{et}$-torsor, so for $\sigma \in G_L$ and $y$ in the fibre, there is $g_{\sigma} \in \pi_1^{et}(V)$ such that $y\sigma = g_{\sigma}(y)$. Since the elements of $\Aut_{\Feg(V)}(Y / V)$ are defined over $L$, the actions commute and we get a continuous homomorphism
\begin{align*}
\rho_v : G_L & \ra \pi_1^{et}(V) \\
\sigma & \mapsto g_{\sigma}.
\end{align*}
Clearly the above can be extended to tuples $v \in V^n$. By exactly the same argument as in \S \ref{secgalpro}, we have the following:

\begin{pro}
Let $v \in V^n$ be a tuple of distinct elements and let $L: = K(v)$. Then the cosets of the image of the representation
$$\rho_v : G_L \ra \pi_1^{et}(V)^n$$
are in bijection with the types of tuples $u \in \hat{\U}^n$ such that $\hat{p}(u_i)=v_i$.
\end{pro}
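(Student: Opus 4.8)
The plan is to repeat the construction of \S\ref{secgalpro} verbatim in the present, more general, setting, since all the structure that argument used is available here: the fibre $\hat p^{-1}(v)$ is a left $\pi_1^{et}(V)$-torsor, the covering automorphisms are defined over $L$, and the model $\hat{\cU}$ is $\omega$-saturated for $\pi_1$-independent types (by \ref{realisegenet} and quantifier elimination, \ref{QEgen}). First I would reduce to the case $n=1$ up to notation, remarking that for a tuple $v=(v_1,\dots,v_n)$ of distinct elements the fibre $\hat p^{-1}(v)=\prod_i \hat p^{-1}(v_i)$ is a $\pi_1^{et}(V)^n$-torsor and the Galois action is diagonal, so everything below goes through componentwise exactly as in the remark following \ref{progal}.

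Next, fix $y\in\hat p^{-1}(v)$ and use the torsor structure to get a bijection $f:\pi_1^{et}(V)^n\to\hat p^{-1}(v)$, $g\mapsto g(y)$. Since the elements of $\Aut_{\Feg(V)}(Y/V)$ are defined over $L$, the action of $G_L$ commutes with that of $\pi_1^{et}(V)$, so $y\mapsto g_\sigma$ is a genuine homomorphism $\rho_v:G_L\to\pi_1^{et}(V)^n$ (this is exactly the paragraph just above the statement). As in the proof of \ref{progal}, $f$ descends to a bijection
$$
\pi_1^{et}(V)^n / \rho_v(G_L) \;\lora\; \hat p^{-1}(v)/G_L,
$$
because $g_1\rho_v(\sigma)=g_2$ iff $(g_1 y)\sigma = g_2 y$; thus the cosets of the image of $\rho_v$ are in bijection with the $G_L$-orbits on the fibre.

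It then remains to identify $G_L$-orbits on the fibre with quantifier-free types (hence, by quantifier elimination, complete types) of elements of $\hat{\U}^n$ lying over $v$. This is where \ref{typesgeneral} does the work: for $u,u'\in\hat{\U}^n$ with $\hat p(u_i)=\hat p(u_i')=v_i$ — in particular $\pi_1^{an}$-independent tuples, since the $v_i$ are distinct — the type $\qftp(u/L)$ is determined by the Weil loci $\Loc(\hat p_{Y}(u)/L)$ as $Y$ ranges over $\Feg(V)$, and by $\Cov4$ together with definability of the covering automorphisms these loci are the same for $u$ and $u'$ precisely when $u'$ lies in the $G_L$-orbit of $u$; concretely, two points of $\hat p^{-1}(v)$ have the same type over $L=K(v)$ iff some $\sigma\in G_L$ carries one to the other, using homogeneity of $\C_K$ in the algebraically closed field sort. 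The main (and essentially only) obstacle is making this last equivalence airtight at every finite level $Y_i$ and passing to the limit — i.e. checking that ``same minimal variety over $L$ at each level $Y_i$'' is equivalent to ``conjugate under $G_L$'', which is the content of the computation in \S\ref{secgalpro} and reduces to the torsor property of $\hat p^{-1}(v)$ plus the fact that $G_L$ acts through $\pi_1^{et}(V)$ on each finite fibre. Granting that, combining the bijection above with the type/orbit identification yields the claim.
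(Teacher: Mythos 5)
Your proof is correct and follows essentially the same route as the paper: reduce to the torsor/orbit bijection of \ref{progal}, then identify $G_L$-orbits on the fibre with types over $L$ via the description of types in \ref{typesgeneral} and the model-theoretic homogeneity of the algebraically closed field sort. The one obstacle you flag at the end — that agreement of Weil loci over $L$ at every finite level is equivalent to $G_L$-conjugacy — is exactly the point the paper also compresses into the word ``homogeneity,'' and it is handled by the standard compactness argument in $G_L$ applied to the inverse system of finite fibres, so you have reproduced the paper's argument faithfully.
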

\begin{proof}
By the same argument as \ref{progal}, the cosets of $\rho_v(G_L)$ are in bijection with the number of orbits of $G_L$ on the fibre $\hat{p}^{-1}(v)$. But by the description of the types (\ref{typesgeneral}) and quantifier elimination, we can just look at the corresponding pro-definable set in the algebraically closed field $\C_L$ which determines the type. $G_L$ acts as the group of automorphisms of $\C_L$, and the result now follows by the (model-theoretic) homogeneity of the field $\C_L$ (i.e. if $y$ and $y'$ in the fibre are not conjugated by $G_L$, then their field types differ).
\end{proof}

\section{The multiplicative group}\label{multgpet}

It is here where the general theory finishes. It turns out that the theory $T\wedge SF$ is too weak on its own to give categoricity for the most basic case I had in mind, the universal cover of the multiplicative group $\C^{\times}$. In this section we will see why this is the case, and how to fix this by including the information of the additive group on the covering sort with an additional $\cL_{\omega_1,\omega}$-sentence. \newline

So consider the multiplicative group $\C^{\times}$, which may be seen in this situation as $\A^1(\C) \backslash \{0 \}$, the $\C$-points of the $\Q$-variety  $$\Spec(\Q[X,Y] / (XY-1)).$$ It is well known (for example by an application of Riemann-Hurwitz) that the finite \'etale covers of $\C^{\times}$ are of the form
$$\C^{\times} \overset{x \mapsto x^n}{\lora} \C^{\times},$$
and that the \'etale automorphisms of these covers are given by multiplications by $n$'th roots of unity. So in this situation the field of definition of all finite \'etale covers and their automorphisms is $\Q^{\rm{cyc}}$, and we consider $\C^{\times}$ as the subset of $\C^2$ defined by the formula $xy=1$ in the structure $$\C_{\Q^{\textrm{cyc}}}= \langle \C, + , \cdot,  \{\Q^{\textrm{cyc}} \} \rangle.$$ 

 In this example, we may emerge from the abstraction of the previous discussion and see concretely what is going on: Given $x \in \C^{\times}$, the fibre $\hat{p}^{-1}(x)$ is just a compatible sequence $(x^{1/n})_n$ of $n$'th roots of $x$, and $$\pi_1^{et}(\C^{\times}) \cong \hat{\Z}$$ acts on the fibre via multiplication by a compatible sequence of roots of unity. The analytic universal cover is the exponential function
$$\ex: \C \ra \C^{\times}$$
and we may take the maps
$$\ex_n: x \mapsto \ex(x/n)$$
as $n$ ranges over the natural numbers, for the set of maps $(p_{Y})_{Y \in \Feg(\C^{\times})}$. \newline

We denote the map in an arbitrary model corresponding to $\ex_n$ by $p_n$, and the covering maps in the pro-\'etale cover by $\hat{\ex}_n$.

\begin{pro}
There are $2^{\aleph_0}$-types of $\pi_1$-independent tuples realisable in $\hat{\cU}$.
\end{pro}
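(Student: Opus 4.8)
The plan is to show that $\hat{\cU}$ \emph{already} realises $2^{\aleph_0}$ pairwise distinct types of $\pi_1$-independent pairs; since the language $\cL_{\Cov(V)}$ (with $V=\C^{\times}$) is countable this is the maximum possible. First I would recall the concrete picture established just above: over a point $x\in\C^{\times}$ the fibre $\hat{p}^{-1}(x)$ consists of the compatible sequences of $n$-th roots $(x^{1/n})_n$, and $\pi_1^{et}(\C^{\times})\cong\hat{\Z}$ acts on it through the compatible sequences of roots of unity. The guiding idea is that the "extra" points of $\hat{\U}$ (those not in the image of the analytic cover) are distinguished from one another by the compatible sequence of roots of unity needed to relate their coordinates, and there are continuum many such sequences.

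Concretely, I would fix a transcendental $x\in\C^{\times}$ and a compatible system $(x^{1/n})_n$ (e.g.\ $x^{1/n}=\exp((\log x)/n)$ for a fixed branch of $\log$), set $u_1:=(x^{1/n})_n\in\hat{p}^{-1}(x)$, and then, for each compatible system of roots of unity $c=(\zeta_n)_n$ (so $\zeta_n^n=1$ and $\zeta_{nm}^m=\zeta_n$ for all $n,m$), define $u_2^{(c)}:=(\zeta_n\,x^{2/n})_n$. A routine check ($(\zeta_n x^{2/n})^n=x^2$, compatibility) shows $u_2^{(c)}\in\hat{p}^{-1}(x^2)$, so $(u_1,u_2^{(c)})$ is a genuine pair in $\hat{\cU}$. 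Since $x$ is transcendental we have $x\neq x^2$, hence $\hat{p}(u_1)\neq\hat{p}(u_2^{(c)})$; as $\pi_1^{an}$ preserves the fibres of $\hat{p}$, the pair is even $\pi_1^{an}$-independent, so \ref{typesgeneral} applies verbatim. Finally the compatible systems $c$ form a group isomorphic to $\varprojlim_n(\text{$n$-th roots of unity})\cong\hat{\Z}$, of cardinality $2^{\aleph_0}$.

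The last step is to see that distinct $c$ yield distinct types. Every root of unity lies in $\Q^{\cyc}$ and is therefore named by a constant of $\C_{\Q^{\cyc}}$, so for each $n$ the condition $p_n(y_2)=\zeta_n\,p_n(y_1)^2$ is an $\cL_{\Cov(V)}$-formula over $\dcl(\emptyset)$. By construction $\hat{\cU}\models p_n(u_2^{(c)})=\zeta_n\,p_n(u_1)^2$, and since $p_n(u_1)=x^{1/n}\neq 0$ this formula fails of $(u_1,u_2^{(c')})$ as soon as $c$ and $c'$ differ in the $n$-th coordinate. Hence $\tp_{\hat{\cU}}(u_1,u_2^{(c)})\neq\tp_{\hat{\cU}}(u_1,u_2^{(c')})$ whenever $c\neq c'$. (Equivalently, via \ref{typesgeneral}: as $x^{1/n}$ is transcendental over $\Q^{\cyc}$, the minimal variety over $\Q^{\cyc}$ containing $p_n(u_1,u_2^{(c)})$ is precisely the curve $\{(a,b)\in\G_m^2 : b=\zeta_n a^2\}$, and its defining equation records $\zeta_n$.) Either way we obtain $2^{\aleph_0}$ types of $\pi_1$-independent pairs realised in $\hat{\cU}$.

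I do not expect a serious obstacle. The only points needing a little care are the transcendence input — that $x$, and hence each $x^{1/n}$, is transcendental over $\Q^{\cyc}$, which is what forces the Weil locus in \ref{typesgeneral} to be the full curve rather than a finite set (and which gives $x\neq x^2$) — and the bookkeeping identifying compatible systems of roots of unity with $\hat{\Z}$. One might also remark, for context, that the same construction shows why $T_{\Cov(V)}\wedge SF$ fails to cut this down: the sentence $SF$ collapses $\hat{p}^{-1}(x)$ from a $\pi_1^{et}$-torsor to a $\pi_1^{an}$-torsor, but nothing in $T_{\Cov(V)}\wedge SF$ prevents the second coordinate from "choosing" an arbitrary element of $\hat{\Z}$ relative to the first.
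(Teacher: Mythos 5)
Your proof is correct and takes essentially the same approach as the paper: fix a pair of base points that are multiplicatively dependent (the paper takes $\hat\ex(y)$ an $n$-th root of $\hat\ex(x)$; you take $x$ and $x^2$), observe this forces the Galois representation $\rho$ into $\hat{\Z}^2$ to have image of index $2^{\aleph_0}$, and conclude via the earlier correspondence between types and cosets that continuum many types are realised. You simply make the coset count explicit by naming the witnesses $(u_1,u_2^{(c)})$ and a distinguishing quantifier-free formula $p_n(y_2)=\zeta_n p_n(y_1)^2$ for each $c$, whereas the paper leaves that step to the preceding proposition identifying types with cosets of the image of $\rho$.
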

\begin{proof}
Consider for example $(x,y) \in \hat{\U}^2$ such that $\hat{\ex}_n (x)=\hat{\ex}(y)$ and $\hat{\ex}(y) \neq \hat{\ex}(x)$. Let $\hat{\ex}(x) = a$ $\hat{\ex}(y) = b$. Then the image of the Galois representation
$$\rho_{(a , b)} : \Gal(\C / \Q^{\textrm{cyc}}(a,b)) \lora \hat{\Z} \times \hat{\Z}$$
has index $2^{\aleph_0}$.
\end{proof}

So by Keisler's theorem (\ref{keislerthm}) and \ref{realisegenet}, in the case of $\C^{\times}$ the theory $T_{\Cov(\C^{\times})}\wedge SF$ is not $\aleph_1$-categorical. However we can fix the situation by axiomatising the action of a divisible, torsion-free abelian group on the covering sort with an $\cL_{\Cov(\C^{\times}) , \omega_1,\omega}$-sentence.

\subsection{Axiomatisation of the additive group}\label{additive}

Consider the following $\cL_{\Cov(\C^{\times}) , \omega_1,\omega}$-sentences:
\begin{itemize}
\item [Identity] $\exists ! x \ \ \bigwedge_{n \in \N}  p_n(x)=1$;
\item [Closure] $\forall x \forall y \exists ! z \ \ \bigwedge_{n \in \N} p_n (x) . p_n(y) = p_n(z)$;
\item [Inverses] $\forall x \exists ! y \ \ \bigwedge_{n \in \N} p_n(x).p_n(y)=1$;
\item [Associative] $\forall x_1,...,x_7 \ \ (\bigwedge_{n \in \N} p_n(x_1).p_n(x_2)=p_n(x_4) \wedge p_n(x_2).p_n(x_3)=p_n(x_5) \wedge$ \\
$ p_n(x_4).p_n(x_3)=p_n(x_6)  \wedge p_n(x_1).p_n(x_5)=p_n(x_7))  \ra x_6 = x_7$;
\item [Divisible] $\bigwedge_{n \in \N} \forall x \exists!y \ \ \bigwedge_{m \in \N} p_{nm}(x)=p_m(y)$;
\item [Torsion-free] $ \forall x  \ \ \left( \bigvee_{n \in \N} \exists y \bigwedge_{m \in \N} p_{nm}(x)=p_m(y) \right) \ra \bigwedge_{n \in \N}  p_n(x)=1$
\end{itemize}
Let $GP$ be the conjunction of these sentences. We will now study the theory $T_{\Cov(\C^{\times})}\wedge SF\wedge GP$. The following is immediate from the axioms:
\begin{pro}
Let $\cM \models T_{\Cov(\C^{\times})}\wedge SF\wedge GP$. Then there is a divisible, torsion-free, abelian group $\langle U,+ ,0 \rangle$, $\cL_{\Cov(\C^{\times})}$-definable on the sort $U$. The covering map $p$ is a homomorphism from $\langle U,+,0 \rangle$ onto the multiplicative group $\langle \C^{\times}, \cdot ,1 \rangle$. 
\end{pro}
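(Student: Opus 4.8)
The plan is to read the group $\langle U,+,0\rangle$ directly off the six $\cL_{\omega_1,\omega}$-sentences comprising $GP$, using their uniqueness clauses to make the operations single-valued, and then to identify $p$ with $p_1$ to get the homomorphism property; $SF$ itself plays no role. First I would define the structure maps: by \textbf{Identity} there is a unique $0\in U$ with $p_n(0)=1$ for all $n$; by \textbf{Closure}, for $x,y\in U$ there is a unique $z$ with $p_n(z)=p_n(x)p_n(y)$ for all $n$, and I set $x+y:=z$; by \textbf{Inverses} there is a unique $y$ with $p_n(x)p_n(y)=1$ for all $n$, and I set $-x:=y$. Each graph is cut out by the corresponding displayed formula, so the operations are $\cL_{\Cov(\C^{\times})}$-definable. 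A \emph{separation} remark I would isolate at the outset, got from \textbf{Closure} with $y=0$, is that $x$ is the only element of $U$ with the prescribed values $(p_n(x))_n$; equivalently, $p_n(a)=p_n(b)$ for all $n$ forces $a=b$.

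Next I would verify the abelian-group axioms, each reducing to the corresponding identity in $\C^{\times}$ via single-valuedness: $x+0=x$ from $p_n(x+0)=p_n(x)p_n(0)=p_n(x)$; commutativity from $p_n(x+y)=p_n(x)p_n(y)=p_n(y)p_n(x)$; $x+(-x)=0$ from $p_n(x+(-x))=1=p_n(0)$; and associativity is exactly \textbf{Associative}, whose hypotheses say (again by single-valuedness of $+$) that $x_4=x_1+x_2$, $x_5=x_2+x_3$, $x_6=(x_1+x_2)+x_3$ and $x_7=x_1+(x_2+x_3)$. This makes $\langle U,+,0\rangle$ an abelian group, with $p_n(kx)=p_n(x)^k$ for $k\in\N$.

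For divisibility and torsion-freeness I would bring in the compatibility relations $p_n(w)=p_{mn}(w)^m$, which hold in every model of $T_{\Cov(\C^{\times})}$ (they are instances of \textbf{Cov2} for the power-map system of covers of $\C^{\times}$). These convert the hypothesis $\bigwedge_m p_{nm}(x)=p_m(y)$ of \textbf{Divisible} and \textbf{Torsion-free} into the group equation $ny=x$: from that hypothesis, $p_k(x)=p_{nk}(x)^n=p_k(y)^n=p_k(ny)$ for all $k$, so $x=ny$ by separation; and conversely $ny=x$ forces $p_{nm}(x)=p_{nm}(y)^n=p_m(y)$ for all $m$. Hence \textbf{Divisible} asserts precisely that $\langle U,+,0\rangle$ is uniquely $n$-divisible for every $n$, which is equivalent to being divisible and torsion-free (if $ny=0$ then $y$ and $0$ are both $n$-th divisors of $0$, hence equal), and the \textbf{Torsion-free} axiom records the torsion-freeness on its own. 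Finally $p=p_V=p_1$, so the $n=1$ case of the defining property of $+$ gives $p(x+y)=p(x)p(y)$; thus $p$ is a homomorphism $\langle U,+,0\rangle\ra\langle\C^{\times},\cdot,1\rangle$, surjective by \textbf{Cov3}.

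The only step demanding any care is the bookkeeping in the divisibility argument, where the infinitary $p_n$-conditions in $GP$ must be translated into statements about $+$ by means of the $T_{\Cov(\C^{\times})}$-relations $p_n(\cdot)=p_{mn}(\cdot)^m$. Everything else is a direct unwinding of the uniqueness clauses, which is why the proposition can fairly be called immediate.
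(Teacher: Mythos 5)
The paper itself offers no proof of this proposition, dismissing it as ``immediate from the axioms''; your write-up is a correct and complete unwinding of exactly what that immediacy consists in, and takes the only reasonable route: defining $0$, $+$, $-$ via the uniqueness clauses of $GP$, extracting single-valuedness (your ``separation'') from \textbf{Closure} with $y=0$, reducing each group axiom to the corresponding identity in $\C^{\times}$, and translating the infinitary $p_n$-conditions in \textbf{Divisible} into $ny=x$ via the Cov2 compatibilities $p_n(w)=p_{mn}(w)^m$. You are also right that $SF$ is not used anywhere, and that surjectivity onto $\C^{\times}$ is just \textbf{Cov3}.

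One remark worth recording concerns your treatment of the \textbf{Torsion-free} axiom. Your main argument correctly observes that \textbf{Divisible} alone yields \emph{unique} $n$-divisibility for each $n$, hence both divisibility and torsion-freeness, so \textbf{Torsion-free} is logically redundant for this proposition. But your parenthetical claim that the \textbf{Torsion-free} axiom ``records the torsion-freeness on its own'' does not survive scrutiny of the axiom as literally printed: its antecedent $\bigvee_{n}\exists y\,\bigwedge_m p_{nm}(x)=p_m(y)$ is satisfied by every $x$ (take $n=1$, $y=x$), so as written it would force $p_n(x)=1$ for all $x$, i.e.\ $U=\{0\}$, which is absurd. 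This is evidently a misprint in the paper (one would expect something like $\bigvee_{n\geq 2}\exists y\neq 0\,\bigwedge_m p_m(y)^n=1$, or the restriction $n\geq 2$ at minimum), but it does not affect your proof precisely because you never actually invoke \textbf{Torsion-free}. It would be cleaner to note explicitly that you deduce torsion-freeness from \textbf{Divisible} and do not use \textbf{Torsion-free} at all, rather than asserting that the latter ``records'' anything.
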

Clearly the standard model is a model of $T_{\Cov(\C^{\times})}\wedge SF \wedge GP$ (consider the structure $\langle \C , +, 0 \rangle$ on the covering sort), where the $+$ operation and the identity 0 are the ones we expect. As a result, the pro-\'etale cover $\hat{\cU}$ is a model of $T_{\Cov(\C^{\times})} \wedge GP$ with the induced component-wise addition.

%However, a priori, we do not know whether definable group is actually the same as the one we expect it to be.

\subsection{Categoricity}

Now we go about showing that this theory is categorical if and only if certain openness conditions on Galois representations in the geometric \'etale fundamental group hold. There is nothing new here in two ways. Firstly, the proofs are direct generalisations of the analogous theorems for the $j$-function in the previous chapter. Secondly, now we have seriously restricted the class of models of the $\cL_{\omega_1,\omega}$-theory to have a $\Q$-vector space on the covering sort and the covering map to be a homomorphism, we are in the exact situation considered in \cite{zilber2003model} and \cite{zilber2006covers}, where the categoricity result for this class of structures has been proven already. That is, in \cite{zilber2006covers} Zilber considers the two-sorted structure
$$\langle \C,+,0 \rangle \overset{\ex}{\lora} \langle \C,+,\cdot,0,1 \rangle,$$
along with an $\cL_{\omega_1,\omega}$-sentence saying that $\langle \C,+,0 \rangle $ is a divisible, torsion-free, abelian group, and $\ex$ is a surjective homomorphism onto the multiplicative group $\langle \C^{\times}, \cdot ,1 \rangle$, with $\ker{\ex}\cong \Z$. He shows that this theory has a unique model in every uncountable cardinality (modulo an error which was fixed by Bays in \cite{bays2011covers}). In \cite{zilber2003model}, Zilber considers the universal cover of an arbitrary semi-abelian variety with the suitable generalisation of the $\cL_{\omega_1,\omega}$-sentence of \cite{zilber2006covers}, and finds necessary and sufficient conditions for categoricity of this sentence (which are presented in a different form, but are equivalent to the ones given here in the case of $\C^{\times}$). \newline

The interesting thing is the fact that the situation there can in some sense be embedded in the situation here (I would like to be more precise about the sense in which the `situation can be embedded', but I am unsure at the moment). So we state the results below with just the main steps in the proof outlined, because we are just looking at things from a slightly different perspective.

\begin{dfn}
Define a dependence relation on tuples $u \in U^n$ in models of $T_{\Cov(\C^{\times})}\wedge SF\wedge GP$, to be the $\Q$-linear closure with respect to the group $\langle U ,+,0 \rangle$. We say that a tuple $u \in U^n$ is \textit{independent} iff it is independent with respect to this linear closure.
\end{dfn}
Note that a tuple $u \in U^n$ is (in)dependent with respect to this closure operator iff $p(u)$ is multiplicatively (in)dependent in $\langle \C^{\times}, \cdot,1 \rangle$.\newline

We now give the three conditions which will be equivalent to categoricity, the first of which is an elementary result of algebra and is of a slightly different nature to the other two:

\begin{cdn}\label{condtorsion}
$\Gal(\Q^{\textrm{cyc}} / \Q) \cong \hat{\Z}^{\times}$.
\end{cdn}

\begin{cdn}[Arithmetic homogeneity]\label{AH} Let $x \in \C^{\times n}$ be a multiplicatively independent tuple, and $L \subset \C$ a finitely generated extension of $\Q^{\textrm{cyc}}$ containing the coordinates of $x$. Then the image of the Galois representation $$\rho_{x}: G_L \lora \pi_1^{et}  ( \C^{\times})^n$$
is open.
\end{cdn}

\begin{cdn}[Geometric homogeneity]\label{GH}
Let $x$ and $L$ be as above, with $A \subset \C$ be a countable algebraically closed field such that $x_i \notin A$, and let $B$ be the compositum $LA$. Then the image of the representation
$$\rho_{x} : G_B \lora  \pi_1^{et}  ( \C^{\times})^n$$ is open.
\end{cdn}

\begin{rem}
Let $\cM$ and $\cM'$ be models of $T_{\Cov(\C^{\times})} \cup SF \wedge  GP$. Then there is a unique element $0$ in the covering sort $U$ of $\cM$ provided by the axiom $GP$. Any isomorphism $\sigma: \cM \ra\cM'$ must map $0$ to the corresponding element $0' \in U'$. Since there is a symbol for every element of $\pi_1^{an} \cong \Z$ in the language, for all $\gamma \in \pi_1^{an}$ we must have $\sigma(\gamma 0) = \gamma \sigma(0) = \gamma 0'$. So $\sigma$ does not have any choice as to where it sends elements of $\pi_1^{\an} 0$. As a result,  $\sigma$ must make up for this with a field automorphism i.e. Condition \ref{condtorsion} must hold. As noted above, this is a well known result and below we will assume that our partial isomorphisms include an isomorphism on the substructure generated by $0$ i.e. $\pi_1^{\an}0 \cup \Q^{\textrm{cyc}}$.
\end{rem}

\begin{lem}[$\aleph_0$-homogeneity over $\emptyset$]
Suppose Conditions \ref{condtorsion} and \ref{AH} hold. Let $\cM, \cM'$ be models of $T_{\Cov(\C^{\times})} \cup SF \wedge  GP$ and
$$\sigma : \cM \ra \cM'$$
a partial isomorphism with finitely generated domain $D$. Then given any $z \in \cM$, $\sigma$ extends to the substructure generated by $D \cup \{z \}$.
\end{lem}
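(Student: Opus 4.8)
The plan is to mirror the proof of the $\aleph_0$-homogeneity lemma for the $j$-function (Lemma~\ref{homo}), translating $j$ into $\ex=p$, the curves $Z_N$ into the tower of covers $x\mapsto x^m$ of $\C^{\times}$, Condition~\ref{condition1} into Condition~\ref{AH}, and the role played there by the special points into the role played here by the kernel $\pi_1^{an}0$ and its images under the $p_m$ (the roots of unity), which is exactly where Condition~\ref{condtorsion} enters. As in Proposition~\ref{QEgen}, since $p$ is surjective we may assume $z$ lies in the covering sort $U$; by the $SF$ axiom, if $p(z)=p(d)$ for some $d\in D\cap U$ then $z\in\pi_1^{an}d$ already lies in the substructure generated by $D$, so we may assume $z\notin\pi_1^{an}(D\cap U)$. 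Fix generators $u=(u_1,\dots,u_n)$ of $D\cap U$ and a finite set $C$ generating the field part of $D$; by the Remark preceding the lemma we already have $\sigma$ fixed on the substructure generated by $0$, namely $\pi_1^{an}0$ together with the field $\Q^{\cyc}$ it names through the $p_m(\gamma 0)$, this last point relying on Condition~\ref{condtorsion}.

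First I would dispose of the case in which $z$ is $\Q$-linearly dependent on $u$ together with $\pi_1^{an}0$ inside the group $\langle U,+,0\rangle$ furnished by $GP$. Then $mz=w$ for some $m\in\N$ and some $w$ in the subgroup generated by $C\cup u\cup\pi_1^{an}0$, and by the Torsion-free axiom $z$ is the unique solution of this equation, so $\qftp(z/D)$ is determined; applying the Divisible and Torsion-free axioms in $\cM'$ produces the unique $z'$ with $mz'=\sigma(w)$, and one checks via Proposition~\ref{typesgeneral} that it has the required type. This case is routine.

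The main case is $z$ linearly independent from $D\cap U$, i.e.\ $a:=p(z)$ is multiplicatively independent from $\{p(u_i)\}$ and from the roots of unity. The only freedom left is in the compatible system $(p_m(z))_m$ of $m$-th roots of $a$: by $SF$, the systems realised by the various $z'\in U'$ lying over $\sigma(a)$ form a single $\pi_1^{an}\cong\Z$-worth of twists of one base system, a coset which is dense in the $\hat{\Z}$-torsor of all such compatible systems. Put $L_0:=\Q^{\cyc}\big(C,p(u_1),\dots,p(u_n),a\big)$, a finitely generated extension of $\Q^{\cyc}$, and $L:=\Q^{\cyc}\big(C,\{p_m(\gamma u_i)\}_{m\in\N,\gamma\in\pi_1^{an}}\big)$. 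By Condition~\ref{AH} the image of $\rho_{a}:G_{L_0}\to\pi_1^{et}(\C^{\times})\cong\hat{\Z}$ is open, and a Kummer-theoretic argument using the multiplicative independence of $a$ from the $p(u_i)$ shows this image stays of finite index after base change to $L(a)$, say $\rho_{a}\big(G_{L(a)}\big)\supseteq N_0\hat{\Z}$. Since the dense $\Z$-coset available in $\cM'$ meets the $\rho_{a}\big(G_{L(a)}\big)$-orbit of $\sigma\big((p_m(z))_m\big)$, there is $z'\in U'$ with $\qftp_{\cF}\big((p'_m(z'))_m/\sigma(L)\big)=\sigma\big(\qftp_{\cF}((p_m(z))_m/L)\big)$; by Proposition~\ref{typesgeneral} this makes $\sigma$ extend by $z\mapsto z'$, and the field-sort case of the statement then follows by surjectivity of $p$ exactly as in Proposition~\ref{QEgen}.

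The hard part is the arithmetic at the heart of the last paragraph: that the image $\rho_{a}(G_{L(a)})$ is of finite index after adjoining all the $p_m(\gamma u_i)$, and that finite index of this image is enough to match the full compatible system of roots. Both are ``thumbtack lemma''-type statements about the failure of linear disjointness of cyclotomic and Kummer extensions over a field containing all roots of unity, and it is precisely here that Condition~\ref{condtorsion} (which makes the cyclotomic, ``determinant'', part behave as in the standard model) must be used together with Condition~\ref{AH}. This is exactly the point at which the original argument of \cite{zilber2006covers} had a gap, later repaired by Bays \cite{bays2011covers}, so I would import that corrected argument rather than reprove it from scratch.
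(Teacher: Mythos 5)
Your proposal follows essentially the paper's route: reduce (by $GP$) to the case where $(u,z)$ is independent, apply Condition~\ref{AH} to a joint tuple over a suitable finitely generated field, extract a modulus $N_0$ from openness of the image, and realise the resulting finite amount of information by surjectivity of the covering maps (Cov3), closing via the description of types. The paper's own proof is exactly this, stated by deferring to Lemma~\ref{homo}.

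Where you go wrong is the closing paragraph, which mis-locates the arithmetic. Once you apply Condition~\ref{AH} to the full tuple $(p(u_1),\dots,p(u_n),a)$ over $L_0=\Q^{\cyc}(C,p(u_1),\dots,p(u_n),a)$, you get openness of $\rho$ in $\hat\Z^{n+1}$; the claim that $\rho_a(G_{L(a)})$ then has finite index after adjoining all $p_m(\gamma u_i)$ needs no extra Kummer theory. Since $\sigma\in G_{L_0}$ already fixes $\Q^{\cyc}$, fixing the chosen compatible systems above the $p(u_i)$ is the same as fixing all roots, so $G_{L(a)}$ is exactly $\rho^{-1}(\{0\}^n\times\hat\Z)$, and intersecting an open subgroup of $\hat\Z^{n+1}$ with $\{0\}^n\times\hat\Z$ gives a subgroup whose projection to the last factor is open. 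This is group theory, not arithmetic. Likewise ``finite index is enough to match the full compatible system'' is just that $\qftp$ is determined by its restriction to $p_{N_0}$ together with surjectivity of $p'_{N_0}$ -- the analogue of \ref{realise} and \ref{nicesystem2}. Proposing to ``import'' Bays' corrected thumbtack argument here is both unnecessary and structurally backwards: the thumbtack lemma (and its repair) is what the paper proves in the Galois representations section to \emph{establish} Conditions~\ref{AH} and \ref{GH}; the present lemma is the easy deduction that runs in the other direction, and using the thumbtack lemma inside it would short-circuit the equivalence the chapter is setting up.
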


\begin{proof} Let $u \in U^n$ generate $D\cap U$. We may assume that $z \in U$, and since $\cM \models GP$ we may assume that the tuple $(u,z)$ is independent (it is crucial that we can make an assumption of this form, or the corresponding Galois representations may not be open). The rest of the proof is now almost identical to the proof of \ref{homo}.
% By \ref{AH} there exists a finite \'etale cover $Y \in \Fet(V) $ such that the formula
%$$q_{Y}(u ,z) \in W$$
%determines $\qftp(u z)$ where $W$ is the minimal algebraic variety over $k$ containing $q_{Y}(u,z)$. We %can then find $z'$ such that $q'_{Y}(z')=q_{Y}(z)$ by \ref{finsat}.
\end{proof}

In the same way, \ref{GH} implies $\aleph_0$-homogeneity over countable models. We may also define a pregeometry with the countable closure property on models of $T_{\Cov(V)}\wedge SF \wedge GP$ as 
$$\cl:= p^{-1} \circ \acl_{\C_{\Q^{\textrm{cyc}}}} \circ p,$$
 and by the same argument as in \ref{thmcat} we have the following:
\begin{thm} Suppose that Conditions \ref{condtorsion}, \ref{AH} and \ref{GH} hold. Then the standard model  $\cU$ is the unique model of $T_{\Cov(V)}\wedge SF\wedge GP$ of cardinality continuum.
\end{thm}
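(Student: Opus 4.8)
The plan is to mirror, essentially verbatim, the quasiminimal–excellence argument used for the $j$-function in Theorem \ref{thmcat}. The point is that, having added $GP$, we are now in exactly the situation of \cite{zilber2006covers} (with the correction of \cite{bays2011covers}): the covering sort carries a $\Q$-vector space structure and $p$ is a surjective homomorphism, so the class $\cK$ of models of $T_{\Cov(\C^{\times})}\wedge SF\wedge GP\wedge\trdeg(F)\geq\aleph_0$, equipped with the closure operator $\cl=p^{-1}\circ\acl_{\C_{\Q^{\cyc}}}\circ p$, should be a quasiminimal pregeometry class. First I would check that $\cl$ is a pregeometry with the countable closure property: this is inherited from $\acl$ in $ACF_0$ pulled back along $p$, using $GP$ to identify $\cl$-dependence upstairs with $\Q$-linear dependence modulo the kernel, and using that $\Q^{\cyc}$ is countable. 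A short direct verification — closure of a $\cl$-closed set under the $\pi_1^{an}$-action, under $+$, and under $n$-th roots, together with the fact that $p$ restricted to such a set is still surjective onto the $p(B)$-points of each finite cover since $p(B)$ is algebraically closed — shows that $\cl$-closed subsets of a model are again models of the theory, so that $\cK$ with closed embeddings is an abstract elementary class with L\"owenheim--Skolem number $\aleph_0$.

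The substantive work is verifying the axioms of a quasiminimal pregeometry structure, in the numbering of \cite[\S1]{kirby2010quasiminimal}. Axioms $0$, I.1, I.2, I.3 and II.1 are formal given quantifier elimination for $T_{\Cov(\C^{\times})}$ and the description of quantifier-free types over closed sets in \ref{typesgeneral}: $\pi_1^{an}\cong\Z$ acts freely with no interesting definable structure on the covering sort, every deck transformation and covering automorphism is $\emptyset$-definable in the field sort, so the quantifier-free type of an independent tuple is recorded by the countable family of its Weil loci at the finite levels. Axiom II.2, $\aleph_0$-homogeneity over $\emptyset$ and over countable closed submodels, is precisely the content of the two $\aleph_0$-homogeneity lemmas stated above. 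Here Condition \ref{condtorsion} is used to fix, once and for all, the (unique) isomorphism type of the substructure generated by $0$, namely $\pi_1^{an}0\cup\Q^{\cyc}$ — the kernel bookkeeping peculiar to $\C^{\times}$, where $\pi_1^{an}$ acts rigidly on $0$ and an isomorphism can only compensate by a field automorphism of $\Q^{\cyc}$ — while Conditions \ref{AH} and \ref{GH} guarantee that, after extending by one independent element, the whole quantifier-free type is already captured by the image of one finite-level Galois representation $\rho_x(G_L)$, which is then realised in the other model by surjectivity of the covering maps (axiom $\Cov3$) exactly as in the proofs of \ref{homo} and \ref{homo2}.

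Granting that $\cU$ (with $\langle\C,+,0\rangle$ on the covering sort) is a quasiminimal pregeometry structure, \cite[Corollary 2.2]{kirby2010quasiminimal} gives $\aleph_1$-categoricity of $\cK$, and the main theorem of \cite{bays2012quasiminimal} — excellence follows from quasiminimality — promotes this to categoricity of $\cK$ in every infinite cardinality. Since any model of $T_{\Cov(\C^{\times})}\wedge SF\wedge GP$ of cardinality continuum has field sort an algebraically closed field of transcendence degree continuum, it lies in $\cK$; and $\cU$ is such a model of cardinality $2^{\aleph_0}$, so it is the unique one, which is the assertion. The main obstacle is exactly the verification of II.2: this is where all the arithmetic enters, and without the openness statements \ref{AH} and \ref{GH} (together with \ref{condtorsion} for the kernel) the type of an independent pair fails to be atomic over the relevant parameters — witnessed by the $2^{\aleph_0}$ many $\pi_1$-independent types in $\hat{\cU}$ computed above — so that $\aleph_0$-homogeneity, and hence categoricity, breaks down.
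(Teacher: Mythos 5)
Your proposal is correct and follows exactly the route the paper takes: the paper literally says "by the same argument as in \ref{thmcat}", defining the pregeometry $\cl = p^{-1}\circ\acl_{\C_{\Q^{\cyc}}}\circ p$, obtaining axiom II.2 from the two $\aleph_0$-homogeneity lemmas (with Condition \ref{condtorsion} handling the substructure generated by $0$), and then invoking \cite{kirby2010quasiminimal} and the main theorem of \cite{bays2012quasiminimal} to conclude. You have in fact supplied more detail than the paper itself does, but there is no substantive difference in strategy.
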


Note that I could have set things up to involve models of greater cardinality as in Chapter \ref{chapj}, but for me the interesting part of the theory of quasiminimal excellence is the passage from a unique model of cardinality $\aleph_1$ to the standard model being unique, so I chose to state things in less generality in this chapter.

\subsubsection{Necessary conditions for categoricity}

We also have the following analogue of \ref{number types}, and \cite[Proposition 1]{zilber2003model}:
\begin{pro}\label{number types2}
Let $x \in \C^{\times n}$ be multiplicatively independent. Then the number of types of (independent) tuples realisable in $\hat{\cU}$ such that $\hat{\ex}(u)=x$ is either finite or $2^{\aleph_0}$.
\end{pro}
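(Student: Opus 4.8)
The plan is to follow the proof of \ref{number types}, with the Hecke-orbit covers replaced by the power covers $z\mapsto z^{m}$ of $\C^{\times}$. Fix the multiplicatively independent tuple $x=(x_{1},\dots,x_{n})$. A repeated coordinate would give a multiplicative relation, so the $x_{i}$ are distinct, and consequently every $u=(u_{1},\dots,u_{n})\in\hat{\U}^{n}$ with $\hat{\ex}(u_{i})=x_{i}$ is a $\pi_{1}^{an}$-independent, hence independent, tuple; in particular the $\pi_{1}^{an}$-independence formulae do nothing to distinguish the types of such tuples. By the description of types \ref{typesgeneral} together with quantifier elimination, the complete type of such a $u$ is then determined by the family of loci $W_{m}(u):=\Loc(\hat{\ex}_{m}(u)/\Q^{\textrm{cyc}})$, $m\in\N$ ordered by divisibility, where $\hat{\ex}_{m}(u)$ is the tuple of $m$-th roots of $x$ carried by $u$; for $m\mid m'$ the power map $q_{m',m}\colon z\mapsto z^{m'/m}$ carries $\hat{\ex}_{m'}(u)$ to $\hat{\ex}_{m}(u)$, so $W_{m}(u)$ is the image $q_{m',m}(W_{m'}(u))$.

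The next step is to organise these data into a tree $\mathcal{T}$ whose level-$m$ nodes are the varieties $W_{m}(u)$ realised by $u$ in the fibre $\hat{p}^{-1}(x)$, with a level-$m'$ node $W'$ joined to the level-$m$ node $q_{m',m}(W')$ whenever $m\mid m'$. Every node has at least one child (pass to a finer level with the same $u$) and only finitely many (the tuples $\hat{\ex}_{m'}(u')$ lying over a fixed value of $\hat{\ex}_{m}(u')$ differ by a tuple of $(m'/m)$-th roots of unity, so at most $(m'/m)^{n}$ loci can occur). A short compactness argument in the profinite fibre $\hat{p}^{-1}(x)=\varprojlim_{m}q_{m}^{-1}(x)$ shows conversely that every branch of $\mathcal{T}$ is realised by some $u$ (the subsets of the fibre realising a prescribed coherent sequence of nodes are nonempty clopen sets with the finite intersection property), so the types of tuples $u$ in $\hat{p}^{-1}(x)$ are exactly the branches of $\mathcal{T}$.

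The crux — and the step I expect to bear the weight of the argument — is that $\mathcal{T}$ branches homogeneously, i.e. all nodes at a common level have the same number of children. The fibre $\hat{p}^{-1}(x)=\prod_{i}\hat{p}^{-1}(x_{i})$ is a torsor under $\pi_{1}^{et}(\C^{\times})^{n}\cong\hat{\Z}^{n}$, and an element $\gamma\in\hat{\Z}^{n}$ acts on the $\hat{\ex}_{m}$-coordinate through its image in $(\Z/m\Z)^{n}$, by multiplication by a tuple of $m$-th roots of unity. These roots of unity lie in $\Q^{\textrm{cyc}}$, so the resulting map $T_{\gamma,m}\colon\C^{\times n}\to\C^{\times n}$ is a $\Q^{\textrm{cyc}}$-definable automorphism, and the family $(T_{\gamma,m})_{m}$ is compatible with the maps $q_{m',m}$. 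Hence if $W$ and $\widetilde{W}$ are level-$m$ nodes, realised by $u$ and by $v=\gamma u$ respectively (so that $\widetilde{W}=T_{\gamma,m}(W)$ by the torsor property), then $W'\mapsto T_{\gamma,m'}(W')$ is a bijection from the children of $W$ onto those of $\widetilde{W}$ at every level $m'$; thus the branching number $c_{m}$ depends only on $m$. This is the exact analogue of the torsor-plus-definability step in \ref{number types}.

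It then only remains to count the branches of a finitely, homogeneously branching tree in which every node has a child. If $c_{m}=1$ for all sufficiently large $m$, then $\mathcal{T}$ has finitely many branches. Otherwise $c_{m}\ge 2$ for infinitely many $m$, so no branch of $\mathcal{T}$ is isolated and its space of branches is a nonempty, compact, perfect, metrizable space, hence of cardinality $2^{\aleph_{0}}$; this is also an upper bound for the number of types since $\cL_{\Cov(\C^{\times})}$ is countable. In either case the number of types of tuples $u$ with $\hat{\ex}(u)=x$ is finite or $2^{\aleph_{0}}$. The same argument with $\Q^{\textrm{cyc}}$ replaced by a countable subfield $L\subseteq\C$ gives the corresponding statement for types over $L$.
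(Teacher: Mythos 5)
Your proposal is correct and is precisely the argument the paper intends: the paper gives no proof of \ref{number types2}, merely labelling it an analogue of \ref{number types} (and of \cite[Proposition 1]{zilber2003model}), and your write-up is the \ref{number types} argument transplanted to $\C^{\times}$ — loci over $\dcl(\emptyset)\cap F=\Q^{\textrm{cyc}}$ organised into a tree, homogeneous branching via the $\hat{\Z}^{n}$-torsor structure and the $\Q^{\textrm{cyc}}$-definability of multiplication by roots of unity, then the standard dichotomy for compact, perfect branch spaces. One small slip that does not affect the argument: you write ``is a $\pi_{1}^{an}$-independent, hence independent, tuple,'' but the implication runs the other way — $\Q$-linear independence is strictly stronger than $\pi_{1}^{an}$-independence. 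What you actually need, and what holds, is (i) that $u$ is $\pi_{1}^{an}$-independent (true because the $x_{i}=\hat{\ex}(u_{i})$ are distinct, so \ref{typesgeneral} applies), and (ii) that $u$ is independent in the $\Q$-linear sense (true by definition, since independence of $u$ is equivalent to multiplicative independence of $\hat{\ex}(u)$, which is the hypothesis); neither is a consequence of the other.
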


Again, we also have the analogue of \ref{type in etale realise SF} and \cite[Lemma 3.5]{zilber2003model}:
\begin{pro}\label{realisegenet}
Given an independent tuple $u \subset \hat{\U} $, there is a model of $T_{\Cov(V)}\wedge SF \wedge GP$ realising $\tp(u / L)$ for any $L \subseteq \C$ such that $\hat{\ex}(u)$ is multiplicatively independent over $L$. 
\end{pro}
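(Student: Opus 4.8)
The plan is to realise $\tp(u/L)$ as the type of $u$ itself inside a suitable submodel of $\hat{\cU}$, exactly as in the proofs of \ref{type in etale realise SF} and \cite[Lemma 3.5]{zilber2003model}. The first observation is a reduction: since the field sort is rigidly fixed to be $\C_{\Q^{\textrm{cyc}}}$, so that $L \subseteq \C$ already lives in it, and since in any model of $T_{\Cov(\C^{\times})} \wedge GP$ the maps $p_m$ are determined by $p$ together with the group operation (namely $p_m(x)=p(y)$ for the unique $y$ with $my=x$, as one reads off from the $\textrm{Divisible}$ and $\textrm{Closure}$ axioms), the whole of $\tp(u/L)$ is governed — via \ref{typesgeneral} and quantifier elimination for $T_{\Cov(\C^{\times})}$ (see \ref{QEgen}) — by the $\C_K$-loci $\Loc((\hat{\ex}_m(u_i))_i / L)$ for $m \in \N$ together with the ($\pi_1^{an}$-)independence of the tuple. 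So it is enough to produce a subgroup $A \leq \hat{\U}$ with (i) $A$ divisible (hence torsion-free, being a subgroup of $\hat{\U}$); (ii) $u_1,\dots,u_n \in A$; (iii) $\hat{\ex}(A)=\C^{\times}$; (iv) $\ker(\hat{\ex}|_A)=\pi_1^{an}0$. Then restricting all the structure of $\hat{\cU}$ to $A$ yields a model $\cM$ of $T_{\Cov(\C^{\times})} \wedge SF \wedge GP$: the axioms $\Cov1$--$\Cov4$ and $\Th(\C_K)$ are inherited ($\Cov3$ uses (iii), and the $\pi_1^{an}$-action restricts to $A$ because it acts on $\hat{\U}$ by translation by elements of $\pi_1^{an}0\subseteq A$); $GP$ holds by (i) and (iii); and $SF$ holds precisely because (iv) forces every fibre of $\hat{\ex}|_A$ to be a single $\pi_1^{an}0$-coset. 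As $u$ lies in $\cM$ with the same $\hat{\ex}_m$-values and is still independent, \ref{typesgeneral} gives $\tp^{\cM}(u/L)=\tp^{\hat{\cU}}(u/L)=\tp(u/L)$.

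To build $A$, note that independence of $u$ means $\hat{\ex}(u_1),\dots,\hat{\ex}(u_n)$ generate a free abelian group of rank $n$ in $\C^{\times}$, from which a short computation shows that $e_0,u_1,\dots,u_n$ are $\Q$-linearly independent in $\hat{\U}$ (here $e_0$ denotes the generator of $\pi_1^{an}0$, i.e. the image of $2\pi i$), and that the $\Q$-span $W:=\Q e_0 + \sum_i \Q u_i$ already satisfies $\ker(\hat{\ex}|_W)=\pi_1^{an}0$. The key feature of $W$ is that $\hat{\ex}(\Q e_0)$ is exactly the group $\mu_{\infty}$ of all roots of unity in $\C$, so $\mu_{\infty}\subseteq \hat{\ex}(W)$. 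Now run Zorn's lemma on the poset of divisible subgroups $A'$ with $W\leq A'\leq \hat{\U}$ and $\ker(\hat{\ex}|_{A'})=\pi_1^{an}0$ (unions of chains stay in the poset). A maximal such $A$ must have $\hat{\ex}(A)=\C^{\times}$: if $c\in\C^{\times}\setminus\hat{\ex}(A)$, then, since $\hat{\ex}(A)$ is divisible and contains $\mu_{\infty}$, the quotient $\langle\hat{\ex}(A),c\rangle/\hat{\ex}(A)$ is torsion-free, so picking any lift $\tilde c\in\hat{\ex}^{-1}(c)$ and adjoining $\Q\tilde c$ enlarges $A$ while keeping the kernel equal to $\pi_1^{an}0$, contradicting maximality. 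This produces (i)--(iv) and completes the construction.

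The step I expect to be the real heart of the argument — and the only genuine subtlety — is the interaction between divisibility and the kernel in the Zorn step: one must guarantee that forcing $A$ divisible and surjective onto $\C^{\times}$ does not pull extra elements of $\Lambda:=\ker(\hat{\ex}|_{\hat{\U}})\cong\hat{\Z}$ into $A$, which would destroy $SF$. This is precisely what the inclusion $\mu_{\infty}=\hat{\ex}(\Q e_0)\subseteq\hat{\ex}(W)$ buys us: whenever $c^m\in\hat{\ex}(A)$ for some $m>1$, divisibility of $\hat{\ex}(A)$ lets us write $c^m=(a')^m$ with $a'\in\hat{\ex}(A)$, whence $c = a'\cdot(a'/c)^{-1}\in\hat{\ex}(A)$ because $a'/c$ is a root of unity and $\mu_{\infty}\subseteq\hat{\ex}(A)$; so the torsion-freeness of $\langle\hat{\ex}(A),c\rangle/\hat{\ex}(A)$ used above does hold, and the enlargement is legitimate. (The hypothesis that $\hat{\ex}(u)$ be multiplicatively independent \emph{over $L$}, rather than merely multiplicatively independent, is not actually needed for the construction above — one may ignore $L$ until the last line — but it is the natural hypothesis under which the proposition is applied in the categoricity arguments.) Everything else is routine bookkeeping with the axioms $\Cov1$--$\Cov4$ and $GP$, entirely parallel to the proof of \ref{type in etale realise SF}.
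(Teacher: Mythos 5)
Your proof is correct, and it fills in a step the paper leaves unproved: the text states this $GP$-version of the proposition without argument, pointing only to the analogue for $T_{\Cov(V)}\wedge SF$ and to Zilber's Lemma 3.5. The fibre-swap construction used there, replacing the analytic fibre above $\hat{p}(u)$ with $\pi_1^{an}u$, does not produce a subgroup of $\hat{\U}$ and so cannot directly give a model of $GP$; the extra work you do is genuinely needed. Your Zorn argument on divisible subgroups $A\leq\hat{\U}$ with $W\leq A$ and $\ker(\hat{\ex}|_A)=\pi_1^{an}0$, with the inclusion $\mu=\hat{\ex}(\Q e_0)\subseteq\hat{\ex}(W)$ keeping $\langle\hat{\ex}(A),c\rangle/\hat{\ex}(A)$ torsion-free, is exactly right. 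An equivalent explicit construction, closer in spirit to the fibre-swap, takes $v_i\in\i(\U)$ with $\hat{\ex}(v_i)=\hat{\ex}(u_i)$, extends $\{e_0,v_1,\dots,v_n\}$ to a $\Q$-basis of $\i(\U)$, and then replaces each $v_i$ by $u_i$ in that basis; the $\Q$-span of the result has the required kernel because $u_i-v_i\in\ker(\hat{\ex})$ contributes only roots of unity already swallowed by $\Q e_0$. Two points worth making explicit in your write-up: $\Cov3$ holds in $\cM$ because $\hat{\ex}_n(A)=\hat{\ex}_n(nA)=\hat{\ex}(A)=\C^{\times}$ by divisibility of $A$; and the hypothesis that $u$ is independent supplies both the $\Q$-linear independence of $e_0,u_1,\dots,u_n$ used to compute $\ker(\hat{\ex}|_W)$ and the $\pi_1^{an}$-independence needed to invoke \ref{typesgeneral}. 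Your closing observation is also correct: since $\cM$ is a literal substructure of $\hat{\cU}$ on the same field sort, \ref{typesgeneral} and quantifier elimination (\ref{QEgen}) give $\tp^{\cM}(u/L)=\tp^{\hat{\cU}}(u/L)$ for any $L\subseteq\C$ whatsoever, so the over-$L$ multiplicative independence in the statement is not used in the construction and matters only where the proposition is applied.
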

%\begin{proof}
%The proof is the same as \cite[Lemma 3.5]{zilber2003model}, using the
%\end{proof}

%\begin{proof} 
%Consider tuples $u, u' $ in $\hat{\U}$ such that $\hat{p}(u_i) = \hat{p}(u_i')$, and suppose that 
%$$\Loc(\hat{p}_{Y_j}(u) ) \neq \Loc(\hat{p}_{Y_j}( u' )),$$ 
%where by $\Loc(x)$, we mean the minimal algebraic variety over $K(\hat{p}(u))$ containing $x$. Imagine a tree where branches are types of tuples $u'$ such that $\hat{p}(u_i')=\hat{p}%(u_i)$
 %Now also suppose there is $u''$ such that $$\Loc(\hat{p}_{Y_j}( u'')) = \Loc(\hat{p}_{Y_j}( u)) 
%\textrm{ but } \Loc(\hat{p}_{Y_k}( u))\neq \Loc(\hat{p}_{Y_k}( u''))$$
%for some $Y_k \in \Feg(V)$ with $j < k$.
 %I claim that there exists $u'''$ such that $$\Loc(\hat{p}_{Y_j}( u'))=\Loc(\hat{p}_{Y_j}( u''')) \textrm{ and } \Loc(\hat{p}_{Y_k}(u'))\neq \Loc(\hat{p}_{Y_k}( u''')):$$ 
%Let $\sigma \in \Aut_{\Fet(V)}(Y / V)^{n}$ send $\hat{p}_{Y_j}(u)$ to $\hat{p}_{Y_j}(u')$. Then $\sigma$ is a $\emptyset$-definable bijection sending $\Loc(\hat{p}_{Y_j}( u))$ to  %$\Loc(\hat{p}_{Y_j}( u'))$ since $\Loc(\hat{p}_{Y_j}( u'))$ is irreducible over $K(\hat{p}(u))$. $\Loc(\hat{p}_{Y_k}( u''))$ follows $\Loc(\hat{p}_{Y_k}( u))$ across onto the other branch of the %tree, proving the claim. This finishes the proof since the tree of types branches homogeneously.
%\end{proof}

Again, in direct analogy with Chapter \ref{chapj} and \cite{zilber2003model}, by Keisler's theorem and the above two propositions, for the theory $T_{\Cov(V)}\wedge SF\wedge GP$ to be $\aleph_1$-categorical there must be finitely many types of independent tuples over a finite set. Since independent tuples in the pro-\'etale cover are realised in models of $T_{\Cov(\C^{\times})}\wedge SF \wedge GP$, we are able to focus our attention there. To a fibre in the pro-\'etale cover and some parameters, we may assign a Galois representation, and certain sets of types are in bijection with the cosets of the corresponding Galois representations. So the number of models of the theory is related to its stability properties, which is related to the images of Galois representations in the \'etale fundamental group. As in Chapter \ref{chapj}, to have categoricity implying the geometric homogeneity condition, we need to show that the theory has the amalgamation property. However since models of $T\wedge SF \wedge GP$ have a $\Q$-vector space in the covering sort, this follows from \cite[Proposition 3]{zilber2003model}. So by exactly the same arguments of the previous chapter, we have the following:

\begin{thm}
If $T_{\Cov(\C^{\times})}\wedge SF \wedge GP$ is $\aleph_1$-categorical, then Conditions \ref{condtorsion}, \ref{AH} and \ref{GH} hold.
\end{thm}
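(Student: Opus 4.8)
The plan is to follow the template of Theorems \ref{thm n1} and \ref{thm n2} almost verbatim, with the three new conditions playing the roles there played by Conditions \ref{condition1} and \ref{condition2}, and with Condition \ref{condtorsion} an extra (in fact unconditional) ingredient. For Condition \ref{condtorsion} itself there is nothing to prove: $\Gal(\Q^{\textrm{cyc}}/\Q)\cong\hat\Z^\times$ is the classical description of the cyclotomic character. It is instructive, though, to see why categoricity would force it anyway: since $\pi_1^{\an}\cong\Z$ is named in the language, any isomorphism between two models is already pinned down on the substructure $\pi_1^{\an}0\cup\Q^{\textrm{cyc}}$ generated by the distinguished element $0$, so the only remaining freedom for reconciling the two named copies of $\Q^{\textrm{cyc}}\subset\C$ comes from field automorphisms, and this forces $\Gal(\Q^{\textrm{cyc}}/\Q)$ to be all of $\hat\Z^\times$.

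For Condition \ref{AH}, assume $T_{\Cov(\C^{\times})}\wedge SF\wedge GP$ is $\aleph_1$-categorical. Since $\cL_{\Cov(\C^{\times})}$ is a countable language, Keisler's theorem (\ref{keislerthm}) gives countably many $\cL_{\omega_1,\omega}$-types over $\emptyset$; naming the finitely many generators of a finitely generated extension $L$ of $\Q^{\textrm{cyc}}$ by new field-sort constants increases this count by at most a countable amount, so there remain only countably many types over $L$ realisable in models of the sentence. Now fix a multiplicatively independent $x\in\C^{\times n}$ with coordinates in $L$. By \ref{realisegenet}, for every $u$ in the fibre $\hat\ex^{-1}(x)$ the type $\tp(u/L)$ is realised in some model of $T_{\Cov(\C^{\times})}\wedge SF\wedge GP$, hence, by categoricity, in the unique model of cardinality $\aleph_1$. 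By \ref{number types2} the number of such types is finite or $2^{\aleph_0}$; countability forces it to be finite. Finally, by the Galois-theoretic description of types in a fibre (the exact analogue of \ref{pro imgtyp}, via homogeneity of the algebraically closed field and the torsor structure of $\hat p^{-1}(x)$), these types are in bijection with the cosets of $\rho_x(G_L)$ in $\pi_1^{et}(\C^{\times})^n$. Finitely many cosets means finite index, and a closed subgroup of finite index in a profinite group is open; this is Condition \ref{AH}.

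For Condition \ref{GH} the argument is the same, with $L$ replaced by the compositum $B=LA$ of $L$ with a countable algebraically closed field $A$ avoiding the coordinates of $x$, except that we now need the \emph{strong} form of Keisler's theorem (\ref{str kei}) to bound the number of types over a countable model. This requires the amalgamation property for $T_{\Cov(\C^{\times})}\wedge SF\wedge GP$; because models of this theory carry a $\Q$-vector space on the covering sort with $p$ a surjective homomorphism onto $\C^{\times}$, this is precisely the situation of \cite[Proposition 3]{zilber2003model} and so holds. The remaining steps, namely reduction to the pro-\'etale cover via \ref{realisegenet}, the finite/$2^{\aleph_0}$ dichotomy via \ref{number types2}, and the bijection with the cosets of $\rho_x(G_B)$, go through exactly as before, yielding open image.

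I do not expect a serious new obstacle here: every step is a transcription of the corresponding step in Chapter \ref{chapj}, and the one genuinely external input, the amalgamation property, is imported from \cite{zilber2003model}. The only points needing a little care are bookkeeping ones: checking that naming the generators of $L$ (resp. $B$) keeps Keisler's theorems applicable given the already infinite, but countable, stock of constants for $\Q^{\textrm{cyc}}$, and checking that the type/representation correspondence and \ref{number types2} are stated over the correct base fields so that they compose cleanly. These are routine, and are the mild price of having phrased the whole chapter at this level of generality.
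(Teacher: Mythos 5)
Your proposal reproduces the paper's argument: the paper itself only gives a sketch (Keisler via \ref{number types2} and \ref{realisegenet} for the arithmetic condition, strong Keisler plus amalgamation from \cite[Proposition 3]{zilber2003model} for the geometric one, and the unconditional fact $\Gal(\Q^{\textrm{cyc}}/\Q)\cong\hat\Z^{\times}$ together with the remark on $\pi_1^{an}0$ for Condition \ref{condtorsion}), and your write-up fills in exactly those steps in the same order, matching the template of Theorems \ref{thm n1} and \ref{thm n2}. The only wobble is the phrase ``increases this count by at most a countable amount'' when passing to types over the named generators of $L$: the cleaner justification, which is the one the paper uses for Theorem \ref{thm n1}, is that a type over the new constants is recovered from a type of a longer tuple over $\emptyset$, of which Keisler guarantees only countably many; but your intended reasoning is clearly the same.
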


%and let $\hat{\cU}|_{\bar{L}}$ be the reduct of $\hat{\cU}$ to the elements of $\hat{\U}$ mapping to the %$\bar{L}$-points of the $Y_i$ in $\Feg(V)$. Let $V_K$ be the algebraic curve $V$, now viewed as a $K$-%variety, i.e. with the coordinate ring is now a finitely generated $K$-algebra (rather than a $\C$-%algebra, as we do throughout this chapter). Out of conceptual interest, we may relate the %automorphisms of the $\cL_{cov}$-structure $\hat{\cU}|_{\bar{L}}$ with the \'etale fundamental group of %the $K$-variety $V_K$: Since the action of $\Gal(\bar{L} / L)$ is trivial on the geometric \'etale %fundamental group $ \pi_1^{et}(V) $, we have
%$$\Aut(\hat{\cU}|_{\bar{L}} ) \cong \pi_1^{et}(V) \oplus \Gal( \bar{L} / L)  \cong \pi_1^{et}(V_K).$$

 %We can think of $\rho_x$ as a profinite version of the traditional Kummer map (arising as the connecting %homomorphism in the Kummer exact sequence - see the proof of \ref{kummer}) i.e.
%\begin{align*}
 %\rho: \G_m(\C) &\ra  \Hom(G_F , \pi_1^{et})\\
 %x & \mapsto \left( \sigma \mapsto \frac{\sigma(x^{1/n})_n}{(x^{1/n})_n} \right)
%\end{align*}

\section{Images of Galois representations}

In this section, we verify that arithmetic and geometric homogeneity hold in this setting. The reason for reproving the result here is that the proof is considerably shorter than the previous ones in \cite{zilber2006covers} and \cite{bays2009categoricity}. The proof is based on \cite[V \S 4]{lang1979elliptic}, and Bays used the same ideas in his thesis (where he gives a proof which works for elliptic curves also), but since he did not have the main result of \cite{bays2012quasiminimal} at hand he had to consider representations over independent systems of algebraically closed fields which complicates things. The only new thing here really, is that we observe that $\Z_l$ is a principal ideal domain and use the structure theorem for modules over a PID at the end of the proof. We will use the notation $\mu$ ($\mu_N$) for the multiplicative group of ($N^{th}$) roots of unity. We are going to prove the following two theorems:

\begin{thm}[Arithmetic homogeneity]\label{thumb0}Let $K$ be a number field and $\bar{a} \subset K^{\times}$, a multiplicatively independent tuple. Then the image of the continuous homomorphism
$$\rho_{\bar{a}} : \Gal(K(\mu, \hat{p}^{-1}(\bar{a})) / K(\mu )) \hookrightarrow \pi_1^{et}(\C^{\times})^r$$
 is open.
\end{thm}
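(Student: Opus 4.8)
The plan is to reduce to a prime-by-prime Kummer-theory computation. Identify $\pi_1^{et}(\C^\times)\cong\hat{\Z}=\prod_\ell\Z_\ell$, so that $\rho_{\bar a}=\prod_\ell\rho_{\bar a,\ell}$ with $\rho_{\bar a,\ell}\colon\Gal(K(\mu,\bar a^{1/\ell^\infty})/K(\mu))\hookrightarrow\Z_\ell^r$, where $\bar a^{1/\ell^\infty}$ is the compatible system of $\ell$-power roots of the coordinates of $\bar a$ sitting in $\hat p^{-1}(\bar a)$, and $\rho_{\bar a,\ell}(\sigma)_i$ records the power of a fixed compatible system of $\ell$-power roots of unity by which $\sigma$ multiplies $a_i^{1/\ell^n}$. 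Since $\rho_{\bar a}$ is injective, its image is the continuous (hence compact, hence closed) copy of $\Gal(K(\mu,\bar a^{1/\infty})/K(\mu))$ inside $\hat{\Z}^r$, so ``open'' is equivalent to ``of finite index''. The extensions $K(\mu,\bar a^{1/\ell^\infty})/K(\mu)$ are pro-$\ell$ for different $\ell$, hence pairwise linearly disjoint over $K(\mu)$, so $\Gal(K(\mu,\bar a^{1/\infty})/K(\mu))=\prod_\ell\mathrm{im}\,\rho_{\bar a,\ell}$ and $[\hat{\Z}^r:\mathrm{im}\,\rho_{\bar a}]=\prod_\ell[\Z_\ell^r:\mathrm{im}\,\rho_{\bar a,\ell}]$. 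It therefore suffices to prove: (i) $\mathrm{im}\,\rho_{\bar a,\ell}$ has finite index in $\Z_\ell^r$ for every $\ell$, and (ii) $\rho_{\bar a,\ell}$ is surjective for all but finitely many $\ell$.

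Fix $\ell$ and set $M_\ell=\mathrm{im}\,\rho_{\bar a,\ell}$, a closed subgroup of $\Z_\ell^r$. Since $\Z$ is dense in $\Z_\ell$ and $M_\ell$ is closed under the continuous $\Z$-action, $M_\ell$ is in fact a $\Z_\ell$-submodule. \emph{This is the one point where the argument departs from the classical treatment}: $\Z_\ell$ is a principal ideal domain, so by the structure theorem $M_\ell$ is free of some rank $s\le r$, and $M_\ell$ is of finite index in $\Z_\ell^r$ (equivalently open) if and only if $s=r$, i.e.\ if and only if $M_\ell$ spans $\Q_\ell^r$. Suppose $s<r$. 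Then $M_\ell$ is annihilated by a primitive linear form $(c_1,\dots,c_r)\in\Z_\ell^r$ (so some $c_i\in\Z_\ell^{\times}$). Picking integers $c_i^{(n)}\equiv c_i\pmod{\ell^n}$ and using the description of $\rho_{\bar a,\ell}$ above, the element $\beta_n:=\prod_i(a_i^{1/\ell^n})^{c_i^{(n)}}$ is fixed by $\Gal(K(\mu,\bar a^{1/\ell^\infty})/K(\mu))$, hence $\beta_n\in K(\mu)^\times$, while $\beta_n^{\ell^n}=\prod_i a_i^{c_i^{(n)}}\in K^\times$.

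To finish (i) I would invoke the Kummer-theoretic input of \cite[V \S4]{lang1979elliptic}: there is an integer $d=d(K,\ell)$, equal to $0$ for all but finitely many $\ell$, with $K^\times\cap(K(\mu)^\times)^{\ell^n}\subseteq(K^\times)^{\ell^{n-d}}\cdot\mu_K$ for $n\ge d$ (here $\mu_K$ is the group of roots of unity of $K$). First reduce to the case in which $\bar a$ is part of a $\Z$-basis of the free abelian group $K^\times/\mu_K$: multiplicative independence of $\bar a$ in $K^\times$ forces independence in $K^\times/\mu_K$ (a relation $\prod a_i^{m_i}\in\mu_K$ yields $\prod a_i^{m_i|\mu_K|}=1$), so one may replace $\bar a$ by a $\Z$-basis $\bar e$ of the saturation of $\langle\bar a\rangle$ in $K^\times/\mu_K$, which is a direct summand since finite-rank pure subgroups of free abelian groups are direct summands; this change multiplies $\rho_{\bar a,\ell}$ by a matrix in $M_r(\Z)$ of nonzero determinant, hence preserves both $\dim_{\Q_\ell}(M_\ell\otimes\Q_\ell)$ and, for almost all $\ell$, surjectivity. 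For such $\bar e$, the relation $\beta_n^{\ell^n}=\prod_i e_i^{c_i^{(n)}}\in(K^\times)^{\ell^{n-d}}\mu_K$ forces $\ell^{n-d}\mid c_i^{(n)}$ for every $i$; since $c_i\equiv c_i^{(n)}\pmod{\ell^n}$, letting $n\to\infty$ gives $c_i=0$ for all $i$, contradicting primitivity. This proves (i). For (ii): for the (cofinitely many) primes with $d(K,\ell)=0$ the same argument at level $n=1$ shows that $\rho_{\bar a,\ell}$ is surjective modulo $\ell$, and since $\Z_\ell^r$ is pro-$\ell$, topological Nakayama gives $\mathrm{im}\,\rho_{\bar a,\ell}=\Z_\ell^r$. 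Together with the first paragraph this completes the proof.

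The main obstacle is exactly the quantitative statement controlling $K^\times\cap(K(\mu)^\times)^{\ell^n}$ — that an element of $K^\times$ which becomes an $\ell^n$-th power over the cyclotomic field was already an $\ell^n$-th power in $K$ up to a bounded discrepancy, and that this discrepancy is uniformly trivial for almost all $\ell$ (so that $\prod_\ell[\Z_\ell^r:M_\ell]$ converges). This is classical and is what \cite[V \S4]{lang1979elliptic} supplies; everything else — the product decomposition via linear disjointness, the reduction to a basis of $K^\times/\mu_K$, and the identification of $M_\ell$ as a free $\Z_\ell$-module — is formal, which is why the proof can be kept short.
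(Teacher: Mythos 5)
Your proposal follows the same overall strategy as the paper's proof: decompose $\hat{\Z}\cong\prod_\ell\Z_\ell$ and work prime-by-prime, identify $\mathrm{im}\,\rho_{\bar a,\ell}$ as a closed $\Z_\ell$-submodule of $\Z_\ell^r$ and use the PID structure theorem to reduce openness to full rank, translate rank into a Kummer-theoretic question about $\ell^n$-th powers via Theorem~\ref{kummer}, and control the discrepancy between $K^\times$ and $K(\mu)^\times$ by the input from \cite[V~\S4]{lang1979elliptic}. The differences are organizational rather than mathematical. The paper splits the work into a separate horizontal lemma (surjectivity for almost all $\ell$, via the elementary non-abelianness argument of Proposition~\ref{arith2}) and a vertical lemma (finite index for every $\ell$, via the Sah's-lemma bound of Proposition~\ref{sahcor}), whereas you cite a single packaged estimate $K^\times\cap(K(\mu)^\times)^{\ell^n}\subseteq(K^\times)^{\ell^{n-d}}\mu_K$ with $d=d(K,\ell)$ vanishing for almost all $\ell$, and then derive both the all-$\ell$ finite-index statement and the almost-all-$\ell$ surjectivity (the latter via topological Nakayama) from it; the paper's Proposition~\ref{sahcor} is essentially the proof of that estimate, so you have repackaged rather than replaced the argument. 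Two points you make explicit that the paper leaves implicit and are worth noting: the linear disjointness of the pro-$\ell$ Kummer towers, which justifies $[\hat{\Z}^r:\mathrm{im}\,\rho_{\bar a}]=\prod_\ell[\Z_\ell^r:\mathrm{im}\,\rho_{\bar a,\ell}]$, and the reduction to a $\Z$-basis of the saturation of $\langle\bar a\rangle$ in $K^\times/\mu_K$, which cleans up the paper's standing assumption (using Lemma~\ref{freeab}) that $\bar a$ lies in a chosen free abelian complement of $\mu_K$. Neither observation changes the substance of the argument.
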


\begin{thm}[Geometric homogeneity]\label{thumb1}Let $K \subset \C$ be an algebraically closed field, and $\bar{a} \subset \C-K$ multiplicatively independent. Then the image of the continuous homomorphism
$$\rho_{\bar{a}} : \Gal(K( \hat{p}^{-1}(\bar{a})) / K(\bar{a})) \hookrightarrow \pi_1^{et}(\C^{\times})^r$$
 is open.
\end{thm}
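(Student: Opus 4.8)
The plan is to prove Theorem~\ref{thumb1} in three stages: a reduction to one prime at a time, a Kummer-theoretic computation of the index of the mod-$\ell^{k}$ image, and a purely algebraic finiteness statement extracted from the structure theorem over the principal ideal domain $\Z_\ell$. First I would identify $\pi_1^{et}(\C^{\times})\cong\hat{\Z}=\prod_\ell\Z_\ell$, so the target is $\hat{\Z}^{r}\cong\prod_\ell\Z_\ell^{r}$. The image $H$ of $\rho_{\bar a}$ is compact, hence closed; since for distinct primes $\ell\neq\ell'$ the groups $\Z_\ell^{r}$ and $\Z_{\ell'}^{r}$ share no nontrivial finite quotient, a Goursat argument shows any closed subgroup of $\prod_\ell\Z_\ell^{r}$ is the product of its projections. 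Thus $H$ is open iff each projection $H_\ell:=\mathrm{pr}_\ell(H)\le\Z_\ell^{r}$ is open and $H_\ell=\Z_\ell^{r}$ for all but finitely many $\ell$. So it suffices to control, uniformly in $k$, the image of $\Gal(F(\{a_i^{1/\ell^{k}}\}_i)/F)$ in $(\Z/\ell^{k})^{r}$, where $F:=K(\bar a)$ (note $\mu_{\ell^{\infty}}\subseteq K\subseteq F$).

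Second, Kummer theory over $F$ (which contains all $\ell$-power roots of unity) identifies $\Gal(F(\{a_i^{1/\ell^{k}}\})/F)$ with $\operatorname{Hom}(\Gamma F^{\times\ell^{k}}/F^{\times\ell^{k}},\mu_{\ell^{k}})$, where $\Gamma:=\langle a_1,\dots,a_r\rangle\le F^{\times}$; chasing the definition of $\rho_{\bar a}$ through this identification shows that the index of the mod-$\ell^{k}$ image in $(\Z/\ell^{k})^{r}$ equals $|(\Gamma\cap F^{\times\ell^{k}})/\Gamma^{\ell^{k}}|$. The geometric input now enters: since $K$ is algebraically closed and $\bar a\subset\C-K$, every $a_i$ is transcendental over $K$, so $F$ is the function field of a normal projective variety $X/K$; then $\cO_X(X)^{\times}=K^{\times}$, so $g\mapsto\operatorname{div}(g)$ embeds $M:=F^{\times}/K^{\times}$ into the free abelian group $\operatorname{Div}(X)$, whence $M$ is free abelian, and, $K$ being algebraically closed, the image of $(F^{\times})^{\ell^{k}}$ in $M$ is exactly $\ell^{k}M$. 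Reading ``multiplicatively independent'' as independence modulo $K^{\times}$ — which is what the pregeometry $\cl=p^{-1}\circ\acl\circ p$ forces here, and which is genuinely necessary (with the naive notion one can take $a_1=c\,a_2$, $c\in K^{\times}$ non-torsion, and openness fails) — we get $\Gamma\cap K^{\times}=1$, so $\Gamma$ maps isomorphically onto a rank-$r$ free subgroup $\bar\Gamma\le M$, and a short diagram chase gives
\[
|(\Gamma\cap F^{\times\ell^{k}})/\Gamma^{\ell^{k}}|\;=\;|(\bar\Gamma\cap\ell^{k}M)/\ell^{k}\bar\Gamma|.
\]

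Third, this last quantity is pure algebra. Since $\bar\Gamma$ is finitely generated it lies in a finite-rank direct summand of $M$, so its saturation $\bar\Gamma^{\mathrm{sat}}:=(\bar\Gamma\otimes\Q)\cap M$ is free of rank $r$ with $d:=[\bar\Gamma^{\mathrm{sat}}:\bar\Gamma]<\infty$, and $\bar\Gamma\cap\ell^{k}M=\bar\Gamma\cap\ell^{k}\bar\Gamma^{\mathrm{sat}}$ (if $\ell^{k}m\in\bar\Gamma$ then $m\in\bar\Gamma^{\mathrm{sat}}$). Choosing bases of $\bar\Gamma^{\mathrm{sat}}$ and $\bar\Gamma$ adapted to each other, with elementary divisors $d_1\mid\cdots\mid d_r$ and $\prod_i d_i=d$, a direct computation yields $|(\bar\Gamma\cap\ell^{k}M)/\ell^{k}\bar\Gamma|=\prod_i\gcd(\ell^{k},d_i)\le d$, a bound independent of $k$. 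Hence $H_\ell$ has index dividing $d$ in $\Z_\ell^{r}$ — in particular open — and $H_\ell=\Z_\ell^{r}$ whenever $\ell\nmid d$. By the first step $H$ is open (indeed of index $d$), proving the theorem. The same argument proves Theorem~\ref{thumb0}, the only change being that over a number field $K$ the finiteness of $d$ comes from finite generation of a group of $S$-units (Dirichlet) rather than from properness of a model.

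The step I expect to be the main obstacle is the middle one: choosing the projective model cleanly and pinning down exactly which hypothesis on $\bar a$ makes $\Gamma\cap K^{\times}$ trivial and $M$ free — i.e.\ translating the model-theoretic notion of an independent tuple over $K$ into the Kummer-theoretic requirement — together with the verification that $(F^{\times})^{\ell^{k}}$ surjects onto $\ell^{k}M$. Once that translation is in place, the reduction over primes and the structure-theorem computation are routine.
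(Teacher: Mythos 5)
Your proof is correct and follows the same overall strategy as the paper's: reduce to one prime $\ell$ at a time, use Kummer theory to identify the mod-$\ell^{k}$ Galois image with a quotient of $\Gamma$, and control the index via freeness of $F^{\times}/K^{\times}$. Your Goursat reduction to $\ell$-adic components is just a repackaging of the paper's horizontal/vertical split (Lemmas \ref{horiz} and \ref{vert}), which the paper applies to the geometric case ``by an identical argument'' while dropping Sah's lemma. Two substantive differences are worth recording. First, where the paper invokes Lemma \ref{freeab} (cited from \cite{zilber2006covers}) for freeness of the multiplicative group modulo the algebraically closed constants, you give a self-contained proof via the divisor map on a normal projective model, and as a bonus this directly exhibits the surjectivity of $(F^{\times})^{\ell^{k}}$ onto $\ell^{k}M$, which the paper leaves implicit. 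Second, the paper's vertical openness argument is indirect: it shows that a $\Z_{\ell}$-rank smaller than $r$ would, after approximating the $\Z_{\ell}$-linear relation by integers mod $\ell^{m}$, contradict finite generation of the base field. Your elementary-divisor computation of the index as $\prod_{i}\gcd(\ell^{k},d_{i})\le d$ is exact and uniform in $\ell$ and $k$, so it delivers surjectivity for $\ell\nmid d$ and openness for all $\ell$ in one step rather than two. Finally, your flag about the hypothesis is a genuine one: as written, ``multiplicatively independent'' is insufficient, for exactly the reason you give, and the paper's proof silently restricts to a tuple coming from the free abelian part of $(KL)^{\times}$, which is equivalent to your ``independence modulo $K^{\times}$''. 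The pregeometry used in the back-and-forth in fact supplies the still stronger condition that the $a_{i}$ are algebraically independent over $K$, but multiplicative independence modulo $K^{\times}$ is the right hypothesis for the statement as formulated.
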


In fact, geometric homogeneity falls out of the proof of arithmetic homogeneity, and is easier since no cohomology is needed to deal with the roots of unity.

\begin{rem}
The arithmetic homogeneity statement above only covers Galois representations over a number field, which is enough to do the back and forth argument needed for quasiminimality (and therefore categoricity). However, assuming that the theory $T_{\Cov(\C^{\times})}\wedge SF\wedge GP$ in this situation is $\aleph_1$-categorical will still give the full arithmetic homogeneity condition.
\end{rem}

A subgroup of a profinite group is open iff it is closed and of finite index. To prove that the image of the Galois representations are open in the profinite group 
$$\pi_1^{et}(\C^{\times}) \cong \hat{\Z} \cong \prod_{l} \Z_l$$ we split the proof into a `horizontal' and a `vertical' result:

\begin{lem}\label{horiz}[Horizontal openness]
The image of the continuous homomorphism
$$\rho_{l^{\infty}} : \Gal(K(\mu_{l^{\infty}},\hat{p}_l^{-1}(\bar{a})) / K(\mu_{l^{\infty}}))  \hookrightarrow \hat{\Z}_l^r$$
is surjective for almost all primes $l$.
\end{lem}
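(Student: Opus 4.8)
The goal is to show that the mod-$l^n$ Galois representations attached to an independent tuple $\bar a = (a_1,\dots,a_r) \subset K^\times$ fill up $(\mathbb{Z}/l^n\mathbb{Z})^r$ in the limit, i.e. that $\rho_{l^\infty}$ surjects onto $\hat{\mathbb{Z}}_l^r$, for all but finitely many $l$. Unwinding the Kummer-theoretic meaning: $\hat{p}_l^{-1}(a_i)$ is a compatible system of $l^n$-th roots $a_i^{1/l^n}$, and $K(\mu_{l^\infty}, \hat{p}_l^{-1}(\bar a))$ is the field obtained by adjoining all these roots. Surjectivity of $\rho_{l^\infty}$ onto $\hat{\mathbb{Z}}_l^r$ is exactly the statement that the Kummer pairing is perfect, equivalently that the classes of $a_1,\dots,a_r$ are "independent" in $K^\times/(K^\times)^{l^n} \otimes$-ish sense after passing to $K(\mu_{l^\infty})$, uniformly in $n$.

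This is what I would prove. Let me sketch the plan.

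\textbf{Plan of proof.} The representation lands in $\hat{\mathbb{Z}}_l^r$ because, after adjoining $\mu_{l^\infty}$, the cocycle $\sigma \mapsto (\sigma(a_i^{1/l^n})/a_i^{1/l^n})_{n}$ becomes a homomorphism valued in $\varprojlim_n \mu_{l^n}^r \cong \mathbb{Z}_l^r$ (using a choice of compatible system of roots of unity to identify $\varprojlim \mu_{l^n}$ with $\mathbb{Z}_l$). To show the image is open, by a standard compactness/Nakayama argument over the pro-$l$ group $\hat{\mathbb{Z}}_l^r$ it suffices to show surjectivity mod $l$, i.e. that
$$\rho_l : \Gal(K(\mu_l, a_1^{1/l},\dots,a_r^{1/l})/K(\mu_l)) \twoheadrightarrow (\mathbb{Z}/l\mathbb{Z})^r$$
for almost all $l$ --- indeed, once the mod-$l$ map is surjective, the image is a closed subgroup of $\hat{\mathbb{Z}}_l^r$ surjecting mod $l$, hence all of $\hat{\mathbb{Z}}_l^r$ since $\hat{\mathbb{Z}}_l^r$ is topologically generated by any lift of a generating set mod $l$ (pro-$l$ Frattini argument). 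First I would reduce to this mod-$l$ statement. Then, via Kummer theory, $\rho_l$ fails to be surjective precisely when some nontrivial product $\prod a_i^{e_i}$ with $0 \le e_i < l$, not all zero, becomes an $l$-th power in $K(\mu_l)^\times$. Since $[K(\mu_l):K] \le l-1$ is prime to $l$, a Galois-descent / norm argument shows $a^{1/l} \in K(\mu_l)$ forces $a \in (K^\times)^l \cdot \mu$, so the obstruction already lives over $K$: some $\prod a_i^{e_i}$ is an $l$-th power in $K^\times$ up to roots of unity. Now invoke multiplicative independence of $\bar a$: the subgroup $\langle a_1,\dots,a_r\rangle \subset K^\times$ is free abelian of rank $r$, modulo torsion, so $\langle a_1,\dots,a_r,\mu(K)\rangle / \mu(K)$ is free of rank $r$. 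The valuations of the $a_i$ at the finitely many primes of $K$ in the support of $\bar a$, together with the (finitely many) prime factorizations, pin down a finite set $\Sigma$ of primes $l$ outside which no such relation $\prod a_i^{e_i} \in (K^\times)^l\mu(K)$ with exponents in $[0,l)$ can hold: e.g. pick a prime $\mathfrak p$ of $K$ where not all $v_{\mathfrak p}(a_i)$ vanish; if $\prod a_i^{e_i}$ is an $l$-th power then $l \mid \sum e_i v_{\mathfrak p}(a_i)$, and running over a basis of relations one shows that for $l$ larger than the absolute values of all entries of the relevant integer matrix (and avoiding the finitely many bad primes), no nontrivial solution with exponents in $[0,l)$ exists. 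This gives surjectivity of $\rho_l$, hence of $\rho_{l^\infty}$, for almost all $l$.

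\textbf{Main obstacle.} The descent step --- controlling $a^{1/l} \in K(\mu_l)$ versus $a \in (K^\times)^l$ --- is routine since $\gcd([K(\mu_l):K], l) = 1$. The genuinely delicate point, and the one I expect to be the crux, is making the "almost all $l$" uniform: translating multiplicative independence of $\bar a$ over $K$ into an \emph{effective} finite exceptional set of primes. The clean way is to note $\langle \bar a \rangle$ maps, via valuations at a finite set $S$ of primes containing its support together with a finite-rank unit contribution, to a lattice in $\mathbb{Z}^{|S|}$ (times a finite group from units/class group), and that a tuple generating a rank-$r$ sublattice remains "independent mod $l$" as soon as $l$ does not divide any of the finitely many $r\times r$ minors of a defining matrix and $l$ is unramified/outside $S$; all roots-of-unity ambiguities are absorbed because $\mu(K)$ is finite. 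This is standard but needs to be assembled carefully. Once done, the profinite Nakayama step at the start upgrades mod-$l$ surjectivity to the asserted openness of $\rho_{l^\infty}$ in $\hat{\mathbb{Z}}_l^r$.
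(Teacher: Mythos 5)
Your overall strategy --- Kummer theory over the cyclotomic tower, with a finite exceptional set of primes coming from the finite generation of $\langle \bar{a}\rangle$ --- mirrors the paper's, and the profinite Frattini reduction is a perfectly sensible way to package the step from mod~$l$ to $\Z_l$. However, the way you carry it out opens a genuine gap. The Frattini argument applied to the closed image of $\rho_{l^\infty}$ requires that its reduction mod $l$, namely the Galois group $\Gal(K(\mu_{l^\infty}, a_1^{1/l},\dots,a_r^{1/l})/K(\mu_{l^\infty}))$, be all of $(\Z/l\Z)^r$. What you in fact establish is surjectivity for the \emph{different} group $\Gal(K(\mu_l, a_1^{1/l},\dots,a_r^{1/l})/K(\mu_l))$. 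The restriction map from the former into the latter is injective with image $\Gal\bigl(K(\mu_l, a_i^{1/l})\,/\,K(\mu_l, a_i^{1/l}) \cap K(\mu_{l^\infty})\bigr)$, so equality fails exactly when some nontrivial $\prod a_i^{e_i}$ acquires an $l$-th root in $K(\mu_{l^m})$ for some $m>1$ without having one in $K(\mu_l)$. Your norm descent does not rule this out: it uses $\gcd\bigl([K(\mu_l):K],l\bigr)=1$ in an essential way, and $[K(\mu_{l^m}):K]$ is divisible by $l$ once $m\geq 2$.

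The paper's Proposition~\ref{arith2} (Lang's argument) is precisely what closes this hole, and is the key difference from your route: if $\alpha\in\Gamma$ and $\alpha=\beta^l$ with $\beta\in K(\mu_{l^m})\setminus K$, then $K(\mu_l,\beta)/K$ is non-abelian Galois, hence cannot sit inside the abelian extension $K(\mu_{l^m})/K$; crucially, this works uniformly in $m$, so it gives $\Gamma\cap K(\mu_{l^\infty})^{\times l}=\Gamma^l$ rather than merely $\Gamma\cap K(\mu_l)^{\times l}=\Gamma^l$. Replacing your norm step with this abelianness argument repairs the proof. The remaining difference from the paper is cosmetic: your explicit $S$-unit/minor bound on the exceptional set of primes is the same content as the paper's appeal to the finiteness of $[\Gamma':\Gamma]$ (Corollary~\ref{corfingen}), just unwound; as you anticipate, in the unit case you must lean on the $S$-unit theorem, whereas the division-group formulation (Lemma~\ref{freeab}, Corollary~\ref{corfingen}) handles that automatically.
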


\begin{lem}\label{vert}[Vertical openness]
The image of the continuous homomorphism
$$\rho_{l^{\infty}} : \Gal(K(\mu_{l^{\infty}},\hat{p}_l ^{-1}(\bar{a})) / K(\mu_{l^{\infty}}))  \hookrightarrow \hat{\Z}_l^r$$
is open for all primes $l$.
\end{lem}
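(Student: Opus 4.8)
The plan is to prove Lemma~\ref{vert} by combining Kummer theory over the cyclotomic field $K(\mu_{l^{\infty}})$ with the structure theory of finitely generated modules over the PID $\Z_l$. Fix a prime $l$, write $\tilde K := K(\mu_{l^{\infty}})$ and let $M := \rho_{l^{\infty}}\!\big(\Gal(\tilde K(\hat p_l^{-1}(\bar a))/\tilde K)\big) \subseteq \Z_l^r$ be the image in question. First I would observe that $M$ is a \emph{closed $\Z_l$-submodule} of $\Z_l^r$: it is closed because it is the continuous image of a compact group, and being then a closed subgroup we get $\Z_l\cdot M \subseteq \overline{\Z\cdot M} = \overline{M} = M$. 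Since $\Z_l^r$ is Noetherian over $\Z_l$, $M$ is finitely generated, hence free, say $M \cong \Z_l^{s}$ with $s\le r$; and $M$ is open in $\Z_l^r$ iff $\Z_l^r/M$ is finite iff $s=r$ iff $M$ is not contained in any proper $\Q_l$-hyperplane of $\Q_l^r$. So it suffices to show that there is no primitive vector $\bar c=(c_1,\dots,c_r)\in\Z_l^r$ (i.e.\ with some $c_{i_0}\in\Z_l^{\times}$) with $\sum_i c_i m_i=0$ for all $m\in M$.

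Next I would translate this via Kummer theory. Unwinding the definition of $\rho_{l^{\infty}}$ (as in \S\ref{secgalpro}, specialised to $\C^{\times}$), an element $\sigma$ of the Galois group acts on a fixed compatible system of radicals by $\sigma(a_i^{1/l^n})=\zeta_{l^n}^{\gamma_i(\sigma)}a_i^{1/l^n}$ with $\rho_{l^{\infty}}(\sigma)=(\gamma_i(\sigma))_i$. If such a $\bar c$ existed, choose integers $c_i^{(n)}\equiv c_i \bmod l^n$ with $c_{i_0}^{(n)}$ coprime to $l$; then $\beta_n:=\prod_i (a_i^{1/l^n})^{c_i^{(n)}}$ satisfies $\sigma(\beta_n)=\zeta_{l^n}^{\sum_i c_i^{(n)}\gamma_i(\sigma)}\beta_n=\beta_n$ for every $\sigma$, since $\sum_i c_i^{(n)}\gamma_i(\sigma)\equiv \sum_i c_i\gamma_i(\sigma)=0 \pmod{l^n}$. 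Hence $\beta_n\in\tilde K^{\times}$, i.e.
\[
\prod_i a_i^{c_i^{(n)}} \in \big(K(\mu_{l^{\infty}})^{\times}\big)^{l^n}\qquad\text{for every } n,
\]
with $(c_i^{(n)})$ not entirely divisible by $l$. Thus Lemma~\ref{vert} reduces to the following uniform non-divisibility statement, which is the heart of the matter: \emph{there is a constant $c=c(K,\bar a,l)$ such that for no $(e_i)\in\Z^r$ with $l\nmid\gcd_i(e_i)$ is $\prod_i a_i^{e_i}$ an $l^{\,c+1}$-th power in $K(\mu_{l^{\infty}})^{\times}$.}

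Proving this statement is the main obstacle, and I would split it, following \cite[V \S 4]{lang1979elliptic}, into a ``descent to $K$'' step and an arithmetic-of-$K^{\times}$ step. For the descent: a radical $\gamma^{1/l^m}\in K(\mu_{l^{\infty}})$ with $\gamma\in K^{\times}$ forces $\gamma$, up to a root of unity lying in $K$, to already be an $l^{\,m-c_0}$-th power in $K^{\times}$, where $c_0$ depends only on $l$ and $[K:\Q]$ — because $K(\mu_{l^{\infty}})/K$ is abelian whereas a nontrivial radical tower is not, the precise bound coming from the finiteness of $H^1(\Gal(K(\mu_{l^{\infty}})/K),\mu_{l^{\infty}})$ (this is the only place cohomology enters, which is why the geometric case is cleaner). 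For the arithmetic step: multiplicative independence of $\bar a$ means $\Gamma:=\langle a_1,\dots,a_r\rangle$ has rank $r$ modulo torsion, and its saturation $\Gamma'$ inside the free abelian group $K^{\times}/\mathrm{tors}$ is again free of rank $r$, hence finitely generated, so $D:=[\Gamma':\bar\Gamma]$ is finite, where $\bar\Gamma$ is the image of $\Gamma$ in $K^{\times}/\mathrm{tors}$. Expressing the $a_i$ in a $\Z$-basis of $\Gamma'$ gives an integer matrix of determinant $\pm D$, and a Cramer's-rule computation shows that $\prod_i a_i^{e_i}\in (K^{\times})^{l^k}\cdot\mathrm{tors}$ forces $D\,e_i\equiv 0\pmod{l^k}$ for all $i$, hence $l^{\,k-v_l(D)}\mid\gcd_i(e_i)$. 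Taking $c:=c_0+v_l(D)$ yields the claim, and hence Lemma~\ref{vert}, since the displayed condition cannot hold for $n>c$.

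Finally I would note that the same argument, with the base field $K(\bar a)$ in place of $K(\mu_{l^{\infty}})$ (now $K$ algebraically closed, so $\mu_{l^{\infty}}\subseteq K$), proves the analogue needed for geometric homogeneity (Theorem~\ref{thumb1}): the descent step is then vacuous, and the arithmetic step runs with $\Gamma$ the image of $\langle a_1,\dots,a_r\rangle$ in $\C^{\times}/K^{\times}$ and its saturation in that torsion-free group — compare \cite[V \S 4]{lang1979elliptic} and \cite[Lemma 3.5]{zilber2003model}. Together with the horizontal statement (Lemma~\ref{horiz}), which gives surjectivity for all but finitely many $l$, Lemma~\ref{vert} then yields Theorems~\ref{thumb0} and~\ref{thumb1}, and hence Conditions~\ref{AH} and~\ref{GH}.
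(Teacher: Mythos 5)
Your proof is correct and follows essentially the same route as the paper's: reduce via the PID structure theorem to excluding a rank defect in the closed $\Z_l$-submodule, translate the hypothetical linear relation through Kummer theory into an $l$-power divisibility constraint on $\prod_i a_i^{e_i}$ in $K(\mu_{l^\infty})^\times$, descend cohomologically to $K^\times$ (the paper packages this as Proposition \ref{sahcor} via Sah's lemma), and conclude from the finiteness of $[\Gamma':\Gamma]$ given by \ref{corfingen}. You make the final arithmetic step (Cramer/determinant bound from $[\Gamma':\Gamma]$) more explicit than the paper, which merely asserts the divisibility ``cannot happen for arbitrarily large $m$ since $K$ is finitely generated,'' while being slightly looser on the cohomological bound than the paper's explicit Sah's lemma computation, but the decomposition, the key lemmas, and the source (\cite[V \S4]{lang1979elliptic}) coincide.
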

For the definition of the $l$-adic fibre $ \hat{p}_l^{-1}(\bar{a})$, just take all compatible sequences of $l^{n}$th roots of $\bar{a}$ for all $n$ and for the definition of $\rho_{l^{\infty} , \bar{a}}$ just choose one of them and proceed as before.

\begin{lem}\label{freeab} \cite[2.1]{zilber2006covers}
Let $K$ be a finitely generated extension of $\Q$. 
Then
$$K^{\times} \cong A \times \left( K \cap \mu \right)^{\times} $$
where $A$ is a free abelian group. If $L$ is algebraically closed, then
$$ (KL)^{\times} \cong B \times L^{\times} $$
where $B$ is a free abelian group.
\end{lem}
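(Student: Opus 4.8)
The strategy in both cases is the same two--step pattern: realise the claimed factor as a direct summand of the abelian group in question, using one of the two elementary splitting criteria for short exact sequences of abelian groups --- such a sequence splits as soon as its quotient is free (hence projective), and it also splits as soon as its sub is divisible (hence injective). So in each case it suffices to locate the relevant subgroup and show that the corresponding quotient is free abelian.

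\emph{The arithmetic statement.} The torsion subgroup of $K^\times$ is $K\cap\mu$, and since $K$ is finitely generated over $\Q$ the relative algebraic closure of $\Q$ in $K$ is a number field, so $K\cap\mu$ is finite cyclic. I would choose a finitely generated $\Z$-subalgebra of $K$ with fraction field $K$, enlarge it so that it contains the (finitely many) elements of $K\cap\mu$, and then replace it by its integral closure $R$ in $K$; this is again a finitely generated $\Z$-algebra, hence a Noetherian normal domain with $\mathrm{Frac}(R)=K$. Two standard facts about $R$: (i) $R^\times$ is a finitely generated group --- a form of the Dirichlet unit theorem for finitely generated $\Z$-algebra domains --- with torsion subgroup $K\cap\mu$, so $R^\times/(K\cap\mu)$ is free abelian of finite rank; (ii) since $R$ is Noetherian and normal one has $R=\bigcap_{\mathrm{ht}\,\mathfrak p=1}R_{\mathfrak p}$, so the divisor map $x\mapsto (v_{\mathfrak p}(x))_{\mathfrak p}$ from $K^\times$ to the free abelian group $\bigoplus_{\mathrm{ht}\,\mathfrak p=1}\Z$ has kernel exactly $R^\times$, whence $K^\times/R^\times$ embeds in a free abelian group and is therefore itself free abelian. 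Now in
\[
0\longrightarrow R^\times/(K\cap\mu)\longrightarrow K^\times/(K\cap\mu)\longrightarrow K^\times/R^\times\longrightarrow 0
\]
both outer terms are free abelian, so the middle term $A:=K^\times/(K\cap\mu)$ is free abelian; consequently $0\to K\cap\mu\to K^\times\to A\to 0$ splits, giving $K^\times\cong A\times(K\cap\mu)^\times$.

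\emph{The geometric statement.} Here $L$ is algebraically closed of characteristic zero, so $L$ contains every root of unity. Hence if $x\in(KL)^\times$ satisfies $x^n\in L^\times$, picking $d\in L$ with $d^n=x^n$ forces $x/d\in\mu\subseteq L^\times$, so $x\in L^\times$: thus $L^\times$ is pure in $(KL)^\times$ and the quotient $(KL)^\times/L^\times$ is torsion-free. Since $KL$ is finitely generated over $L$, I would run exactly the argument above with the base $\Q$ replaced by $L$: choose a finitely generated normal $L$-subalgebra $A_0$ of $KL$ with fraction field $KL$; then $A_0^\times/L^\times$ is finitely generated (the unit theorem for affine varieties over a field) and torsion-free, hence free abelian of finite rank, while $(KL)^\times/A_0^\times$ embeds in the free abelian group of Weil divisors of $\mathrm{Spec}\,A_0$. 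As before this shows $B:=(KL)^\times/L^\times$ is free abelian. Finally $L^\times$ is divisible, hence an injective $\Z$-module, so $0\to L^\times\to(KL)^\times\to B\to 0$ splits and $(KL)^\times\cong B\times L^\times$ (one could equally conclude from the fact that the quotient $B$ is free, hence projective).

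The only real work is in assembling the algebro-geometric inputs: producing a finitely generated normal model (using finiteness of integral closure for finitely generated $\Z$-algebras, resp.\ for affine domains over a field), and correctly invoking the generalised Dirichlet unit theorem both in the arithmetic setting (finite generation of $R^\times$ for finitely generated $\Z$-algebra domains) and in the function-field setting (finite generation of $\mathcal O(X)^\times/L^\times$ for a variety $X$ over $L$). The purity computations and the two splitting criteria for abelian groups are routine.
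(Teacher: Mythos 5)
The paper gives no proof of this lemma at all --- it is stated with a citation to Zilber's paper \cite{zilber2006covers}, so there is no in-house argument to compare against. Your write-up is therefore a self-contained reconstruction, and it is correct. The strategy is the standard one: exhibit the claimed complement as a quotient, show that quotient is free abelian (so the extension splits by projectivity), and do this via the ``units plus divisors'' dichotomy --- finite generation of the unit group of a finitely generated normal domain (Roquette over $\Z$, Rosenlicht over a field) together with the embedding of $K^\times/R^\times$ into the free group of Weil divisors of a normal Noetherian model. The extra observations you need and correctly supply are that roots of unity in $K$ are already integral over $\Z$, hence automatically in the normal model $R$ (so the torsion of $R^\times$ is precisely $K\cap\mu$), that a subgroup of a free abelian group is free (of possibly infinite rank), that an extension of free by free is free because the quotient is projective, and, in the geometric case, that algebraic closedness of $L$ makes $(KL)^\times/L^\times$ torsion-free. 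One could alternatively finish the second case by invoking divisibility of $L^\times$ (injectivity) rather than freeness of $B$, as you note, but both are fine. In short: the paper defers to the literature, while your proposal supplies a complete and correct argument along the expected lines.
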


Let $K$ be field which is either a number field or the compositum of a finitely generated extension of $\Q$ with an algebraically closed field and $\bar{a}$ a multiplicatively independent tuple coming from the free abelian part of $K^{\times}$ (as in \ref{freeab}). Let $\Gamma$ be the multiplicative subgroup of $K^{\times}$ generated by $\bar{a}$, and $\Gamma'$ the division group of $\Gamma$ in $K^{\times}$ i.e.
$$\Gamma':=\{x \in K^{\times} \ | \ x^n \in \Gamma \textrm{ for some } n \in \N \}.$$

\begin{cor}\label{corfingen}
Let $K$ be a finitely generated extension of $\Q$ and $\Gamma$ a finitely generated subgroup of $K^{\times}$. Then $[\Gamma':\Gamma]$ is finite.
\end{cor}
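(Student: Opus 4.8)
The plan is to strip off the roots of unity using Lemma~\ref{freeab}, reduce to the corresponding statement about a finitely generated subgroup of a free abelian group, and then reduce that to the finite-rank case, where it follows from the structure theorem for finitely generated abelian groups. First I would apply Lemma~\ref{freeab} to write $K^{\times}\cong A\times W$ with $A$ free abelian and $W:=K\cap\mu$; since $K$ is a finitely generated extension of $\Q$, its algebraic part $K\cap\bar{\Q}$ is a number field, so $W=K\cap\mu$ is finite. Let $\pi:K^{\times}\to A$ denote the projection, with $\ker\pi=W$. Note that $\Gamma'$ really is a subgroup of $K^{\times}$ (in an abelian group, if $x^n,y^m\in\Gamma$ then $(xy)^{nm}\in\Gamma$ and $(x^{-1})^n\in\Gamma$), so all the quotients below make sense.

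Next I would put $\Gamma_0:=\pi(\Gamma)$, a finitely generated subgroup of $A$, and let $\Gamma_0'$ be its division group in $A$. Because $\Gamma_0$ is finitely generated, a generating set involves only finitely many members of a fixed basis $\{e_i\}$ of $A$, so $\Gamma_0\subseteq A_0:=\bigoplus_{i\in S}\Z e_i$ for some finite $S$, and $A=A_0\oplus A_1$ with $A_1$ complementary and free. Since $A$ is torsion free, any $a\in A$ with $na\in\Gamma_0\subseteq A_0$ must already lie in $A_0$; hence $\Gamma_0'\subseteq A_0$. Thus $\Gamma_0'/\Gamma_0$ is precisely the torsion subgroup of the finitely generated abelian group $A_0/\Gamma_0$, which is finite, so $[\Gamma_0':\Gamma_0]<\infty$.

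Finally I would transfer this back along $\pi$. For $x\in\Gamma'$ with $x^n\in\Gamma$ we get $\pi(x)^n\in\Gamma_0$, so $\pi(\Gamma')\subseteq\Gamma_0'$, and $\pi$ induces a homomorphism $\bar\pi:\Gamma'/\Gamma\to\Gamma_0'/\Gamma_0$ with finite image by the previous step. If $x\Gamma\in\ker\bar\pi$ then $\pi(x)=\pi(g)$ for some $g\in\Gamma$, so $x=gw$ with $w=g^{-1}x\in W\cap\Gamma'$; hence $\ker\bar\pi$ is a quotient of a subgroup of the finite group $W$. Therefore $[\Gamma':\Gamma]=|\Gamma'/\Gamma|\le |\ker\bar\pi|\cdot[\Gamma_0':\Gamma_0]\le |W|\cdot[\Gamma_0':\Gamma_0]<\infty$. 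The only non-formal point — the ``main obstacle'' — is that $A$ may have infinite rank, so one cannot directly quote the finitely generated case; this is dissolved by the observation that any divisibility relation witnessing membership in $\Gamma_0'$ is already visible inside the finite-rank summand $A_0\supseteq\Gamma_0$. Everything else is bookkeeping with the exact sequence $1\to W\to K^{\times}\xrightarrow{\pi}A\to 1$.
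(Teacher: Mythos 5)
Your proof is correct, and since the paper leaves this as an unproved corollary of Lemma~\ref{freeab}, your argument is exactly the unpacking the paper implicitly intends: project to the free abelian factor $A$ via that lemma, observe that $\Gamma_0=\pi(\Gamma)$ lands in a finite-rank pure summand $A_0$ so that $\Gamma_0'/\Gamma_0$ is the (finite) torsion of $A_0/\Gamma_0$, and control the kernel of the induced map $\Gamma'/\Gamma\to\Gamma_0'/\Gamma_0$ by the finitely many roots of unity in $K$. No gaps.
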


At this point, we make the observation that if $n$ is prime to the index $[\Gamma':\Gamma]$, then
$$\Gamma^n = \Gamma \cap K^{\times^n}.$$

\begin{thm}\label{kummer}[Kummer theory]
Let $L$ be a field containing the $n^{th}$ roots of unity $\mu_n$ and $\Gamma$ a finitely generated subgroup of $L^{\times}$. Then
$$\Gal(L( \Gamma^{1 / n}) / L) \cong \Gamma /  \Gamma \cap L^{\times^n} .$$
\end{thm}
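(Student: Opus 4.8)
The plan is to run the standard Kummer-pairing argument. First I would record that $M := L(\Gamma^{1/n})$ is a finite abelian extension of $L$ of exponent dividing $n$: if $a_1,\dots,a_r$ generate $\Gamma$ and $\alpha_i \in \bar{L}$ satisfies $\alpha_i^n = a_i$, then since $\mu_n \subseteq L$ every $n$-th root of any $\prod_i a_i^{e_i}$ differs from $\prod_i \alpha_i^{e_i}$ by an element of $\mu_n \subseteq L$, so $M = L(\alpha_1,\dots,\alpha_r)$ is the splitting field of $\prod_i(X^n - a_i)$, hence Galois and finite, and $\sigma \mapsto (\sigma(\alpha_i)/\alpha_i)_i$ is an injective homomorphism $G:=\Gal(M/L) \hookrightarrow \mu_n^r$ (it is a homomorphism because $\tau(\alpha_i)$ is again an $n$-th root of $a_i$, differing from $\alpha_i$ by an element of $\mu_n\subseteq L$). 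In particular $G$ is finite abelian of exponent dividing $n$.

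Next I would define the Kummer pairing
\[
\langle \cdot , \cdot \rangle : G \times \Gamma \to \mu_n, \qquad \langle \sigma , a \rangle := \sigma(\alpha)/\alpha \quad\text{where } \alpha^n = a .
\]
Here $\mu_n \subseteq L$ guarantees the value is independent of the choice of $n$-th root $\alpha$, and $(\sigma(\alpha)/\alpha)^n = \sigma(a)/a = 1$ shows it lands in $\mu_n$. Bilinearity is routine: in the second variable use that $\alpha\beta$ is an $n$-th root of $ab$ whenever $\alpha^n=a$, $\beta^n=b$, and in the first variable use again that $\tau(\alpha)$ is an $n$-th root of $a$, so $\langle\sigma\tau,a\rangle = (\sigma(\tau(\alpha))/\tau(\alpha))(\tau(\alpha)/\alpha) = \langle\sigma,a\rangle\langle\tau,a\rangle$. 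For non-degeneracy: the left kernel is trivial, since a $\sigma$ pairing trivially with all of $\Gamma$ fixes every $n$-th root of every element of $\Gamma$, hence fixes $M$; and the right kernel consists of those $a$ all of whose $n$-th roots are $G$-fixed, i.e. lie in $M^G = L$, which is precisely $\Gamma \cap L^{\times^n}$.

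Thus the pairing descends to a non-degenerate pairing $G \times \bigl(\Gamma/(\Gamma\cap L^{\times^n})\bigr) \to \mu_n$. Both sides are finite abelian groups of exponent dividing $n$ — for $\Gamma/(\Gamma\cap L^{\times^n})$ this uses that $\Gamma$ is finitely generated and that $a^n \in \Gamma \cap L^{\times^n}$ for every $a\in\Gamma$. Non-degeneracy yields injections $G \hookrightarrow \Hom(\Gamma/(\Gamma\cap L^{\times^n}),\mu_n)$ and $\Gamma/(\Gamma\cap L^{\times^n}) \hookrightarrow \Hom(G,\mu_n)$, and since $|\Hom(A,\mu_n)| = |A|$ for any finite abelian $A$ of exponent dividing $n$, comparing cardinalities forces both injections to be isomorphisms. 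Fixing an isomorphism $\mu_n \cong \Z/n\Z$ then identifies $\Hom(\Gamma/(\Gamma\cap L^{\times^n}),\mu_n)$ with $\Gamma/(\Gamma\cap L^{\times^n})$, giving $G \cong \Gamma/(\Gamma\cap L^{\times^n})$ as claimed.

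There is no serious obstacle here — this is textbook Kummer theory — so the remaining work is really just bookkeeping: making sure $\mu_n \subseteq L$ is invoked at each of the three places it is needed (Galoisness of $M/L$, well-definedness of the pairing, and the final dualization), and noting that the resulting isomorphism is only abstract rather than canonical. For the intended applications (Lemmas \ref{horiz} and \ref{vert}) only the isomorphism type, indeed the order, of the Galois group matters, so this is harmless; if a canonical statement were wanted one would instead record the functorial isomorphism $G \cong \Hom(\Gamma/(\Gamma\cap L^{\times^n}),\mu_n)$.
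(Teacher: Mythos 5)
Your proof is correct, but it takes a genuinely different route from the paper's. The paper starts from the Kummer exact sequence of $G$-modules $\mu_n \hookrightarrow L(\Gamma^{1/n})^{\times} \xrightarrow{x\mapsto x^n} L(\Gamma^{1/n})^{\times n}$, passes to the long exact sequence in group cohomology, invokes Hilbert's Theorem 90 to kill $H^1(G, L(\Gamma^{1/n})^{\times})$, extracts an exact sequence ending in $\Hom(G,\mu_n)$, and then identifies the finite abelian exponent-$n$ group $G$ with its character group. You instead construct the Kummer pairing $\langle \sigma, a\rangle = \sigma(\alpha)/\alpha$ directly, verify bilinearity and identify both kernels by hand, and finish with the duality of finite abelian groups of exponent dividing $n$. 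The two arguments are of course morally the same --- your pairing is exactly the connecting map $\delta$ made explicit --- but yours is self-contained and needs no group cohomology or Hilbert 90, whereas the paper's version is shorter once that machinery is assumed and scales more naturally (e.g.\ to the profinite Kummer sequence or to twists by characters). One small point in your favour: you carefully justify that $M/L$ is Galois and that $G$ has exponent dividing $n$, which the paper takes for granted; and you correctly flag that the final isomorphism is non-canonical, which the paper also leaves implicit.
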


\begin{proof}
We start with the `Kummer exact sequence' of $G:=\Gal(L(\Gamma^{1/n}) / L)$-modules
$$\mu_n \lora L(\Gamma^{1/n})^{\times} \overset{x\mapsto x^n}{\lora} L(\Gamma^{1/n})^{\times^n},$$
and we take cohomology to get a long exact sequence
$$H^0(G,\mu_n)\ra H^0(G,L(\Gamma^{1/n})^{\times}) \overset{x\mapsto x^n}{\ra} H^0(G,L(\Gamma^{1/n})^{\times^n})  \overset{\delta}{\ra}  H^1(G, \mu_n) \ra H^1(G, L(\Gamma^{1/n})^{\times}). $$
$G$ acts trivially on $L$ and therefore on $\mu_n$, and $H^1(G, L(\Gamma^{1/n})^{\times})=0$ by Hilbert's theorem 90, so we get an exact sequence
$$\mu_n \lora L^{\times}  \overset{x\mapsto x^n}{\lora} L^{\times}\cap L(\Gamma^{1/n})^{\times^n}   \overset{\delta}{\lora}  \Hom(G,\mu_n) \ra 0.$$
Now we note that $G$ is a finite abelian group of exponent $n$, and so is isomorphic to its character group $\Hom(G,\mu_n)$, giving
$$\Gal(L(\Gamma^{1/n}) / L) \cong \left( L^{\times} \cap L(\Gamma^{1/n})^{\times^n} \right) /L^{\times^n} =\Gamma L^{\times^n} / L^{\times^n} \cong \Gamma /  \Gamma \cap L^{\times^n} .$$
\end{proof}

\subsection{Horizontally}

\begin{pro}\label{arith2}\cite[V 4.2]{lang1979elliptic}
Let $K$ be a number field, $n$ be coprime to $2[\Gamma':\Gamma]$ and
$$\Gal(K(\mu_n) / K) \cong (\Z / N \Z)^{\times}.$$
Then 
$$\Gamma \cap K(\mu_n)^{\times^{l^n}} = \Gamma^n.$$
\end{pro}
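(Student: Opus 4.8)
The plan is to reduce the identity to a single cohomological vanishing. One inclusion, $\Gamma^n \subseteq \Gamma \cap K(\mu_n)^{\times n}$, is trivial. For the reverse inclusion, recall the observation made just before the statement: since $n$ is coprime to $[\Gamma':\Gamma]$ we already have $\Gamma \cap K^{\times n} = \Gamma^n$. Hence it suffices to prove the purely field-theoretic statement
$$K^{\times} \cap K(\mu_n)^{\times n} = K^{\times n},$$
i.e. that passing to $K(\mu_n)$ creates no new $n$-th powers inside $K^{\times}$; the element $\gamma$ then never needs to be singled out.

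Concretely I would write $L := K(\mu_n)$, $G := \Gal(L/K)$, and let $\chi \colon G \xrightarrow{\ \sim\ } (\Z/n\Z)^{\times}$ be the cyclotomic character, an isomorphism by hypothesis; this identifies the $G$-module $\mu_n$ with $\Z/n\Z$ under the tautological $(\Z/n\Z)^{\times}$-action. Given $\gamma \in K^{\times}$ with $\gamma = \beta^n$ for some $\beta \in L^{\times}$, set $c(\sigma) := \beta^{\sigma}/\beta$ for $\sigma \in G$. Since $c(\sigma)^n = \gamma^{\sigma}/\gamma = 1$ we have $c(\sigma) \in \mu_n$, and a direct check shows $\sigma \mapsto c(\sigma)$ is a $1$-cocycle $G \to \mu_n$. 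I would then show its class vanishes in $H^1(G,\mu_n)$; granting this, write $c(\sigma) = \eta^{\sigma}\eta^{-1}$ with $\eta \in \mu_n$, so that $\beta\eta^{-1}$ is fixed by $G$, hence lies in $K^{\times}$, while $(\beta\eta^{-1})^n = \gamma$ because $\eta^n = 1$. Thus $\gamma \in K^{\times n}$, and combining with the reduction above, $\gamma \in \Gamma \cap K^{\times n} = \Gamma^n$.

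The heart of the matter is therefore $H^1(G, \mu_n) = 0$, and this is precisely where the hypothesis $\gcd(n,2)=1$ enters. Let $\sigma_0 := \chi^{-1}(-1) \in G$; it acts on $\mu_n$ by $\zeta \mapsto \zeta^{-1}$, so the $\Z[G]$-module endomorphism $\sigma_0 - 1$ of $\mu_n$ is multiplication by $-2$, which is an isomorphism since $n$ is odd. On the other hand, $\sigma_0$ lies in the centre of the abelian group $G$, so conjugation by $\sigma_0$ is the identity on $G$, and therefore the module endomorphism "multiply by $\sigma_0$" induces the identity on $H^{\ast}(G,\mu_n)$; by additivity of cohomology in the module, "multiply by $\sigma_0 - 1$" then induces the zero map on $H^{\ast}(G,\mu_n)$. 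Being an automorphism of $\mu_n$ it must induce an isomorphism, so $H^1(G,\mu_n)=0$ (one could instead argue by CRT, reducing to prime powers $p^k \mid n$, where $(\Z/p^k\Z)^{\times}$ is cyclic with a generator $\not\equiv 1 \bmod p$ and the same invertibility kills the Tate cohomology). I do not expect a serious obstacle here: this cohomology computation is classical --- it is exactly the reason the Grunwald--Wang type phenomenon obstructing such descent only occurs at $2$-power levels --- and the only real care needed is the bookkeeping, namely that the three hypotheses are used in three distinct places: surjectivity of $\chi$ guarantees $G$ is all of $(\Z/n\Z)^{\times}$ so that $\sigma_0$ exists and $\mu_n$ is the standard module, $\gcd(n,2)=1$ gives the cohomology vanishing, and $\gcd(n,[\Gamma':\Gamma])=1$ effects the final descent from $K^{\times n}$ back into $\Gamma^n$.
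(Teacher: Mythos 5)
Your proof is correct, but it takes a genuinely different route from the paper's. The paper argues directly with field theory: taking $\alpha\in\Gamma$ that is a $p$-th power in $K(\mu_n)$ but not in $K$ (for some prime $p\mid n$), it invokes irreducibility of $X^p-\alpha$ over $K$, the degree count $[K(\mu_p):K]=p-1$, and observes that $K(\mu_p,\beta)/K$ would then be a nonabelian subextension of the abelian extension $K(\mu_n)/K$, a contradiction. You instead isolate the purely descent-theoretic statement $K^{\times}\cap K(\mu_n)^{\times n}=K^{\times n}$ and deduce it from the vanishing $H^1\bigl(\Gal(K(\mu_n)/K),\mu_n\bigr)=0$, which you get from the Sah's-lemma device: $\sigma_0=\chi^{-1}(-1)$ is central and $\sigma_0-1$ is the automorphism $-2$ of $\mu_n$ since $n$ is odd, so it simultaneously kills and acts bijectively on cohomology. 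Both arguments use the three hypotheses in the same way (oddness of $n$, surjectivity of the cyclotomic character, and the index condition for the final passage $\Gamma\cap K^{\times n}=\Gamma^n$). What your route buys is a cleaner reusable intermediate (the $H^1$-vanishing is exactly the odd-$n$ half of the Grunwald--Wang discussion) and, notably, greater uniformity with the rest of the section: the paper already invokes Sah's lemma in the ``vertical'' step (Proposition~\ref{sahcor}), and your proof makes the ``horizontal'' step cohomological in the same spirit rather than falling back on an ad hoc irreducibility argument. The paper's route is more elementary and avoids cohomology altogether, at the cost of some delicacy around when $X^p-\alpha$ is irreducible (which is where it quietly uses the coprimality to $2[\Gamma':\Gamma]$, as you note).
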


\begin{proof}
Suppose not. Then for some prime $p|n$ there is $\alpha \in \Gamma$ such that
$$\alpha = \beta^p \ \ , \ \ \beta \in K(\mu_n) - K$$
(where $\beta \notin K$ since $n$ is coprime to $2[\Gamma':\Gamma]$). Now since $\beta \notin K$, the polynomial $X^p-\alpha$ is irreducible over $K$ and so $\beta$ has degree $p$ over $K$. But we have
$$[K(\mu_p) :K]=p-1$$
so $\beta$ has degree $p$ over $K(\mu_p)$ also. The Galois extension
$$K(\mu_p, \beta) / K$$
is non-abelian and therefore cannot be contained in $K(\mu_n)$, since the extension $K(\mu_n) / K$ is abelian.
\end{proof}

\begin{pro}
Let $K$ be a finitely generated extension of $\Q$. Then 
$$\Gal(K(\mu) / K)$$
is isomorphic to an open subgroup of $\hat{\Z}^{\times}$.
\end{pro}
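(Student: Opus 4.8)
The plan is to reduce the statement to the classical theorem that $\Gal(\Q^{\cyc}/\Q) \cong \hat{\Z}^{\times}$ (which is essentially Condition \ref{condtorsion}, itself a restatement of the Kronecker--Weber theorem together with the computation of cyclotomic Galois groups), and then transfer along the restriction map. First I would observe that $K(\mu)$ is the compositum $K \cdot \Q^{\cyc}$ inside $\bar{\Q}$ (or inside $\C$), since $\mu = \bigcup_N \mu_N$ and $\Q^{\cyc} = \Q(\mu)$. Therefore restriction of automorphisms gives a natural continuous homomorphism
\[
\res: \Gal(K(\mu)/K) \lora \Gal(\Q^{\cyc}/\Q^{\cyc}\cap K) \cong \Gal(\Q^{\cyc}/\Q)\big/ \Gal(\Q^{\cyc}/\Q^{\cyc}\cap K),
\]
and the key point is that this map is injective: any $\sigma \in \Gal(K(\mu)/K)$ fixing $K$ and fixing $\Q^{\cyc}$ fixes the whole compositum $K(\mu)$, hence is trivial. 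So $\Gal(K(\mu)/K)$ embeds continuously into $\Gal(\Q^{\cyc}/\Q) \cong \hat{\Z}^{\times}$.

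Next I would show the image has finite index, equivalently is open. Since $\res$ identifies $\Gal(K(\mu)/K)$ with $\Gal(\Q^{\cyc}/\Q^{\cyc}\cap K)$, it suffices to see that $\Q^{\cyc} \cap K$ is a finite extension of $\Q$. This is where finite generation of $K/\Q$ is used: a finitely generated field extension of $\Q$ contains only finitely many roots of unity if it has transcendence degree controlled, but more robustly, $\Q^{\cyc}\cap K$ is an algebraic extension of $\Q$ contained in a finitely generated field, hence is itself finitely generated as a field, hence finite over $\Q$ (a finitely generated algebraic extension is finite). Then $[\Q^{\cyc}:\Q^{\cyc}\cap K] $ need not be finite, so I must instead argue that $\Gal(\Q^{\cyc}/\Q^{\cyc}\cap K)$ is an \emph{open} subgroup of $\Gal(\Q^{\cyc}/\Q)$: this is exactly because it is the stabiliser of (equivalently, has fixed field) the finite extension $\Q^{\cyc}\cap K / \Q$, and by the fundamental theorem of infinite Galois theory the subgroups of finite index corresponding to finite subextensions are precisely the open ones. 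Since $\Gal(\Q^{\cyc}/\Q)\cong\hat{\Z}^{\times}$ is compact, an open subgroup automatically has finite index, completing the argument.

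Assembling: $\Gal(K(\mu)/K) \cong \Gal(\Q^{\cyc}/\Q^{\cyc}\cap K)$ is an open subgroup of $\Gal(\Q^{\cyc}/\Q)\cong\hat{\Z}^{\times}$, which is the claim. The main obstacle is the verification that $\Q^{\cyc}\cap K$ is finite over $\Q$; once one knows that $K$ finitely generated over $\Q$ forces every algebraic subextension to be finite over $\Q$ (via the fact that an algebraic extension generated over $\Q$ by finitely many elements is a finite extension, and any algebraic extension inside a finitely generated field is contained in one generated by finitely many algebraic elements), everything else is a routine application of infinite Galois theory and the known structure of $\Gal(\Q^{\cyc}/\Q)$. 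One should also take a moment to note that the restriction map is continuous and the identification is a topological isomorphism, so "open" is meaningful on both sides.
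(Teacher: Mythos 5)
Your proof is correct, and it is in essence the argument the paper intends: the paper dismisses this with ``the result follows almost immediately,'' and what you have written is a careful filling-in of that ``almost immediately'' via the restriction isomorphism $\Gal(K(\mu)/K)\cong\Gal(\Q^{\cyc}/\Q^{\cyc}\cap K)$, the observation that $\Q^{\cyc}\cap K$ is finite over $\Q$ because $K$ is finitely generated, and the correspondence between finite subextensions of $\Q^{\cyc}/\Q$ and open subgroups of the compact group $\hat{\Z}^{\times}$.
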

\begin{proof}
It is an elementary result that
$$\Gal(\Q(\mu) / \Q) \cong  \hat{\Z}^{\times}$$
and the result follows almost immediately, remembering that open is equivalent to closed and finite index.
\end{proof}

\begin{proof} [Proof of Lemma \ref{horiz}]
Since $\Gal(K(\mu) / K)$
is isomorphic to an open subgroup of $\hat{\Z}^{\times}$, there is $l_0$ such that if $l \geq l_0$ then
$$\Gal(K(\mu_{l^n}) / K) \cong (\Z / l^n \Z)^{\times}$$
for all $n$. By Kummer theory (\ref{kummer})
$$\Gal(K(\mu_{l^n}, \Gamma^{1 / l^n}) / K(\mu_{l^n})) \cong \Gamma /  \Gamma \cap K(\mu_{l^n})^{\times^{l^n}} $$
but by \ref{corfingen}, the index $[\Gamma':\Gamma]$ is finite and if $l \geq l_0$ is coprime to $2[\Gamma':\Gamma]$ then by \ref{arith2} we have
$$\Gamma \cap K(\mu_{l^n})^{\times^{l^n}} = \Gamma^{l^n}$$
and so 
$$\Gal(K(\mu_{l^n}, \Gamma^{1 / l^n}) / K(\mu_{l^n})) \cong \Gamma /  \Gamma^{l^n} \cong (\Z / l^n \Z)^r. $$
\end{proof}

\subsection{Vertically}

\begin{lem}[Sah's Lemma]
Let $M$ ba a $G$-module and let $\alpha$ be in the centre $Z(G)$. Then $H^1(G,M)$ is killed by the endomorphism
$$x \mapsto \alpha x-x$$
of $M$. In particular, if $f$ is a 1-cocycle and $g \in G$ then
$$(\alpha -1) f(g) = (g-1)f(\alpha).$$
\end{lem}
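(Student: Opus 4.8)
The plan is to prove the ``in particular'' identity by a direct $1$-cocycle computation, and then read off the statement about $H^1(G,M)$ from it. First I would fix conventions: write $M$ additively with $G$ acting on the left, and recall that a $1$-cocycle is a map $f : G \to M$ with $f(gh) = f(g) + g\,f(h)$ for all $g,h \in G$, the coboundaries being the maps $g \mapsto (g-1)m = gm-m$ for $m \in M$.

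The key step is then the following expansion. For $\alpha \in Z(G)$ and any $g \in G$, apply the cocycle relation two ways:
$$f(\alpha g) = f(\alpha) + \alpha\, f(g), \qquad f(g\alpha) = f(g) + g\, f(\alpha).$$
Since $\alpha$ is central, $\alpha g = g\alpha$, so the left-hand sides coincide; equating the right-hand sides and rearranging gives
$$(\alpha-1)\,f(g) = (g-1)\,f(\alpha),$$
which is exactly the claimed identity. In particular the map $g \mapsto (\alpha-1)f(g)$ is the coboundary associated to the element $f(\alpha) \in M$.

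Finally, to deduce the cohomological statement: the map $\phi\colon M \to M$, $\phi(x) = \alpha x - x$, is a homomorphism of $G$-modules, since $\phi(gx) = \alpha g x - g x = g\alpha x - gx = g\phi(x)$ using centrality of $\alpha$ once more. Hence $\phi$ induces an endomorphism $\phi_{*}$ of $H^1(G,M)$, sending the class of a cocycle $f$ to the class of $g \mapsto (\alpha-1)f(g)$. By the identity just proved, this latter cocycle is the coboundary of $f(\alpha)$, so $\phi_{*}[f] = 0$ for every class $[f]$; thus $\phi_{*}$ is the zero endomorphism of $H^1(G,M)$, i.e. $H^1(G,M)$ is killed by $x \mapsto \alpha x - x$.

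There is essentially no obstacle here; the only points requiring care are the sign and ordering conventions in the cocycle relation (which depend on whether the action is written on the left or right) and verifying that $\phi$ is genuinely $G$-equivariant so that it descends to cohomology. Both are routine, and the entire argument reduces to the one displayed computation. (Conceptually, one may instead invoke the standard fact that the automorphism pair $(\mathrm{conj}_{\alpha},\ \alpha\cdot)$ induces the identity on $H^1$; since $\mathrm{conj}_{\alpha} = \mathrm{id}_G$, this says $\alpha\cdot$ acts as the identity on $H^1(G,M)$, whence $\phi_{*} = 0$ — but the cocycle calculation above is the cleanest self-contained route.)
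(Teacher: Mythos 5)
Your proof is correct and is the standard argument for Sah's lemma. For reference, the paper states this lemma without proof (it is cited as a known fact and then applied multiplicatively), so there is no paper argument to compare against; your derivation — expanding $f(\alpha g)=f(g\alpha)$ via the cocycle identity, rearranging to get $(\alpha-1)f(g)=(g-1)f(\alpha)$, observing that the right side is the coboundary of $f(\alpha)$, and checking that $x\mapsto\alpha x - x$ is $G$-equivariant so that it descends to $H^1$ — is exactly the self-contained proof one would supply.
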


Note that we presented Sah's lemma in its familiar additive notation, but we will now apply it in multiplicative notation.

\begin{pro} \label{sahcor}
Let $a \in K^{\times}$ and suppose that $\rho_{l^m,a}(h)=0$ for all  $h \in H:= \linebreak \Gal(K(\mu_{l^m} ,a^{1/l^m}) / K(\mu_{l^m}))$.  Then there is a constant $\lambda (K,l) \in \N$ (independent of $m$) such that $a^{\lambda} \in K^{\times^{l^m}}$.
\end{pro}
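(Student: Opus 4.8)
The plan is to reinterpret the hypothesis as a Kummer statement at the cyclotomic level and then descend it using Sah's Lemma in exactly the form quoted above, following \cite[V \S4]{lang1979elliptic}. First, $\rho_{l^m,a}(h)=0$ for every $h\in H$ says precisely that every element of $H=\Gal(K(\mu_{l^m},a^{1/l^m})/K(\mu_{l^m}))$ fixes the chosen root $b:=a^{1/l^m}$, i.e.\ $b\in K(\mu_{l^m})$ while $b^{l^m}=a\in K^\times$. Putting $G:=\Gal(K(\mu_{l^m})/K)$, for $g\in G$ one has $g(b)^{l^m}=g(a)=a=b^{l^m}$, so $f(g):=g(b)/b\in\mu_{l^m}$, and a one-line check shows $f$ is a $1$-cocycle for the action of $G$ on $\mu_{l^m}$ through the cyclotomic character $\chi\colon G\hookrightarrow(\Z/l^m\Z)^\times$.

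Next I would isolate the constant. By the Proposition above, $\Gal(K(\mu)/K)$ is isomorphic to an open subgroup of $\hat\Z^\times$; restricting to the $l$-part, the image of the cyclotomic character on $\Gal(K(\mu_{l^\infty})/K)$ is an open subgroup of $\Z_l^\times$, hence contains $1+l^c\Z_l$ for some $c=c(K,l)$ (for $l=2$ simply enlarge $c$ so that $1+2^c\Z_2$ lies in it). Set $\lambda:=l^c$. If $m\le c$ the claim is trivial, since $a^{l^c}=(a^{l^{c-m}})^{l^m}\in K^{\times l^m}$ outright; so assume $m>c$ and pick $\alpha\in G$ with $\chi(\alpha)\equiv 1+l^c\pmod{l^m}$, obtained by restricting to $K(\mu_{l^m})$ an element of $\Gal(K(\mu_{l^\infty})/K)$ whose cyclotomic value is $1+l^c\in\Z_l^\times$. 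Because $m>c$ this gives $\chi(\alpha)-1\equiv l^c\pmod{l^m}$.

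Now I apply Sah's Lemma: $\alpha$ is central in the abelian group $G$, so $(\chi(\alpha)-1)f(g)=(g-1)f(\alpha)$ for all $g\in G$, which in multiplicative notation and using that $f(g)$ has order dividing $l^m$ reads $f(g)^{l^c}=g(\xi)/\xi$ with $\xi:=f(\alpha)=\alpha(b)/b\in\mu_{l^m}$. Hence $g(b^{l^c})/b^{l^c}=f(g)^{l^c}=g(\xi)/\xi$ for every $g\in G$, so $b^{l^c}/\xi$ is $G$-invariant; lying in $K(\mu_{l^m})$, it lies in $K$. Raising the identity $b^{l^c}=\xi\cdot(b^{l^c}/\xi)$ to the $l^m$ power and using $\xi^{l^m}=1$ gives $a^{l^c}=(b^{l^c})^{l^m}=(b^{l^c}/\xi)^{l^m}\in K^{\times l^m}$, which is the assertion with $\lambda=l^c$, a constant depending only on $K$ and $l$.

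The cocycle identity and the closing manipulation are routine; the one delicate point is that $c$ must be chosen independently of $m$, which is exactly what the openness of the cyclotomic image (the earlier Proposition) buys us, together with the usual minor caveat at $l=2$. I do not expect any further obstruction; for the geometric-homogeneity variant (Theorem \ref{thumb1}) the ground field already contains all roots of unity, $G$ is trivial, and the whole argument collapses to taking $c=0$.
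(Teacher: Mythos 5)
Your proof is correct and follows essentially the same route as the paper's: use the hypothesis to produce a Kummer-type $1$-cocycle $g\mapsto g(b)/b$ valued in $\mu_{l^m}$, pick $\alpha$ via the openness of the cyclotomic image so that $\chi(\alpha)-1$ is a fixed power of $l$, and apply Sah's Lemma to conclude a fixed power of $a$ is an $l^m$-th power in $K^{\times}$. Your version is in fact slightly tidier: you correctly work with the exponent $\chi(\alpha)-1=l^c$ (and handle $m\le c$ separately), whereas the paper's write-up conflates $\chi(\alpha)$ with $\chi(\alpha)-1$ in the displayed Sah identity, and you dispense with the auxiliary root $\eta$ of $\zeta$ by dividing out $\xi$ directly.
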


\begin{proof}
If $b$ is an $l^m$-th root of $a$, and $\rho_{l^m,a}(h)=0$ for all $h \in H$, then $b$ is in $K(\mu_{l^m})$ since it is fixed by everything in $H$ and the map
$$g \mapsto \frac{g b}{b}$$
is a cocycle from $G:=\Gal(K(\mu) / K)$ to $\mu_{l^m}$. $G$ is isomorphic to an open subgroup of $\hat{\Z}^{\times}$, and units in $\Z_l$ are those which aren't divisible by $l$, so there is $\alpha \in G$ such that $\alpha$ acts on $\mu$ as raising to the power $\lambda:=l^{m_0}+1$ for some $m_0$. Now by Sah's lemma there exists $\zeta \in \mu_{\lambda}$ such that
$$ \left( \frac{g b}{b} \right)^{\lambda} = \frac{g \zeta}{\zeta}.$$
Now if we let $\eta^\lambda = \zeta$, then we have
$$ \left( \frac{g b}{b} \right)^\lambda = \frac{g \zeta}{\zeta} = \frac{g \eta^\lambda}{\eta^\lambda} = \left(\frac{g \eta}{\eta}\right)^\lambda,$$
so $c:=(b. \eta^{-1})^\lambda $ is fixed under all $g \in G$ and is therefore in $K^{\times}$, and $a^\lambda=c^{l^m}$.
\end{proof}

%\begin{lem}
%Let $S$ be a simple $R$-module. Then the submodules of $S^n$ are exactly those of the form
%$$\{ \langle x_1,...,x_n \rangle \ | \ \sum_{i} \eta_{i}x_i=0 \}$$
%\end{lem}
%with $\eta_{i} \in \End_R(S)$.

%Lang proves this for an elliptic curve using a bit of Galois cohomology (\cite[V]{lang1979elliptic} ), and the %same proof goes through for $\G_m$ but there should be a simpler one. I wanted to do something like %the following: Fix an isomorphism of $(\pi_1^{et})_l(\C^{\times})^r$ with $\Z_l^r$ and identify the two objects. The %image of $\rho_{l^{\infty}}$ is a closed subgroup of $\Z_l^r$ and is therefore a $\Z_l$-submodule. $\Z_l$ %is a principal ideal domain, and therefore by the structure theorem for finitely generated modules over a %PID, we know that the image is isomorphic to $\Z_l^m$ for some $0 \leq m \leq r$. A $\Z_l$-submodule of %$\Z_l$ is either 0, or is of the form $l^k \Z_l$ for some $k$, in which case it has index $l^k$ in $\Z_l$ and %is isomorphic to $\Z_l$. The idea would be to use the irreducibility properties of the polynomial $X^{l^s}-%a=0$, but I haven't been able to make this work yet.

\begin{proof}[Proof of \ref{vert}]
The image of $\rho_{\bar{a},l^ \infty }$ is a closed subgroup of $ \Z_l^r$ and is therefore a $\Z_l$-submodule. $\Z_l$ is a PID, so a $\Z_l$-submodule of the free module $\Z_l^r$ is free of dimension $s \leq r$. If $s \neq r$ then there are $\eta_1 ,...,\eta_r \in \Z_l$, not all zero such that
$$\eta_1 \rho_{a_1}(\sigma)+\cdots + \eta_r \rho_{a_r}(\sigma)=0$$
for all $\sigma$. Let $\eta_{j,m} \in \Z$ such that $\eta_{j,m}\equiv \eta_j \mod l^m$, and let
$$a=\eta_{1,m}a_1 + \cdots + \eta_{r,m}a_r.$$
Then $\rho_{a,l^m} = 0$ on $\Gal(K(\mu_{l^m},a^{1/l^m}) / K(\mu_{l^m} ))$ (since $\rho_{-,l^m}(-)$ as a function of two variables is bilinear) 
so by \ref{sahcor} $a^{\lambda} \in K^{\times^{l^m}}$, and this cannot happen for arbitrarily large $m$ since $K$ is finitely generated. So the image of $\rho_{\bar{a},l^ \infty }$ is an $r$-dimensional $\Z_l$-submodule of $\Z_l^r$. A $\Z_l$-submodule of $\Z_l$ is either 0, or is of the form $l^k \Z_l$ for some $k$, in which case it has index $l^k$ in $\Z_l$ and is isomorphic to $\Z_l$ and the result follows.
\end{proof}

Geometric homogeneity follows by an identical argument, except we use the second statement of \ref{freeab} and there is no need to use Sah's lemma. 

\begin{rem}
In the setting of this chapter, the union of Conditions \ref{AH} and \ref{GH} are equivalent to a weak version of the main algebraic lemma of \cite{zilber2006covers} (sometimes called the `thumbtack lemma'). The weakening is in that we only consider Galois representations over one algebraically closed field, and not over independent systems of them. However, along with the main result of \cite{bays2012quasiminimal} (which implies that just considering representations over one algebraically closed field is enough),
and the elementary fact that
$$\Gal(\Q(\mu) / \Q) \cong \hat{\Z}^{\times},$$
Theorems \ref{thumb0} and \ref{thumb1} below imply the main categoricity result (Theorem 1) of \cite{zilber2006covers}. 
\end{rem}

\section{Directions for extending this work}

In the above we started with a very general setting, with a covering sort with no structure on it at all, and then restricted the number of models through an appropriate $\cL_{\omega_1,\omega}$-sentence. Hopefully this can be done in some other situations:

\subsection{Elliptic curves}

The case of complex elliptic curves without complex multiplication is very similar to the case of the multiplicative group, since it is also a 1-dimensional algebraic group, and again, we end up looking at Galois representations in the Tate module. Here, the finite \'etale covers of elliptic curves are well know to be the multiplication by $N$ maps from $E$ to itself
$$[N] : E \ra E.$$
The \'etale covering automorphisms are given by addition by an $N$-torsion point, so we have
$$\pi_1^{et}(E) \cong \varprojlim_{N} \Aut(E[N]) \cong \varprojlim_N \GL_2 (\Z / N \Z) \cong \GL_2 (\hat{\Z})$$
and the etale fundamental group is just the Tate module of the elliptic curve. So if $E$ is a complex elliptic curve, then we may view $E$ as the subset of $\C^2$ defined by
$$y^2 = x^3 + Ax + B$$
in the structure $\C_K$, where $K = \Q(A,B, \tor(E))$ (and $\tor(E)$ is the set of coordinates of all torsion points for all $N$).\newline

If the elliptic curve has complex multiplication by an imaginary quadratic field $K$, then as Misha Gavrilovich notes in his thesis, the information of the Mumford-Tate group of $E$ must somehow be included. That is, the Galois representations on the Tate-module are not open in $\GL_2(\hat{\Z})$, but in the $\hat{\Z}$-points of the Mumford-Tate group i.e. $\hat{\cO}_K^{\times}$. However it should also be possible to encode this information as an $\cL_{\Cov(\C^{\times}),\omega_1,\omega}$-sentence.

\subsection{The $j$-function}

One of my initial reasons for approaching the model theory of covering spaces in this way, was to construct a setting for the $j$-function where Galois representations on Tate modules of CM elliptic curves may be seen. In the setting of Chapter \ref{chapj}, this is not the case since the action of each element of $\gl2q$ is definable on the covering sort, so the special points are definable. It was clear in the previous chapter that few analytic properties of the $j$-function were used, and it was just the behaviour of $j$ as a covering map that was at the heart of things (outside of the special points). This points towards considering the complex affine algebraic curve $$V:= \P_1(\C) \backslash \{ 0 , 1728 , \infty \}.$$
% Then considered as a Riemann surface $V$ has negative curvature, and is therefore universally covered %by $\H$. 
The analytic fundamental group $\pi_1^{an}(V)$ is equal to the free group on two generators, so the geometric \'etale fundamental group is the free profinite group on two generators.
However, we take a subsystem of $\Feg(V)$ corresponding to the action of $\gl2q$ on $\H$ (as in Chapter \ref{chapj}).\newline
%We can therefore go two ways: Restrict to a subcategory of $\Feg(V)$ is in chapter \cite{}, which is what %we do. Alternatively, we could try and tweak the set-up, and move towards Grothendieck's anabelian %programme, which is what I would like to do, but won't mention here.

Consider the (unramified) $j$-function
$$j': \H \backslash \{i , e^{2\pi i / 3} \} \ra V,$$
i.e. the restriction of $j$ to its unramified points. We consider the subcategory of $\Feg(V)$ given by Galois covers $f: Z_g \ra V$ arising from tuples $g=(g_1,...,g_n) \subset \gl2q$ and restricting the domain of the cover to the unramified points. Denote this restriction by $f: Z_g' \ra V$. 

We define the resticted pro-\'etale cover  to be the projective limit
$$\hat{\U}' := \varprojlim_{g} Z_g'$$
and the restricted \'etale fundamental group
$$\pi_1^{et'}(V):= \varprojlim_{g} \Aut_{\Feg(V)} (Z_g' / V) \cong \psl2(\hat{\Z}).$$
Similarly, define the restricted analytic fundamental group to be $$\pi_1^{an'}(V) : = \psl2 ( {\Z})=: \Gamma.$$
Note that $\Gamma$ acts freely on $ \H \backslash \{i , e^{2\pi i / 3} \}$. Also note that all of the general theory of model theory of covering spaces in this chapter applies in the situation of $j'$ with this restricted system. For example we still have an embedding of the restricted analytic fundamental group $\psl2(\Z)$ into the restricted \'etale fundamental group $\psl2(\hat{\Z})$. \newline

The question now is whether an $\cL_{\omega_1,\omega}$ axiomatisation similar to that of \S \ref{additive} for the additive group can be found for (something like) $\gl2q$ acting on the upper half plane, but I am hopeful that this is the case. If this is the case, then the results regarding the openness of Galois representations in the Mumford-Tate groups of products of arbitrary non-isogenous elliptic curves over number fields are known \cite{hindry2008torsion}, so if the information of the Mumford-Tate groups of the products can also be encoded in the infinitary theory, then it should be categorical.

\subsection{The modular $\lambda$-function}
Similarly to $j$, we could consider the $\lambda$-function. Consider the punctured plane $\P^1(\C) \backslash \{ 0,1,\infty \}$. Since the corresponding Riemann surface has negative curvature, it is universally covered by $\H$, and the analytic fundamental group is easily seen to be the free group on two generators $F_2$. Consider the exact sequence
$$\Gamma(2) \hookrightarrow \Gamma \ra \psl2(\Z / 2 \Z)$$
induced by reduction mod $2$. Then
$$\Gamma(2) = \left\{ \begin{pmatrix} a&b\\ c&d \end{pmatrix} \in \Gamma \ \ , \ \ b \equiv c \equiv 0 , \ a \equiv d \equiv 1 \mod 2 \right\},$$
is of index 6 in the modular group $\Gamma$, and isomorphic to $F_2$. The $\lambda$-function is invariant under $\Gamma(2)$, and gives a complex analytic isomorphism
$$\lambda : \Gamma(2) \backslash \H \cong \C \backslash \{0,1 \},$$
i.e. it is the universal covering map. As a result, the $\lambda$-function is in many ways a more natural one to consider than $j$ in this setting. 

\begin{rem}
It would be interesting to know the relationship between the theories of $\lambda$-function and $j$-function in this situation, and the correspondence between categoricity and Grothendieck's anabelian program. For instance, does categoricity imply the injectivity statement of the section conjecture (which is known)?
\end{rem}

\appendix

%Hopefully, most of the background results of model theory and algebraic geometry required for an understanding of this thesis %are contained in this appendix. But encourage the reader to only refer to it when required, but we note that a basic %understanding of the notion of an \'etale cover is crucial, and this is why I have taken so much time to give an exposition of %the basic properties of \'etale covers which is accessible to the model theorist in \ref{app etale}.

\chapter{Algebro-geometric background}\label{app etale}

\section{Covering spaces}\label{seccov}

This section is a fairly direct summary of chapter $2$ of \cite{szamuely2009galois}. We assume all topological spaces to be locally simply connected (i.e. each point has a basis of neighbourhoods consisting of connected open subsets).  Let  $\Cov(X)$ be  the category of covers of $X$. 

\begin{dfn}
Let $X$ be a topological space. Then a \textit{covering space} (or \textit{cover}) of $X$ is a topological space $Y$, and a continuous map 
$$p:Y \ra X$$
such that each point of $X$ has an open neighbourhood $U$ such that $p^{-1}(U)$ decomposes into a disjoint union of open subsets $U_i$ of $Y$, and $p$ is a homeomorphism of $U_i$ onto $U$. Given two covers $p_i : Y_i \ra X$, $i \in \{1,2 \}$, a morphism of covers (of $X$) is a continuous map $f: Y_1 \ra Y_2$ such that the diagram
$$
\xymatrix{
Y_1  \ar@{->}[dr]_{p_1} \ar@{->}[rr]^{f} & & Y_2 \ar@{->}[dl]^{p_2} \\
& X & 
}
$$
commutes. We denote the category of covers of $X$ by $\Cov(X)$.
\end{dfn}

An immediate consequence of the definition is that covering maps are always surjective. \newline

Take a non-empty discrete topological space $I$ and form the topological product $X \times I$. The projection onto the first coordinate $$p_1 : X \times I \ra X$$ turns $X \times I$ into a cover of $X$ called a \textit{trivial cover}. It turns out that every cover is locally a trivial cover \cite[2.1.3]{szamuely2009galois} (given an open set $U \subseteq X$, if $p^{-1}(U)$ is a disjoint union of open subsets of $Y$, homeomorphic to $U$, and indexed by a set $I$, then the discrete space is $I$),  and the points of $X$ over which the fibre of $p$ equals a particular discrete space $I$, form an open subset of $X$. From this we deduce:

\begin{pro}
 If $p:Y \ra X$ is a cover, and X is connected, then the fibres of $p$ are all homeomorphic to the same discrete space.
\end{pro}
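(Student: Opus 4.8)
The plan is to run the standard \emph{``locally constant on a connected space is constant''} argument, using as the only real input the local triviality of covers quoted immediately before the statement (from \cite[2.1.3]{szamuely2009galois}): every point of $X$ has an open neighbourhood over which $p$ is isomorphic to a trivial cover $U \times I \ra U$ for some discrete space $I$.

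First I would record the elementary fact that two discrete spaces are homeomorphic if and only if they have the same cardinality, so that, for each cardinal, the set
$$X_I := \{\, x \in X \ \mid \ p^{-1}(x) \text{ is homeomorphic to } I \,\}$$
is well defined, and the family $\{X_I\}$ partitions $X$. Next I would show each $X_I$ is open: given $x \in X_I$, local triviality yields an open $U \ni x$ and a homeomorphism $p^{-1}(U) \cong U \times J$ over $U$ for some discrete $J$; restricting to the fibre over $x$ gives $J \cong p^{-1}(x) \cong I$, and then for every $y \in U$ the fibre $p^{-1}(y)$ is homeomorphic to $\{y\}\times I \cong I$, so $U \subseteq X_I$. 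Hence $X_I$ is open.

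Finally I would observe that each $X_I$ is also closed, being the complement of the union of the remaining (open) members of the partition, so $X$ is a disjoint union of clopen subsets. Since $X$ is connected and nonempty (covering maps being surjective, the fibres are nonempty), exactly one $X_I$ is nonempty and equals $X$; thus all fibres of $p$ are homeomorphic to that single discrete space $I$. I do not anticipate any genuine obstacle: the substantive content is the already-granted local triviality, and the only point that needs to be stated with care is that the homeomorphism type of a discrete space is pinned down by its cardinality, so that the sets $X_I$ honestly partition $X$.
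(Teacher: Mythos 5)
Your proof is correct and follows exactly the route the paper indicates: local triviality makes the assignment $x \mapsto$ (homeomorphism type of $p^{-1}(x)$) locally constant, hence constant on the connected space $X$. This is the same argument the paper gestures at in the sentence immediately preceding the proposition.
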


\begin{dfn}\label{defdeg}
A cover $p:Y \ra X$ is called finite if it has finite fibres. For connected $X$ all fibres have the same cardinality, called the \textit{degree} of the cover.
\end{dfn}

\subsection{Quotients as covers}

\begin{dfn}
Let $G$ be a group acting continuously from the left on a topological space $Y$. The action of $G$ is \textit{even} if each point $y \in Y$ has a neighbourhood $U$ such that the open sets $gU$ are pairwise disjoint for all $g \in G$.
\end{dfn}

Looking at the definitions, we see that even group actions give covering spaces:
\begin{pro}
If G is a group acting evenly on a connected space $Y$ , the projection $p : Y \ra G \backslash Y$ turns $Y$ into a cover of $G \backslash Y$.
\end{pro}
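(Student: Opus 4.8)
The plan is to verify directly the defining property of a covering space for the quotient map $p : Y \ra G\backslash Y$. The key preliminary observation is that $p$ is an \emph{open} map: for any open $V \subseteq Y$ we have $p^{-1}(p(V)) = \bigcup_{g \in G} gV$, which is open since each $g$ acts as a homeomorphism of $Y$; hence $p(V)$ is open in the quotient topology. I would record this first, since without it the local homeomorphism property below is not immediate.

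Next, fix a point of $G\backslash Y$, choose a representative $y \in Y$, and invoke the evenness hypothesis to pick an open neighbourhood $U$ of $y$ for which the sets $\{gU\}_{g\in G}$ are pairwise disjoint. Put $\bar U := p(U)$, which is open by the previous step, and note that $y \in U$ gives $p(y) \in \bar U$. I claim $\bar U$ is an evenly covered neighbourhood. Indeed $p^{-1}(\bar U) = \bigcup_{g\in G} gU$, and this union is disjoint by the choice of $U$, with each $gU$ open in $Y$.

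It then remains to show that for each $g \in G$ the restriction $p|_{gU} : gU \ra \bar U$ is a homeomorphism. It is continuous as a restriction of $p$, and open because $p$ is open and $gU$ is open in $Y$. It is surjective onto $\bar U$ because every point of $gU$ is $G$-equivalent to a point of $U$, so $p(gU) = p(U) = \bar U$. For injectivity, suppose $p(gy_1) = p(gy_2)$ with $y_1, y_2 \in U$; then $gy_2 = h(gy_1)$ for some $h \in G$, so $gU \cap (hg)U \neq \emptyset$, which forces $hg = g$ by disjointness, i.e. $h = e$; thus $gy_1 = gy_2$, and $y_1 = y_2$ since $g$ acts injectively. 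Hence $p|_{gU}$ is a continuous open bijection, therefore a homeomorphism onto $\bar U$, and $p$ is a cover.

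I do not anticipate a genuine obstacle: the proof is a routine unwinding of definitions. The only points requiring mild care are extracting openness of $p$ at the outset and the bookkeeping with the group elements in the injectivity argument (using disjointness of the translates to conclude $h=e$). The connectedness of $Y$ is not actually needed for the covering property itself; it only serves to make the fibres homeomorphic to a fixed discrete space, as in the proposition immediately preceding.
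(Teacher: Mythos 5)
Your proof is correct and complete; the paper itself does not write out an argument (it says only ``looking at the definitions, we see that even group actions give covering spaces,'' following Szamuely), and yours is precisely the routine unwinding of definitions the paper has in mind, with the openness of $p$ and the disjointness-forces-$h=e$ bookkeeping spelled out carefully. Your remark that connectedness of $Y$ is not used here (only in the preceding proposition about fibres) is also accurate.
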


Now if $p:Y\ra X$ is a connected cover, then around $x \in X$ there is a neighbourhood $U$ such that $p^{-1}(U)$ is a disjoint union of open subsets of $Y$ isomorphic to $U$. Automorphisms of the cover have  to permute these open sets, but an automorphism of a connected cover having a fixed point must be trivial (\cite[2.2.1]{szamuely2009galois}), so we get the following:

\begin{pro} If $p : Y \ra X$ is a connected cover, then the action of $ \Aut_{\Cov(X)}(Y/X)$ on $Y$ is even.
\end{pro}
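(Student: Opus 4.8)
The plan is to verify the definition of an even action directly: given $y\in Y$, I will produce a neighbourhood $U$ of $y$ such that the translates $g(U)$, for $g$ ranging over $G:=\Aut_{\Cov(X)}(Y/X)$, are pairwise disjoint, and the natural candidate for $U$ is a single sheet of the cover lying over a connected, evenly covered neighbourhood of $p(y)$.

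Concretely, I would fix $y\in Y$, put $x:=p(y)$, and use that $X$ is locally connected (a consequence of the standing hypothesis that spaces are locally simply connected) together with the covering property to choose a \emph{connected} open neighbourhood $V$ of $x$ over which $p$ is trivial, so that $p^{-1}(V)=\bigsqcup_{i\in I}U_i$ with each $U_i$ open and $p|_{U_i}\colon U_i\ra V$ a homeomorphism; since $V$ is connected, so is each $U_i$. Let $U:=U_{i_0}$ be the unique sheet containing $y$; it is an open neighbourhood of $y$, and it is connected. Then for any $g\in G$ the identity $p\circ g=p$ gives $p(g(U))=p(U)=V$, so $g(U)\subseteq p^{-1}(V)$; moreover $g(U)$ is connected, being a homeomorphic image of the connected set $U$. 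A connected subset of the disjoint union $\bigsqcup_i U_i$ lies in a single member, so $g(U)\subseteq U_{j(g)}$ for exactly one index $j(g)$.

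For the disjointness itself, suppose $g_1(U)\cap g_2(U)\neq\emptyset$ with $g_1,g_2\in G$. Then $U_{j(g_1)}\cap U_{j(g_2)}\neq\emptyset$, and since distinct sheets are disjoint this forces $U_{j(g_1)}=U_{j(g_2)}=:U_j$. Hence $g_1(y),g_2(y)\in U_j$, while $p(g_1(y))=p(g_2(y))=x$; as $p|_{U_j}$ is injective, $g_1(y)=g_2(y)$. Thus the automorphism $g_2^{-1}g_1\in G$ fixes the point $y$, so by the rigidity fact recalled just above (an automorphism of a connected cover with a fixed point is the identity, \cite[2.2.1]{szamuely2009galois}) we get $g_2^{-1}g_1=\mathrm{id}$, i.e.\ $g_1=g_2$. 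Therefore the sets $\{g(U):g\in G\}$ are pairwise disjoint, which is exactly the assertion that the $G$-action on $Y$ is even.

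The only step requiring any care is the choice of $U$: it must be taken connected so that each of its $G$-translates is confined to one sheet of $p^{-1}(V)$ rather than splitting across several, and this is precisely where local connectedness of $X$ is used. Once $U$ is connected, the rest is a formal consequence of the definition of a covering and of the no-fixed-point rigidity of automorphisms of connected covers, so I do not anticipate any genuine obstacle.
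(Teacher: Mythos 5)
Your proof is correct and follows essentially the same route the paper takes: choose an evenly covered neighbourhood of $p(y)$, observe that deck transformations permute the sheets of $p^{-1}(V)$, and invoke the rigidity of automorphisms of connected covers (no nontrivial automorphism has a fixed point) to conclude that distinct $g$ send the sheet through $y$ to distinct sheets. You are slightly more careful than the paper's one-line sketch in explicitly choosing $V$ connected so that each sheet (and hence each translate $g(U)$) is connected and so lands in a single sheet, which is exactly the point where the standing local connectedness hypothesis is used.
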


Conversely, if a group $G$ acts evenly on a connected space $Y$, then elements of $g$ are automorphisms of the cover $p : Y \ra G \backslash Y$. Since fibres of the quotient map are $G$-orbits, and covering automorphisms act freely on fibres, these are all of the covering automorphisms:
\begin{pro} If G is a group acting evenly on a connected space $Y$, the automorphism group of the cover $p : Y \ra G \backslash Y$ is precisely $G$. 
\end{pro}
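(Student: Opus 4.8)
The plan is to prove the two inclusions $G \subseteq \Aut_{\Cov(G\backslash Y)}(Y/G\backslash Y)$ and $\Aut_{\Cov(G\backslash Y)}(Y/G\backslash Y) \subseteq G$ separately: the first is essentially formal, while the second uses the connectedness of $Y$ together with the rigidity of automorphisms of connected covers already recalled in this section.

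First I would note that an even action is automatically free: if $gy = y$ for some $y \in Y$ and some $g \in G$, choose a neighbourhood $U$ of $y$ with the sets $hU$, $h \in G$, pairwise disjoint; then $y \in gU \cap U$ forces $g$ to be the identity. In particular the map $G \ra \Aut_{\Cov(G\backslash Y)}(Y/G\backslash Y)$ sending $g$ to the homeomorphism $y \mapsto gy$ is injective, and it lands in the automorphism group because $p(gy) = G(gy) = Gy = p(y)$, so each $g$ commutes with the quotient map $p : Y \ra G\backslash Y$. This yields $G \hookrightarrow \Aut_{\Cov(G\backslash Y)}(Y/G\backslash Y)$.

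For the reverse inclusion, let $\phi$ be an automorphism of the cover $p : Y \ra G\backslash Y$ and fix any $y \in Y$. Since $\phi$ commutes with $p$, the point $\phi(y)$ lies in the fibre $p^{-1}(p(y))$; but the fibres of the quotient map are exactly the $G$-orbits, so $\phi(y) = gy$ for some $g \in G$. Now the composite $g^{-1} \circ \phi$ is again an automorphism of the cover (automorphisms of a cover form a group) and it fixes $y$. Since $Y$ is connected, an automorphism of a connected cover having a fixed point is the identity (\cite[2.2.1]{szamuely2009galois}, as recalled above), hence $g^{-1}\circ\phi = \mathrm{id}_Y$ and $\phi$ is the action of $g$. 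Thus $\Aut_{\Cov(G\backslash Y)}(Y/G\backslash Y) \subseteq G$, and combining with the previous paragraph gives the claimed equality.

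There is no genuine obstacle here; the only points requiring a word of care are that evenness upgrades to freeness (so that the identification $G = \Aut_{\Cov(G\backslash Y)}(Y/G\backslash Y)$ is literal rather than merely a surjection with kernel) and the appeal to rigidity of connected covers, which is precisely why connectedness of $Y$ appears in the hypotheses. Both are immediate from material already established in this section.
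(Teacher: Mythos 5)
Your proof is correct and follows essentially the same route as the paper: $G$ embeds into the automorphism group because the quotient map kills the $G$-action, and any covering automorphism is pinned down to a unique $g \in G$ by combining the fact that fibres are $G$-orbits with the rigidity lemma \cite[2.2.1]{szamuely2009galois} that an automorphism of a connected cover with a fixed point is trivial. The one small thing you add is the explicit observation that evenness implies freeness, which the paper leaves implicit; this is a worthwhile clarification but not a different argument.
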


So we have seen that automorphism groups of connected covering spaces are even group actions, and even group actions give rise to connected covering spaces.

\begin{exa}
Consider $\C^{\times}:=\C \backslash \{0 \}$ with the complex analytic topology. Multiplication by the $N^{th}$ roots of unity $\mu_N$ defines an even action on $\C^{\times}$ and we get a cover
$$\C^{\times} \lora \C^{\times} / \mu_N \overset{z \mapsto z^n}{\lora} \C^{\times}$$
where the first map is the quotient map, and the last map is a homeomorphism.
\end{exa}

\begin{dfn}
A connected cover $p:Y \ra X$ is said to be \textit{Galois} if  \linebreak $\Aut_{\Cov(X)}(Y /X)$ acts transitively on fibres.
\end{dfn}

%In fact, for a connected cover $p : Y \ra X$ to be Galois it suffices for $ \Aut_{\Cov(X)}(Y/X)$ to act %transitively on one fibre.

So given a Galois covering $p:Y\ra X$ and $x \in X$, the fibre $p^{-1}(x)$ is an $\Aut_{\Cov(X)}(Y /X)$-torsor (see \ref{app torsors} below for a quick reminder on torsors). We have now arrived at the main theorem of this subsection. It is a Galois correspondence for covering spaces, and is analogous to the Galois correspondence for fields.

\begin{thm}[The Galois correspondence] Let $p : Y \ra X$ be a Galois cover. For each subgroup $H$ of $G =  \Aut_{\Cov(X)}(Y/X)$, the projection $p$ induces a natural map $p_H : H \backslash Y \ra X$ which turns $H\backslash Y$ into a cover of $X$.
Conversely, if $q:Z \ra X$ is a connected cover fitting into a commutative diagram 
$$
\xymatrix{
Y \ar@{->}[dr]^f \ar@{->}[dd]^p &  \\
&  Z \ar@{->}[dl]^q \\
 X &  
}
$$
then $f:Y \ra Z$ is a Galois cover and $Z \cong  H \backslash Y$ for the subgroup $H = Aut_{\Cov(X)}(Y / Z) $ of $G$. The maps 
\begin{align*}
H & \mapsto H \backslash Y \\
Z & \mapsto \Aut_{\Cov(X)}(Y / Z)
\end{align*}
induce a bijection between subgroups of $G$ and intermediate covers $Z$ as above. The cover $q : Z \ra X$ is Galois if and only if $H$ is a normal subgroup of $G$, in which case $\Aut_{\Cov(X)}(Z / X) \cong G/H$.
\end{thm}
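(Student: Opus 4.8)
The final statement to prove is the Galois correspondence for covering spaces: given a Galois cover $p: Y \to X$ with automorphism group $G = \Aut_{\Cov(X)}(Y/X)$, subgroups $H \leq G$ correspond bijectively to intermediate covers $Z$ (with $Y \to Z \to X$), via $H \mapsto H\backslash Y$ and $Z \mapsto \Aut_{\Cov(X)}(Y/Z)$, and $Z \to X$ is Galois iff $H \triangleleft G$, in which case $\Aut_{\Cov(X)}(Z/X) \cong G/H$.

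\textbf{Overall approach.} The plan is to exploit the fact (established in the preceding subsection) that a group acts evenly on a connected space $Y$ if and only if it is the full automorphism group of the cover $Y \to G'\backslash Y$ for $G' = $ that group, together with the standard unique-lifting property of covering maps (an automorphism of a connected cover with a fixed point is trivial, cited as \cite[2.2.1]{szamuely2009galois}). First I would observe that since $H \leq G$ acts evenly on $Y$ (being a subgroup of the even action of $G$), the quotient $p_H : H\backslash Y \to X$ is well-defined and continuous because $p$ is constant on $G$-orbits hence on $H$-orbits; checking it is a cover is local and routine, using that $Y \to X$ is a cover and $Y \to H\backslash Y$ is a cover. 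Then I would show $Y \to H\backslash Y$ is Galois with group exactly $H$ (this is the quoted proposition about even actions), which handles one direction.

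\textbf{Key steps.} For the converse, given a connected cover $q: Z \to X$ and a map $f: Y \to Z$ over $X$, I would first note $f$ is itself a covering map (a morphism of covers over $X$ between connected spaces is a cover — standard, and can be cited from \cite{szamuely2009galois}). Set $H := \Aut_{\Cov(X)}(Y/Z)$; since $Y \to X$ is Galois and $f$ is a cover, $H$ acts transitively on the fibres of $f$, i.e. $f$ is Galois, so $Z \cong H\backslash Y$ canonically. The two assignments are mutually inverse: starting from $H$, we get $H\backslash Y$, whose automorphism group over $Z = H\backslash Y$ is $H$ again by the even-action proposition; starting from $Z$, we get $H = \Aut(Y/Z)$, and $H\backslash Y \cong Z$ by the Galois property of $f$. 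Monotonicity/well-definedness of the bijection on the lattice of subgroups is then immediate. Finally, for the normality statement: $Z \to X$ is Galois iff $\Aut_{\Cov(X)}(Z/X)$ acts transitively on a fibre of $q$; I would identify $\Aut_{\Cov(X)}(Z/X)$ with $N_G(H)/H$ (an automorphism of $Z$ over $X$ lifts, non-uniquely, to an automorphism of $Y$ over $X$ normalizing $H$, since it must permute the $H$-orbits; conversely elements of $N_G(H)$ descend to $Z$), and then transitivity on the fibre of $q$ translates, via the identification of the fibre of $q$ over $x$ with $H\backslash p^{-1}(x)$ and the fact that $p^{-1}(x)$ is a $G$-torsor, to the condition $N_G(H) = G$, i.e. $H \triangleleft G$; in that case $\Aut_{\Cov(X)}(Z/X) \cong G/H$.

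\textbf{Main obstacle.} The technically delicate step is the identification $\Aut_{\Cov(X)}(Z/X) \cong N_G(H)/H$: one must carefully use the lifting criterion for covering maps to lift an automorphism $\phi$ of $Z/X$ to $Y$ (this requires checking the $\pi_1$-image condition, or equivalently using that $Y \to Z$ is a cover and $Y$ is connected so the lift exists once we know $\phi_* $ respects the relevant subgroups), verify the lift normalizes $H$, and check well-definedness modulo $H$. Everything else — that $p_H$ is a cover, that $f$ is Galois, that the two maps are mutually inverse — is a routine unwinding of the even-action propositions already proved in the excerpt, so I would state those quickly and concentrate the detail on the normalizer computation and the translation of "Galois" into a transitivity statement about $G$-torsors.
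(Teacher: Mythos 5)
The paper itself does not prove this theorem: it appears in Appendix~\ref{seccov}, which the author explicitly describes as ``a fairly direct summary of chapter~2 of \cite{szamuely2009galois},'' so the result is stated there without proof as background. With that caveat, your proposal is mathematically correct and is essentially the standard argument from Szamuely's book, built exactly on the two even-action propositions the appendix records (a connected cover is Galois over the quotient by its automorphism group, and that group is precisely the even group acting). One small simplification to the step you flag as the main obstacle: to lift $\phi\in\Aut_{\Cov(X)}(Z/X)$ to $Y$ you do not need to verify a $\pi_1$-image condition for lifting along $f$. Because $p^{-1}(x)$ is a $G$-torsor you can simply pick $g\in G$ sending a chosen $y_0$ to any preimage of $\phi(f(y_0))$, and then observe that $f\circ g$ and $\phi\circ f$ are both lifts of $p\colon Y\to X$ along $q\colon Z\to X$ that agree at $y_0$; uniqueness of lifts from the connected space $Y$ then gives $f\circ g=\phi\circ f$ outright. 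Checking $g\in N_G(H)$ and well-definedness mod $H$ are the same short computations you indicate, and the translation of transitivity on $q^{-1}(x)\cong H\backslash G$ into $N_G(H)=G$ is exactly as you describe.
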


\subsection{Torsors}\label{app torsors}

Given a group $G$, a set $X$ is said to be a $G$\textit{-torsor} if $G$ acts freely and transitively on $X$. This is equivalent to saying that for every $x,y \in X$ there is a unique $g \in G$ such that $g(x)=y$. Similar to an affine space being a vector space which has forgotten where 0 is, a $G$-torsor is $G$ but after forgetting where the identity is. So to recover $G$ from a $G$-torsor $X$, you just have to choose $x_0 \in X$ to be the identity of $G$. After choosing $x_0 \in X$, you get an isomorphism
$$f: X \cong G$$
where $x \in X$ is sent to the unique $g \in G$ sending $x_0$ to $x$. So every $G$-torsor is isomorphic (as a set) to $G$, however the important thing is that the isomorphism is non-canonical since it is dependent on the choice of identity $x_0$.\newline

\subsection{The fibre functor and the universal cover}

For the purpose of understanding this thesis, the reader need not know the definitions of  path, or homotopy equivalence. The notion of path is used in the construction of the universal cover, and in the definition of the fundamental group. However, if the reader is willing to take for granted the existence of an object representing the fibre functor, then this is actually more in the spirit of this thesis, since in the model-theoretic discussions of the previous chapters we are always forgetting about the complex topology (and therefore about paths), and trying to recover the analytic universal cover from algebraic information alone. However, for completeness we give a brief definition of the fundamental group. 
\begin{dfn}
Given a topological space, $X$ as above, define the \textit{fundamental group} of $X$, with base point $x$, denoted $\pi_1(X;x)$ to be the set of all homotopy classes of paths from $x$ to $x$ (the group operation is given by composition of paths).
\end{dfn}

%\fm Do I need to mention paths? Probably not if we take for granted the existence of the universal cover.

%The \textit{monodromy action} of $\pi_1(X;x)$ on the fibre $p^{-1}(x)$ comes from lifting paths. By %composing functions, the left action of $\Aut(\tilde{X}_x / X)$ on $\tilde{X}_x$ gives a right action of %$\Aut(\tilde{X}_x / X)$ on the fibre $p^{-1}(X)$ for each cover $p:Y \ra X$.

%The universal cover is simply connected.

Consider the \textit{fibre functor}
\begin{align*}
\Fib_x: \Cov(X) & \lora \Sets \\
(p:Y \ra X) & \longmapsto p^{-1}(x).
\end{align*}

%In fact $\Fib_x$ goes to the subcategory of $\Sets$ with a left $\pi_1(X,x)$-action.

\begin{thm}\cite[2.3.4]{szamuely2009galois}
Let $X$ be  connected, locally simply connected topological space, and $x \in X$. The functor $\Fib_x$ induces an equivalence of categories of $\Cov(X)$, with the category of left $\pi_1(X,x)$-sets. Connected covers correspond to $\pi_1(X,x)$-sets with transitive action, and Galois covers to coset spaces of normal subgroups.
\end{thm}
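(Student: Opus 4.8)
The plan is to produce an explicit quasi-inverse to $\Fib_x$ out of the universal cover, and then to read off the assertions about connected and Galois covers. \textbf{Setting up the functor.} First I would upgrade $\Fib_x$ to a functor into left $\pi_1(X,x)$-sets. Given a cover $p:Y\ra X$, the homotopy lifting property of covers (established by a Lebesgue-number argument: subdivide $[0,1]^2$ into rectangles each mapping into an evenly-covered open set, and lift successively) yields, for every loop $\ga$ at $x$ and every $y\in p^{-1}(x)$, a unique lift $\tilde\ga$ starting at $y$; the endpoint $\tilde\ga(1)$ depends only on the homotopy class of $\ga$. This gives a natural right action of $\pi_1(X,x)$ on $p^{-1}(x)$, which I convert to a left action by precomposing with inversion. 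A morphism of covers commutes with path-lifting, so $\Fib_x\colon\Cov(X)\ra(\text{left }\pi_1(X,x)\text{-sets})$ is a functor.

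\textbf{The universal cover and the quasi-inverse.} Next I would construct $\widetilde{X}\ra X$ as the set of homotopy classes rel endpoints of paths in $X$ starting at $x$, topologised so that the endpoint map restricts to a homeomorphism over (lifts of) evenly-covered opens; here the hypothesis that $X$ is connected and locally simply connected is used to check that $\widetilde{X}$ is connected and simply connected, that $\pi_1(X,x)$ acts evenly on $\widetilde{X}$ by precomposition with loops, and that $X\cong\pi_1(X,x)\backslash\widetilde{X}$. For a left $\pi_1(X,x)$-set $S$ I set
$$c(S):=\widetilde{X}\times_{\pi_1(X,x)}S=\bigl(\widetilde{X}\times S\bigr)/\pi_1(X,x),$$
the quotient by the diagonal action (the deck action on $\widetilde{X}$ and the given action on $S$, one side converted by inversion so the product is balanced), with the map to $X$ inherited from $\widetilde{X}\ra X$. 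Since $\widetilde{X}\ra X$ is a cover and $S$ is discrete, $c(S)\ra X$ is a cover, and $c$ is plainly functorial.

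\textbf{The equivalence.} The two natural isomorphisms are: $\Fib_x(c(S))\cong S$, sending the class of $(e_x,s)$, with $e_x$ the constant path at $x$, to $s$ — one checks the induced monodromy action on $\Fib_x(c(S))$ is exactly the original action on $S$; and, for a cover $p:Y\ra X$ with fibre $F=\Fib_x(Y)$, an isomorphism $c(F)\cong Y$ coming from the lifting criterion: since $\widetilde{X}$ is simply connected, for each $y\in F$ there is a unique morphism of covers $\widetilde{X}\ra Y$ over $X$ sending $e_x\mapsto y$, and assembling these over $F$ gives $\widetilde{X}\times F\ra Y$, which descends to $c(F)\ra Y$ and is a homeomorphism because it is one sheet-by-sheet. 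Naturality of both isomorphisms in $S$ and in $Y$ is a diagram chase. Hence $\Fib_x$ and $c$ are quasi-inverse equivalences.

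\textbf{Connected and Galois covers, and the main obstacle.} Because $\widetilde{X}$ is connected, the path components of $c(S)$ are the images of $\widetilde{X}\times(\text{orbit})$, so $c(S)$ is connected iff $\pi_1(X,x)$ acts transitively on $S$; transporting along the equivalence, connected covers correspond to transitive $\pi_1(X,x)$-sets, i.e. to coset spaces $\pi_1(X,x)/H$. For $Y=c(\pi_1(X,x)/H)$ one computes $\Aut_{\Cov(X)}(Y/X)\cong N(H)/H$ with $N(H)$ the normaliser of $H$, and this acts transitively on the fibre $\pi_1(X,x)/H$ iff $N(H)=\pi_1(X,x)$, i.e. iff $H$ is normal in $\pi_1(X,x)$; combined with the Galois correspondence theorem stated above, this identifies Galois covers with coset spaces of normal subgroups and gives $\Aut_{\Cov(X)}(Y/X)\cong\pi_1(X,x)/H$. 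The only genuinely substantial work is the ``lifting package'': the homotopy lifting property, the construction of $\widetilde{X}$ together with its simple-connectedness, and the lifting criterion for maps out of a simply connected space. Once these are in hand, everything above is formal; in the book-summary spirit of this section these are the lemmas I would isolate (or cite) first.
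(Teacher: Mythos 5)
The paper does not actually prove this theorem: it is stated in the appendix as background material and attributed directly to \cite[2.3.4]{szamuely2009galois}, with the construction of the universal cover and the representability of $\Fib_x$ recorded separately as \cite[2.3.5]{szamuely2009galois}. Your proof is the standard one and is essentially the argument Szamuely himself gives: path-lifting and homotopy-lifting yield the monodromy action so that $\Fib_x$ lands in $\pi_1(X,x)$-sets; the universal cover $\widetilde X$ of homotopy classes of paths out of $x$ (using local simple-connectedness) is built, and the balanced product $S\mapsto\widetilde X\times_{\pi_1(X,x)}S$ is exhibited as a quasi-inverse; connected covers then correspond to transitive actions because $\widetilde X$ is connected, and the identification $\Aut_{\Cov(X)}(c(\pi_1/H)/X)\cong N(H)/H$ pins down Galois covers as the coset spaces of normal subgroups. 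All the steps you list check out, including the care you take over the left/right action convention in the balanced product and the explicit verification that $\Fib_x(c(S))\cong S$ carries the monodromy action to the given one. There is no gap; this is a correct proof, and it follows the same route as the source the paper cites.
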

So $\pi_1(X,x)$ classifies the covering spaces of $X$. Crucial in proving the above we have:

\begin{thm}\cite[2.3.5]{szamuely2009galois}\label{fibrep}
Let $X$ connected, locally simply connected topological space, and $x \in X$. Then the functor $\Fib_x$ is representable by a cover $\tilde{p}: \Xx \ra X$, i.e.
$$\Fib_x \cong \Hom_{\Cov(X)}(\Xx,-).$$
\end{thm}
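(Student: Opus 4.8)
The plan is to construct $\Xx$ by hand as a space of path-classes and verify it has the stated universal property. First I would set $\Xx$ to be the set of homotopy classes (rel endpoints) of paths in $X$ that start at $x$, and define $\tilde{p} \colon \Xx \ra X$ by $[\gamma] \mapsto \gamma(1)$. The topology on $\Xx$ is generated by the sets $U_{[\gamma]}$, where $U \subseteq X$ ranges over open simply connected subsets with $\gamma(1) \in U$, and $U_{[\gamma]}$ consists of all classes $[\gamma \cdot \delta]$ with $\delta$ a path inside $U$ beginning at $\gamma(1)$. One checks that these sets form a basis, using that the simply connected open $U$ form a basis of $X$ (local simple connectedness) and that, when $U$ is simply connected, $[\gamma \cdot \delta]$ depends only on the endpoint $\delta(1)$.

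Next I would verify that $\tilde{p}$ is a covering map and that $\Xx$ is connected. For $U \subseteq X$ open and simply connected, $\tilde{p}^{-1}(U) = \bigsqcup U_{[\gamma]}$ over a set of representatives $[\gamma]$ with distinct "sheets", the sheets being pairwise disjoint (again by simple connectedness of $U$), and $\tilde{p}$ restricts to a homeomorphism $U_{[\gamma]} \ra U$. Connectedness of $\Xx$ follows because, for any class $[\gamma]$, the assignment $t \mapsto [\gamma_t]$ (with $\gamma_t$ the restriction of $\gamma$ to $[0,t]$, suitably reparametrised) is a path in $\Xx$ from the class $\tilde{x} := [c_x]$ of the constant path to $[\gamma]$; hence $\Xx$ is path-connected.

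Then I would establish representability. Given any cover $q \colon Y \ra X$, define a map
$$\Hom_{\Cov(X)}(\Xx, Y) \ra \Fib_x(Y) = q^{-1}(x), \qquad f \mapsto f(\tilde{x}),$$
with inverse sending $y \in q^{-1}(x)$ to the morphism $f_y \colon \Xx \ra Y$, $f_y([\gamma]) := \hat{\gamma}(1)$, where $\hat{\gamma}$ is the unique lift of $\gamma$ to $Y$ with $\hat{\gamma}(0) = y$. Well-definedness of $f_y$ uses the homotopy lifting property of covers; continuity is checked on the basic opens $U_{[\gamma]}$; and $f_y$ commutes with $\tilde{p}$ and $q$ by construction, so it is a morphism of covers. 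That the two assignments are mutually inverse — in particular $f_{f(\tilde{x})} = f$ — follows from uniqueness of lifts together with the (path-)connectedness of $\Xx$ from the previous step. Naturality in $Y$ is immediate from functoriality of lifting, giving $\Fib_x \cong \Hom_{\Cov(X)}(\Xx, -)$.

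The main obstacle is the point-set bookkeeping in the first two steps: showing that the $U_{[\gamma]}$ genuinely form a basis and that each restricts homeomorphically under $\tilde{p}$ — this is exactly where local simple connectedness is used, and it is easy to be sloppy about the dependence of $[\gamma \cdot \delta]$ on $\delta$. After that, the only other non-formal input is the path- and homotopy-lifting property of covering spaces (standard, but itself requiring proof), which underlies both the well-definedness and the continuity of $f_y$; everything else is routine verification.
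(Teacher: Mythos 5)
Your proof is correct and is exactly the standard construction used in the cited reference (Szamuely, \emph{Galois Groups and Fundamental Groups}, Theorem 2.3.5), and it also matches the construction sketched in the remark immediately following the theorem, where $\Xx$ is described as the space $\cup_{y \in X}\pi_1(X;x,y)$ of homotopy classes of paths from $x$. The paper itself gives no proof beyond the citation, so there is nothing further to compare: you have correctly identified and carried out the argument, including the key use of locally simply connected (in the sense of a basis of simply connected opens, which is what the hypothesis must mean despite the paper's slightly garbled parenthetical gloss) to make the $U_{[\gamma]}$ a basis and to see that the sheets over a basic $U$ are disjoint.
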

The representing object is called the \textit{universal cover} (it is unique up to isomorphism in $\Cov(X)$ by its universality), and it classifies the covering spaces of $X$.

\begin{rem} To construct the universal cover $\Xx$, we take the space of homotopy classes of all paths in $X$
$$\Xx := \cup_{y \in X} \pi_1  (X ; x,y),$$
however, as was mentioned earlier, the reader may just take its existence for granted.
The universal cover has the property that
$$\Aut_{\Cov(X)}(\Xx / X) \cong \pi_1(X,x),$$
and since we are forgetting about the notion of path, we will identify the two groups.
\end{rem}

\subsection{The action of $\Aut_{\Cov(X)}(\Xx)$ on fibres}\label{sec indact}

 Now let $\gamma \in \Aut_{\Cov(X)}(\Xx)$ and $(p:Y\ra X)$ a cover. Then $\sigma$ induces a bijection
\begin{align*}
\sigma: \Hom_{\Cov(X)}(\Xx,Y) & \lora \Hom_{\Cov(X)}(\Xx,Y)  \\
f & \longmapsto  f \circ \gamma
\end{align*}
so that the left action of $ \Aut_{\Cov(X)}(\Xx)$ on $\Xx$ induces a right action on the fibre $p^{-1}(x)=\Fib_x(Y) \cong \Hom_{\Cov(X)}(\Xx,Y)$.

%We also have
%\begin{cor}\cite[2.3.9]{szamuely2009galois} The functor $\Fib_x$ induces an equivalence of the category %of finite covers of X with the category of finite
%continuous left $\widehat{\pi_1(X,x)}$-sets. Connected covers correspond to finite %$\widehat{\pi_1(X,x)}$-%sets with transitive action and Galois covers to coset spaces of open normal %subgroups.
%\end{cor}

%\subsection{Functorial definition of the fundamental group}

%\begin{thm}
%A choice of basepoint lift $\tilde{b} \in \tilde{p}^{-1}(b)$ determines a natural isomorphism
%\begin{align*}
%\Psi: \Isom(\Fib_b , \Fib_c) & \cong \tilde{p}^{-1}(c) \\
 %\eta & \mapsto \eta_{\tilde{X}}(\tilde{b}).
%\end{align*}
%\end{thm}

\section{Algebraic geometry (for model-theorists)}
In this section is a summary of some background algebraic geometry, mostly aimed at the model-theorist. 
Throughout this thesis, the main focus will be on smooth algebraic curves over an algebraically closed field of characteristic 0, so this is the main focus of the start of this section. The main objective here is to give a simple definition of an \'etale morphism for a smooth complex curve in \S \ref{appetalemor}. \newline

For a variety $V$ over a field $k$ (not necessarily algebraically closed), it is common for model-theorists to consider the $\bar{k}$-points of the variety, as embedded in the `$k$-enriched algebraically closed field $\bar{k}$'. This basically means that we consider $V$ as a $k$-variety, and as a $\bar{k}$-variety at the same time. This can be confusing for geometers, so the purpose of sections \ref{kvar} and \ref{natlang} is to make precise this distinction, and to describe how to pass between the two points of view.

\subsection{Riemann surfaces}\label{secrie}

Riemann surfaces are just Hausdorff topological spaces that look locally like the complex numbers. It is an easy fact that a complex projective algebraic curve can be given the structure of an algebraic curve, however, compact Riemann surfaces can be given the structure of a complex algebraic curve, and this allows us to pass between the algebraic and analytic categories.  However, we do not just want study the category of compact Riemann surfaces, since we want to look at the analytic universal cover. For this reason we look at the category of Riemann surfaces in general, which does not really complicate our discussions much at all here. For a more detailed discussion of Riemann surfaces in general see \cite{miranda1995algebraic}. In this section we will assume that all maps are non-constant on connected components i.e. they do not map a whole component to a point. \newline

At the heart of all discussions of this section is the following local description of holomorphic maps between Riemann surfaces:

\begin{pro}\cite[II, 4.1]{miranda1995algebraic}\label{local}
Let $p:Y \ra X$ be a holomorphic map of Riemann surfaces, let $y \in Y$ and $x=p(y) \in X$. Then there exist open neighbourhoods $V_y$ of $y$, and $U_x$ of $x$, with $p(V_y) \subseteq U_x$, and complex charts $g_y : V_y \ra \C$ and $f_x : U_x \ra \C$ such that $f_x(x) = g_y(y)=0$ such that there is a commutative diagram
$$
\xymatrix{
V_y \ar@{->}[r]^f \ar@{->}[d]^{g_y} & U_x \ar@{->}[d]^{f_x}  \\
\C  \ar@{->}[r]^{z \mapsto z^{e_y}}  &   \C
}
$$
with an appropriate integer $e_y$, which does not depend on the choice of charts.
\end{pro}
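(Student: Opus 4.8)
The statement is the standard local normal form for holomorphic maps between Riemann surfaces, and the plan is to reduce everything to the one-variable complex case by choosing charts carefully. First I would pick arbitrary complex charts: a chart $\phi: U_x \to \C$ centred at $x$ (so $\phi(x)=0$) and a chart $\psi: V_y \to \C$ centred at $y$ (so $\psi(y)=0$), shrinking $V_y$ if necessary so that $p(V_y)\subseteq U_x$. In these coordinates the map $p$ is represented by a holomorphic function $T := \phi\circ p\circ \psi^{-1}$ defined on a neighbourhood of $0$ in $\C$ with $T(0)=0$, and since we are assuming $p$ is non-constant on the component containing $y$, the function $T$ is not identically zero near $0$.

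Next I would extract the multiplicity. Writing the Taylor expansion $T(w) = c_e w^e + c_{e+1}w^{e+1}+\cdots$ with $c_e \neq 0$, we get $T(w) = w^e\, u(w)$ where $u$ is holomorphic near $0$ and $u(0) = c_e \neq 0$. The key step is then to absorb $u$ into a change of coordinate on the source: since $u(0)\neq 0$, on a small enough neighbourhood of $0$ we may choose a holomorphic $e$-th root $v(w) := w\, u(w)^{1/e}$ (the $e$-th root of $u$ exists because $u$ is nonvanishing on a simply connected neighbourhood). Then $v'(0) = u(0)^{1/e} \neq 0$, so by the holomorphic inverse function theorem $v$ is a biholomorphism of a neighbourhood of $0$ onto a neighbourhood of $0$, and hence $g_y := v\circ \psi$ is a new complex chart at $y$. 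In this new chart, $T$ becomes $w \mapsto w^e$ exactly, since $v(w)^e = w^e u(w) = T(w)$. Setting $f_x := \phi$ (restricted suitably) and $e_y := e$ gives the commutative diagram asserted.

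Finally I would check independence of $e_y$ from the choices of charts: if we had used different charts, the map in coordinates changes by pre- and post-composition with biholomorphisms fixing $0$, i.e. with maps of the form $w \mapsto a w + O(w^2)$, $a\neq 0$; such compositions do not change the order of vanishing at $0$, so $e_y$ is intrinsic (it equals the local ramification index of $p$ at $y$, or equivalently $\dim_{\C}\mathcal{O}_{Y,y}/p^*\mathfrak{m}_{X,x}$). The main obstacle, such as it is, is purely bookkeeping: making sure all the neighbourhoods are chosen small enough and simply connected so that the $e$-th root of $u$ is well-defined and single-valued, and that $v$ is genuinely a chart on the shrunk domain. There is no deep content beyond the inverse function theorem and the local factorisation of a holomorphic function through its leading term; the proof in \cite{miranda1995algebraic} proceeds in exactly this way, so I would simply cite it for the details after indicating the argument above.
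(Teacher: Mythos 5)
Your argument is correct and is precisely the standard normal-form proof given in the cited reference (Miranda, II.4.1), which the paper itself does not reproduce but merely invokes; the key steps — factoring $T(w)=w^e u(w)$ with $u(0)\neq 0$, taking a holomorphic $e$-th root of $u$ to build the new source chart $v(w)=w\,u(w)^{1/e}$, and invoking the inverse function theorem — are exactly right, as is the observation that pre/post-composition with biholomorphisms fixing $0$ preserves the order of vanishing. Nothing to add.
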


\begin{dfn}
The integer $e_y$ above is called the \textit{ramification index} or \textit{multiplicity} of $p$ at $y$. The points $y \in Y$ with $e_y > 1$ are called \textit{ramification points} of $p$, and images of ramification points are called \textit{branch points} of $p$. The set of ramification points of $Y$ is called the \textit{ramification locus} of $p$, and the set of branch points of $X$ is called the \textit{branch locus}.
\end{dfn}

The above description (\ref{local}) of holomorphic maps between Riemann surfaces tells us that locally these maps look like power maps. In particular, they are open. Almost immediately we also have the following:

\begin{pro}\cite[3.2.4]{szamuely2009galois}
Let $p:Y \ra X$ be a holomorphic map of Riemann surfaces. Then the fibres of $p$, and the ramification locus of $p$ are discrete, closed subsets of $Y$.
\end{pro}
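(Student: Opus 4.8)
The plan is to derive everything from the local normal form for holomorphic maps of Riemann surfaces (Proposition \ref{local}), together with the standing assumption that $p$ is non-constant on each connected component of $Y$; this is precisely what guarantees that the normal form is available at \emph{every} $y \in Y$, with a well-defined ramification index $e_y \geq 1$. No heavier machinery is needed: the whole argument reduces to the observation that the map $z \mapsto z^{e}$ has $0$ as its only zero and is unramified away from $0$, combined with the Hausdorff property of Riemann surfaces.

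For the fibres I would fix $x \in X$ and an arbitrary $y \in p^{-1}(x)$, and apply Proposition \ref{local} to obtain neighbourhoods $V_y \ni y$, $U_x \ni x$ with $p(V_y) \subseteq U_x$ and charts $g_y : V_y \ra \C$, $f_x : U_x \ra \C$ with $g_y(y) = f_x(x) = 0$ and $f_x \circ p = g_y(\cdot)^{e_y}$ on $V_y$. If $y' \in V_y$ satisfies $p(y') = x$, then $g_y(y')^{e_y} = f_x(x) = 0$, so $g_y(y') = 0 = g_y(y)$ and hence $y' = y$. Thus $V_y \cap p^{-1}(x) = \{y\}$: every point of the fibre is isolated in the fibre, so $p^{-1}(x)$ is discrete. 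It is closed because $p$ is continuous and $\{x\}$ is closed in the Hausdorff space $X$.

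For the ramification locus $R := \{y \in Y : e_y > 1\}$ I would again fix $y \in Y$ and use the same local picture: in the chart $g_y$, the map $p$ restricted to $V_y$ is $z \mapsto z^{e_y}$ on a disc about $0$. At any point $y' \in V_y$ with $g_y(y') \ne 0$ this power map has non-vanishing derivative, hence is a local biholomorphism near $y'$; comparing with Proposition \ref{local} and using the stated independence of the ramification index from the choice of charts forces $e_{y'} = 1$. Consequently, if $e_y > 1$ then $R \cap V_y = \{y\}$, so the points of $R$ are isolated and $R$ is discrete; and if $e_y = 1$ then $p|_{V_y}$ is itself a biholomorphism onto its image, so $e_{y'} = 1$ for every $y' \in V_y$, showing that $Y \setminus R$ is open and hence $R$ is closed. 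Putting the two halves together gives that $p^{-1}(x)$ and $R$ are discrete closed subsets of $Y$. The only step that warrants a moment's care is the claim that the ramification index at a nearby point $y'$ is genuinely governed by this same normal form — but that is exactly the uniqueness/"independence of the charts" clause of Proposition \ref{local}, so there is no real obstacle here, only bookkeeping.
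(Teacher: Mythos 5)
Your proof is correct and takes exactly the route the paper gestures at: the paper states this result with a citation to Szamuely (3.2.4) rather than proving it, but the lead-in ("Almost immediately we also have the following") signals that it is a direct consequence of the local normal form, which is precisely what you carry out. Both halves are sound — the fibre argument uses that $z\mapsto z^{e_y}$ vanishes only at $0$ and continuity plus Hausdorffness for closedness, while the ramification-locus argument uses that $z\mapsto z^{e}$ is a local biholomorphism away from $0$ together with the chart-independence of $e_y$; nothing is missing.
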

So if $Y$ is compact then the ramification locus and fibres are finite. 
\begin{dfn}Recall that a continuous map between locally compact topological spaces is said to be \textit{proper} if the preimages of compact sets are compact.
\end{dfn}
Proper maps between Riemann surfaces have finite fibres. Note also that a proper map between Hausdorff spaces is closed, and that a continuous map $p:Y \ra X$ between Hausdorff spaces is automatically proper if $Y$ is compact. \newline

Since proper holomorphic maps are open and closed we have the following:
\begin{pro}
Let $p:Y \ra X$ be a proper holomorphic map of Riemann surfaces, with $X$ connected. If $p$ is proper, then $p$ is surjective.
\end{pro}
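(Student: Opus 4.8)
The plan is to combine the openness of holomorphic maps between Riemann surfaces with the fact that proper maps are closed, and then invoke the connectedness of $X$.

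First I would observe that $p$ is an open map. This is immediate from the local normal form of Proposition~\ref{local}: near any $y \in Y$, in suitable complex charts, $p$ is the power map $z \mapsto z^{e_y}$ on a neighbourhood of $0$ in $\C$, and such power maps are open. Hence $p(Y)$ is an open subset of $X$. Next I would observe that $p$ is a closed map: $Y$ and $X$, being Riemann surfaces, are locally compact Hausdorff, and a proper continuous map between locally compact Hausdorff spaces is closed (as recalled in the discussion preceding the statement). Hence $p(Y)$ is also a closed subset of $X$.

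Finally, $p(Y)$ is nonempty: under the standing convention of this section, $p$ is non-constant on each connected component of $Y$, so in particular $Y \neq \emptyset$ and therefore $p(Y) \neq \emptyset$. A nonempty subset of the connected space $X$ that is simultaneously open and closed must equal $X$; hence $p(Y) = X$, i.e. $p$ is surjective.

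I do not expect any genuine obstacle here: both inputs — openness via the local structure theorem, and closedness via properness — have already been established in the surrounding text, and what remains is the elementary observation that a nonempty clopen subset of a connected space is the whole space. The only point requiring a moment's care is the nonemptiness of $p(Y)$, which is guaranteed by the blanket assumption that maps in this section are non-constant on connected components.
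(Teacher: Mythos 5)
Your proof is correct and follows exactly the route the paper indicates: the paper precedes the proposition with "Since proper holomorphic maps are open and closed we have the following," which is precisely the open-plus-closed-plus-connected argument you spell out. You have simply made explicit the details (local normal form for openness, proper-implies-closed in the locally compact Hausdorff setting, nonemptiness from the standing non-constancy convention) that the paper leaves implicit.
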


%\begin{pro}\cite[II, 4.8]{miranda1995algebraic}
%Let $p:Y \ra X$ be a nonconstant, proper holomorphic map between Riemann surfaces. For each $x \in X$, define the %degree at $x$ to be
%$$d_x:= \sum_{y \in p^{-1}(x)} e_y.$$
%Then the degree $d_x$ is constant, independent of $x$.
%\end{pro}

\begin{dfn}
A surjective map of locally compact Hausdorff spaces which restricts to a covering map outside of a discrete closed subset is called a \textit{branched cover}. If it restricts to a finite cover, then it is called a \textit{finite branched cover}.
\end{dfn}

We can package up all of the above discussion into the following:

\begin{thm}
Let $X$ be a connected Riemann surface, and $p:Y \ra X$ a holomorphic map. Then $p$ is a branched covering map of its image. If $p$ is proper, then it is a finite branched covering map, and above a branch point $x$, the sum of the ramification indices $e_y$ of the points $y$ in the fibre $p^{-1}(x)$ is equal to the degree of the (restricted) cover.
\end{thm}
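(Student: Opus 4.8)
The first assertion is essentially a restatement of the local description \ref{local}: since $p$ is non-constant on components, \ref{local} shows that near each $y\in Y$ the map $p$ is, in suitable charts, $z\mapsto z^{e_y}$; in particular $p$ is open, so $p(Y)$ is an open subset of $X$, hence itself a Riemann surface, and $p\colon Y\to p(Y)$ is surjective. The ramification locus $R=\{y:e_y>1\}$ is discrete and closed in $Y$ by the proposition of \S\ref{secrie} cited above, and off $R$ the local normal form says $p$ is a local homeomorphism, while at a point of $R$ it looks like a power map; this is exactly the statement that $p$ is a branched covering of its image. So the work is all in the proper case, to which I turn.

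Assume $p$ proper and $X$ connected. Then $p$ is surjective by the cited proposition, so $p(Y)=X$. Each fibre $p^{-1}(x)$ is compact (preimage of a point under a proper map) and discrete (cited), hence finite. Set $B:=p(R)$, the branch locus. Since a proper map of locally compact Hausdorff spaces is closed, $B$ is closed; moreover for any compact $K\subseteq X$ the set $R\cap p^{-1}(K)$ is a discrete closed subset of the compact space $p^{-1}(K)$, hence finite, so $B\cap K$ is finite. Thus $B$ is discrete and closed in $X$, and $X\setminus B$ is connected (it is an open Riemann surface). Restricting gives a proper holomorphic map $p'\colon Y\setminus p^{-1}(B)\to X\setminus B$ which is a local homeomorphism.

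The crux is the lemma: \emph{a proper local homeomorphism $p'$ onto a connected Hausdorff space is a (finite) covering map}. Given $x'\in X\setminus B$, write $p'^{-1}(x')=\{y_1,\dots,y_d\}$ (finite) and pick pairwise disjoint open $W_i\ni y_i$ on which $p'$ restricts to a homeomorphism onto an open neighbourhood of $x'$. I claim there is an open $U\ni x'$ with $p'^{-1}(U)\subseteq\bigcup_i W_i$: otherwise there are $x_n\to x'$ and $z_n\in p'^{-1}(x_n)\setminus\bigcup_i W_i$, and since $\{x'\}\cup\{x_n\}$ is compact, $(z_n)$ lies in a compact set and has a subsequence converging to some $z$ with $p'(z)=x'$, forcing $z=y_i$ for some $i$ and $z_n\in W_i$ eventually, a contradiction. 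Shrinking $U$ we may take it connected and contained in $\bigcap_i p'(W_i)$; then $p'^{-1}(U)=\bigsqcup_i\bigl(W_i\cap p'^{-1}(U)\bigr)$, each piece mapped homeomorphically onto $U$, so $U$ is evenly covered. Hence $p'$ is a covering map; it is finite since fibres are finite, and by \ref{defdeg} all its fibres have the same cardinality, the degree $d=\deg(p')$. (This lemma, and the no-leakage step that genuinely needs properness, is the main obstacle; everything else is the local normal form plus bookkeeping.)

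Finally, the ramification formula. Fix a branch point $x$, with $p^{-1}(x)=\{y_1,\dots,y_k\}$ (finite). Apply \ref{local} at each $y_i$ to get disjoint open neighbourhoods $V_i\ni y_i$ and a common disk chart $D$ about $x$ in which $p|_{V_i}$ is $z\mapsto z^{e_{y_i}}$ onto $D$. Exactly as above, by properness there is a neighbourhood $D'\subseteq D$ of $x$ with $p^{-1}(D')\subseteq\bigcup_i V_i$. For $x'\in D'$ with $x'\neq x$ the fibre meets each $V_i$ in exactly $e_{y_i}$ points (the map $z\mapsto z^{e_{y_i}}$ is $e_{y_i}$-to-one off the origin), so $|p^{-1}(x')|=\sum_{i=1}^k e_{y_i}$. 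Since $B$ is discrete we may choose such $x'\notin B$, and then $|p^{-1}(x')|$ is the degree of the restricted cover $p'$ by the previous paragraph. Therefore $\deg(p')=\sum_{i=1}^k e_{y_i}$, as claimed. $\qed$
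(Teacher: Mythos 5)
Your proposal is correct, and it supplies something the paper itself does not: the paper presents this theorem only as a ``packaging up'' of the discussion in \S A.2.1 and gives no proof at all, instead listing as a background topological fact at the end of that section that a proper local homeomorphism between locally compact Hausdorff spaces is the same thing as a finite covering map. You have recognised that this is exactly the crux and actually proved it (the ``no-leakage'' step), which is the right instinct. Your ramification-count argument at a branch point is also correct and is the standard one (disjoint normal-form neighbourhoods $V_i$, shrink the base disk using properness, count preimages of a nearby regular value).

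Two small points of care, neither of which invalidates the argument. First, the sequential compactness argument in your lemma (extracting a convergent subsequence of $(z_n)$) implicitly uses that Riemann surfaces are second countable, hence metrizable; in the generality of ``locally compact Hausdorff'' one would replace sequences by nets, but for Riemann surfaces what you wrote is fine. Second, the assertion that $X\setminus B$ is connected is true, but the parenthetical ``(it is an open Riemann surface)'' is not a reason --- open subsets of connected surfaces are frequently disconnected. The correct justification is that $B$ is a discrete closed subset of a connected (real two-dimensional) manifold $X$, and removing a discrete set from a connected surface leaves it connected: locally this is the statement that a punctured disc is connected, and the local-to-global step is routine. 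With those two glosses your proof is a complete and correct derivation, following the same toolkit the paper assembles but filling in the one nontrivial lemma the paper leaves to the reader.
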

So in particular, holomorphic maps between compact Riemann surfaces are branched covering maps.

\begin{dfn}
A branched cover $p: Y \ra X$ is called \textit{Galois} if $\Aut_{\Cov(X)}(Y / X)$ acts transitively on the fibres outside of the ramification locus.
\end{dfn}

In fact, by continuity of the covering automorphisms we have:

\begin{pro}
If $p:Y \ra X$ is a Galois branched cover then $\Aut_{\Cov(X)}(Y / X)$ acts transitively on all fibres.
\end{pro}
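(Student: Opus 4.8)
The plan is to reduce everything to the Galois hypothesis on the unramified locus, using the local normal form for holomorphic maps (Proposition \ref{local}) to transport information across a branch point. Let $B\subseteq X$ be the branch locus; since $B$ is discrete and closed, a fibre $p^{-1}(x_0)$ with $x_0\notin B$ lies entirely outside the ramification locus, so the hypothesis already gives transitivity of $G:=\Aut_{\Cov(X)}(Y/X)$ on such fibres. The only content is therefore the case $x\in B$.

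First I would set up good neighbourhoods. Fix $x\in X$ and $y_1,y_2\in p^{-1}(x)$. Using discreteness of $B$, choose a chart around $x$ identifying a small neighbourhood with a disc $U$ with $U\cap B\subseteq\{x\}$; by Proposition \ref{local}, after shrinking $U$, each $y_i$ has a chart neighbourhood $V_i$ on which $p$ is modelled by $z\mapsto z^{e_{y_i}}$ and maps onto $U$. Two elementary facts about these pictures are what make the argument run: (i) $p^{-1}(x)\cap\overline{V_i}=\{y_i\}$ and $p$ carries $\overline{V_i}\setminus V_i$ off $U\setminus\{x\}$, so that $\overline{V_i}\cap p^{-1}(U\setminus\{x\})=V_i^\circ$, where $V_i^\circ:=V_i\setminus\{y_i\}$; and (ii) $V_i^\circ$ is open, connected (a punctured disc), and — by (i) — closed in $p^{-1}(U\setminus\{x\})$, hence is a connected component of $p^{-1}(U\setminus\{x\})$. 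Note that none of this uses properness or finiteness of the cover, only the local normal form.

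Next I would choose a base value: pick any $x_0\in U\setminus\{x\}$; then $x_0\notin B$, and since $p(V_1)=p(V_2)=U$ there are preimages $z_1\in V_1^\circ$, $z_2\in V_2^\circ$ with $p(z_1)=p(z_2)=x_0$. By the Galois hypothesis at the unramified point $x_0$ there is $\sigma\in G$ with $\sigma(z_1)=z_2$. Since $\sigma$ is a homeomorphism commuting with $p$, it permutes the connected components of $p^{-1}(U\setminus\{x\})$; the component $\sigma(V_1^\circ)$ contains $\sigma(z_1)=z_2$, and so does $V_2^\circ$, whence $\sigma(V_1^\circ)=V_2^\circ$. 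Taking closures in $Y$ gives $\sigma(\overline{V_1})=\overline{V_2}$, so $\sigma(y_1)\in\overline{V_2}$; as $p(\sigma(y_1))=p(y_1)=x$ and $\overline{V_2}\cap p^{-1}(x)=\{y_2\}$ by (i), we get $\sigma(y_1)=y_2$, which is what we wanted.

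The place that needs care — the main obstacle — is precisely fact (i)/(ii): that each $V_i^\circ$ is a genuine \emph{component} of $p^{-1}(U\setminus\{x\})$, i.e. that no piece of $Y$ outside $\overline{V_i}$ limits onto $y_i$ while mapping into $U$. This is where the local normal form (and the fact that $p$ is an honest branched cover, so fibres and the ramification locus are discrete and closed) does the work; once it is in place, the rest is just the standard rigidity of covering automorphisms together with continuity — the slogan ``by continuity of the covering automorphisms'' in the statement. For \emph{finite} branched covers, which is all that is used later, fact (ii) is also immediate from properness, and one may instead argue by approximating $x$ by a sequence $x_n\notin B$, lifting to $y_{i,n}\to y_i$ via Proposition \ref{local}, picking $\sigma_n\in G$ with $\sigma_n(y_{1,n})=y_{2,n}$, and extracting a constant subsequence (possible since $G$ is then finite) to pass to the limit $\sigma(y_1)=y_2$.
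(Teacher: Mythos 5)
The paper actually gives no proof here — the proposition is stated with only the preceding remark ``by continuity of the covering automorphisms,'' so there is nothing to compare against; you are filling a genuine gap, and your argument is correct. The second, shorter argument you sketch for the finite case (approximate a branch point $x$ by unramified $x_n$, lift to $y_{i,n}\to y_i$ via the local normal form, choose $\sigma_n\in G$ with $\sigma_n(y_{1,n})=y_{2,n}$, use finiteness of $G$ to extract a constant subsequence, and pass to the limit by continuity) is the most literal reading of the paper's one-line justification and is all that is needed in this thesis, where the Galois branched covers that actually appear are finite. Your main argument is more structural and works without finiteness: you correctly identify that the real content is that each punctured disc $V_i^\circ$ is a full connected component of $p^{-1}(U\setminus\{x\})$ — which does require the local form $z\mapsto z^{e_{y_i}}$ on a shrunk chart, plus Hausdorffness of $Y$, to rule out extraneous boundary points of $V_i$ limiting into $p^{-1}(U)$ — and once that is established, the fact that a covering automorphism permutes components and the torsor-like rigidity of components over the punctured disc force $\sigma(V_1^\circ)=V_2^\circ$ and hence $\sigma(y_1)=y_2$ on closures. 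Both routes are sound; it might be worth stating explicitly that you shrink $V_i$ and $U$ so that $g_{y_i}(V_i)$ and $f_x(U)$ are genuine model discs with $p(V_i)=U$, since that is what makes both fact (i) and the existence of the lifts $z_i\in V_i^\circ$ immediate.
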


By the above discussion it is also easy to see that the following holds:

\begin{pro}
Let $p:Y \ra X$ be a proper holomorphic map of connected Riemann surfaces, such that $p$ is a Galois branched cover. Then if $y \in p^{-1}(x)$ is a branch point with ramification index $e$, then so are all the points in the fibre $p^{-1}(x)$. The stabilisers of points in the fibre $p^{-1}(x)$ in $\Aut_{\Cov(X)}(Y / X)$ are conjugate cyclic subgroups of order $e$.
\end{pro}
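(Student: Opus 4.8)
The plan is to reduce the statement about ramification indices and stabilizers to the local normal form for holomorphic maps (Proposition \ref{local}) combined with the transitivity of the covering automorphism group on fibres. First I would fix a branch point $x \in X$ and a point $y \in p^{-1}(x)$ with ramification index $e = e_y > 1$. Since $p$ is a Galois branched cover, by the preceding proposition $\Aut_{\Cov(X)}(Y/X)$ acts transitively on the fibre $p^{-1}(x)$; so given any other $y' \in p^{-1}(x)$ there is $\gamma \in \Aut_{\Cov(X)}(Y/X)$ with $\gamma(y) = y'$. Because $\gamma$ is a holomorphic automorphism over $X$, it carries local charts at $y$ to local charts at $y'$ compatibly with the chart $f_x$ at $x$; comparing the two instances of the diagram in Proposition \ref{local} (one at $y$, one at $y'$), the integer $e_{y'}$ reading off the power map $z \mapsto z^{e_{y'}}$ must equal $e_{y}$, since $\gamma$ identifies the two commutative squares over a common target chart. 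Hence all points in the fibre have ramification index $e$.

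For the statement about stabilizers, I would argue as follows. Let $\Stab(y) \leq \Aut_{\Cov(X)}(Y/X)$ be the stabilizer of $y$. If $\gamma(y) = y' = \delta(y)$ for $\gamma, \delta$ in the automorphism group, then $\delta^{-1}\gamma \in \Stab(y)$, which shows via transitivity that $\Stab(y') = \gamma \, \Stab(y) \, \gamma^{-1}$, i.e. the stabilizers of points in a single fibre are all conjugate; in particular they have the same order. To identify this order and the group structure, I would use the local picture at $y$: the elements of $\Stab(y)$ act on the chart $V_y \cong \{|z| < \varepsilon\}$ fixing $y$ (the origin) and covering the map $z \mapsto z^e$ on the target chart, so each such automorphism is given in the chart by $z \mapsto \zeta z$ for some $e$-th root of unity $\zeta$ (a biholomorphism of a disc fixing $0$ and commuting with $z \mapsto z^e$ must be linear of this form, by uniqueness of analytic continuation it is determined by its germ). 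This gives an injective homomorphism $\Stab(y) \hookrightarrow \mu_e$ into the cyclic group of $e$-th roots of unity. Conversely, since the cover is Galois, the degree of the restricted cover $Y \setminus p^{-1}(\text{branch locus}) \to X \setminus (\text{branch locus})$ equals $|\Aut_{\Cov(X)}(Y/X)|$ (the group acts freely and transitively on a generic fibre), and by the degree formula in the theorem just above, summing ramification indices $e$ over the $|p^{-1}(x)|$ points of the fibre gives $e \cdot |p^{-1}(x)| = \deg = |\Aut_{\Cov(X)}(Y/X)|$; since the group acts transitively on $p^{-1}(x)$ with stabilizer $\Stab(y)$, the orbit-stabilizer theorem gives $|p^{-1}(x)| = |\Aut_{\Cov(X)}(Y/X)| / |\Stab(y)|$, whence $|\Stab(y)| = e$. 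Combined with the injection into $\mu_e$, this forces $\Stab(y) \cong \mu_e$, a cyclic group of order $e$.

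The main obstacle I anticipate is making the local linearization argument for elements of $\Stab(y)$ rigorous: one needs that an automorphism of the cover fixing $y$ is, in a suitable pair of charts, exactly a rotation $z \mapsto \zeta z$, and that distinct such rotations give distinct automorphisms (which follows from the identity principle, since a covering automorphism is determined by its restriction to any open set). A minor secondary point is checking that the chart at $y'$ obtained by pushing forward the chart at $y$ along $\gamma$ is compatible with the \emph{same} target chart $f_x$ at $x$, so that the exponents genuinely match; this is immediate from $p \circ \gamma = p$. Everything else is orbit-stabilizer bookkeeping together with the degree formula already established.
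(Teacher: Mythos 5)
Your proof is correct and fills in exactly the argument the paper waves at: the paper offers no written proof, merely asserting the result follows "by the above discussion," and the ingredients you use (the local normal form, transitivity of $\Aut_{\Cov(X)}(Y/X)$ on fibres for a Galois cover, and the degree formula combined with orbit-stabilizer) are precisely that discussion. The only minor blemish is the phrase "commuting with $z \mapsto z^e$," which should read "satisfying $\varphi(z)^e = z^e$," i.e. lying over the identity on the base chart; the conclusion that $\varphi(z)=\zeta z$ with $\zeta \in \mu_e$ is nonetheless right.
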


We end this section with some topological facts which are useful when thinking about holomorphic maps between Riemann surfaces:
\begin{itemize}
\item A map is proper iff it is closed, and fibres of points are compact;
\item A finite covering map is an open map which is also a local homeomorphism. It is also closed and therefore proper;
\item A continuous map is a finite branched covering map iff it is both open and closed;
\item Let $p$ be a continuous map between locally compact Hausdorff spaces. Then $p$ is a proper local homeomorphism iff $p$ is a (finite) covering map.
\end{itemize}

\subsection{Complex algebraic curves}

In this section, all Riemann surfaces are assumed to be connected, and all varieties irreducible.

\subsubsection{The analytification functor}

Given a complex algebraic variety $X$, we can consider $X$ (or, in particular its $\C$-points $X(\C)$, see \ref{kvar}) with the complex topology (i.e. as an analytic set in some complex space). We denote this complex analytic set by $X(\C)^{an}$. Similarly for a map of complex algebraic varieties $f: Y \ra X$, we denote the corresponding map of analytic sets by $f^{an} : Y(\C)^{an} \ra X(C)^{an}$. 

\subsection{The Riemann existence theorem}

The following well known theorem provides an important link between the algebraic and complex analytic worlds. Clearly the interesting direction of the theorem is that every compact Riemann surface can be given a unique structure as an algebraic variety (i.e. can be seen as being cut out by polynomials).
\begin{thm}[The Riemann existence theorem]\label{thmrie}
The category of smooth projective curves over $\C$ with rational maps as morphisms, is equivalent to that of compact Riemann surfaces with holomorphic maps.
\end{thm}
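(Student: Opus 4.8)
The plan is to produce the \emph{analytification functor} $(-)^{an}$ from smooth projective curves over $\C$ to compact Riemann surfaces and to show it is fully faithful and essentially surjective. On objects this is clear: a smooth projective curve $X$ over $\C$ is, on its $\C$-points, a compact complex one-manifold, so $X^{an}$ is a compact Riemann surface; on morphisms, a rational map between smooth projective curves is in fact everywhere defined (a rational map from a smooth curve to a projective variety extends over every point, since the local rings are DVRs), and induces a holomorphic map on analytifications. Faithfulness is immediate, since two morphisms of varieties that agree on the Zariski-dense set of $\C$-points coincide.

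For fullness the essential case is $Y=\P^1$. Morphisms $X\to\P^1$ are exactly the elements of $\C(X)\cup\{\infty\}$, and holomorphic maps $X^{an}\to\P^1$ are exactly the meromorphic functions on $X^{an}$, so I must show $\cM(X^{an})=\C(X)$. Choosing any nonconstant $t\in\C(X)$ gives a finite morphism $X\to\P^1$ of some degree $d$, with $[\C(X):\C(t)]=d$; and $[\cM(X^{an}):\C(t)]=d$ as well, because a meromorphic function on $X^{an}$ is determined by its values on a generic fibre of $t$ (a $d$-element set) and the elementary symmetric functions of those values, being meromorphic on $\P^1$, are rational by the identity theorem (here the local normal form \ref{local} is used to control behaviour at branch points). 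Since $\C(X)\subseteq\cM(X^{an})$ and the two fields have the same degree over $\C(t)$, they are equal. For general $Y$, fix a closed embedding $Y\hookrightarrow\P^n$: a holomorphic map $X^{an}\to Y^{an}$ is given in homogeneous coordinates by meromorphic, hence (by the previous case) rational, functions, so it is algebraic, and it lands in $Y$ because it satisfies $Y$'s defining equations wherever it is regular, hence everywhere by density.

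The main obstacle is essential surjectivity, and more precisely the single analytic input it requires: given a compact Riemann surface $S$, there exists a nonconstant meromorphic function $f\colon S\to\P^1$. This cannot be obtained by purely algebraic means; one deduces it from the Riemann--Roch inequality $\ell(D)\ge\deg D+1-g$ (take $\deg D$ large), which rests on Hodge theory on $S$ — the finite-dimensionality of the cohomology of coherent sheaves on a compact Riemann surface and elliptic regularity for $\bar\partial$. Granting $f$, the field $\cM(S)$ is a finite extension of $\C(f)$, so by the primitive element theorem $\cM(S)=\C(f,g)$ for some meromorphic $g$; this is the function field of a curve over $\C$, and I take $X$ to be its unique smooth projective model. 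The pair $(f,g)$ defines a holomorphic map $S\to\P^1\times\P^1$ onto the analytification of a (possibly singular) plane model of $X$; normalising and using \ref{local}, the induced holomorphic map $S\to X^{an}$ is a branched covering map whose generic fibre has one point (because $\cM(S)$ and $\C(X)$ become the same field over $\C(f)$), hence is a biholomorphism. Functoriality on morphisms and uniqueness of $X$ up to isomorphism then follow formally from the full faithfulness established above.
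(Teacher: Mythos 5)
The paper does not give a proof of this theorem: it is stated in the appendix as a classical background result (with the remark that the interesting direction is that every compact Riemann surface carries a unique algebraic structure), so there is no in-paper argument to compare yours against. What you have written is a correct and reasonably complete sketch of the standard proof, organised around the analytification functor: fullness is reduced to the identity $\cM(X^{an})=\C(X)$ via a degree count over $\C(t)$, general targets are handled by embedding into $\P^n$, and essential surjectivity is correctly reduced to the one genuinely analytic input, the existence of a nonconstant meromorphic function on a compact Riemann surface via Riemann--Roch/Hodge theory. You also correctly observe that a rational map from a smooth curve to a projective variety extends to a morphism because the local rings are DVRs, which is what lets the two morphism notions agree.

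Two small points you should tighten. First, the statement that a meromorphic function on $\P^1$ is rational is not an application of the identity theorem; it follows from a removable-singularity/Liouville-type argument (subtract off the principal parts at the finitely many poles to get a bounded entire function, which is constant). The identity theorem is what you want for the preceding step, namely that the elementary symmetric functions of $g$ on the fibres of $t$, a priori meromorphic only away from the branch locus, extend meromorphically across it once you control them using the local normal form. Second, the passage ``normalising and using \ref{local}'' is compressing a real step: you get a holomorphic map from $S$ to the analytification of the (possibly singular) plane model, you lift it to $X^{an}$ over the complement of the finitely many singular points where normalisation is an isomorphism, and then you extend across those finitely many points of $S$ by removable singularities; only after that does the degree-one computation $[\cM(S):\C(X)]=1$ give the biholomorphism. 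Spelling this out would make the essential surjectivity argument watertight.
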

Note that for a smooth curve, every rational map is regular. We also have the following:

\begin{thm}\cite[I, 4.4]{hartshorne1977algebraic}
The category of compact Riemann surfaces with holomorphic maps, is anti-equivalent to the category of transcendence degree one field extensions of $\C$.
\end{thm}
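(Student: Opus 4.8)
The plan is to deduce this from the Riemann existence theorem (\ref{thmrie}) already stated, thereby reducing it to the classical anti-equivalence between smooth projective curves over $\C$ and function fields. Implicitly the target category consists of \emph{finitely generated} field extensions of $\C$ of transcendence degree one, with $\C$-algebra homomorphisms as morphisms; note every such homomorphism is automatically injective and, since source and target have the same transcendence degree, exhibits the target as a finite extension of the image of the source. To give an anti-equivalence I must produce a fully faithful, essentially surjective contravariant functor.

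First I would define the contravariant functor $\cM$: to a connected compact Riemann surface $X$ it assigns the field $\cM(X)$ of meromorphic functions on $X$, and to a nonconstant holomorphic map $f:Y \ra X$ it assigns the pullback $f^{*}:\cM(X)\ra\cM(Y)$, $\varphi \mapsto \varphi\circ f$, which is an injective $\C$-algebra homomorphism since $f$ is surjective. To check that $\cM(X)$ lands in the target category, use Riemann existence to identify it with the function field $\C(X)$ of the associated smooth projective curve, which is finitely generated over $\C$; and since $X$ is one-dimensional, any two nonconstant meromorphic functions on $X$ are algebraically dependent, so $\trdeg_{\C}\cM(X)=1$.

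Next I would establish full faithfulness: for compact Riemann surfaces $X,Y$ the assignment $f\mapsto f^{*}$ is a bijection from nonconstant holomorphic maps $Y\ra X$ to $\C$-algebra homomorphisms $\cM(X)\ra\cM(Y)$. Injectivity holds because meromorphic functions separate points of a compact Riemann surface (equivalently, $X$ embeds into projective space via a tuple of meromorphic functions), so $f$ is recovered pointwise from $f^{*}$. For surjectivity, a $\C$-algebra homomorphism $\theta:\cM(X)\ra\cM(Y)$ realizes $\cM(Y)$ as a finite extension of $\theta(\cM(X))$; transporting through Riemann existence and the algebraic theory of curves, such an inclusion of function fields is induced by a unique finite morphism of the associated smooth projective curves, which is holomorphic and nonconstant on analytifications. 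Finally, essential surjectivity: given finitely generated $K/\C$ with $\trdeg_{\C}K=1$, the primitive element theorem gives $K=\C(x)(y)$ with $g(x,y)=0$ an irreducible relation; the resulting affine plane curve has a smooth projective model with function field $K$ (normalization of its projective closure), and by Riemann existence this is a compact Riemann surface $X$ with $\cM(X)\cong K$.

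The heart of the argument, and the step I expect to be the main obstacle if one insists on a self-contained analytic proof, is the surjectivity half of full faithfulness: producing, from an abstract field homomorphism $\cM(X)\ra\cM(Y)$, a genuine holomorphic branched covering $Y\ra X$. Granting Riemann existence (\ref{thmrie}), this is precisely the classical fact that finite extensions of function fields of curves arise from finite morphisms of their smooth projective models; done directly one would have to construct the branched cover realizing the extension and recognize it as an analytification, which is essentially a second invocation of Riemann existence. The remaining ingredients — transcendence degree one, finite generation of $\cM(X)$, separation of points by meromorphic functions, and existence of smooth projective models of function fields — are standard and may be quoted.
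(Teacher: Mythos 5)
The paper does not prove this theorem; it simply cites Hartshorne I.4.4 and, having just stated the Riemann existence theorem, treats the anti-equivalence as standard background. Your sketch — push everything through \ref{thmrie} to the algebraic side and invoke the classical anti-equivalence between smooth projective curves over $\C$ (with finite/dominant morphisms) and finitely generated function fields of transcendence degree one — is exactly the standard argument underlying the citation, and your supporting observations (connectedness of $X$ so that $\cM(X)$ is a field, restriction to nonconstant maps consistent with the paper's convention, separation of points by meromorphic functions for faithfulness, normalization of a plane model for essential surjectivity) are all correct.
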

So we can pass between smooth complex projective curves, compact Riemann surfaces, and field extensions, which is extremely useful. We now want to relate the above discussion to affine curves. The correspondence is very simple:\newline

If $X$ is a smooth complex affine curve, then (being irreducible by definition) its coordinate ring $\cO(X)$ is an integral domain, which embeds uniquely into its fraction field. So we see that an affine algebraic curve sits inside a unique complex projective curve as an open set. Furthermore, if $f: Y \ra X$ is a map of smooth complex affine curves, then this induces an injection of coordinate rings
$$f^* : \cO(X) \hookrightarrow \cO(Y).$$
By the universal property of the field of fractions, this extends uniquely to an injection of function fields
$$f^*: \C(X) \hookrightarrow \C(Y)$$
and therefore extends uniquely to the corresponding complex projective curves. From now on, we will use the implications of the above discussion freely.

\subsection{Proper morphisms and finite morphisms}

In \S \ref{secrie}, we saw that proper maps had some good properties. We would like to find a similar notion for algebraic varieties, however the Zariski topology is good for some things, and bad for others. Since affine varieties are Noetherian in the Zariski topology and a topological space is Noetherian if and only if every subspace is compact, it is clearly no good to try and carry the definition of a proper map over word for word from general topology. There is a general algebro-geometric definition of a proper map, which corresponds to the general topological notion when varieties are viewed with their complex topology. However, we do not give this definition here, because we are interested in smooth curves, where properness is equivalent to the notion of finite. We focus on the notion of finite, because we are aiming to give a definition of an \'etale cover, and finiteness is the correct notion for generalisation to arbitrary varieties and schemes.

\begin{dfn}
A morphism $p:Y \ra X$ of affine curves is said to be \textit{finite} if the induced map on coordinate rings
$$f^*: \cO(X) \hookrightarrow \cO(Y)$$
gives $\cO(Y)$ the structure of a finitely generated $\cO(X)$-module. A morphism of projective curves is said to be finite if for every affine open set $U \subset X$, the inverse image $f^{-1}(U)$ is affine, and the induced map
 $$f^*(\cO(U)) \hookrightarrow \cO(f^{-1}(U))$$
gives $\cO(f^{-1}(U))$ the structure of a finitely generated $\cO(U)$-module.
\end{dfn}
% Note the above ring homomorphism is injective iff $f$ is dominant (which is is in the case of curves)

For example, the inclusion of algebraic curves
$$\A^1(\C)\backslash \{0\} \hookrightarrow \A^1(\C),$$
corresponding to the inclusion of rings
$$\C[X] \hookrightarrow \C[X,X^{-1}]$$
is not finite. Finite morphisms (between any varieties) have finite fibres, and are proper (by the going up theorem of algebra), and therefore closed. Furthermore, it is well known that a map between varieties is finite if and only if it is proper, and has finite fibres (see \cite{ega1964ements} for example). In the case of smooth complex curves we have:

\begin{pro}\cite[4.4.7]{szamuely2009galois}
A surjective morphism of smooth, projective complex curves is finite.
\end{pro}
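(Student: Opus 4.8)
Let $f:Y\ra X$ be a surjective morphism of smooth, projective complex curves. By the criterion recalled just above the statement (a morphism of varieties is finite if and only if it is proper and has finite fibres, see \cite{ega1964ements}), it suffices to verify these two properties for $f$. So the plan is: (i) establish that $f$ has finite fibres, purely from the fact that $Y$ is an irreducible curve and $f$ is non-constant; and (ii) establish that $f$ is proper, from the fact that $Y$ is projective.

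For (i), fix $x\in X$. The fibre $f^{-1}(x)$ is a closed subset of $Y$. If $f^{-1}(x)=Y$ then $f$ is constant, which is impossible since $X$, being a curve, has infinitely many points and $f$ is surjective. Hence $f^{-1}(x)$ is a proper closed subset of the irreducible one-dimensional variety $Y$, so it is finite. (As a sanity check one can pass to the analytic side: by the Riemann existence theorem \ref{thmrie}, $f$ corresponds to a non-constant holomorphic map $f^{\an}:Y(\C)^{\an}\ra X(\C)^{\an}$ of compact Riemann surfaces, which is a finite branched covering map by the discussion of \S\ref{secrie}, and in particular has finite fibres.)

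For (ii), recall that $Y$ projective means the structure morphism $Y\ra\Spec\C$ is proper, while $X$, being a variety over $\C$, is separated over $\C$; by the cancellation property of proper morphisms, $f:Y\ra X$ is then proper. (Again this matches the topological picture: $Y(\C)^{\an}$ is compact and $X(\C)^{\an}$ is Hausdorff, so $f^{\an}$ is a closed, proper map, and by the remarks at the start of the subsection the algebro-geometric notion of properness corresponds to this topological one for complex varieties.) Combining (i) and (ii) with the cited criterion, $f$ is finite.

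The main point to be careful about — and the only real obstacle — is avoiding circularity: one must invoke the \emph{general} equivalence ``finite $\Leftrightarrow$ proper $+$ finite fibres'' rather than the curve-specific slogan ``finite $\Leftrightarrow$ proper'', and one must genuinely deduce properness of $f$ from projectivity of $Y$ (the crux of the argument) rather than from finiteness. The finite-fibre half is essentially immediate from irreducibility and dimension.
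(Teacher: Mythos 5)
Your proof is correct. The paper does not actually supply an argument here -- it simply cites Szamuely -- but your approach is precisely the one the paper sets up in the sentence immediately preceding the proposition, namely applying the criterion ``finite $\Leftrightarrow$ proper $+$ finite fibres'': the finite-fibre part follows from $Y$ being an irreducible curve and $f$ non-constant, and properness of $f$ follows from properness of $Y\ra\Spec\C$ together with separatedness of $X$ via cancellation.
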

As a corollary, a surjective morphism of smooth complex affine curves is also finite.

\subsubsection{An algebraic curve minus a point is algebraic}

\begin{thm}
Given a smooth complex projective curve $X$, and $\{ x_1,...,x_n \} \subset X$,  $X \backslash \{x_1,...,x_n \}$ is an affine curve.
\end{thm}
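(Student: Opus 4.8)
The plan is to remove the points one at a time, so by induction it suffices to treat the case $n=1$, i.e.\ to show that if $X$ is a smooth projective complex curve and $x \in X$, then $X \setminus \{x\}$ is affine. First I would produce a non-constant regular function on $X \setminus \{x\}$ with a pole only at $x$. By the correspondence between smooth projective curves, compact Riemann surfaces, and function fields recalled in \S\ref{secrie}, the function field $\C(X)$ has transcendence degree one, so there certainly exist non-constant rational functions; using Riemann--Roch (or, more elementarily for our purposes, the fact that on a compact Riemann surface one can build a meromorphic function with prescribed poles once the pole order is large enough) one obtains $f \in \C(X)$ regular away from $x$. Such an $f$ defines a finite morphism $f : X \to \P^1(\C)$ with $f^{-1}(\infty) = \{x\}$, by \cite[4.4.7]{szamuely2009galois} (a surjective morphism of smooth projective curves is finite) together with the fact that the fibre over $\infty$ is exactly the pole locus of $f$.

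Next I would exploit finiteness. Since $f : X \to \P^1(\C)$ is finite and $\A^1(\C) = \P^1(\C) \setminus \{\infty\}$ is affine, the preimage $f^{-1}(\A^1(\C)) = X \setminus f^{-1}(\infty) = X \setminus \{x\}$ is affine: a finite morphism is in particular affine, so the preimage of an affine open is affine. Concretely, writing $\A^1(\C) = \Spec \C[T]$, the ring $\cO(X \setminus \{x\})$ is a finitely generated module over $\C[T]$ via $f^*$, hence a finitely generated $\C$-algebra which is an integral domain, and one checks that $X \setminus \{x\}$ is the corresponding affine variety (its points are the maximal ideals, and the identification with $X \setminus \{x\}$ is forced because $f$ restricted to $X \setminus \{x\}$ is the finite morphism induced by $\C[T] \hookrightarrow \cO(X \setminus \{x\})$). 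This is precisely the situation anticipated by the discussion following \cite[4.4.7]{szamuely2009galois} in \S\ref{secrie}, where it was noted that a surjective morphism of smooth complex affine curves is finite.

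For the general case, given $\{x_1,\dots,x_n\} \subset X$, I would remove $x_1$ first to get an affine curve $X_1 := X \setminus \{x_1\}$, take its smooth projective completion $\bar{X}_1$ (which is again $X$, since a smooth affine curve sits inside a unique smooth projective curve as recalled in \S\ref{secrie}), and repeat: $X \setminus \{x_1,\dots,x_n\} = \bar{X}_1 \setminus \{x_1,\dots,x_n\}$, and the point is that removing a point from a smooth \emph{projective} curve gives an affine curve by the $n=1$ case. Alternatively, and perhaps more cleanly, I would directly build a single $f \in \C(X)$ whose only pole is supported on $\{x_1,\dots,x_n\}$ and whose fibre $f^{-1}(\infty)$ is exactly $\{x_1,\dots,x_n\}$ (adding poles is no obstruction to the Riemann--Roch construction), giving a finite morphism $f : X \to \P^1(\C)$ with $f^{-1}(\A^1(\C)) = X \setminus \{x_1,\dots,x_n\}$ affine by the same affineness-of-finite-morphisms argument.

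The main obstacle is the first step: exhibiting a rational function on $X$ whose polar locus is contained in (indeed equal to) the prescribed finite set. This is where a genuine input from the theory of compact Riemann surfaces / algebraic curves is needed --- Riemann--Roch guarantees that $\dim H^0(X, \mathcal{O}(m\,x_1 + \cdots + m\,x_n))$ grows linearly in $m$ and in particular exceeds $1$ for $m$ large, so a non-constant such $f$ exists, and a further small argument (or just taking $f$ generic, or a sum of functions with poles at the individual $x_i$) arranges that every $x_i$ actually occurs as a pole so that $f^{-1}(\infty) = \{x_1,\dots,x_n\}$. Everything after that is formal: finiteness of the morphism from \cite[4.4.7]{szamuely2009galois}, and the fact that the preimage of an affine open under an affine (in particular finite) morphism is affine.
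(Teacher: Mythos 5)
Your direct argument (construct a single rational function with pole divisor supported exactly on $\{x_1,\dots,x_n\}$, note that the resulting map to $\P^1(\C)$ is a surjective hence finite morphism of smooth projective curves by \cite[4.4.7]{szamuely2009galois}, and pull back the affine open $\A^1(\C)$) is precisely the proof the paper gives, and it is correct. The preliminary induction you sketch does not actually make progress: after removing $x_1$ you take the projective completion $\bar X_1$, but $\bar X_1 = X$, so ``$\bar X_1 \setminus \{x_1,\dots,x_n\}$'' is the same set you started from and the $n=1$ case applied to $\bar X_1$ would only delete a single point, not all $n$; however you flag this yourself and supply the all-at-once construction via Riemann--Roch, which is what the paper does and is all that is needed.
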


\begin{proof}
By the Riemann-Roch theorem, we can construct a rational function
$$f: X(\C) \lora \P^1(\C) = \A^1(\C) \cup \infty$$
with poles exactly at the $x_i$. So $f$ is a non-constant holomorphic map between compact Riemann surfaces and is therefore surjective. The induced morphism on complex projective curves is therefore finite, so $f^{-1}(\A^1(\C)) = X \backslash \{x_1,...,x_n \}$ is an affine curve.
\end{proof}

By embedding a smooth complex affine curve as an open set in a projective curve, we get:

\begin{cor}
Given a smooth complex affine curve $X$, and $\{ x_1,...,x_n \} \subset X$,  $X \backslash \{x_1,...,x_n \}$ is an affine curve.
\end{cor}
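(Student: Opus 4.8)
The plan is to reduce the affine case immediately to the projective case already handled in the preceding theorem. First I would invoke the discussion earlier in this subsection: a smooth complex affine curve $X$ has coordinate ring $\cO(X)$ an integral domain, and $X$ sits as a Zariski-open subset of a (unique) smooth complex projective curve $\bar{X}$, namely the projective curve with function field $\C(X)$. Write $\bar{X} \setminus X = \{ y_1, \dots, y_m \}$; this is a finite set because $X$ is open and dense in the irreducible projective curve $\bar{X}$, so its complement is a proper closed subset of a one-dimensional variety, hence finite.

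Next I would simply observe that
$$X \setminus \{ x_1, \dots, x_n \} = \bar{X} \setminus \{ y_1, \dots, y_m, x_1, \dots, x_n \},$$
since removing finitely many points from $X$ is the same as removing those points together with the finitely many points $y_1,\dots,y_m$ already missing from $\bar{X}$. The right-hand side is a smooth complex projective curve with a finite set of points deleted, so by the theorem just proved it is an affine curve. This gives the corollary.

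I do not expect any genuine obstacle here; the only point requiring a little care is the legitimacy of the compactification $X \hookrightarrow \bar{X}$ as an open immersion and the finiteness of the boundary, but both are part of the standard correspondence between smooth affine curves, smooth projective curves, and transcendence-degree-one extensions of $\C$ recalled earlier via the Riemann existence theorem, so they may be used freely. The statement is really just the observation that ``affine minus finitely many points'' unfolds to ``projective minus finitely many points,'' which is the content of the previous theorem.
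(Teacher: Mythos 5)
Your proof is correct and matches the paper's approach exactly: the paper also deduces the corollary by embedding $X$ as an open subset of its smooth projective closure and then applying the preceding theorem to the complement of the finite set of points removed together with the finitely many boundary points. You have merely spelled out the details that the paper leaves implicit.
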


\subsection{\'Etale morphisms}\label{appetalemor}

\begin{dfn}
A map $p: Y\ra X$ of smooth complex curves is said to be \textit{\'etale} if it is covering map in the complex toplology. Let $\Feg(X)$ be the category of finite, \'etale, Galois covers of $X$.
\end{dfn}

From our previous discussion, it is easy to see that we have:
\begin{pro}
A map $p: Y\ra X$ of smooth complex curves is \'etale iff it is surjective and unramified. A surjective map $p:Y \ra X$ of smooth complex algebraic curves is a finite Galois, \'etale cover iff the fibres of $p$ are  $\Aut_{\Feg(X)} (Y / X)$-torsors.
\end{pro}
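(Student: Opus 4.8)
The plan is to obtain both assertions by combining the local normal form \ref{local} for holomorphic maps of Riemann surfaces with the elementary facts about covers collected in \S\ref{seccov} and \S\ref{secrie}, together with the Riemann existence theorem \ref{thmrie}. For the first assertion, recall that \'etale is \emph{defined} to mean ``covering map in the complex topology'', so I would argue the two directions separately. If $p\colon Y \ra X$ is a covering map then it is surjective (covering maps are always surjective) and a local homeomorphism; comparing with \ref{local}, in which $p$ looks like $z \mapsto z^{e_y}$ near a point $y$, being a local homeomorphism forces $e_y = 1$ for every $y$, so $p$ is unramified. Conversely, if $p$ is surjective and unramified then $e_y = 1$ for all $y$, so \ref{local} makes $p$ a local homeomorphism; being a surjective morphism of smooth complex algebraic curves, $p$ is finite, hence proper, and a proper local homeomorphism of locally compact Hausdorff spaces is a covering map, so $p$ is \'etale.

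For the second assertion, I would first record that a surjective morphism $p\colon Y \ra X$ of smooth complex (hence connected) algebraic curves is automatically finite, hence proper, hence a finite branched cover of $X$; write $G$ for its group of automorphisms over $X$, which --- once $p$ is known to be \'etale --- coincides with $\Aut_{\Feg(X)}(Y/X)$ since the analytification functor from $\Feg(X)$ to finite covering spaces of $X$ is an equivalence, in particular fully faithful, by \ref{thmrie}. For the direction ($\Ra$): if $p$ is a finite \'etale Galois cover then it is a connected covering space on which $G$ acts transitively on fibres, and an automorphism of a connected cover fixing a point is trivial, so $G$ acts freely on each fibre; hence every fibre is a $G$-torsor. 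For the direction ($\La$): if every fibre is a $G$-torsor then $G$ acts freely and transitively on all fibres; transitivity makes $p$ a Galois branched cover, and by the structure of Galois branched covers the stabiliser in $G$ of a point of a fibre is cyclic of order the ramification index there, while freeness of the torsor action forces every such stabiliser to be trivial; hence every ramification index equals $1$, so $p$ is unramified and therefore \'etale by the first assertion, and it is finite and Galois, so that $p \in \Feg(X)$ and its fibres are genuinely $\Aut_{\Feg(X)}(Y/X)$-torsors.

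The point I expect to need the most care is the interplay between the ambient categories and the finiteness hypotheses. In the ($\La$) direction $\Aut_{\Feg(X)}(Y/X)$ must be read first as the automorphism group of the morphism $p$ --- which is meaningful before one knows $p$ is \'etale --- and only afterwards identified with the covering-automorphism group via \ref{thmrie}; and the passage from the purely local statement \ref{local} (local homeomorphism) to the global conclusion (covering map) must be routed through ``finite morphism $\Ra$ proper''. Everything else is straightforward bookkeeping with the propositions of \S\ref{seccov} and \S\ref{secrie}.
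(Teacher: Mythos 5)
Your proof is correct and fills in exactly the argument the paper leaves implicit — the paper states this proposition with no proof beyond ``from our previous discussion, it is easy to see,'' and your assembly of the local normal form \ref{local}, finiteness of surjective curve morphisms, the cyclic-stabiliser structure of Galois branched covers, and the proper-local-homeomorphism criterion is precisely the intended route through \S\ref{seccov} and \S\ref{secrie}. Your observation that $\Aut_{\Feg(X)}(Y/X)$ must first be read as the automorphism group of the morphism $p$ before $p$ is known to lie in $\Feg(X)$ is the one genuine subtlety, and you handle it correctly.
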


\begin{rem}
Clearly covering maps are local homeomorphisms, however \'etale morphisms are not local homeomorphisms in the Zariski topology. To see this, consider the finite \'etale covering 
\begin{align*}
p: \A^1(\C)\backslash \{ 0\} & \ra \A^1(\C) \backslash \{0 \} \\
x & \mapsto x^n.
\end{align*}
Then if this were to be a local homeomorphism between open sets $U$ and $U'$, then it would induce an isomorphism of coordinate rings $\cO(U)$ and $\cO(U')$, and this would induce an automorphism of the function field
\begin{align*}
p^*: K(X) & \ra K(X) \\
X & \mapsto X^n.
\end{align*}
But clearly this is not an automorphism.
\end{rem}

%\subsection{The Riemann existence theorem}
%LetXbeavarietyover $\C$ ,andlet $Y$ beafinitee ́talecoveringof .If isnonsin- gular, so also is $Y$ and, when $X$ and $Y$ are %endowed with their complex topologies,becomes a finite covering space of
%The key result that allows us to relate the classical and e ́tale fundamental groups is:
%\begin{thm}[Riemann Existence Theorem] Let $X$ be a nonsingular variety over . The functor sending a finite etale covering %of X to the finite covering space is an equivalence of categories.
%\end{thm}
%see szamueley chap 3 for proof (Important)

%Galois branched covers: aut acts transitively on fibres

%\fm so don't have to use GAGA! But maybe I do since my maps are ramified!

\subsection{Affine $k$-varieties}\label{kvar}
I tried to avoid mentioning schemes at all, since model theorists generally tend to shy away from them, but in this section a scheme theoretic approach is taken because it is a neater way of doing things, and the scheme theoretic notions introduced are in direct correspondence with the model theoretic notions of the next section \ref{natlang}. For the model-theorist who knows `Weil style' algebraic geometry over an algebraically closed field, this section is supposed to provide an easy route through to scheme theory. For example, for a smooth, irreducible affine curve over $\C$, the points of the curve may be identified with the prime ideals of the coordinate ring, in a functorial manner. For the definition of the $\Spec$-functor, see Ravi Vakil's online algebraic geometry notes. Let $k$ be a field (not-necessarily algebraically closed).
\begin{dfn}
An \textit{affine $k$-variety} is an affine variety $X=\Spec(A)$, such that there is morphism of finite type in the category of schemes (called the structure morphism)
$$X \lora \Spec(k),$$
i.e. there is a ring homomorphism
$$k \hookrightarrow A$$
giving $A$ the structure of a finitely generated $k$-algebra. Let $X$ and $Y$ be $k$-varieties with coordinate rings $A$ and $B$ respectively. Then a morphism of $k$-varieties $f:X \ra Y$ is a regular map $f$ such that the induced map on the coordinate rings commutes with the structure morphisms i.e. there is a commutative diagram
$$
\xymatrix{
X \ar@{->}[rr]^{f} \ar@{->}[dr] & & Y \ar@{->}[dl]  \\
& \Spec(k) &
}
$$
i.e. we have a commutative diagram
$$
\xymatrix{
B \ar@{->}[rr]^{f_*}  & & A   \\
& \ar@{->}[ul] k \ar@{->}[ur]&
}
$$
We denote the category of $k$-varieties by $k$-var. If $X$ is a $k$-variety and $L / k$ is a field extension, we define the \textit{$L$-rational points} of $X$ (denoted $X(L)$) to be elements of $\Hom_{\kvar}(\Spec(L),X)$. If $k$ is a field and $\bar{k}$ is a fixed algebraic closure of $k$ then a $\bar{k}$-valued point is called a \textit{geometric point}. For a $k$-variety $X$ and a point $x \in X$, we define $k(X)$ to be the function field of $X$, and $\kappa(x)$ to be the residue field of $x$.
%as in Poonen p31
\end{dfn}
%Note that the $k$-rational points of $X$ are sections of the structure morphism, 
\begin{rem} The $L$-rational points of $X$ are in bijection with $\Hom_{\kalg}(A,L)$. Giving an $L$-rational point is equivalent to giving a point $x \in X$ and an embedding $\kappa(x) \hookrightarrow L$. 
\end{rem}
We can unwind the above definitions into something more familiar to model-theorists as follows: \newline

\begin{rem} The coordinate ring of a $k$-variety is isomorphic to $k[X_1,...,X_n] / I$ for a finitely generated ideal $I = (f_1,...,f_r)$ of $k[X_1,...,X_n]$.
\end{rem}

\begin{pro}
Let $X = \Spec(k[X_1,...,X_n] / (f_1,...,f_r))$ be a $k$-variety and $L / k$ a field extension. Then $X(L)$ can be identified with the set of elements of the vanishing set of $\{f_1,...,f_r \}$ in $\A^n(L)$, or similarly as the set defined by the equations $f_i(X_1,...,X_n)=0$ in the structure $\langle L , + ,  \cdot , \{c_i \}_{i \in k} \rangle$ (see \ref{natlang}).
\end{pro}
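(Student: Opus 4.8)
The plan is to unwind the functor-of-points definition of $X(L)$ using the preceding remark, which identifies
$$X(L) = \Hom_{\kvar}(\Spec(L), X)$$
with $\Hom_{\kalg}(A, L)$, where $A := k[X_1,\dots,X_n]/(f_1,\dots,f_r)$. Thus it suffices to exhibit a bijection between $k$-algebra homomorphisms $A \to L$ and the zero set of $\{f_1,\dots,f_r\}$ in $\A^n(L)$, and then to recognise that zero set as a definable set in the ring structure $\langle L, +, \cdot, \{c_i\}_{i\in k}\rangle$.

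First I would invoke the universal property of the polynomial ring: a $k$-algebra homomorphism $k[X_1,\dots,X_n] \to L$ (that is, a ring homomorphism commuting with the structure maps $k \hookrightarrow k[X_1,\dots,X_n]$ and $k \hookrightarrow L$) is precisely the datum of an $n$-tuple $(a_1,\dots,a_n) \in L^n$, namely the images of the $X_i$; compatibility with the inclusion of $k$ forces such a homomorphism to send a polynomial $f$ to its value $f(a_1,\dots,a_n)$ computed in $L$ using the images of the coefficients of $f$, which lie in $k$. Passing to the quotient $A = k[X_1,\dots,X_n]/(f_1,\dots,f_r)$, such a homomorphism factors through $A$ if and only if it annihilates the ideal $(f_1,\dots,f_r)$, equivalently if and only if $f_j(a_1,\dots,a_n) = 0$ in $L$ for every $j = 1,\dots,r$. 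This yields the required bijection between $X(L)$ and $\{(a_1,\dots,a_n) \in \A^n(L) : f_j(a_1,\dots,a_n) = 0,\ 1 \le j \le r\}$.

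Finally I would observe that, since each $f_j$ is a polynomial in $X_1,\dots,X_n$ with coefficients among the elements of $k$ named by the constant symbols $c_i$, the expression ``$f_j(x_1,\dots,x_n) = 0$'' is a genuine atomic formula in the first-order language of rings augmented by the constants $\{c_i\}_{i \in k}$, and its interpretation in $\langle L, +, \cdot, \{c_i\}_{i\in k}\rangle$ is exactly evaluation of $f_j$ at a tuple from $L$. Hence the zero set above is literally the solution set in that structure of the quantifier-free formula $\bigwedge_{j=1}^{r} f_j(x_1,\dots,x_n) = 0$, which completes the identification.

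There is no genuinely hard step here; the one point deserving a word of care is the passage through the structure morphisms, i.e. verifying that the correspondence between homomorphisms and tuples respects the $k$-algebra structure, so that the coefficients of the $f_j$ are indeed named by constants in the language and the relation $f_j(a)=0$ is expressible. Everything else is the universal properties of polynomial rings and of quotient rings together with the cited remark.
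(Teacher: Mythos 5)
Your argument is correct and matches the paper's proof in essence: both reduce $X(L)$ to $\Hom_{\kalg}(A,L)$ and invoke the universal property of the polynomial ring together with the factorisation criterion through the quotient by $(f_1,\dots,f_r)$. The only cosmetic difference is that the paper phrases the forward direction via preservation of positive formulas under ring homomorphisms, whereas you derive both directions uniformly from the universal property; you also make the definability observation slightly more explicit, which is a small improvement in exposition.
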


\begin{proof}
Fix an embedding $\i :k \hookrightarrow L$ (it is important to note that an $L$-valued point includes the data of this embedding). Let $X=\Spec(A)$ with $$A  = k[x_1,...,x_n] \cong k[X_1,...,X_n] / (f_1,...,f_r).$$ Then given a ring homomorphism
$$\bar{g}: A \ra L,$$
for a positive formula $\varphi$ in the language of rings, we have
$$A \models \varphi(x_1,...,x_n) \Ra L \models \varphi(\bar{g}(x_1),...,\bar{g}(x_n))$$
so that if $\bar{g}(x_j)=l_j \in L$, then 
$$L \models \bigwedge_{i=1}^r f_i(l_1,...,l_n) = 0. $$

 Conversely, by the universal property of the polynomial ring, any map of sets
$$g:\{X_1,...,X_n\} \ra L$$
taking $(X_1,...,X_n)$ to $(l_1,...,l_n) \in L^n$, extends uniquely to a $k$-algebra homomorphism
$$g:k[X_1,...,X_n] \ra L$$
preserving the embedding $\i$. So if $f_i(l_1,...,l_n)=0$, then $g$ will pass to a map $\bar{g}$ on the quotient $k[X_1,...,X_n] /  (f_1,...,f_r)$.
\end{proof}

\begin{rem}
For the model-theorist, the residue field $\kappa(x)$ is just the minimal field of definition of the coordinates of $x$ under the above embedding of $X(\bar{k})$ as a definable set in $\langle \bar{k} , + ,  \cdot , \{c_i \}_{i \in k} \rangle$.
\end{rem}

Given a $k$-variety $X$, and a field extension $L / k$, we denote by $X \times_k L :=\Spec(A \otimes_k L)$, the \textit{base change} of $X$ to $L$. Note that if $X \cong \Spec(k[X_1,...,X_n] / I)$ then $X \times_k L \cong \Spec(L[X_1,...,X_n] / I)$, and $X \times_k L $ is an $L$-variety, but not necessarily a $k$-variety.  Intuitively, here we have just forgotten that $X$ was defined by polynomials in $k$, and we view the polynomials as being over $L$. For a fixed algebraic closure $\bar{k} / k$, we will also use the notation $$\bar{X}:=X \times_k \bar{k}.$$

\subsection{A natural structure for $k$-varieties}\label{natlang}

Let $X$ be a $k$-variety. Given an embedding of fields $\i: k \hookrightarrow L$, consider the structure
$$L_k := \langle L , + , \cdot , \{ c_i\}_{i \in k} \rangle$$
i.e. we have a field $L$ in the ring language, enriched with a constant symbol for every element of $k$ (interpreted as in the embedding $\i$). Then we can identify the $L$-rational points of $X := \Spec(k[X_1,...,X_n] / (f_1,...,f_r))$ as the definable set cut out by the formula
$$\bigwedge_{i=1}^{r} f_i(X_1,...,X_n)=0$$
in $\langle L , + , \cdot , \{ c_i\}_{i \in k} \rangle$. We denote the language for structures of this form by $\cL_{\Ring,k}$.

\begin{rem}
Similarly to the way geometers consider the functor of points of a $k$-variety, we can consider a $k$-variety $X:=\Spec(k[X_1,...,X_n] / (f_1,...,f_r))=\Spec(A)$ as a functor 
$$\varphi_X : \cL_{\Ring,k}\textrm{-structures} \lora \Sets,$$
determined by the formula $$\varphi_X(X_1,...,X_n):=\wedge_{i=1}^r f_i(X_1,...,X_n)=0,$$ sending a structure $\cM:=\langle L , + , \cdot , \{ c_i\}_{i \in k} \rangle$, to the set defined by the formula $\varphi_X$ in $\cM$ which we denote by $\varphi_X(\cM)$. It is a functor because given a homomorphism $f:L_1 \ra L_2$ of $\cL_{\Ring,k}$-structures, and an $L_1$-rational point
$$\bar{g}: A \lora L_1$$
we get an $L_2$-rational point via the composition $f \circ \bar{g}$, so we obtain a map of sets from $\varphi(L_1)$ to $\varphi(L_2)$.
\end{rem}

\subsubsection{Types in algebraically closed fields}

Clearly the easiest case to consider with respect to the model theory, is where we take $k$-variety $X$, and consider $X(\bar{k})$ as a definable subset of 
$$\langle \bar{k} , + , \cdot , \{ c_i\}_{i \in k} \rangle.$$
This is because if $\bar{k}$ is an algebraically closed field of characteristic 0, then the theory of this structure has just about every nice model-theoretic property that you can think of. In particular it has quantifier elimination, and a unique model in every uncountable cardinality. For countable algebraically closed fields of characteristic 0, with transcendence degree $\kappa \leq \aleph_0$, the $\cL_{\omega_1, \omega}$ sentence (see \ref{app quasi}) fixing the transcendence degree gives a categorical theory. \newline

 In particular, we have the following, well known, simple description of the types:
\begin{thm}
Let $L$ be an algebraically closed field of characteristic 0, $k \subseteq K \subseteq L$ and $x \subset L^n$. Then $\tp(x / K)$ in the structure $L_k$, is determined by the formulae
$$x \in W \cup \{ x \notin W' \ | \ W' \subset W, \ \dim W'< \dim W\}$$
where $W$ is the minimal algebraic variety over $K$ containing $x$, and $W'$ are subvarieties of $W$ over $K$.
\end{thm}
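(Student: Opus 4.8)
The plan is to reduce, via quantifier elimination for $\mathrm{ACF}_0$, to the quantifier-free type, and then to show that the displayed family of formulas generates that quantifier-free type. First I would recall (as noted earlier in the excerpt) that $\mathrm{ACF}_0$ has quantifier elimination, so $\tp(x/K)$ in $L_k$ is determined by $\qftp(x/K)$. Since every quantifier-free $\cL_{\Ring,k}$-formula with parameters from $K$ is a Boolean combination of atomic formulas $p(X_1,\dots,X_n)=0$ with $p\in K[X_1,\dots,X_n]$, it suffices to show that the formulas in the statement decide every such atomic formula. Equivalently, writing $I:=I(x/K)=\{p\in K[X_1,\dots,X_n] : p(x)=0\}$, I must show that the displayed formulas pin down the ideal $I$ together with the family of polynomials \emph{not} in $I$.

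Next I would observe that $I$ is prime: the evaluation homomorphism $K[X_1,\dots,X_n]\to L$, $X_i\mapsto x_i$, has kernel $I$ and image the subring $K[x]\subseteq L$, which is an integral domain; hence $K[X_1,\dots,X_n]/I\cong K[x]$ is a domain and $I$ is prime. Thus $W:=V(I)$ is an irreducible variety over $K$, and it is exactly the minimal variety over $K$ containing $x$ (any $K$-variety through $x$ has vanishing ideal contained in $I$), i.e. $W=\Loc(x/K)$. By the Hilbert basis theorem $I$ is finitely generated, so ``$x\in W$'' is a single quantifier-free formula over $K$, and it implies $p(X)=0$ for every $p\in I$; so ``$x\in W$'' captures precisely the positive part of $\qftp(x/K)$. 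Moreover the formulas ``$x\notin W'$'' for proper $K$-subvarieties $W'$ of smaller dimension force $I(x/K)$ to be \emph{exactly} $I$: if it were strictly larger, then $V(I(x/K))$ would be a proper $K$-subvariety of $W$ of strictly smaller dimension (primality again, plus the fact that passing to a proper prime quotient of an affine domain drops dimension) containing $x$, contradicting one of the listed formulas.

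For the negative part, let $q\in K[X_1,\dots,X_n]$ with $q(x)\neq 0$. Then $x\in W\setminus(W\cap V(q))$, so $W\cap V(q)=V(I+(q))$ is a proper Zariski-closed subset of the irreducible $K$-variety $W$; each of its $K$-irreducible components $W'$ corresponds to a prime of $K[X_1,\dots,X_n]$ strictly containing $I$, hence satisfies $\dim W'<\dim W$, and of course $x\notin W'$. So for this $q$ the statement's list contains the formula ``$x\notin W'$'', which together with ``$x\in W$'' implies $q(x)\neq 0$. Running over all such $q$, the displayed family implies every negated atomic formula true of $x$ as well, so it generates the complete quantifier-free type of $x$ over $K$; by quantifier elimination any tuple realising all of these formulas then has type $\tp(x/K)$, which is the assertion to be proved.

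The only step with genuine content is the handling of the negated atomic formulas, and it rests on two facts I would isolate as lemmas: (a) $I(x/K)$ is prime, so $W$ is irreducible over $K$ — this is where one uses that $L$ is a domain; and (b) a proper closed subvariety of an irreducible variety has strictly smaller dimension, so that the ``hypersurface cut'' $W\cap V(q)$ genuinely decomposes into subvarieties $W'$ of the kind allowed in the statement. Everything else is bookkeeping with quantifier elimination. I would also remark that nothing here requires $K$ to be algebraically closed and no Nullstellensatz is invoked, which matters since in the intended applications $K$ is an arbitrary intermediate field.
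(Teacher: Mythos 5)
Your proof is correct. Note, though, that the paper itself gives no proof of this statement: it is introduced with the phrase ``we have the following, well known, simple description of the types'' and left as a standard fact, so there is nothing to compare your argument against. What you have supplied is exactly the standard argument: reduce to quantifier-free types via QE for $\mathrm{ACF}_0$ (which persists when constants for $k$ are added), observe that $I(x/K)$ is prime so that $W=\Loc(x/K)$ is $K$-irreducible, use Hilbert's basis theorem to see ``$x\in W$'' is a single formula capturing the positive atomic part, and handle the negative atomic part by noting that for $q(x)\neq 0$ the set $W\cap V(q)$ is a proper closed $K$-subvariety of the $K$-irreducible $W$, hence of strictly smaller dimension, so ``$x\notin W\cap V(q)$'' (or its components) is among the listed formulas. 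Your remark that no Nullstellensatz is needed and that $K$ need not be algebraically closed is correct and worth keeping, since in the body of the paper this result is applied over non-algebraically-closed fields such as $\Q(j(S))$.
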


\subsection{The weak Lefschetz principle}

In the previous sections, we focused on complex curves, but we would like to study curves over arbitrary algebracially closed fields of characteristic 0. Once we embed a variety in an algebraically closed field of characteristic 0 as a definable set, statements which are expressible as sentences in the language of rings apply to all algebraically closed fields of characteristic 0 because of the following basic fact:

\begin{thm}[The weak Lefschetz principle]\label{wlp}
The theory of algebraically closed fields of characteristic 0 is complete.
\end{thm}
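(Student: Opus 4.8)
The plan is to deduce completeness from categoricity by way of the {\L}o\'s--Vaught test: if a theory $T$ in a countable language has no finite models and is $\kappa$-categorical for some infinite cardinal $\kappa$, then $T$ is complete. So it suffices to check two things for the theory $\mathrm{ACF}_0$ of algebraically closed fields of characteristic $0$ (axiomatised in the language of rings by the field axioms, the sentences $\underbrace{1+\cdots+1}_{n}\neq 0$ for each $n$, and for each $n$ the sentence asserting that every monic polynomial of degree $n$ has a root): first, that it has no finite models, and second, that it is categorical in some uncountable power.

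For the first point, a finite field $F$ with $q$ elements cannot be algebraically closed, since the polynomial $1+\prod_{a\in F}(X-a)$ has no root in $F$; hence every model of $\mathrm{ACF}_0$ is infinite. For the second point I would invoke Steinitz's structure theorem: an algebraically closed field is determined up to isomorphism by its characteristic together with its transcendence degree over the prime field. Granting this, fix an uncountable cardinal $\kappa$ and let $K,K'$ be two models of $\mathrm{ACF}_0$ of cardinality $\kappa$. Each has prime field $\mathbb{Q}$, and since $K$ is the algebraic closure of $\mathbb{Q}(B)$ for a transcendence basis $B$, we have $|K|=\max(\aleph_0,|B|)$; as $\kappa$ is uncountable this forces $|B|=\kappa$, and similarly for $K'$. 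Thus $K$ and $K'$ have transcendence bases of the same cardinality, so by Steinitz $K\cong K'$, and $\mathrm{ACF}_0$ is $\kappa$-categorical for every uncountable $\kappa$. Combining the two points, the {\L}o\'s--Vaught test applies and $\mathrm{ACF}_0$ is complete.

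The main (and essentially only) obstacle is the Steinitz classification, i.e. setting up transcendence bases and proving that a bijection between transcendence bases of two algebraically closed fields of equal characteristic extends to a field isomorphism; this rests on the invariance of transcendence degree and on uniqueness of algebraic closures, which are standard but are the one ingredient that is not purely formal.

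As an alternative route that avoids categoricity, one could instead prove that $\mathrm{ACF}$ admits quantifier elimination and then observe that any two models of $\mathrm{ACF}_0$ share the common substructure $\mathbb{Z}$, hence (a quantifier-free sentence being absolute, and all sentences being equivalent mod $\mathrm{ACF}$ to quantifier-free ones) satisfy the same sentences. Here the work shifts into the quantifier-elimination argument, which in turn relies on elementary elimination theory for polynomials over algebraically closed fields. Either way the model-theoretic skeleton is short and the content is classical field theory.
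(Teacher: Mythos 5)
Your proof is correct and is the standard textbook argument; note, though, that the paper does not actually supply a proof of this theorem at all — it is stated as a well-known background fact and the author immediately moves on (with a remark about not knowing the stronger Lefschetz principles of algebraic geometers). So there is nothing in the paper to compare against. Both routes you sketch are sound: the {\L}o\'s--Vaught route needs Steinitz's classification of algebraically closed fields by characteristic and transcendence degree (plus the elementary observations that algebraically closed fields are infinite and that an uncountable such field has transcendence degree equal to its cardinality), while the quantifier-elimination route needs elimination theory and the fact that $\mathbb{Z}$ embeds in every model of $\mathrm{ACF}_0$, so that quantifier-free sentences are decided uniformly. Either is perfectly adequate for the role this theorem plays in the paper, namely transferring first-order statements about $\C$-varieties to other algebraically closed fields of characteristic $0$.
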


I have called it the weak Lefschetz principle here, because apparently algebraic-geometers have stronger versions, but I am unfamiliar with these.

\subsection{The pro-\'etale cover}

Let $X$ be a $k$-curve (i.e. a 1-dimensional $k$-variety), fix an algebraic closure $k \hookrightarrow \bar{k}$, and let $\bar{X}:=X \times_k \bar{k}$. Let $\Fet(\bar{X})$ be the category of finite \'etale covers of $\bar{X}$, and $\Feg(\bar{X})$ the  category of finite Galois \'etale covers of $\bar{X}$. \newline

Fix a directed set $I$, indexing all finite \'etale covers of $\bar{X}$, and fix a directed system $(Y_i, \varphi_{j,i})$ where $\varphi_{j,i} : Y_j \ra Y_i$ for $i \leq j$. Define the \textit{pro-\'etale cover} of $\bar{X}$ to be
$$\hat{\bar{X}}:= \varprojlim_{I} Y_i.$$
and define the \textit{\'etale fundamental group} of $\bar{X}$ to be
$$\pi_1^{et}(\bar{X}):= \Aut(\hat{\bar{X}} / \bar{X}):= \varprojlim_i \Aut_{\Feg(\bar{X})}(Y / \bar{X}) ,$$
which comes equipped with its natural topology as a projective limit of finite discrete groups. 
%The last isomorphism follows from the following:

%\begin{pro}\cite[5.3.9]{szamuely2009galois}
%Let $X$ be a $k$-curve and $\varphi : Y \ra X$ a connected finite \'etale cover. Then there is a finite \'etale Galois cover $p: Z \ra X$ and a commutative diagram
%$$
%\xymatrix{
%Z \ar@{->}[dr]^f \ar@{->}[dd]^p &  \\
%&  Y \ar@{->}[dl]^ \varphi \\
% X &  
%}
%$$
%of finite \'etale covers. Moreover, every Galois cover $f': Z' \ra Y$ in $\Feg(\bar{X})$ factors through $f$.
%\end{pro}

\begin{rem}
Note that since we viewed $X$ as a $k$-curve and $k$ might not be algebraically closed, the \'etale fundamental group of $X$ (as in the general definition of \cite{szamuely2009galois} for example) may be much bigger than the \textit{`geometric etale fundamental group'} of $X$ (i.e. the \'etale fundamental group of $\bar{X}$) described here.
\end{rem}

Now we define another \textit{fibre functor} $\Fib^{et}_x$ from $\Fet(\bar{X})$ to $\Sets$
\begin{align*}
\Fib^{et}_x: \Fet(\bar{X}) & \lora \Sets \\
(p:Y \ra \bar{X}) & \longmapsto p^{-1}(x).
\end{align*}

This functor is pro-representable by a cover i.e. we have the analogue of \ref{fibrep}
\begin{thm}\cite[5.4.6]{szamuely2009galois}
Let $X$ be a $k$-curve, and $x \in X \times_k \bar{k} $. Then the functor $\Fib^{et}_x$ is pro-representable by a cover $\tilde{\bar{X}}_x$ i.e.
$$\Fib^{et}_x(Y) \cong \Hom(\hat{\bar{X}}_x , Y) := \varinjlim_{i}\Hom_{\Fet(\bar{X})}(Y_i, Y)$$
for every $Y\in \Fet(\bar{X})$.
\end{thm}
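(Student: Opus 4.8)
This is \cite[5.4.6]{szamuely2009galois}, and the plan is to run the standard pro-representability argument for Galois categories. The structural input I would invoke is that $\Fet(\bar X)$ is a Galois category: every finite \'etale cover is a finite disjoint union of connected ones, fibre products exist within the category, every connected cover $Y \to \bar X$ has a Galois closure $\tilde Y \to Y$ (a connected Galois cover dominating it), and $\Fib^{et}_x$ is exact and conservative with finite values. Over $\bar X$ these hold by the theory of \'etale covers of curves, so in a clean write-up I would simply cite \cite[Ch.\ 5]{szamuely2009galois} for them and spend the effort on the representing object, which is what gets used later.

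First I would build the pro-object. Running over the (cofiltered) system of connected finite \'etale Galois covers $Y_i$ equipped with base points $y_i$ above $x$, chosen so that the transition maps carry $y_j$ to $y_i$ --- possible because, given two pointed connected covers, a connected component of their fibre product through the pair of marked points dominates both, and its Galois closure dominates that --- I obtain a canonical point $\hat x = (y_i)_i$ of $\hat{\bar X}_x := \varprojlim_i Y_i$ lying over $x$, and the candidate natural transformation
$$\theta_Y : \varinjlim_i \Hom_{\Fet(\bar X)}(Y_i, Y) \lora \Fib^{et}_x(Y), \qquad [\,f : Y_i \to Y\,] \longmapsto f(y_i),$$
which is natural in $Y$ by construction. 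Since both sides convert finite disjoint unions into disjoint unions, it is then enough to prove $\theta_Y$ bijective for $Y$ connected.

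For surjectivity, given $y \in \Fib^{et}_x(Y)$ I would take the Galois closure $\tilde Y \to Y$, which is a connected Galois cover and hence occurs as some $Y_i$; precomposing $\tilde Y \to Y$ with a suitable element of $\Aut(\tilde Y / \bar X)$ --- using transitivity of that group on the fibre over $x$ together with surjectivity of $\tilde Y \to Y$ on fibres --- produces a morphism sending $y_i$ to $y$, so $y$ lies in the image of $\theta_Y$. For injectivity I would use rigidity of maps of connected covers: if $f : Y_i \to Y$ and $g : Y_j \to Y$ agree at the marked points, pull both back along transition maps to a common $Y_k$ dominating $Y_i$ and $Y_j$; the pullbacks $f', g' : Y_k \to Y$ then agree at $y_k$, and since $Y \to \bar X$ is finite \'etale --- hence separated and unramified --- the equalizer of $f'$ and $g'$ is clopen in the connected scheme $Y_k$ and nonempty, so $f' = g'$ and $[f] = [g]$ in the colimit. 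Finally I would record that the construction is independent of the choices of marked points up to canonical isomorphism.

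The main obstacle is not really in this argument, which is essentially formal once one is inside a Galois category, but in the prerequisites: cofilteredness of the marked-point system and, above all, the existence of Galois closures for connected \'etale covers of $\bar X$. Those are exactly the facts packaged into the statement that $\Fet(\bar X)$ is a Galois category, and the honest place where work would be needed (if one did not want to cite it) is in establishing them.
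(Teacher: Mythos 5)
The paper cites this result to \cite[5.4.6]{szamuely2009galois} without reproducing a proof, so there is no in-text argument to compare against; your write-up is a correct sketch of exactly the standard pro-representability argument that the cited reference (and any treatment via Galois categories) uses: build a cofiltered system of pointed connected Galois covers, define $\theta_Y$ by evaluation at the marked points, reduce to connected $Y$, get surjectivity from Galois closures plus transitivity of $\Aut(\tilde Y/\bar X)$ on fibres, and get injectivity from rigidity of morphisms between connected finite \'etale covers. You have also correctly flagged that the real content lies in the Galois-category axioms (existence of Galois closures, fibre products, exactness/conservativity of the fibre functor) rather than in the formal part of the argument; and your closing remark about independence of choices up to canonical isomorphism matches the paper's own remark immediately after the theorem that the pro-representing object depends on the choice of basepoint lift and is therefore only unique up to (non-canonical) isomorphism.
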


Note that the pro-representing object is not unique, since the maps in the system pro-representing $\Fib_x$ depend on a choice of basepoint lift. In particular, a point $(y_i)_I$ in the fibre above $x$, determines a unique system of maps between the system of covers i.e.
\begin{align*}
\varphi_{j,i} : Y_j & \ra Y_i \\
 y_{j} & \mapsto y_i.
\end{align*}

Finally we note that the geometric \'etale fundamental group does not change when considering extensions of algebraically closed fields.

\begin{pro}\cite[Remark 3.3]{milne1998lectures}
Let $K \subseteq L$ be an extension of algebraically closed fields of characteristic 0. Let $k \hookrightarrow K$, and $X$ be a $k$-curve. Then $\pi_1^{et}(X \times_k K) \cong \pi_1^{et}(X \times_{k} L)$.
\end{pro}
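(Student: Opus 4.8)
The plan is to deduce the statement from the basic invariance of the category of finite \'etale covers under extension of an algebraically closed base field, and to prove that invariance by a spreading-out argument whose only genuinely non-formal input is the deformation-invariance of finite \'etale covers in characteristic zero. First I would reduce: since $X\times_k L=(X\times_k K)\times_K L$, we may replace the $k$-curve $X$ by the $K$-curve $X\times_k K$ and $k$ by $K$, so from now on $k=K$ is algebraically closed and we must show $\pi_1^{et}(X)\cong\pi_1^{et}(X\times_k L)$ for $L\supseteq k$ algebraically closed of characteristic zero (we assume throughout, as is implicit, that $X$ is geometrically connected, so both groups are defined). Recall that $\pi_1^{et}(Z)$ is the automorphism group of the fibre functor $\Fib^{et}_z:\Fet(Z)\to\Sets$, and that an equivalence of Galois categories commuting with fibre functors induces an isomorphism of the associated profinite groups. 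Fix $x\in X(k)\subseteq X(L)$ (which exists since $k=\bar k$). Then it suffices to show that the base-change functor $F:\Fet(X)\to\Fet(X\times_k L)$, $Y\mapsto Y\times_k L$, is an equivalence of categories: it commutes with $\Fib^{et}_x$ up to canonical isomorphism, because a degree-$d$ finite \'etale cover has $d$ geometric points in its fibre over $x$ whether this is computed over $k$ or over $L$, compatibly with morphisms.

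For the equivalence, $F$ is faithful because $\Hom_X(Y,Y')$ is a finite set injecting into $\Hom_{X\times_k L}(Y\times_k L,Y'\times_k L)$. For full faithfulness and essential surjectivity one spreads out: a finite \'etale cover $Y'\to X\times_k L$, respectively a morphism of such covers, is of finite presentation, hence already defined over some finitely generated $k$-subalgebra $R$ of $L$; after replacing $R$ by a localization $R_f$ (still a finitely generated $k$-algebra, still a subring of $L$) we may assume $Y'_R\to X_R:=X\times_k\Spec R$ is finite \'etale (\cite{ega1964ements}). Since $\Spec R$ is a nonempty finite-type $k$-scheme over the algebraically closed field $k$, it carries a $k$-rational point $s$; write $Y'_s:=Y'_R\times_{R,s}k$, a finite \'etale cover of $X$. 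The key claim — which yields essential surjectivity when applied to covers, and, together with the analogous statement for the graphs of morphisms, full faithfulness — is that $Y'_s\times_k L\cong Y'$ as covers of $X\times_k L$.

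This claim is precisely the assertion that $Y'_R\to X_R$, restricted over two different geometric points of the connected base $\Spec R$ (namely $s$, and a geometric point above the generic point, whose residue field we may take inside $L$), gives isomorphic covers after base change to $L$; equivalently, that the specialization homomorphism on geometric \'etale fundamental groups along $\Spec R$ is an isomorphism. In characteristic zero this holds (SGA~1, Exp.~X): there is no wild ramification and the prime-to-characteristic part is everything. Concretely, when $k=\C$ one can see it topologically, joining the two points of $(\Spec R)(\C)$ by a path and using that a finite covering space of $X_\C\times(\Spec R)(\C)$ is locally constant along it by Ehresmann's theorem; the case of a general algebraically closed $k$ of characteristic zero then follows by the weak Lefschetz principle (\ref{wlp}), or one may cite SGA~1 directly.

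I expect this deformation-invariance step to be the only real obstacle: the reductions to $k$ algebraically closed and to an equivalence of Galois categories are formal, and the passage from $L$ down to a finitely generated subalgebra is routine limit algebra, but here one genuinely uses characteristic zero (in characteristic $p$ only the prime-to-$p$ quotient of $\pi_1^{et}$ would be invariant under such specialization). Granting the claim, $F$ is essentially surjective and fully faithful, hence an equivalence compatible with the fibre functors at $x$, and therefore $\pi_1^{et}(X\times_k K)\cong\pi_1^{et}(X\times_k L)$ as profinite groups, as required.
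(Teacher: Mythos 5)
Your proof is correct and is the standard algebro-geometric argument: reduce to $k=K$ algebraically closed, show the base-change functor $\Fet(X)\to\Fet(X\times_k L)$ is an equivalence of Galois categories compatible with a fibre functor at a $k$-point, and establish essential surjectivity and fullness by spreading out to a finitely generated $k$-subalgebra $R\subseteq L$ and specializing at a $k$-rational point of $\Spec R$. The paper, however, does not prove the result this way at all: it cites Milne's lecture notes and then, in a single sentence, indicates a model-theoretic route. That route is: a finite \'etale cover of $X\times_k K$ (together with its covering map and its automorphisms) is a definable set in the structure $K_F$ of \S\ref{natlang} for a suitable finitely generated subfield $F\subseteq K$; being a finite \'etale cover of a given degree is a first-order condition; and the weak Lefschetz principle \ref{wlp} (completeness of $\mathrm{ACF}_0$, equivalently model-completeness so that $K\preccurlyeq L$) transports this data between $K$ and $L$. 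Both arguments share the reduction to a finitely generated base; the difference is what effects the transfer. You invoke the specialization theorem for $\pi_1^{et}$ --- and rightly flag that, since $X$ is affine and hence not proper, the proper-smooth case of SGA~1, Exp.~X does not apply verbatim; your fallback through $\C$ (path-lifting for the covering space, rather than Ehresmann per se, which concerns proper submersions) plus the Lefschetz principle is the correct repair. The paper replaces that geometric input entirely by definability plus completeness of $\mathrm{ACF}_0$. One thing your proof does better than the paper's sketch is handle the nontrivial direction explicitly: the paper's sentence, as written, only addresses extending covers over $K$ to covers over $L$, whereas your specialization at a $k$-rational point of $\Spec R$ is precisely what gives essential surjectivity. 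Your route is more self-contained as a classical proof; the paper's is terser and leans on the definability apparatus it has already built.
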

The above may be seen by embedding the finite \'etale covers in a suitable algebraically closed field of characteristic 0 (i.e. for a finite \'etale cover $p:Y \ra X \times_k K$ let $F$ be the minimal field of definition of the cover and use the structure $K_F$ of \ref{natlang}) and applying the weak Lefschetz principle \ref{wlp}, since finite \'etale morphisms of finite covers are definable.

\subsection{Galois action on the pro-\'etale cover}
Let $X$ be as above, let $K$ be a field containing all the fields of definition of finite \'etale covers $p:Y \ra \bar{X}$, and let $\hat{p}$ denote the projection map from the pro-\'etale cover to $\bar{X}$
$$\hat{p} : \hat{\bar{X}} \ra X.$$
If $x \in \bar{X}$, then the fibre $\hat{p}(x)$ is a left $\pi_1^{et}(\bar{X})$-torsor. Fix an algebraic closure $K \hookrightarrow \bar{K}$. Then there is also an action of the absolute Galois group of $K$, $G_K$ on the fibre $\hat{p}^{-1}(x)$. This can be seen model-theoretically as follows:

Consider the structure
$$\bar{K}_K := \langle \bar{K}, + , \cdot , \{ c_i\}_{i \in K} \rangle$$
of \S \ref{natlang}. For each finite \'etale cover $p:Y \ra \bar{X}$, fix an embedding of the $\bar{K}$-points of $Y$, $Y(\bar{K})$,
into $\bar{K}_K$ as a definable set (along with the covering maps). 
Let $L$ be a field containing $K$, and the coordinates of the point $x$. Then $\Aut(\bar{L}_L) \cong G_L$, and the fibre $\hat{p}(x)$ is a `pro-definable set'. \newline

Since the fibre $\hat{p}^{-1}(x)$ is a left $\pi_1^{et}(\bar{X})$-torsor, for $\sigma \in G_L$ and $y \in \hat{p}^{-1}(x)$ there is $g_{\sigma} \in \pi_1^{et}(\bar{X})$ such that $y \sigma = g_{\sigma}(y)$. Since the elements of $\Aut_{\Fet(\bar{X})}(Y / \bar{X})$ are defined over $L$, the actions commute and we get a continuous homomorphism
\begin{align*}
\rho_{(x,y)} : G_L & \ra \pi_1^{et}(\bar{X}) \\
\sigma & \mapsto g_{\sigma}.
\end{align*}

\begin{rem}
It is important to note that given a $k$-curve as above, to get an action of Galois on fibres in the pro-\'etale cover, it is unnecessary to extend the base field to include the fields of definition of all finite \'etale covers. However, this is possibly most naturally seen scheme-theoretically. If $L$ is a field containing $k$, and the coordinates of $x$, then in general (as shown in \ref{secgalpro}), you will get a continuous cocycle from $G_L$ to $\pi_1^{et}(\bar{X}) $. To see this model-theoretically, we can use the same setting as a above, but just consider the action of $G_L$ on the structure $\bar{K}_K$.
\end{rem}

\section{Quotients of the upper half plane}\label{appmod}

In this section, we define Riemann surfaces to be connected.

\begin{dfn}
Define the (complex) \textit{upper half plane}
$$\H:= \{ z \in \C \ : \  \Im(z) >0 \},$$
and the \textit{extended upper half plane} 
$$\H^*:= \H \cup \P^1(\Q)=\H \cup \Q \cup \infty.$$
The points of $\P^1(\Q)$ are called \textit{cusps}.
We define a (Hausdorff) topology on $\H^*$ as follows: For $z \in \H$ just use a usual complex open neighbourhood contained in $\H$. For the cusp $\infty$, take the sets
$$\{ z  \in \H \ : \ \Im(z) > \epsilon \} \cup \{ \infty \}$$
for every $\epsilon >0$ as a basis of open sets.  For a cusp $c \neq \infty$ take the interiors of circles in $\H$ tangent to the real axis at $c$, along with $c$ as a basis of open neighbourhoods. The extended upper half plane is connected in this topology.
\end{dfn}

$\psl2(\R)$ acts as the group of holomorphic automorphisms on the upper half plane, and also acts transitively on $\R\cup \{ \infty \}$. Define the \textit{modular group} $\Gamma:= \psl2(\Z)$. We now collect together some results regarding quotients of the extended upper half plane by discrete subgroups of $\psl2(\R)$. The standard reference is  \cite{shimura1971introduction}.

\begin{thm} \cite[\S 1.4]{shimura1971introduction}
$\Gamma \backslash \H^*$ is compact.
\end{thm}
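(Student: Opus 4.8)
The plan is to exhibit a compact subset of $\H^*$ that surjects onto $\Gamma\backslash\H^*$ under the quotient map, and then invoke continuity of the projection. Let $\pi:\H^*\to\Gamma\backslash\H^*$ be the quotient map, which is continuous for the quotient topology. Set
$$\cF:=\{\,z\in\H\ :\ |z|\geq 1,\ |\Re(z)|\leq\tfrac12\,\},\qquad \cF^*:=\cF\cup\{\infty\}.$$
I would prove (i) that $\cF^*$ is compact in $\H^*$, and (ii) that $\pi(\cF^*)=\Gamma\backslash\H^*$. Together these finish the proof, since $\Gamma\backslash\H^*=\pi(\cF^*)$ is then the continuous image of a compact set, hence compact.

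For (i): let $\{U_\alpha\}$ be an open cover of $\cF^*$. Some $U_{\alpha_0}$ contains $\infty$, hence contains a basic neighbourhood $\{z\in\H:\Im(z)>\varepsilon\}\cup\{\infty\}$ for some $\varepsilon>0$. Then $\cF\setminus U_{\alpha_0}\subseteq\{z\in\cF:\Im(z)\leq\varepsilon\}$, which is a closed and bounded subset of $\C$ and therefore compact; finitely many of the $U_\alpha$ cover it, and adjoining $U_{\alpha_0}$ gives a finite subcover of $\cF^*$. For (ii) it suffices to show every $\Gamma$-orbit on $\H^*$ meets $\cF^*$. For the cusps: any $p/q\in\P^1(\Q)$ in lowest terms (or $\infty$ itself) can be written as $\gamma(\infty)$ for some $\gamma=\left(\begin{smallmatrix}p&b\\ q&d\end{smallmatrix}\right)\in\sl2(\Z)$ with $pd-bq=1$, using that $\sl2(\Z)$ acts transitively on primitive integer vectors. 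For points of $\H$: this is the classical reduction argument — using $\Im(\gamma z)=\Im(z)/|cz+d|^2$ together with discreteness of the lattice $\Z+\Z z$ (only finitely many $(c,d)$ give $|cz+d|$ below a fixed bound), choose $\gamma_0\in\Gamma$ maximising $\Im(\gamma z)$; composing with a power of $T:z\mapsto z+1$ we may assume $|\Re(\gamma_0 z)|\leq\tfrac12$, and if $|\gamma_0 z|<1$ then $S:z\mapsto-1/z$ would strictly increase the imaginary part, contradicting maximality. Hence $\gamma_0 z\in\cF$.

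The main obstacle is step (ii), and within it the reduction theory for points of $\H$: one must justify that $\sup_{\gamma\in\Gamma}\Im(\gamma z)$ is actually attained (equivalently, discreteness of $\Z+\Z z$, or finiteness of the set of $(c,d)$ with $|cz+d|$ bounded), and then run the alternation between $S$ and $T$ to land inside the standard domain. Step (i) is routine once the topology at $\infty$ is unwound, and the cusp statement is immediate. One could of course instead simply cite \cite[\S1.4]{shimura1971introduction}, but the fundamental-domain argument above is self-contained and also yields the stronger fact that $\cF^*$ is a set of orbit representatives.
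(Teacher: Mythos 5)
Your argument is correct and self-contained. The paper itself offers no proof — it simply cites \cite[\S1.4]{shimura1971introduction} — and Shimura's treatment there is precisely the fundamental-domain reduction you reproduce: show the closed region $\cF=\{|z|\ge 1,\,|\Re z|\le\tfrac12\}$ together with $\infty$ is a set of orbit representatives, check it is compact in the $\H^*$-topology, and conclude by continuity of the quotient map. So you have written out the proof that the paper delegates to its reference, and it is the standard one. The only places a reader might pause are exactly the ones you flagged: (a) the finiteness of $\{(c,d)\in\Z^2:|cz+d|\le R\}$, which you correctly attribute to discreteness of the lattice $\Z+\Z z$, and (b) the fact that composing the maximizing $\gamma_0$ with a power of $T$ preserves maximality of $\Im$, which holds because $T$ leaves the imaginary part unchanged. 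Both are handled correctly. Note also that compactness of $\pi(\cF^*)$ does not require $\Gamma\backslash\H^*$ to be Hausdorff, so you have correctly avoided needing the (separate) Hausdorff assertion to establish compactness.
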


\begin{thm} \cite[\S 1.5]{shimura1971introduction}
If $G$ is a discrete subgroup of $\psl2(\R)$ then $G \backslash \H^*$ is a Riemann surface.
\end{thm}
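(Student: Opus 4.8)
The plan is to build the atlas of $G\backslash\H^*$ explicitly, in three families of charts (ordinary points, elliptic points, cusps), and then to check the two topological hypotheses (Hausdorff, second countable) and the holomorphic compatibility of the charts. Throughout I use that a discrete subgroup of $\psl2(\R)$ acts properly discontinuously on $\H$, that the stabiliser $G_z$ of any $z\in\H$ is finite cyclic, and — since in the applications of this thesis $G$ is commensurable with $\Gamma=\psl2(\Z)$ — that every point of $\P^1(\Q)$ is a cusp of $G$ in the sense that its stabiliser contains a parabolic element (see \cite[\S1.3--1.5]{shimura1971introduction}; for a general discrete $G$ one would instead adjoin only the cusps of $G$).

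First I would treat the points of $\H$. Let $p:\H^*\to G\backslash\H^*$ be the quotient map. If $z\in\H$ has trivial stabiliser, proper discontinuity gives a neighbourhood $U\ni z$ with $gU\cap U=\emptyset$ for $g\neq 1$, so $p|_U$ is a homeomorphism onto an open set and I transport the standard complex chart of $\H$. If $G_z$ is cyclic of order $e>1$, I conjugate by an element of $\psl2(\R)$ sending $\H$ to the unit disk $\Delta$ and $z$ to $0$; then $G_z$ becomes the group $\mu_e$ of rotations by $e$-th roots of unity acting on a small disk $\Delta_r$, the $G_z$-orbits near $0$ are fibres of $w\mapsto w^e$, and (choosing $\Delta_r$ small enough that it meets no other orbit) the induced map $\Delta_r/\mu_e\to G\backslash\H^*$ composed with $w\mapsto w^e$ provides a chart. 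Next, the cusps: given a cusp $c$, choose $\alpha\in\psl2(\R)$ with $\alpha c=\infty$; the stabiliser of $\infty$ in $\alpha G\alpha^{-1}$ is generated (modulo torsion, which is trivial at $\infty$) by a translation $z\mapsto z+h$ with $h>0$, so on a neighbourhood $\{\Im z>M\}\cup\{\infty\}$ the orbit space is parametrised by $q=e^{2\pi i z/h}$, with $c\mapsto 0$; shrinking $M$ so the neighbourhood meets no other $G$-orbit, this gives a homeomorphism onto a disk about $0$, which I take as the chart at $c$.

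With the charts in hand, I would verify that the transition maps are holomorphic. Overlaps of two ordinary charts are clear. For an ordinary chart overlapping an elliptic chart, the transition is a branch of $w\mapsto w^{1/e}$ composed with a Möbius map, holomorphic away from the branch point (which is not in the overlap), together with the observation that $w\mapsto w^e$ is an open holomorphic map so the transition in the other direction is holomorphic too. For an ordinary chart overlapping a cusp chart the transition involves $z\mapsto e^{2\pi i z/h}$ and its local inverse $\log$, holomorphic on the punctured part; holomorphy across $0$ follows since the map is continuous there and holomorphic on the punctured disk, so Riemann's removable singularity theorem applies. (Two elliptic or two cusp charts never overlap once the neighbourhoods are chosen small.)

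Finally I would establish that $G\backslash\H^*$ is Hausdorff and second countable, and note it is connected since $\H^*$ is. Second countability is inherited from $\H^*$. For the Hausdorff property the cases to check are: two $G$-inequivalent interior points (immediate from proper discontinuity), an interior point and a cusp (a cusp has arbitrarily small neighbourhoods of the horoball type whose $G$-translates avoid any fixed compact set of $\H$), and two $G$-inequivalent cusps (the key lemma, \cite[\S1.7]{shimura1971introduction}: a cusp $c$ has a neighbourhood $U$ such that $gU\cap U\neq\emptyset$ implies $gc=c$, so inequivalent cusps have disjoint saturated neighbourhoods). \textbf{The main obstacle} is precisely this last point together with the verification that the cusp charts are genuine homeomorphisms: this is where one must use the fine structure of the $\H^*$-topology near cusps and the discreteness of $G$ (in particular that a sufficiently deep horoball neighbourhood of a cusp contains no elliptic points and no other cusps in its image), rather than just proper discontinuity on $\H$.
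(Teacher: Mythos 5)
The paper gives no proof of this statement; it is a bare citation to Shimura \cite[\S1.5]{shimura1971introduction}, and your proposal reproduces the standard argument from that reference (ordinary/elliptic/cusp charts, power maps and $q$-expansion at the cusp, Hausdorffness via proper discontinuity and the saturated-horoball lemma). Two small remarks. You correctly flag that for a \emph{general} discrete $G \le \psl2(\R)$ one should adjoin only the cusps of $G$ rather than all of $\P^1(\Q)$, which is in fact a looseness in the paper's statement (its $\H^*$ is defined once and for all as $\H\cup\P^1(\Q)$, and its definition of ``cusp'' is also stated with $\alpha\in\psl2(\R)$ rather than $\alpha\in\Gamma'$); your restriction to $G$ commensurable with $\Gamma$, which is all the thesis ever uses, resolves this. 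And in the Hausdorff step you should make explicit that the horoball neighbourhoods you choose at a cusp are precisely invariant under the full stabiliser of that cusp (not just the parabolic part), which is what lets you conclude that the chart $q=e^{2\pi i z/h}$ is a homeomorphism onto a disk rather than merely a continuous bijection.
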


\begin{thm} \cite[1.36]{shimura1971introduction}
Let $G$ and $G'$ be discrete subgroups of $\psl2(\R)$ with $G'$ of finite index in $G$. Then the natural map
$$p: G' \backslash \H^* \ra G \backslash \H^*$$
is holomorphic.
\end{thm}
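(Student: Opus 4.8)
The plan is to verify holomorphicity locally, working chart-by-chart with the explicit complex-analytic coordinates that define the Riemann surface structures on $G'\backslash\H^*$ and $G\backslash\H^*$ (the charts produced in the construction underlying \cite[\S1.5]{shimura1971introduction}, quoted above). First I would observe that $p$ is well-defined and continuous: since $G'\subseteq G$, every $G'$-orbit in $\H^*$ is contained in a $G$-orbit, so $p$ is induced by the identity map of $\H^*$, and continuity is immediate from the universal property of the quotient topology. Holomorphicity being a local condition, it then suffices to fix $\bar z = G'z \in G'\backslash\H^*$ with image $\bar w = Gz$, choose appropriate charts around $\bar z$ and $\bar w$, and check that $p$ is holomorphic in those coordinates.

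The charts come in three types, according to whether $z$ is a non-elliptic point of $\H$, an elliptic point, or a cusp. If $z \in \H$ is not elliptic for $G$, proper discontinuity of the action gives a neighbourhood $U$ of $z$ in $\H$ whose $G$-translates (hence also its $G'$-translates) are pairwise disjoint; $U$ itself serves as a chart at both $\bar z$ and $\bar w$, and in these coordinates $p$ is the identity map, so it is holomorphic. If $z$ is elliptic, set $e' = |\mathrm{Stab}_{G'}(z)|$ and $e = |\mathrm{Stab}_G(z)|$, so that $e'\mid e$; choosing a local coordinate $\lambda$ vanishing at $z$ in which $\mathrm{Stab}_G(z)$ acts by rotation, the chart at $\bar z$ is $\lambda \mapsto \lambda^{e'}$ and the chart at $\bar w$ is $\lambda\mapsto\lambda^{e}$, so in these coordinates $p$ is the power map $t\mapsto t^{e/e'}$, which is holomorphic.

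The only case needing genuine care is a cusp $c \in \P^1(\Q)$. After conjugating $c$ to $\infty$ by some $\alpha\in\psl2(\R)$, the stabilizer $\mathrm{Stab}_G(\infty)$ is generated (modulo the trivialities of $\psl2$) by a translation $\tau\mapsto\tau+h$, and $\mathrm{Stab}_{G'}(\infty)=\mathrm{Stab}_G(\infty)\cap G'$ is generated by $\tau\mapsto\tau+mh$ where $m=[\mathrm{Stab}_G(\infty):\mathrm{Stab}_{G'}(\infty)]$ is a positive integer dividing $[G:G']$ — this is the one place the finite-index hypothesis enters. The standard local coordinate at the cusp is $q=e^{2\pi i\tau/h}$ for $G\backslash\H^*$ and $q'=e^{2\pi i\tau/(mh)}$ for $G'\backslash\H^*$, and since $q=(q')^{m}$ the map $p$ is again a power map $t\mapsto t^{m}$ in these coordinates, hence holomorphic. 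The main obstacle is therefore purely bookkeeping at the cusps: one must confirm that these $q$-coordinates are exactly the analytic charts used to put the Riemann surface structure on the quotients, and that finite index forces the ramification index $m$ at each cusp to be a genuine positive integer. With the three cases settled, holomorphicity of $p$ at every point follows, completing the proof.
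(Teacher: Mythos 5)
The paper does not prove this statement; it is simply cited to Shimura's book, so there is no in-text proof to compare against. Your argument --- reducing to the three local models (unramified interior points, elliptic points, cusps) and checking that $p$ becomes a power map $t\mapsto t^{e/e'}$ or $t\mapsto t^{m}$ in the standard charts, with the finite-index hypothesis entering only to guarantee that $\mathrm{Stab}_{G'}(c)$ is a nontrivial finite-index subgroup of the infinite cyclic group $\mathrm{Stab}_G(c)$ at each cusp $c$ (so that $G'\backslash\H^*$ actually carries a $q'$-chart there) --- is correct and is essentially the argument given in the cited source.
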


From the above two theorems we have the following:

\begin{cor}
If $\Gamma'$ is a finite index subgroup of $\Gamma$ then $\Gamma' \backslash \H^*$ is a compact Riemann surface.
\end{cor}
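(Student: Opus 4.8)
The plan is to deduce this straightforwardly from the three cited theorems of Shimura. First I would observe that $\Gamma'$, being a subgroup of $\Gamma = \psl2(\Z)$, is itself a discrete subgroup of $\psl2(\R)$, so the theorem cited from \cite[\S 1.5]{shimura1971introduction} applies and gives that $\Gamma' \backslash \H^*$ is a Riemann surface; in particular it is a connected Hausdorff space. Next, since $\Gamma' \le \Gamma$ with $[\Gamma : \Gamma'] < \infty$, the theorem cited from \cite[1.36]{shimura1971introduction} gives that the natural projection $p : \Gamma' \backslash \H^* \ra \Gamma \backslash \H^*$ induced by the inclusion of groups is holomorphic.

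It then remains only to show that $\Gamma' \backslash \H^*$ is compact, and here I would use the compactness of $\Gamma \backslash \H^*$ from \cite[\S 1.4]{shimura1971introduction}. Fixing coset representatives $\gamma_1,\dots,\gamma_n$ for $\Gamma'$ in $\Gamma$, where $n = [\Gamma : \Gamma']$, the map $p$ is surjective with fibres of cardinality at most $n$: concretely $p^{-1}(\Gamma z) = \{\Gamma' \gamma_i z : 1 \le i \le n\}$. To conclude compactness I would argue directly. Since $\H^*$ is locally compact and the quotient map $\pi : \H^* \ra \Gamma \backslash \H^*$ is a continuous open surjection onto a compact space, there is a compact set $C \subseteq \H^*$ with $\Gamma C = \H^*$ (cover $\Gamma \backslash \H^*$ by images under $\pi$ of compact neighbourhoods and pass to a finite subcover). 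Then $C' := \gamma_1 C \cup \dots \cup \gamma_n C$ is compact and $\Gamma' C' = \Gamma C = \H^*$, so the quotient map $\H^* \ra \Gamma' \backslash \H^*$ carries the compact set $C'$ onto $\Gamma' \backslash \H^*$, which is therefore compact.

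Alternatively, and more in the spirit of the surrounding appendix on branched covers, one can note that a non-constant holomorphic map of Riemann surfaces is open, and that $p$, being finite-to-one, is a finite branched covering map of $\Gamma \backslash \H^*$, hence proper (a finite branched cover is both open and closed); properness then yields that $\Gamma' \backslash \H^* = p^{-1}(\Gamma \backslash \H^*)$ is compact. Either way, combining this with the fact that $\Gamma' \backslash \H^*$ is a Riemann surface gives the corollary. I do not anticipate any genuine obstacle: the only point needing a moment's care is the justification that $p$ is proper, equivalently the direct lifting argument for compactness of the source, and everything else is immediate from the cited results.
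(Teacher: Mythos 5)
Your ``alternative'' argument at the end --- noting that $p$ is a finite branched covering map, hence proper (open and closed with finite fibres), so that $\Gamma'\backslash\H^* = p^{-1}(\Gamma\backslash\H^*)$ is compact --- is precisely the paper's one-line proof, and it is correct given the surrounding appendix material on Riemann surfaces and branched covers.

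Your primary argument, however, contains a genuine error: $\H^*$ is \emph{not} locally compact. The cusps of $\P^1(\Q)$ have no compact neighbourhood. Every neighbourhood of $\infty$ contains some $U_\epsilon = \{z : \Im(z) > \epsilon\} \cup \{\infty\}$, and the closure of $U_\epsilon$ contains the closed discrete infinite set $\{n + 2\epsilon i : n \in \Z\}$, which has no limit point in $\H^*$ (it does not tend to $\infty$ since the imaginary parts are bounded, and it is unbounded so cannot accumulate at any point of $\H$ or at any rational cusp, whose horoball neighbourhoods are bounded). Consequently, no closed neighbourhood of $\infty$ is compact. Your plan to ``cover $\Gamma\backslash\H^*$ by images under $\pi$ of compact neighbourhoods'' therefore breaks down exactly at the cusps: the cusp of $\Gamma\backslash\H^*$ cannot lie in the interior of any $\pi(K)$ with $K$ a compact neighbourhood of a point of $\H^*$, because no such $K$ exists above the cusps, and any compact $K \subset \H$ has $\pi(K)$ missing the cusp. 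It is true that a compact $C \subset \H^*$ with $\Gamma C = \H^*$ exists (e.g.\ the closure of the standard fundamental domain together with $\infty$), but extracting this is part of the content of the cited compactness theorem \cite[\S 1.4]{shimura1971introduction}, not a consequence of a general local-compactness argument, so invoking it here would be circular. Stick to the branched-cover/properness route.
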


\begin{proof}
The natural map
$$p: \Gamma' \backslash \H^* \ra \Gamma \backslash \H^*$$
is a finite branched covering map and is therefore proper.
\end{proof}

So since $\Gamma \backslash \H^*$ is a compact Riemann surface, and is therefore algebraic, we may produce many more algebraic curves by quotienting $\H^*$ by finite index subgroups of $\Gamma$ to produce finite branched covers of $\Gamma \backslash \H^*$. Since these algebraic curves arose from geometry, they carry extra information. Of particular interest are those which arise by quotienting by certain arithmetic subgroups of the modular group $\Gamma$.

\subsection{Congruence subgroups of $\Gamma$}

Let  $\Gamma(N)$ be the principal congruence subgroup of level $N$. i.e.
$$\Gamma(N)=\left\{ \gamma =\begin{pmatrix} a&b\\ c&d \end{pmatrix} \in \Gamma \ \ | \ \ a \equiv d \equiv 1  \textrm{ and } b \equiv c \equiv 0 \mod N \right\},$$
and let
$$\Gamma_0(N)=\left\{ \gamma =\begin{pmatrix} a&b\\ c&d \end{pmatrix} \in \Gamma \ \ | \ \  c \equiv 0 \mod N \right\}.$$
Define the affine modular curves
$$Y_0(N):= \Gamma_0(N) \backslash \H \ \ , \ \ Y(N):= \Gamma(N) \backslash \H.$$

They can be compactified by adding in `cusps' i.e.
$$X_0(N):= \Gamma_0(N) \backslash \H \cup \P^1(\Q) \ \ , \ \ X_0(N):= \Gamma_0(N) \backslash \H \cup \P^1(\Q).$$
These compactified modular curves are compact Riemann surfaces and therefore are algebraic. However the good thing about quotienting by a congruence subgroup of $\Gamma$ is that you get a model over something nice. For example $X_0(N)$ has a model $X_0(N)_{\Q}$ over $\Q$ (via the modular polynomial $\Phi_N$ - see \cite[\S 7]{milnemodular}) and $X(N)$ has a model over $\Q(\mu_N)$. Modular curves parametrise elliptic curves with some extra torsion data, and in particular they are the solutions to various moduli problems. See \cite[\S 8]{milnemodular} for a discussion of moduli problems.

%$$\pi_1^{et}(\G_m(\C) ; 1 ) \cong T(\G_m(\C)).$$

%In the case of an elliptic curve $E / K$, we have the following:
%$$\pi_1^{et}(E(\C) ; 0 ) \cong \varprojlim_N E[N] =: T(E) .$$
%as Galois modules. For the path spaces , if we choose $x  \in E(K)$ then we have
%$$\pi_1^{et}(E(\C) ; 0 , x ) \cong \varprojlim_N \{x / N \}$$

\section{Tate modules}\label{app tate}

Define $\hat{\Z}$ to be the limit inverse of additive groups
$$\hat{\Z}:=\varprojlim_n \Z / n \Z$$
(with the maps being reduction mod $n$). Being the inverse limit of finite groups, it is a profinite group i.e. a Hausdorff, compact and totally disconnected topological group (the topology is inherited from the product topology on $\prod \Z / n \Z$ where each $\Z / n \Z$ has the discrete topology). It can be seen as a `horizontal' product of `vertical' things i.e.
$$\hat{\Z} \cong \prod_l \Z_l$$
where the isomorphism is as topological rings and the `vertical' $\Z_l$ are the $l$-adic integers
$$\Z_l:= \varprojlim_n \Z / l^n \Z.$$

In profinite groups, being open is equivalent to being closed and of finite index. Sets of the form $\pi_n^{-1}(U_n)$ form a basis of open sets (where $U_n$ is open in $\Z / n \Z$) and where $\pi_n$ is the projection onto the $n$'th factor i.e. reduction mod $n$.
Closed subgroups of $\Z_l$ are $0$ and $l^n \Z_l$ and so the open subgroups of $\Z_l$ are all of the form $l^n \Z_l$ for some $n$.

For the multiplicative group $\C^{\times}$ we define the $l$-adic Tate module 
$$T_l(\C^{\times}) := \varprojlim_n  \mu_{l^n}$$
where $\mu_{l^n}$ are the $l^n$th roots of unity. It is a free $\Z_l$-module of rank 1. More concretely, as a set we have
$$T_l(\C^{\times})= \{ (a_1,a_2,...) \  | \ a_i^{l^i}=1 , \ a_{i+1}^{l}=a_i  \}.$$
We also define the Tate module
$$T(\C^{\times}):= \varprojlim_N \mu_N,$$
which is a free $\hat{\Z}$-module of rank 1. \newline

 Similarly, for an abelian variety $A$ we define the $l$-adic Tate module
$$T_l(A):= \varprojlim_n(A[l^n])$$
where $A[N]$ is the $N$-torsion, and the Tate module
$$T(A):= \varprojlim_N A[N].$$

\chapter{Quasiminimal excellence}\label{app quasi}

The aim of this chapter, is to give an exposition of some of the background regarding the concept of quasiminimal excellence, which is the main tool used in demonstrating the categoricity results of this thesis. We assume the reader has a basic knowledge of model theory, and is comfortable with the following notions: Formula, (many-sorted) structure, theory, model, type (denoted $\tp(x)$), quantifier free type ($\qftp(x)$), quantifier elimination, model theoretic definable closure ($\dcl(x)$), model-theoretic algebraic closure ($\acl(x)$), saturation,  homogeneity, the Lowenheim-Skolem theorem of first-order logic, and the compactness theorem of first-order logic. Marker's book \cite{marker2002model} is the standard reference for these notions, and is very accessible to algebraic geometers.

\section{The infintary logic $\cL_{\omega_1, \omega}$}

The logic $\cL_{\omega_1,\omega}$, is the most basic infinitary extension of first-order logic. In the language of first-order logic you are only allowed to form finite conjunctions and disjunctions, whereas in $\cL_{\omega_1,\omega}$ you can form countable ones. One of the main differences with first- order logic is the following:

\begin{pro}
The compactness and upward Lowenheim-Skolem theorems fail in $\cL_{\omega_1,\omega}$.
\end{pro}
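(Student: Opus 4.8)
The plan is to exhibit explicit counterexamples, one for each of the two failures. The statements to prove are that in $\cL_{\omega_1,\omega}$ neither the compactness theorem nor the upward L\"owenheim--Skolem theorem holds, so it suffices to produce, in each case, a single consistent $\cL_{\omega_1,\omega}$-theory (or sentence) witnessing the failure.

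For \emph{compactness}, first I would work in a language with one constant symbol $c$ and countably many constant symbols $c_n$, $n\in\N$ (or a single unary predicate; constants are cleanest). Consider the set of sentences $\Sigma = \{\, c \neq c_n : n \in \N \,\} \cup \{\, \bigvee_{n\in\N} c = c_n \,\}$. Every finite subset $\Sigma_0 \subseteq \Sigma$ is satisfiable: $\Sigma_0$ mentions only finitely many of the $c_n$, say those with index in a finite set $S$, so interpret $c$ as $c_n$ for any $n\notin S$ and the single infinitary disjunction (if it lies in $\Sigma_0$) is still satisfied while the finitely many inequations hold. But $\Sigma$ itself has no model: the infinitary disjunction forces $c = c_n$ for \emph{some} $n$, contradicting the corresponding inequation $c\neq c_n$. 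Hence finite satisfiability does not imply satisfiability, and compactness fails. I would remark that the essential point is precisely the new expressive power: the single sentence $\bigvee_{n} c = c_n$ is not equivalent to any finite disjunction, so the usual first-order compactness proof (via ultraproducts or Henkin constants) has no analogue.

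For \emph{upward L\"owenheim--Skolem}, the key observation is that $\cL_{\omega_1,\omega}$ can pin down countability. Work in the language of one binary relation $<$ (or, more simply, just equality plus a countable supply of constants $c_n$). Let $\varphi$ be the conjunction of the first-order axioms for a linear order with a least element $c_0$ and a successor function, together with the infinitary sentence $\forall x\, \bigvee_{n\in\N} x = c_n$ asserting that every element is one of the $c_n$. Then $\varphi$ has a countably infinite model (the standard order $(\N,<)$ with $c_n$ interpreted as $n$), but $\varphi$ has no model of cardinality $\aleph_1$ or larger, since the sentence $\forall x\,\bigvee_n x = c_n$ literally asserts that the universe is exhausted by a countable set of named elements. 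Thus a satisfiable $\cL_{\omega_1,\omega}$-sentence can have an infinite model yet no uncountable model, contradicting upward L\"owenheim--Skolem. (An even more economical version: the sentence $\exists x_0 \exists x_1 \cdots$ is not available, but $\forall x\, \bigvee_{n} x = c_n$ alone, over the empty theory, already has arbitrarily large finite models removed only by adding that the $c_n$ be distinct; to kill all uncountable models one just needs the countable disjunction.)

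I do not anticipate a genuine obstacle here: both counterexamples are standard and elementary, and the only care needed is (i) to check that each finite fragment in the compactness example really is satisfiable --- which is immediate since a finite fragment involves only finitely many constants --- and (ii) to be precise that the sentence $\forall x\,\bigvee_{n\in\N} x = c_n$ is a legitimate $\cL_{\omega_1,\omega}$-formula, i.e.\ a countable disjunction of atomic formulas quantified by a single universal quantifier, which it is. The main conceptual point I would emphasize in the write-up is that both failures trace back to the same source: a countable disjunction can express "being one of countably many specified things," a property that is not first-order definable, and it is exactly this property that simultaneously breaks the finite-approximation argument behind compactness and the elementary-extension argument behind upward L\"owenheim--Skolem.
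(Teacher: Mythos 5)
Your proof is correct and takes essentially the same approach as the paper: both exploit the countable disjunction $\bigvee_{n<\omega} x = c_n$ to pin down the universe, using it once to kill all uncountable models (so a countably infinite model cannot be enlarged, refuting upward L\"owenheim--Skolem) and once alongside the inequations $c \neq c_n$ to produce a finitely satisfiable but unsatisfiable set (refuting compactness). The linear-order scaffolding in your L\"owenheim--Skolem example is unnecessary, as you yourself note; the paper avoids it by working with bare constants.
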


\begin{proof}
For upward LS, consider a language with constants $c_0,c_1,c_2,...,c_{\omega}$. Then the sentence
$$\forall x \bigvee_{n< \omega} \left( x=c_n \right)$$
has a model of power $\omega$ but no uncountable model. For compactness, note that every finite subset of the set of sentences
$$\forall x \bigvee_{n< \omega} \left( x=c_n \right) \bigcup_{n< \omega} \{c_{\omega} \neq c_n \}$$
has a model but the whole thing does not.
\end{proof}

\subsection{Strong Minimality}

Quasiminimal excellence is a generalisation of strong minimality to infinitary logics, so we review this first. \newline

Let $T$ be a complete theory with infinite models, in a countable first-order language. Strong minimality is a way of showing that a first-order theory is categorical in uncountable cardinalities.

\begin{dfn}
Let $\cM$ and $\cN$ be structures.

\noindent A \textit{finite partial monomorphism} between $\cM$ and $\cN$ is a pair of (non-empty) tuples $(\bar{m},\bar{n})$ such that $\qftp_{\cM}(\bar{m})=\qftp_{\cN}(\bar{n})$, or equivalently if $\bar{m}$ and $\bar{n}$ generate isomorphic substructures.

\noindent A map $f:A \ra N$ is called a \textit{partial elementary monomorphism} if
$$\cM \models \phi(\bar{a}) \Lra \cN \models \phi(f(\bar{a}))$$
for all finite tuples $\bar{a} \subseteq A$. A partial elementary monomorphism from $A \subseteq M$ onto $B \subseteq N$ is called a \textit{partial elementary isomorphism} between $A$ and $B$.
\end{dfn}

\begin{pro}[Uniqueness of Closure]
Any elementary isomorphism between $A \subseteq M$ and $B \subseteq N$ can be extended to an elementary isomorphism between $\acl(A)$ and $\acl(B)$.
\end{pro}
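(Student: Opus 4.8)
The statement to prove is: any elementary isomorphism between $A \subseteq M$ and $B \subseteq N$ extends to an elementary isomorphism between $\acl(A)$ and $\acl(B)$. This is a standard back-and-forth argument, and the plan is to build the extension by transfinite recursion, enumerating $\acl(A)$ and at each step extending the partial map one element at a time while preserving elementarity.

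\textbf{Plan.} First I would fix an elementary isomorphism $f : A \to B$ and enumerate $\acl(A) = \{a_\xi : \xi < \kappa\}$ for a suitable ordinal $\kappa$. I would then construct an increasing chain of partial elementary monomorphisms $f_\xi : A \cup \{a_\eta : \eta < \xi\} \to N$ with $f_0 = f$, taking unions at limit stages (a union of a chain of partial elementary maps is partial elementary, since elementarity of a tuple only involves finitely many coordinates). The crux is the successor step: given $f_\xi$ already defined on $A_\xi := A \cup \{a_\eta : \eta < \xi\}$, I must find an image for $a := a_\xi$. Let $p(x) := \tp(a / A_\xi)$. Since $a \in \acl(A) \subseteq \acl(A_\xi)$, there is a formula $\varphi(x,\bar{c}) \in p$ with $\bar{c}$ a tuple from $A_\xi$ such that $\varphi(M,\bar{c})$ is finite, say of size $k$, and $M \models \exists^{=k} x\, \varphi(x,\bar{c})$. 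Applying $f_\xi$ (which is elementary) to this sentence, $N \models \exists^{=k} x\, \varphi(x, f_\xi(\bar{c}))$, so $\varphi(N, f_\xi(\bar{c}))$ is finite and nonempty; every element of it lies in $\acl(B)$ since $f_\xi(\bar{c})$ is a tuple from $B_\xi \subseteq \acl(B)$. It remains to choose the \emph{right} one so as to realise $f_\xi(p)$, the pushforward type.

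\textbf{The main obstacle} is exactly this: showing $f_\xi(p)$ is realised in $N$, and moreover realised by an element of $\acl(B)$. The standard device is to argue that the isolated-over-a-finite-set structure forces a realisation. Concretely: the set $\varphi(M,\bar c) = \{d_1,\dots,d_k\}$ (with $d_i = a$ for some $i$) is finite, so each $d_j$ is determined up to $\mathrm{Aut}(M/\bar c A_\xi)$-equivalence by its type over $A_\xi$, and the finitely many types $\tp(d_1/A_\xi),\dots,\tp(d_k/A_\xi)$ partition $\varphi(M,\bar c)$. One shows that the conjunction $\varphi(x,\bar c) \wedge \psi(x,\bar c')$ for suitable $\psi \in p$ isolates $p$ among the finitely many types extending $\varphi(x,\bar c)$ — i.e. $p$ is "isolated over $A_\xi$ relative to the algebraic formula". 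Then $f_\xi(\varphi(x,\bar c) \wedge \psi(x,\bar c'))$ is consistent in $N$ (as $N$ realises $f_\xi$ applied to "$\exists x(\varphi \wedge \psi)$"), and any realisation $b$ — necessarily in $\acl(B)$ by the algebraicity argument above — satisfies $f_\xi(p)$; indeed for any $\theta(x,\bar c'') \in p$, the sentence $\forall x((\varphi \wedge \psi)(x,\bar c') \to \theta(x,\bar c''))$ holds in $M$, hence its $f_\xi$-image holds in $N$, so $b \models \theta(x, f_\xi(\bar c''))$. Setting $f_{\xi+1} := f_\xi \cup \{(a,b)\}$ gives a partial elementary monomorphism extending $f_\xi$, and $f_{\xi+1}(A_\xi \cup \{a\}) \subseteq \acl(B)$.

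\textbf{Finishing up.} After running the recursion to $\kappa$, the union $g := \bigcup_{\xi<\kappa} f_\xi$ is a partial elementary monomorphism from $\acl(A)$ into $N$ with image contained in $\acl(B)$. To see the image is \emph{all} of $\acl(B)$ — i.e. $g$ is onto — I would run the symmetric argument simultaneously (a genuine back-and-forth, interleaving an enumeration of $\acl(B)$), so that every element of $\acl(B)$ enters the range of some $f_\xi$; alternatively, note that $g$ is an elementary monomorphism $\acl(A) \to \acl(B)$ and, since $\acl$ of an elementarily embedded set is the algebraic closure computed internally, $g(\acl(A)) = \acl(g(A)) = \acl(B)$ directly. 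Either way $g$ is an elementary isomorphism $\acl(A) \cong \acl(B)$ extending $f$. The only care needed throughout is the bookkeeping at limit stages (harmless) and the isolation argument at successor stages, which is where the algebraicity hypothesis is genuinely used.
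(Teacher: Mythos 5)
Your proof is correct and is exactly the argument the paper has in mind: it simply records that "the proof rests on the fact that the type of an algebraic element is isolated, and is therefore realised in every model, so we can extend the elementary isomorphism via transfinite induction," and your write-up is a careful unpacking of precisely that — isolating $\tp(a/A_\xi)$ by an algebraic formula, pushing the isolating formula forward, and running a transfinite back-and-forth (or, as you note, using the injectivity count on the finite algebraic sets to get surjectivity directly).
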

The proof of the above rests on the fact that the type of an algebraic element is isolated, and is therefore realised in every model, so we can extend the elementary isomorphism via transfinite induction.

\begin{dfn}\label{def pregeom}
A \textit{pregeometry} is a set $X$ and $\cl:\cP(X)\ra \cP(X)$ (where $\cP$ denotes the powerset) such that
\begin{enumerate}
\item [PG1] $A \subseteq \cl(A)$
\item [PG2] $A \subseteq B  \Ra \cl(A) \subseteq \cl(B)$
\item [PG3] $\cl(\cl(A))=\cl(A)$
\item [PG4] $\cl(A)= \bigcup \cl(A')$ (for finite subsets $A'$ of $A$)
\item [PG5] (Exchange) $ a \in \cl(Ab)-\cl(A) \Ra b \in \cl(Aa)$.
\end{enumerate}
A pregeometry in which points are closed ($\cl\{a\}=\{a\}$), and $\cl(\emptyset)=\emptyset$) is called a \textit{geometry}.
\end{dfn}

\noindent Note that properties 1-4 hold for the usual $\acl$ operator (where $a \in \acl(A)$ if $a$ is one of only finitely many realisations of a formula with parameters from $A$).

\begin{dfn}
Given a pregeometry $\cl$, a subset $A \subseteq M$ is said to be $\cl$-\textit{independent} if $a \notin cl(A - a)$
for all $a \in A$. If $C \subseteq M$,  $A$ is $\cl$-\textit{independent over} $C$ if $a \notin
\cl(C \cup (A - a))$ for all $a \in A$.
A $\cl$-independent subset $A$ of $Y$ is a \textit{basis} for $Y\subseteq M$ if $\cl(A) = \cl(Y)$.
\end{dfn}
Where it is clear we are talking about a pregeometry $\cl$, we will just write independent instead of $\cl$-independent.

\begin{lem}
If $\cl$ is a pregeometry then any two $\cl$-bases B and C of a set A are of the same cardinality.
\end{lem}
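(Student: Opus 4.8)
The statement to prove is the standard exchange-property fact: in a pregeometry, any two bases of a set have the same cardinality. The plan is to use the exchange axiom PG5 together with the finite-character axiom PG4 to run a Steinitz-style exchange argument, first disposing of the case where one basis is finite, then handling the infinite case by a counting argument.

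First I would reduce to the case $A = \cl(A)$, or rather just work with bases $B, C$ of a fixed set $A$, so $\cl(B) = \cl(C) = \cl(A)$ and both $B, C$ are $\cl$-independent. The key lemma to establish is the \emph{exchange lemma}: if $B$ is independent with $\cl(B) \supseteq \cl(B')$ for some finite independent $B'$, then one can replace elements of $B$ by elements of $B'$ one at a time. Concretely, I would prove by induction on $n = |B'|$ that if $B' = \{c_1,\dots,c_n\}$ is independent and contained in $\cl(B)$, then there exist distinct $b_1,\dots,b_n \in B$ such that $(B \setminus \{b_1,\dots,b_n\}) \cup \{c_1,\dots,c_n\}$ has the same closure as $B$. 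The inductive step uses PG5: having swapped in $c_1,\dots,c_{n-1}$, the element $c_n$ lies in $\cl\big((B\setminus\{b_1,\dots,b_{n-1}\})\cup\{c_1,\dots,c_{n-1}\}\big)$; by PG4 it lies in the closure of a finite subset, and since $c_n \notin \cl\{c_1,\dots,c_{n-1}\}$ (independence of $B'$), some $b_n \in B\setminus\{b_1,\dots,b_{n-1}\}$ must genuinely be needed, and exchange lets us swap $b_n$ for $c_n$ while preserving the closure. This immediately gives $|C| \le |B|$ when $C$ is finite, and by symmetry $|B| = |C|$ in the finite case; it also shows that if one basis is finite then so is the other.

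For the infinite case, suppose $B$ is infinite. For each $c \in C$, axiom PG4 gives a finite subset $B_c \subseteq B$ with $c \in \cl(B_c)$. Put $B^* = \bigcup_{c \in C} B_c \subseteq B$. Then $\cl(B^*) \supseteq C$, hence $\cl(B^*) \supseteq \cl(C) = \cl(A) = \cl(B)$, so $B^* \supseteq B$ would follow from independence of $B$ applied to any $b \in B \setminus B^*$: such a $b$ would lie in $\cl(B^*) \subseteq \cl(B \setminus \{b\})$, contradicting independence. Hence $B = B^* = \bigcup_{c\in C} B_c$, a union of $|C|$ finite sets, giving $|B| \le \aleph_0 \cdot |C| = |C|$ since $C$ must be infinite (else $B$ would be finite by the previous paragraph). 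By symmetry $|C| \le |B|$, and the Schr\"oder--Bernstein theorem yields $|B| = |C|$.

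The main obstacle is getting the exchange lemma's inductive bookkeeping exactly right: one must ensure that at each stage the swapped element $b_n$ can be chosen \emph{outside} the previously swapped ones and that its removal does not lose any closure, which is precisely where the interplay of PG5 (exchange) and PG4 (finite character) is delicate. Everything else is routine set-theoretic cardinal arithmetic. I would present the exchange lemma as a clearly displayed sublemma, prove it carefully, and then the two cardinality comparisons follow in a few lines each.
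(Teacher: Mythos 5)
Your proof is correct and is the standard Steinitz exchange argument; the paper's own ``proof'' is just the one-line remark that the result rests on the exchange principle (PG5), which is precisely the argument you have filled in, via the exchange sublemma for the finite case and the PG4-finite-character counting argument for the infinite case.
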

\noindent The proof rests on the exchange principle holding. In light of the lemma, we can now define the dimension of $A$, $\dim(A)$ to be the cardinality of a basis of $A$. Recall that a subset $A$ of a structure $\cM$ is said to be \textit{indiscernible over}
$B$ if $\tp(\bar{a}/B)$ = $\tp(\bar{a}' / B)$ for any two $n$-tuples of distinct elements of A for
any finite $n$, and that a structure is called \textit{strongly minimal} if every definable set in every elementary extension is finite or cofinite. In a strongly minimal structure, $\acl$ is a pregeometry (i.e. the \textit{exchange property} (5) is satisfied) and therefore in strongly minimal theories $\acl$-bases have the same cardinality and we have a notion of dimension. Now an infinite independent set is a set of indiscernibles of the same type (where the type of a set of indiscernibles is the set of formulae satisfied by every finite tuple), so any bijection between bases is actually a partial elementary isomorphism and we have the following.

\begin{thm}
If $T$ is strongly minimal and $\cM,\cN \models T$,
then $\cM \cong \cN$ $\Lra$ $\dim(M) = \dim(N)$.
\end{thm}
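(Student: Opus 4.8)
The final statement to prove is the classical fact that a strongly minimal theory $T$ is categorical in all uncountable cardinalities; more precisely, that two models are isomorphic if and only if they have the same dimension (with respect to the $\acl$-pregeometry).

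\textbf{Plan of proof.} The forward direction is trivial: an isomorphism $\cM \cong \cN$ preserves $\acl$ and hence maps a basis to a basis, so $\dim(\cM) = \dim(\cN)$. The substance is the converse. Suppose $\dim(\cM) = \dim(\cN)$, and fix $\acl$-bases $B \subseteq \cM$ and $C \subseteq \cN$. By hypothesis there is a bijection $f_0 : B \to C$. The first key step is to observe that in a strongly minimal theory an $\acl$-independent set is a set of indiscernibles, all of the same type: indeed, by strong minimality every formula $\varphi(x,\bar b)$ over a set $\bar b$ in the $\acl$-closure is finite or cofinite, so if $a_1,\dots,a_n$ are distinct and independent then $\tp(a_1,\dots,a_n)$ is forced to be the ``generic'' type, which does not depend on the choice of the $a_i$. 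Consequently $f_0 : B \to C$ is a partial elementary isomorphism: for any finite tuple $\bar b$ from $B$ we have $\tp_{\cM}(\bar b) = \tp_{\cN}(f_0(\bar b))$ because both are the generic type of an independent $n$-tuple.

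The second step is to upgrade $f_0$ to an isomorphism of the whole structures. By the Uniqueness of Closure proposition stated earlier in the excerpt, the partial elementary isomorphism $f_0$ between $B$ and $C$ extends to a partial elementary isomorphism $f : \acl(B) \to \acl(C)$. Since $B$ is a basis for $\cM$ we have $\acl(B) = \acl(M) \supseteq M$; but $\acl$ of a set in a model is a subset of that model, so in fact $\acl(B) = M$, and likewise $\acl(C) = N$. (Here one uses that $\acl$ is a closure operator and that $M$, being a model, already contains $\acl$ of any of its subsets, together with $\cl(B)=\cl(M)$ from the definition of a basis.) Therefore $f$ is an elementary bijection $\cM \to \cN$, hence in particular an isomorphism of $\cL$-structures.

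\textbf{Main obstacle.} The one point requiring genuine care is the claim that an independent set is a set of indiscernibles of a single fixed type --- i.e. that strong minimality really does pin down $\tp(a_1,\dots,a_n)$ for independent tuples uniformly. This is proved by induction on $n$: the case $n=1$ is that any two non-algebraic elements have the same type over $\emptyset$ (immediate from strong minimality applied to formulas in one variable), and the inductive step uses that if $a_1,\dots,a_{n+1}$ is independent then $a_{n+1} \notin \acl(a_1,\dots,a_n)$, so $\tp(a_{n+1}/a_1\dots a_n)$ is the unique non-algebraic $1$-type over that set; combined with the inductive hypothesis on $(a_1,\dots,a_n)$ and on $(f_0 a_1,\dots,f_0 a_n)$, and the fact that a partial elementary map extends over non-algebraic points, this gives $\tp(a_1,\dots,a_{n+1}) = \tp(f_0 a_1,\dots,f_0 a_{n+1})$. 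Everything else is bookkeeping with the pregeometry axioms and the Uniqueness of Closure result already available. I would present the indiscernibility lemma explicitly, then assemble the theorem from it and Uniqueness of Closure in a couple of lines.
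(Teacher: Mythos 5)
Your proof is correct and follows the same route as the paper: show a bijection between $\acl$-bases is partial elementary (via the fact that independent tuples in a strongly minimal structure realise the unique non-algebraic "generic" type in each length, so bases are indiscernibles of the same type), then extend to all of $M=\acl(B)$ and $N=\acl(C)$ using the Uniqueness of Closure proposition. The paper compresses all of this into one remark before the theorem statement; your inductive argument for the indiscernibility of independent tuples is simply a more explicit rendering of the same point.
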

So the models of strongly minimal theories are determined up to isomorphism by their dimension. If $X$ is uncountable then $\dim(X)=|X|$ since the language is countable and $\acl(Y)$ is countable for any $Y \subseteq X$ (i.e. $\acl$ has the \textit{countable closure property}) and we get:

\begin{cor}
If $T$ is strongly minimal then $T$ is $\kappa$-categorical for $\kappa \geq \aleph_1$.
\end{cor}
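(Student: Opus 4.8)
The plan is to deduce the corollary directly from the preceding theorem, which asserts that for a strongly minimal $T$ and $\cM, \cN \models T$ one has $\cM \cong \cN$ if and only if $\dim(M) = \dim(N)$, where $\dim$ is taken with respect to the pregeometry $\acl$. So it suffices to show that every model $\cM \models T$ with $|M| = \kappa \geq \aleph_1$ satisfies $\dim(\cM) = \kappa$; then any two such models share the dimension $\kappa$ and are therefore isomorphic, which is exactly $\kappa$-categoricity.

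First I would record the countable closure property in the sharper form it is actually used: because the language is countable, for any subset $Y$ of a model we have $|\acl(Y)| \leq \max(|Y|, \aleph_0)$. Indeed, each element of $\acl(Y)$ is one of the finitely many realisations of some formula $\varphi(x,\bar{a})$ with $\bar{a}$ a finite tuple from $Y$, and there are only $\max(|Y|,\aleph_0)$ many pairs (formula, finite parameter tuple). In particular $\acl$ of a countable set is countable.

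Next, given $\cM$ with $|M| = \kappa \geq \aleph_1$, choose an $\acl$-basis $B \subseteq M$, so that $\dim(\cM) = |B|$ and $\acl(B) = M$. Trivially $|B| \leq \kappa$. Suppose $|B| = \lambda < \kappa$. Then $\kappa = |M| = |\acl(B)| \leq \max(\lambda, \aleph_0)$; but $\max(\lambda,\aleph_0)$ is a single cardinal lying strictly below $\kappa$, using $\lambda < \kappa$ together with $\aleph_0 < \aleph_1 \leq \kappa$. This gives $\kappa < \kappa$, a contradiction, so $\dim(\cM) = |B| = \kappa$.

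Finally I would invoke the characterisation theorem: any two models of $T$ of cardinality $\kappa \geq \aleph_1$ both have dimension $\kappa$, hence are isomorphic, so $T$ is $\kappa$-categorical. There is no substantive obstacle here; the only step needing a little care is the cardinal-arithmetic observation, which genuinely relies on both hypotheses in play, namely the countability of the language (to obtain the countable closure property and hence $|\acl(Y)| \leq \max(|Y|,\aleph_0)$) and the uncountability of $\kappa$ (so that the cardinal $\max(\lambda,\aleph_0)$ obtained from something smaller than $\kappa$ remains below $\kappa$).
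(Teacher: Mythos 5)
Your proof is correct and takes essentially the same route as the paper: invoke the preceding theorem that models of a strongly minimal theory are classified by $\acl$-dimension, and then use the countability of the language to show that any model of size $\kappa \geq \aleph_1$ has dimension exactly $\kappa$. Your cardinal-arithmetic step is a slightly more careful writeup of the paper's terse remark that $\dim(X)=|X|$ for uncountable $X$ because the language is countable.
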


\subsection{Quasiminimality}\label{app quasi}
This section is just an amalgamation of Baldwin's and Kirby's expositions in \cite{baldwin2009categoricity} and \cite{kirby2010quasiminimal}.

\begin{dfn}
Given a structure $\cM$ with closure operator $\cl$, we say that $(\cM,\cl)$ satisfies the \textit{countable closure property} if $|\cl(X)|\leq \aleph_0$ for any finite $X \subseteq M$.
\end{dfn}

\begin{dfn}
If $\cK$ is a class satisfying Conditions I and II below, then $\cK$ is called \textit{weakly quasiminimal}.
\end{dfn}

\textbf{Condition I: (Pregeometry)}
\begin{enumerate}
\item For each $H \in \cK$ $\cl_H$ is a pregeometry on $H$ satisfying the countable
closure property.
\item For each $X \subseteq H$, $\cl_H(X)$ with the induced relations is in $\cK$.
\item If $\qftp_H(y/X)=\qftp_{H'}(y'/X')$ then $y \in \cl_H(X)$ iff $y' \in \cl_{H'}(X')$.
\end{enumerate}

\noindent \textbf{Condition II: ($\aleph_0$-homogeneity over countable models and $\emptyset$)}

\noindent Let $G \subseteq H,H'$ be empty, or a countable member of $\cK$ that is closed in $H,H'$.
\begin{enumerate}
\item If $f$ is a partial $G$-monomorphism from $H$ to $H'$ with finite domain $X$ then for any $y \in \cl_H(X)$ there is $y' \in H'$ such that $f\cup\{\langle y,y'\rangle \}$ is a partial $G$-monomorphism.
\item If $X \subseteq H$ and $X' \subseteq H'$ are $\cl$-independent over $G$ and $f:X \ra X'$ is a bijection, then $f$ is a partial $G$-monomorphism.
\end{enumerate}

\begin{lem}\label{lem_quasi one indep type}
If $\cK$ is weakly quasiminimal and $M \in \cK$ then for any finite set $X \subset M$ then if $a,b \in M-\cl_M(X)$ then $a$ and $b$ realise the same $\cL_{\omega_1,\omega}$-type over $X$.
\end{lem}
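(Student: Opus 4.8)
The plan is to build a back-and-forth system of finite partial monomorphisms over the closed set $\cl_M(X)$ which contains the pair $(a,b)$, and then invoke the standard infinitary criterion: a back-and-forth system between two structures forces $\equiv_{\infty,\omega}$, hence $\equiv_{\omega_1,\omega}$. Conditions I and II of weak quasiminimality are tailored to provide exactly the extension properties such a system requires.

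First I would pass from $X$ to $G := \cl_M(X)$. By the countable closure property in Condition I, $G$ is countable; by clause (2) of Condition I it is a member of $\cK$; and by PG3 it is closed in $M$. Thus $G$ is a legitimate base for Condition II. Since $a,b \notin G$, each of $\{a\}$ and $\{b\}$ is $\cl$-independent over $G$, so clause (2) of Condition II applied to the map $a\mapsto b$ shows it is a partial $G$-monomorphism; in particular $\qftp_M(a/X)=\qftp_M(b/X)$.

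Now let $\cS$ be the collection of all finite partial $G$-monomorphisms from $M$ to $M$; it contains $\{(a,b)\}$. The heart of the argument is to check that $\cS$ has the back-and-forth property: given $f\in\cS$ with domain $\bar c$ and range $\bar d$, and given $e\in M$, there is $e'$ with $f\cup\{(e,e')\}\in\cS$ (and symmetrically). If $e\in\cl_M(G\bar c)$ this is immediate from clause (1) of Condition II. If $e\notin\cl_M(G\bar c)$, I would first upgrade $f$ via the Uniqueness of Closure mechanism: enumerating the countable sets $\cl_M(G\bar c)$ and $\cl_M(G\bar d)$ and alternately applying clause (1) of Condition II extends $f$ to an isomorphism $\hat f:\cl_M(G\bar c)\to\cl_M(G\bar d)$. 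Pick a $G$-basis $\bar c_0$ of $\bar c$ and set $\bar d_0:=\hat f(\bar c_0)$, which is a $G$-basis of $\bar d$ by clause (3) of Condition I. Since $\bar c_0 e$ is $G$-independent, a dimension count (using the exchange axiom: $\dim_G M\ge|\bar c_0|+1=\dim_G\cl_M(G\bar d)+1$) yields $e'\in M\setminus\cl_M(G\bar d)$ with $\bar d_0 e'$ $G$-independent. By clause (2) of Condition II the map $\bar c_0 e\mapsto\bar d_0 e'$ is a partial $G$-monomorphism, and using the uniqueness of closure together with the homogeneity clauses once more one checks that this is compatible with $\hat f$, so that $\hat f\cup\{(e,e')\}$, and a fortiori $f\cup\{(e,e')\}$, lies in $\cS$.

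Finally, since $\cS$ is a back-and-forth system between $M$ and $M$ containing the pair of enumerations of $Xa$ and $Xb$ that is the identity on $X$, Karp's theorem gives that $a$ and $b$ realise the same $\cL_{\infty,\omega}$-type over $X$, hence the same $\cL_{\omega_1,\omega}$-type over $X$, which is the assertion of \ref{lem_quasi one indep type}. The main obstacle is precisely the compatibility issue in the previous paragraph: extending $f$ by a fresh independent point while respecting the values $f$ has already committed to. This is why the argument must route through the isomorphism $\hat f$ of the ambient closures rather than extending $f$ directly, and it is here that the pregeometry axioms (above all exchange) and the countable closure property are genuinely used.
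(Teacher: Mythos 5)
The paper itself does not prove this lemma; it is cited to Baldwin, so there is no in-text argument to compare against line by line. Your overall strategy is nevertheless the standard one and the correct one: pass to $G=\cl_M(X)$, seed with $a\mapsto b$ via Condition II.2, build a Karp back-and-forth system of finite partial $G$-monomorphisms using II.1 together with the closure-extension machinery (Lemma \ref{lem_increase G mono}) for elements inside $\cl_M(G\bar c)$, and use the pregeometry dimension count plus II.2 for a fresh element. Those parts of your write-up are fine, and your dimension count $\dim_G M\ge |\bar c_0|+1 = \dim_G \cl_M(G\bar d)+1$ is exactly what is needed to guarantee the existence of $e'\notin\cl_M(G\bar d)$.

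The problem is the step you yourself flag as the ``main obstacle''. After producing, by II.2, the partial $G$-monomorphism $\bar c_0 e\mapsto \bar d_0 e'$ on the \emph{basis}, you write that ``using the uniqueness of closure together with the homogeneity clauses once more one checks that this is compatible with $\hat f$.'' That sentence is not an argument, and the claim it asserts does not follow from Conditions I and II by the tools you have invoked. Concretely: Lemma \ref{lem_increase G mono} extends $\bar c_0 e\mapsto\bar d_0 e'$ to \emph{some} isomorphism $\hat\psi\colon\cl_M(G\bar c_0 e)\to\cl_M(G\bar d_0 e')$, but its restriction to $\cl_M(G\bar c_0)$ need not coincide with $\hat f$; equivalently, for the $\cl$-dependent coordinates $\bar c_1 = \bar c\setminus\bar c_0$ you must show $\qftp(\bar d_0\,f(\bar c_1)\,e'/G)=\qftp(\bar c_0\bar c_1 e/G)$, whereas II.1 only hands you \emph{some} $\bar d_1^*\in\cl_M(G\bar d_0)$ with $\qftp(\bar d_0\bar d_1^* e'/G)=\qftp(\bar c_0\bar c_1 e/G)$, with no control over whether $\bar d_1^*=f(\bar c_1)$. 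Tracing through, the missing ingredient reduces to: if $\bar u,\bar v\subseteq\cl_M(G\bar d_0)$ have the same qftp over $G$ and $e'\notin\cl_M(G\bar d_0)$, then $\bar u e'$ and $\bar v e'$ still have the same qftp over $G$. This is precisely the ``uniqueness of the generic type over countable closed sets'' — the property taken as an explicit axiom (QM4) in the formulation of \cite{bays2012quasiminimal} recalled in \S\ref{app bhhkk} — and it is not a routine consequence of I.3, II.1 and II.2 applied once more; the exchange axiom, the countable closure property, and II.2 control only the independent part of the tuple, not the algebraic part. Until that compatibility lemma is actually proved (or taken from the reference), the forth step of your back-and-forth is incomplete, and with it the appeal to Karp's theorem.
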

For a proof of the above see \cite{baldwin2009categoricity}. The next lemma justifies the name quasiminimal.

\begin{lem}
Let $\cK$ be a weakly quasiminimal class and $H \in \cK$. Then every $\cL_{\omega_1,\omega}$-definable subset of $H$ is countable or cocountable. As a result, $a \in \cl_H(X)$ iff $a$ satisfies some $\varphi$ over $X$ which has at most countably many solutions.
\end{lem}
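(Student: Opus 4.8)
The statement to prove is: in a weakly quasiminimal class $\cK$ with $H \in \cK$, every $\cL_{\omega_1,\omega}$-definable subset of $H$ is countable or cocountable, and consequently $a \in \cl_H(X)$ iff $a$ satisfies some formula over $X$ with only countably many solutions.

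\medskip

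The plan is to use the key consequence of weak quasiminimality already recorded as Lemma \ref{lem_quasi one indep type}: for any finite $X \subset H$, all elements of $H - \cl_H(X)$ realise the same $\cL_{\omega_1,\omega}$-type over $X$. First I would take an arbitrary $\cL_{\omega_1,\omega}$-formula $\varphi(x)$ with parameters from $H$; since any $\cL_{\omega_1,\omega}$-formula involves only countably many parameters, and by Condition I(1) the closure of a finite (hence countable) set is countable, there is a finite $X_0 \subset H$ with $\varphi$ defined over $X_0$ and $\cl_H(X_0)$ countable. Now split $H$ into the two pieces $\cl_H(X_0)$ and $H - \cl_H(X_0)$. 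The first piece is countable, so it contributes at most countably many solutions (or at most countably many non-solutions) of $\varphi$. For the second piece, apply Lemma \ref{lem_quasi one indep type}: all elements of $H - \cl_H(X_0)$ have the same type over $X_0$, so either they all satisfy $\varphi$ or none of them do. In the first case $\varphi(H) \supseteq H - \cl_H(X_0)$, so $H - \varphi(H) \subseteq \cl_H(X_0)$ is countable, i.e. $\varphi$ is cocountable; in the second case $\varphi(H) \subseteq \cl_H(X_0)$ is countable. This proves the first assertion.

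\medskip

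For the second assertion, the direction `$a$ satisfies some $\varphi$ over $X$ with countably many solutions $\Rightarrow a \in \cl_H(X)$' is the substantive one. Suppose for contradiction that $a \notin \cl_H(X)$ but $a$ satisfies $\varphi(x)$ over $X$ where $\varphi(H)$ is countable. Enlarging $X$ to a finite $X'$ containing the (finitely many, after absorbing the $\cL_{\omega_1,\omega}$-parameters as above) parameters of $\varphi$ and still with $a \notin \cl_H(X')$ — this uses PG4, since $a \notin \cl_H(X)$ means $a \notin \cl_H(X'')$ for any finite $X'' \supseteq X$ that we can arrange to miss $a$; more carefully, since $H$ need not be large I would instead note that $\cl_H(X \cup \{\text{params}\})$ is countable and that if it contained $a$ we could already conclude, so assume it does not. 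Then by Lemma \ref{lem_quasi one indep type} every element of $H - \cl_H(X')$ realises $\tp(a/X')$ and hence satisfies $\varphi$; so $\varphi(H) \supseteq H - \cl_H(X')$. But $H$ is infinite (every member of $\cK$ that arises here is, being a closure of an independent set under a pregeometry with exchange; in any case $\varphi(H)$ countable forces $H - \cl_H(X')$ countable, hence $H$ countable — and then the conclusion $a \in \cl_H(X')$ follows trivially from the final clause of the lemma interpreted in the countable case, or rather we simply observe $H$ being countable makes the whole statement vacuous). The cleaner route: if $a \notin \cl_H(X)$ then $\{a\}$ is independent over $X$, so $a$ lies outside $\cl_H(X \cup \emptyset)$ and Condition II(2)/Lemma \ref{lem_quasi one indep type} gives that all of $H - \cl_H(X)$ satisfies $\varphi$, contradicting countability of $\varphi(H)$ unless $H$ itself is countable, in which case there is nothing to prove. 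Conversely, if $a \in \cl_H(X)$ then by the characterisation of pregeometric closure together with Condition I(3) — quantifier-free types determining membership in the closure — and an $\omega_1$-saturation/compactness-style argument internal to $\cK$, $a$ satisfies over $X$ a formula with countably many realisations; in fact this direction is essentially the definition of the countable closure property combined with the fact that $\cl_H(X)$ is countable, so one lists $\cl_H(X) = \{a_i : i < \omega\}$ and takes the disjunction $\bigvee_i (x = a_i)$, an $\cL_{\omega_1,\omega}$-formula over $X \cup \cl_H(X)$ — but to get it over $X$ alone one uses I(3) to see that the orbit of $a$ is contained in $\cl_H(X)$, hence countable.

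\medskip

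I expect the main obstacle to be the second, `converse-ish', direction and the bookkeeping around parameters: making precise that an arbitrary $\cL_{\omega_1,\omega}$-definable set is in fact definable over a \emph{finite} set whose closure is countable (so that Lemma \ref{lem_quasi one indep type} applies), and handling the possibility that $H$ is itself countable (where the statement degenerates). The forward direction of the dichotomy is routine given Lemma \ref{lem_quasi one indep type}; the real content is Condition I(3) plus the countable closure property, ensuring that $\cl_H(X)$ is exactly the set of elements satisfying a countable-solution-set formula over $X$. I would present the dichotomy first, cleanly, and then deduce the closure characterisation as a corollary, citing \cite{baldwin2009categoricity} and \cite{kirby2010quasiminimal} for the finer points.
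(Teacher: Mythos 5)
Your proof of the dichotomy (the first sentence of the lemma) is correct and is essentially the paper's argument, just presented more verbosely: the paper compresses your split $H = \cl_H(X_0) \cup (H - \cl_H(X_0))$ into ``pick $a \in \varphi(H) \setminus \cl(X)$ and $b \in \neg\varphi(H) \setminus \cl(X)$ and contradict Lemma~\ref{lem_quasi one indep type}.'' One small imprecision: you assert there is a \emph{finite} $X_0$ over which $\varphi$ is defined, but an $\cL_{\omega_1,\omega}$-formula can mention countably many parameters (e.g.\ $\bigvee_n x = a_n$). You should instead take $X_0$ countable, note $\cl_H(X_0)$ is still countable via PG4 together with the countable closure property, and observe that Lemma~\ref{lem_quasi one indep type} (stated for finite sets) extends to countable $X_0$ by taking unions over finite subsets. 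This does not break your argument, but as written the ``finite $X_0$'' step is not available.

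Where you genuinely go astray is the second clause. The paper does not prove it at all --- it is stated as an immediate consequence --- and your attempt is tangled in ways that would not survive scrutiny. In particular: (i) the assertion ``if $H$ is countable \dots there is nothing to prove'' is false as stated, since a countable $H$ with $\cl_H(X) \neq H$ is a genuine (if degenerate) counterexample to the literal statement, because then every $a$ satisfies $x=x$, which has countably many solutions; the lemma is implicitly about the uncountable case. (ii) Your backward direction proposes $\bigvee_i(x = a_i)$ enumerating $\cl_H(X)$, correctly notices this uses parameters outside $X$, and then gestures at I(3) without closing the argument. The clean version of what you are reaching for is: let $p(x)$ be the finitary quantifier-free type of $a$ over $X$; since the language and $X$ are countable, $\bigwedge p$ is an $\cL_{\omega_1,\omega}$-formula over $X$, and by I(3) every realisation of $\bigwedge p$ lies in $\cl_H(X)$, which is countable. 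You have the right ingredients but the write-up is circular (``if it contained $a$ we could already conclude, so assume it does not'') and would need to be disentangled before it could stand as a proof.
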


\begin{proof}
Suppose that there was an $\cL_{\omega_1,\omega}$-formula $\varphi$ with parameters from $X$ such that $|\varphi(H)|, |\neg \varphi(H)| \geq \aleph_1$. Then since $\cl(X)$ is countable, there is $a \in \varphi(H)$ and $b \in \neg \varphi(H)$ such that $a,b \notin \cl(X)$ and this contradicts Lemma \ref{lem_quasi one indep type}.
\end{proof}

\begin{lem}\label{lem_increase G mono}
Suppose a class $\cK$. Let $G \in \cK$ be countable and suppose $G \subseteq H, H'$ with $G$ closed in $H,H'$ if $G \neq \emptyset$. Then if $X \subseteq H$ and $X' \subseteq H'$ are finite and $f$ is a partial $G$-monomorphism from $X$ onto $X'$ then $f$ extends to a partial $G$-monomorphism $f^*$ from $\cl_H(G \cup X)$ onto $\cl_{H'}(G \cup X')$.
\end{lem}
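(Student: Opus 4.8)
The plan is a standard back-and-forth argument carried out inside the two countable sets $\cl_H(G\cup X)$ and $\cl_{H'}(G\cup X')$, where each single step is supplied by Condition II.1 and the countability of the ambient sets by the countable closure property (Condition I.1); here, as throughout the subsection, $\cK$ is assumed to satisfy Conditions I and II. First I would check that $\cl_H(G\cup X)$ and $\cl_{H'}(G\cup X')$ are countable: since $G$ is countable and $X$ finite, $G\cup X$ is countable, so by PG4 one has $\cl_H(G\cup X)=\bigcup\{\cl_H(Y):Y\subseteq G\cup X\text{ finite}\}$, a countable union of sets each countable by the countable closure property, and likewise for $H'$. Then fix enumerations $\cl_H(G\cup X)\setminus(G\cup X)=\{a_1,a_2,\dots\}$ and $\cl_{H'}(G\cup X')\setminus(G\cup X')=\{b_1,b_2,\dots\}$, and build an increasing chain $f=f_0\subseteq f_1\subseteq f_2\subseteq\cdots$ of partial $G$-monomorphisms, each with finite domain over $G$, maintaining the invariants that $\mathrm{dom}(f_n)\subseteq\cl_H(G\cup X)$ and $\mathrm{ran}(f_n)\subseteq\cl_{H'}(G\cup X')$.

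At an odd stage $n$, let $k$ be least with $a_k\notin\mathrm{dom}(f_{n-1})$. Since $X\subseteq\mathrm{dom}(f_{n-1})$ and $G$ is part of the base of the monomorphism, monotonicity (PG2) gives $a_k\in\cl_H(G\cup X)\subseteq\cl_H(\mathrm{dom}(f_{n-1}))$, so Condition II.1 (with base $G$) produces $a_k'\in H'$ such that $f_n:=f_{n-1}\cup\{\langle a_k,a_k'\rangle\}$ is a partial $G$-monomorphism. As $f_n$ is a $G$-monomorphism, $\qftp_{H'}(a_k'/\mathrm{ran}(f_{n-1}))=\qftp_H(a_k/\mathrm{dom}(f_{n-1}))$, so by Condition I.3 together with $a_k\in\cl_H(\mathrm{dom}(f_{n-1}))$ we obtain $a_k'\in\cl_{H'}(\mathrm{ran}(f_{n-1}))\subseteq\cl_{H'}(\cl_{H'}(G\cup X'))=\cl_{H'}(G\cup X')$ by idempotence (PG3); thus the invariants persist. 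At an even stage I would do the symmetric step: let $k$ be least with $b_k\notin\mathrm{ran}(f_{n-1})$, apply Condition II.1 to the partial $G$-monomorphism $f_{n-1}^{-1}$ (whose finite domain is $\mathrm{ran}(f_{n-1})$, and $b_k\in\cl_{H'}(G\cup X')\subseteq\cl_{H'}(\mathrm{ran}(f_{n-1}))$) to obtain $b_k^{*}\in H$ with $f_n:=f_{n-1}\cup\{\langle b_k^{*},b_k\rangle\}$ a partial $G$-monomorphism, and note $b_k^{*}\in\cl_H(G\cup X)$ by the same closure argument (and that $b_k^{*}$ is genuinely new to the domain, since $b_k\notin\mathrm{ran}(f_{n-1})$). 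Finally set $f^{*}:=\bigcup_n f_n$; by construction $\mathrm{dom}(f^{*})=(G\cup X)\cup\{a_1,a_2,\dots\}=\cl_H(G\cup X)$, $\mathrm{ran}(f^{*})=\cl_{H'}(G\cup X')$, and $f^{*}$ is a partial $G$-monomorphism because for any finite tuple $\bar a\subseteq\mathrm{dom}(f^{*})$ there is $n$ with $\bar a\subseteq\mathrm{dom}(f_n)$, whence $\qftp_H(\bar a)=\qftp_{H'}(f_n(\bar a))=\qftp_{H'}(f^{*}(\bar a))$.

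I do not expect a genuine obstacle here: all the content is carried by Conditions I and II, and the argument is a routine back-and-forth of the same shape as the proof of uniqueness of closure in the strongly minimal case. The only points that need care are the closure bookkeeping — verifying that the witnesses produced by Condition II.1 never escape $\cl_{H'}(G\cup X')$ (respectively $\cl_H(G\cup X)$), which is exactly where Condition I.3 and idempotence of the closure operator enter — and the use of the hypothesis that $G$ is closed in $H$ and $H'$, which is what legitimises treating $G$ as part of the base throughout, so that $\cl_H$ of the growing finite domain always contains $\cl_H(G\cup X)$ and the enumeration can be exhausted.
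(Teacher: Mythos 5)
Your proof is correct and follows the same back-and-forth strategy as the paper's, extending one element at a time via Condition II.1 and keeping the witnesses inside the target closure via Condition I.3. The only difference is that you spell out the bookkeeping (countability of $\cl_H(G\cup X)$ from PG4 and the countable closure property, idempotence PG3, and the even-stage step via $f_{n-1}^{-1}$) that the paper leaves implicit.
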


\begin{proof}
Since $\cl(X)$ and $\cl(X')$ are countable, we may choose an ordering of both of length $\omega$ (with $X$ and $X'$ at the start of each) and inductively construct the map $f^*$ by the back and forth method:

\noindent Let $h_0=f$. For odd $n$ let $a$ be the least element of $\cl(X)- dom(h_{n-1})$. By condition II.1 we can find $b \in H'$ such that $h_{n} := h_{n-1}\cup \{ (a, b)\}$ is a partial $G$-monomorphism, and by I.3 $b \in \cl_{H'}(X')$. At even stages go back and extend the range using the same argument. Let $f^* = \cup_{n \in \N} h_n$.
\end{proof}

\begin{lem}
Suppose the conditions of Lemma \ref{lem_increase G mono} hold. If $B$ is an independent set (over $G$) of cardinality no greater than $\aleph_1$, and $f$ is a partial $G$-monomorphism from $B$ onto $B'$ then $f$ extends to a partial $G$-monomorphism from $\cl_H(G \cup B)$ onto $\cl_{H'}(G \cup B')$
\end{lem}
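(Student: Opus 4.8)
The statement to be proved is the transfinite analogue of Lemma \ref{lem_increase G mono}, upgrading the back-and-forth extension of a partial $G$-monomorphism from a finite independent set to an independent set $B$ of cardinality at most $\aleph_1$. The plan is to run a transfinite induction on $|B|$, using Lemma \ref{lem_increase G mono} as the base step and the countable closure property (Condition I.1) to control the unions at limit stages.

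First I would enumerate $B = \{b_\alpha : \alpha < \kappa\}$ and $B' = \{b'_\alpha : \alpha < \kappa\}$ where $\kappa = |B| \leq \aleph_1$ and $f(b_\alpha) = b'_\alpha$, and set $B_\alpha := \{b_\beta : \beta < \alpha\}$, $B'_\alpha := \{b'_\beta : \beta < \alpha\}$. I would then build an increasing chain of partial $G$-monomorphisms $f_\alpha : \cl_H(G \cup B_\alpha) \to \cl_{H'}(G \cup B'_\alpha)$ with $f_\alpha \restriction B_\alpha = f \restriction B_\alpha$. At a successor stage $\alpha + 1$: the set $G \cup \cl_H(G \cup B_\alpha)$ is a countable closed substructure of $H$ (countable by the countable closure property, since $B_\alpha$ is countable when $\kappa \leq \aleph_1$), and it lies in $\cK$ by Condition I.2; moreover $b_\alpha$ is $\cl$-independent over it (since $B$ was independent over $G$), as is $b'_\alpha$ over the image. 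So $f_\alpha \cup \{(b_\alpha, b'_\alpha)\}$ is a partial $(G \cup \cl_H(G\cup B_\alpha))$-monomorphism by Condition II.2, and I would then apply Lemma \ref{lem_increase G mono} \emph{with the enlarged base} $\cl_H(G \cup B_\alpha)$ in place of $G$ to extend it to $f_{\alpha+1}$ on $\cl_H(G \cup B_{\alpha+1}) = \cl_H(\cl_H(G\cup B_\alpha) \cup \{b_\alpha\})$, using PG3 to identify this closure. At a limit stage $\lambda$, I would set $f_\lambda := \bigcup_{\alpha < \lambda} f_\alpha$; this is a partial $G$-monomorphism because being a partial monomorphism is a property of finite tuples, and every finite tuple in the domain already appears in some $f_\alpha$, while its domain is $\bigcup_\alpha \cl_H(G \cup B_\alpha) = \cl_H(G \cup B_\lambda)$ by PG4 (the closure of a union is the union of closures of finite subsets, each of which lies in some $\cl_H(G\cup B_\alpha)$). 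Finally $f^* := \bigcup_{\alpha < \kappa} f_\alpha$ is the desired extension.

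The main obstacle, and the reason the hypothesis $|B| \leq \aleph_1$ matters, is the countable closure property at the successor step: Lemma \ref{lem_increase G mono} requires a \emph{countable} base $G$, so I can only apply it with the enlarged base $\cl_H(G \cup B_\alpha)$ when $B_\alpha$ — hence $\cl_H(G \cup B_\alpha)$ — is countable, which forces $\alpha < \omega_1$, i.e. $\kappa \leq \aleph_1$. Care is also needed that the limit-stage union is genuinely onto $\cl_{H'}(G \cup B'_\lambda)$ and that closedness of the intermediate sets is preserved (it is, since each $\cl_H(G \cup B_\alpha)$ is closed by PG3); these are routine once the bookkeeping above is set up, so I would not belabour them.
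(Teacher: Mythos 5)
Your proof is correct and follows essentially the same transfinite induction as the paper: at successor stages one applies Lemma \ref{lem_increase G mono} with the enlarged countable closed base $\cl_H(G\cup B_\alpha)$ in place of $G$, and at limit stages one takes unions. You spell out a couple of details the paper leaves implicit (invoking Condition II.2 to obtain the one-point partial monomorphism before applying the lemma, and PG3/PG4 to identify the closures at successor and limit stages), but the argument is the same.
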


\begin{proof}
Well order $B$ as $(b_{\lambda})_{\lambda < \mu}$ for an ordinal $\mu \leq \omega_1$, and for each ordinal $\nu \leq \mu$ let $G_{\nu}=\cl_H(G \cup \{ b_{\lambda} \  |  \ \lambda < \nu\})$ and let $G_0=G$.

\noindent For a each ordinal $\nu \leq \mu$, we construct a partial $G$-monomorphism $f_{\nu}$ extending $f$ from $G_{\nu}$ onto $G'_{\nu}:= \cl_{H'}(f_{\lambda}(G_{\lambda})\cup \{f(b_\lambda) \})$ such that $\nu_1 \leq \nu_2 \Ra f_{\nu_1} \subseteq f_{\nu_2}$:

\noindent For a successor ordinal $\nu = \lambda + 1$, apply Lemma \ref{lem_increase G mono} with $G=G_{\lambda}=G'_{\lambda}$ and $X=\{b_\lambda \}$. At limit ordinals take unions.
\end{proof}

\begin{cor}
Models of a weakly quasiminimal class of dimension less than or equal to $\aleph_1$ are determined up to
isomorphism by their dimension, and in particular there is at most one model of cardinality $\aleph_1$.
\end{cor}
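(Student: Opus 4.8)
The plan is to deduce the corollary directly from the preceding lemma that says: in a weakly quasiminimal class, a partial $G$-monomorphism between independent sets of cardinality at most $\aleph_1$ extends to a partial $G$-monomorphism between their closures. So first I would reduce to the case where the ambient sets are closed, i.e. given $M, M' \in \cK$ of the same dimension $d \leq \aleph_1$, I pick bases $B \subset M$ and $B' \subset M'$, each of cardinality $d$. Since closed sets in $\cK$ are models of the theory, $M = \cl_M(B)$ and $M' = \cl_{M'}(B')$.

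Next I would invoke Condition II.2 with $G = \emptyset$: any bijection $f: B \to B'$ between $\cl$-independent sets is a partial $\emptyset$-monomorphism. Then applying the last lemma (the one establishing extension of $G$-monomorphisms from independent sets of size $\leq \aleph_1$ to their closures) with $G = \emptyset$, $H = M$, $H' = M'$, the map $f$ extends to a partial $\emptyset$-monomorphism $f^*$ from $\cl_M(B) = M$ onto $\cl_{M'}(B') = M'$. A partial monomorphism that is onto and defined on all of $M$ is precisely an isomorphism of structures (it preserves quantifier-free types, hence atomic formulas and their negations, hence is an embedding, and surjectivity makes it an isomorphism). Thus $M \cong M'$. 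Conversely isomorphic models plainly have the same dimension since the pregeometry is defined from the quantifier-free type structure (this uses Condition I.3, that membership in the closure is determined by quantifier-free type). This gives the first assertion.

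For the "in particular" clause, I would note that the dimension of any model is at most $\aleph_1$ only in the relevant range, but more to the point: any two models of cardinality $\aleph_1$ both have dimension $\leq \aleph_1$, so by the first part they are isomorphic — hence there is at most one model of cardinality $\aleph_1$ up to isomorphism. (One should observe that a model of cardinality $\aleph_1$ has dimension $\aleph_1$, since by the countable closure property the closure of a countable set is countable, so a dimension $\leq \aleph_0$ would force the model to be countable; but for the uniqueness statement we only need that both candidate models have dimension in the range covered by the extension lemma.)

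The only genuine subtlety — and hence the step I would flag as the main obstacle — is making sure the extension lemma is being applied with the right side conditions: it requires $G$ closed in both $H$ and $H'$ when $G \neq \emptyset$, but here $G = \emptyset$ so that is vacuous, and it requires the independent set $B$ to have cardinality at most $\aleph_1$, which is exactly our hypothesis $d \leq \aleph_1$. Everything else is bookkeeping: checking that "partial monomorphism onto, with full domain" unpacks to "isomorphism", and checking invariance of dimension under isomorphism. So the proof is essentially two sentences once the lemma chain is in place, and the write-up should be kept correspondingly short.
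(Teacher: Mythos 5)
Your proof is correct and is precisely the argument the paper leaves implicit (the corollary is stated without proof as an immediate consequence of the preceding extension lemma). The chain you spell out — choose bases, apply Condition II.2 with $G=\emptyset$ to get a partial monomorphism between them, extend via the lemma to the full closures which are the models themselves, and observe that a surjective total partial monomorphism is an isomorphism — is exactly the intended reasoning, and your side remarks (dimension is an isomorphism invariant via I.3/QM1; a model of size $\aleph_1$ has dimension $\aleph_1$ by the countable closure property) correctly dispose of the remaining bookkeeping.
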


Structures in a weakly quasiminimal class are homogeneous in the follwing sense:

\begin{lem}
Let $\cK$ be a weakly quasiminimal class, let $G \subseteq H$ be empty or countable and closed, and let $\bar{x}$ and $\bar{y}$ be tuples of the same length from $H$ such that $\qftp(\bar{x}/G)=\qftp(\bar{y}/G)$. Then there is an automorphism of $H$ fixing $G$ and sending $\bar{x}$ to $\bar{y}$.
\end{lem}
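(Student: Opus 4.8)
The hypothesis $\qftp(\bar x/G)=\qftp(\bar y/G)$ says exactly that $f_0:=\{(\bar x,\bar y)\}$ is a partial $G$-monomorphism with finite domain. The first step is to apply Lemma \ref{lem_increase G mono} (with the countable closed set $G$ and $X=\bar x$) to extend $f_0$ to a partial $G$-monomorphism $f_1$ from $A:=\cl_H(G\cup\bar x)$ onto $A':=\cl_H(G\cup\bar y)$. Here $A$ and $A'$ are $\cl$-closed and lie in $\cK$ by Condition I.2, and they are countable: by the finite character of $\cl$ (PG4) the set $A$ is a countable union of sets $\cl_H(F)$ with $F\subseteq G\cup\bar x$ finite, each of which is countable by the countable closure property. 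Moreover, since membership in $\cl$ is determined by $\qftp$ (Condition I.3), the map $f_1$ preserves $\cl$-independence over $G$, so it carries a basis of $A$ over $G$ to a basis of $A'$ over $G$; in particular $\dim(A/G)=\dim(A'/G)$, and this common value is finite, being at most the length of $\bar x$. By additivity of dimension in the pregeometry $\cl_H$ this forces $\dim(H/A)=\dim(H/A')$ (finitely many coordinates of $\bar x$ cannot alter an infinite dimension, and can be cancelled in the finite case).

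The second step is a transfinite back-and-forth extending $f_1$ to an automorphism of $H$. I would enumerate $H$ and build an increasing, continuous chain of partial $G$-monomorphisms $f_1=g_0\subseteq g_1\subseteq\cdots$ whose domains and ranges are $\cl$-closed and which eventually exhaust $H$ on both sides. At a successor stage, if the next element to be absorbed already lies in the closure of the current domain (respectively range), I extend over that closure using Lemma \ref{lem_increase G mono} with the current closed domain in the role of ``$G$''; if instead it is $\cl$-independent from the current domain, the equality $\dim(H/A)=\dim(H/A')$ (maintained throughout, since each step adjoins matching dimension to both sides) lets me pick an element on the other side that is $\cl$-independent from the current range, and Condition II.2 shows that adjoining this pair keeps the map a partial $G$-monomorphism. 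Taking the union over the chain yields an automorphism of $H$; it extends $f_1\supseteq f_0$, so it fixes $G$ pointwise and sends $\bar x$ to $\bar y$.

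The main obstacle is legitimising the ``extend over a closure'' and ``adjoin a new independent element'' steps once the current closed domain has grown uncountable, since Conditions I and II only supply $\aleph_0$-homogeneity over \emph{countable} closed sets. When $\dim(H/G)\le\aleph_1$ this is exactly what the lemma on independent sets of cardinality $\le\aleph_1$ (proved just above) provides: one closes off at countable stages, so Lemma \ref{lem_increase G mono} is only ever invoked over a countable closed set, and the whole construction goes through. For $H$ of larger cardinality the same bookkeeping works once one has amalgamation of independent systems of countable closed models, i.e.\ quasiminimal excellence, which for a weakly quasiminimal class is automatic by \cite{bays2012quasiminimal}. The remaining ingredients — preservation of $\cl$-independence and of dimension by $f_1$ (from Condition I.3), the countability of $A$ and $A'$, and the finiteness of $\dim(A/G)$ — are the routine points isolated in the first paragraph.
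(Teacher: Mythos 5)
The paper states this lemma without proof, so there is nothing in the source to compare against directly; I will assess your argument on its own terms. Your first step — using $\qftp(\bar x/G)=\qftp(\bar y/G)$ to get a finite partial $G$-monomorphism, then invoking Lemma~\ref{lem_increase G mono} to produce $f_1\colon A=\cl_H(G\cup\bar x)\to A'=\cl_H(G\cup\bar y)$, noting that $A,A'$ are countable, closed, in $\cK$, and have the same finite dimension over $G$ (by Condition~I.3), hence $\dim(H/A)=\dim(H/A')$ — is correct and is the right opening move.

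The trouble is in the ``adjoin a new independent element'' step of your interleaved back-and-forth. Condition~II.2 asserts that a bijection between two sets that are each $\cl$-\emph{independent over $G$} is a partial $G$-monomorphism. It does \emph{not} say that if $g_n\colon D_n\to R_n$ is an arbitrary partial $G$-monomorphism with $\cl$-closed countable domain and range, $a$ is independent from $D_n$, and $a'$ is independent from $R_n$, then $g_n\cup\{(a,a')\}$ is still a partial $G$-monomorphism. Your $D_n$ contains $A$ and hence is not independent over $G$, so II.2 doesn't apply to $D_n\cup\{a\}$ and $R_n\cup\{a'\}$; and reading II.2 ``with $D_n$ in the role of $G$'' only gives $\qftp(a/D_n)=\qftp(a'/D_n)$ with $D_n$ fixed pointwise, not $\qftp$ over $G$ after transporting $D_n$ to $R_n$ along $g_n$. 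What you actually need there is uniqueness of the generic type over countable closed sets — the condition labelled QM4 in~\cite{bays2012quasiminimal} — which is not among the Conditions~I, II of a weakly quasiminimal class. The clean repair is the one your last paragraph gestures at without quite executing: choose a basis $B_0$ of $A$ over $G$, set $B_0'=f_1(B_0)$, extend both to bases $B\supseteq B_0$ and $B'\supseteq B_0'$ of $H$ over $G$ (same cardinality by your dimension count), extend the bijection $B_0\to B_0'$ to a bijection $g\colon B\to B'$, and apply II.2 \emph{once} to $B$ and $B'$ — legitimately, since these \emph{are} independent over $G$. Then the $\aleph_1$-basis lemma extends $g$ to an automorphism of $H$, and by running its transfinite recursion so as to absorb $A$ first and declaring the extension on $A$ to be $f_1$, you guarantee $\bar x\mapsto\bar y$. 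This handles $\dim(H/G)\le\aleph_1$; beyond that you do need the excellence machinery from~\cite{bays2012quasiminimal}, as you say, and it is worth stating explicitly that this is where the argument genuinely leaves the confines of Conditions~I and II.
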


So we have seen that to get categoricity in cardinality $\aleph_1$, we just need to show that our theory is weakly quasiminimal. However, in our applications, we want to know that the standard model is the unique model of cardinality continuum (without assuming the continuum hypothesis). Previously, the standard way of doing this was to show that the class satisfied the conditions of excellence (Condition III of \cite{kirby2010quasiminimal} for example). However, we are lucky in that here has been a very recent big advance in the theory of quasiminimal excellent classes in a five author paper \cite{bays2012quasiminimal}, which basically says that if the structures in our class all have infinite dimension with respect to the pregeometry, then excellence follows automatically from the conditions of quasiminimality. Unlike the conditions for quasiminimality, the excellence conditions are not particularly natural for algebraic-geometers, so this is why I omitted them completely and just referred the reader to \cite{kirby2010quasiminimal}.

\subsection{Recent advances in the theory of quasiminimal excellence}\label{app bhhkk}

Most of the following section is lifted directly from \cite{bays2012quasiminimal}.

\begin{dfn} Let $\cM$ be an $\cL$-structure for a countable language $\cL$, equipped with a pregeometry $\cl$ (or $\cl_{\cM}$ if it is necessary to specify $\cM$). We say that $\cM$ is a \textit{quasiminimal pregeometry structure} if the following hold:\newline

\begin{itemize}
\item [QM1] The pregeometry is determined by the language. That is, if $\qftp(a,\bar{b}) = \qftp(a′, \bar{b}′)$ then $a \in \cl(\bar{b})$ iff $a' \in \cl(\bar{b}′)$.
\item [QM2] $\cM$ is infinite-dimensional with respect to $\cl$.
\item [QM3] (Countable closure property) If $A \subset M$ is finite then $\cl(A)$ is countable.
\item [QM4] (Uniqueness of the generic type) Suppose that $H, H' \subseteq M$ are countable closed subsets, enumerated such that $\qftp(H) = \qftp(H′)$. If $a \in M-H$ and $a′ \in M -H′$ then $\qftp(H, a) = \qftp(H′, a′)$ (with respect to the same enumerations for $H$ and $H′$).
\item [QM5] ($\aleph_0$-Homogeneity over closed sets and the empty set)
Let $H,H′ \subseteq M$ be countable closed subsets or empty, enumerated such that $\qftp(H) = \qftp(H′)$, and let $ \bar{b}, \bar{b}′$ be finite tuples from $M$ such that $\qftp(H, b) = \qftp(H′,b′)$, and let $a \in \cl(H, b)$. Then there is $a′ \in M$ such that $\qftp(H,\bar{b},a) = \qftp(H′,\bar{b}′,a′)$
\end{itemize}
\end{dfn}
We say $\cM$ is a weakly quasiminimal pregeometry structure if it satisfies all the axioms except possibly QM2. Given $\cM_1$ and $\cM_2$ both weakly quasiminimal pregeometry $\cL$-structures, we say that an $\cL$-embedding $\theta : \cM_1 \hookrightarrow \cM_2$ is a closed embedding if $\theta (\cM_1)$ is closed in $\cM_2$ with respect to $\cl_{\cM_2}$, and $\cl_{\cM_1}$ is the restriction of $\cl_{\cM_2}$ to $\cM_1$. We write $\cM_1 \preccurlyeq_{\cl} \cM_2$ for a closed embedding.\newline

Given a quasiminimal pregeometry structure $\cM$, let $\cK^-(\cM)$ be the smallest class of $\cL$-structures which contains $\cM$ and all its closed substructures and is closed under isomorphism, and let $\cK(\cM)$ be the smallest class containing $\cK^-(\cM)$ which is also closed under taking unions of chains of closed embeddings. Then both $\cK^-(\cM)$ and $\cK(\cM)$ satisfy axioms 0, I, and II of quasiminimal excellent classes from \cite{kirby2010quasiminimal}, and $\cK(\cM)$ also satisfies axiom IV and, together with closed embeddings, forms an abstract elementary class. We call any class of the form $\cK(\cM)$ a \textit{quasiminimal class}. Note that the homogeneity condition here is weaker than the one in the previous section since it works in one model only.
The main result of \cite{bays2012quasiminimal} is:
\begin{thm} If $\cK$ is a quasiminimal class then every structure $A\in \cK$ is a weakly quasiminimal pregeometry structure, and up to isomorphism there is exactly one structure in $\cK$ of each cardinal dimension. In particular, $\cK$ is uncountably categorical. Furthermore, $\cK$ is the class of models of an $\cL_{\omega_1,\omega}(Q)$ sentence.
\end{thm}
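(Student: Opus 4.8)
The "final statement" here is Theorem 2.2 of Bays–Haykazyan–Hils–Kirby–Kruckman (the main result of \cite{bays2012quasiminimal}), stated at the very end of the excerpt. I will sketch its proof.

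\medskip

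The plan is to show the two assertions separately: first, that every $A \in \cK$ is a weakly quasiminimal pregeometry structure; second, categoricity in each cardinal dimension; and finally, axiomatisability by an $\cL_{\omega_1,\omega}(Q)$-sentence. For the first assertion, the only axiom that can fail in passing from $\cM$ to an arbitrary $A \in \cK$ is QM2 (infinite-dimensionality), since $\cK$ contains closed substructures of $\cM$ which may be finite-dimensional; axioms QM1, QM3, QM4, QM5 are preserved under closed substructures and unions of chains of closed embeddings, which is essentially the content of the remark in the excerpt that $\cK(\cM)$ satisfies axioms $0$, I, II of \cite{kirby2010quasiminimal} and forms an AEC with closed embeddings. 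So I would first verify these preservation facts by routine diagram-chasing: QM1 is a condition on quantifier-free types, hence transfers downward to substructures and is a "local" condition preserved in chains; QM3 follows since closure in a closed substructure is the restriction of the ambient closure; QM4 and QM5 transfer because the relevant quantifier-free types are computed the same way in a closed substructure, and in a union of a chain any finite tuple already lives in one member of the chain.

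\medskip

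For categoricity, the strategy is the classical back-and-forth argument for quasiminimal excellent classes, exactly as in \cite[\S1]{kirby2010quasiminimal} and as outlined in \S\ref{app quasi} of this appendix. Given two structures $A_1, A_2 \in \cK$ of the same cardinal dimension $\kappa$, one picks pregeometry bases $B_1 \subseteq A_1$ and $B_2 \subseteq A_2$, fixes a bijection $B_1 \to B_2$, checks (using QM5 and QM1, via the analogue of Lemma \ref{lem_increase G mono} and the lemmas following it) that this bijection is a partial monomorphism, and then extends it to an isomorphism $\cl_{A_1}(B_1) = A_1 \to \cl_{A_2}(B_2) = A_2$ by transfinite induction along a well-ordering of $B_1$, handling successor stages by QM5 / QM4 and limit stages by taking unions. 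The countable closure property QM3 keeps each step countable so that QM4 and QM5, which are stated for countable closed sets, actually apply; this is precisely where the earlier machinery (Lemmas around \ref{lem_increase G mono}) is needed, now run without any restriction on $\kappa$ rather than only up to $\aleph_1$.

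\medskip

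The step I expect to be the genuine obstacle — and the actual mathematical content of \cite{bays2012quasiminimal} — is that the back-and-forth above requires the \emph{excellence} condition of \cite{kirby2010quasiminimal} (amalgamation over independent systems of countable models) in order to push the induction through uncountable stages; the five-author paper's achievement is to \emph{derive} excellence from QM1--QM5 alone. The idea there is to use QM2 (infinite-dimensionality, available since we have already reduced to the case $A \in \cK(\cM)$ with $\cM$ itself infinite-dimensional, or one works with $\cK(\cM)$ where unions of chains restore infinite dimension) together with the uniqueness-of-generic-type axiom QM4 to show that the relevant independent systems can be amalgamated: one builds the amalgam by realising generic types over the union of the system, using QM4 to see that the amalgamation is unique and QM5 to see that it is a structure in the class. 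I would cite \cite{bays2012quasiminimal} for this rather than reproduce it. Finally, for the $\cL_{\omega_1,\omega}(Q)$-axiomatisation one writes down the sentence asserting: the quantifier-free diagram constraints defining membership in $\cK^-(\cM)$; that $\cl$ (which by QM1 is $\cL_{\omega_1,\omega}$-definable, since $a \in \cl(\bar b)$ iff $a$ satisfies some formula over $\bar b$ with countably many solutions, expressible with $Q$) is a pregeometry with the countable closure property (the $Q$-quantifier "there exist uncountably many" is exactly what is needed to say "$\cl(\bar b)$ is countable"); and the homogeneity axioms QM4, QM5 as $\cL_{\omega_1,\omega}$ type-equality statements. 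One then checks the models of this sentence are exactly $\cK(\cM)$, using the AEC structure and downward Löwenheim–Skolem to $\aleph_0$ noted in the excerpt's proof of Theorem \ref{thmcat}.
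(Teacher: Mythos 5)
The paper does not prove this theorem: it is quoted verbatim as the main result of \cite{bays2012quasiminimal}, with a citation and no argument of its own. The surrounding text merely records (also without proof) the supporting facts that $\cK^-(\cM)$ and $\cK(\cM)$ satisfy axioms $0$, I, II of \cite{kirby2010quasiminimal} and that $\cK(\cM)$ with closed embeddings is an AEC. So there is no proof in the paper for yours to be compared against.

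As a stand-alone reconstruction, your sketch is a sensible roadmap and correctly identifies where the weight lies. You are right that QM2 is the only axiom lost in passing from $\cM$ to an arbitrary $A \in \cK$ (closed substructures may be finite-dimensional, hence "weakly"), that the remaining axioms transfer to closed substructures and along unions of chains by local/quantifier-free arguments, that the categoricity claim is a transfinite back-and-forth along bases (the countable-closure property QM3 keeping the ambient closed sets countable so QM4/QM5 apply), and that the genuine mathematical content — removing the excellence hypothesis by deriving what it buys from QM1–QM5 alone — is precisely what \cite{bays2012quasiminimal} supplies. You appropriately defer to the citation for that step rather than reinventing it, which is consistent with the paper's own treatment. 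One small imprecision: you say "one works with $\cK(\cM)$ where unions of chains restore infinite dimension," but there is nothing to restore — $\cM$ itself is infinite-dimensional by QM2, finite-dimensional $A \in \cK$ are permitted (they need only be \emph{weakly} quasiminimal pregeometry structures), and the infinite-dimensionality of $\cM$ is used as a resource in the excellence-style amalgamation, not as a property to be recovered for each $A$. Your account of the $\cL_{\omega_1,\omega}(Q)$-axiomatisability is also only a rough shape, but the role you assign to the $Q$-quantifier (expressing the countable closure property and the definition of $\cl$) is the right one.
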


\bibliographystyle{alpha}
\bibliography{adambooks}

\newcommand{\etalchar}[1]{$^{#1}$}
\begin{thebibliography}{BHH{\etalchar{+}}12}

\bibitem[Bal09]{baldwin2009categoricity}
J.T. Baldwin.
\newblock {\em {Categoricity}}.
\newblock American Mathematical Society, 2009.

\bibitem[Bay09]{bays2009categoricity}
M.~Bays.
\newblock {\em Categoricity results for exponential maps of 1-dimensional
  algebraic groups, and Schanuels conjecture for Powers and the CIP}.
\newblock PhD thesis, University of Oxford, 2009.

\bibitem[BHH{\etalchar{+}}12]{bays2012quasiminimal}
M.~Bays, B.~Hart, T.~Hyttinen, M.~Kes{\"a}l{\"a}, and J.~Kirby.
\newblock Quasiminimal structures and excellence.
\newblock {\em arXiv preprint arXiv:1210.2008}, 2012.

\bibitem[BZ11]{bays2011covers}
M.~Bays and B.~Zilber.
\newblock Covers of multiplicative groups of algebraically closed fields of
  arbitrary characteristic.
\newblock {\em Bulletin of the London Mathematical Society}, 43(4):689--702,
  2011.

\bibitem[Gav06]{gavrilovich2006model}
M.~Gavrilovich.
\newblock {\em Model theory of the universal covering spaces of complex
  algebraic varieties}.
\newblock PhD thesis, University of Oxford, 2006.

\bibitem[Gro64]{ega1964ements}
A.~Grothendieck.
\newblock Ega iv.
\newblock {\em Publ. Math. IHES}, 20(24):1965, 1964.

\bibitem[Har77]{hartshorne1977algebraic}
R.~Hartshorne.
\newblock {\em Algebraic geometry}, volume~52.
\newblock Springer, 1977.

\bibitem[HR08]{hindry2008torsion}
M.~Hindry and N.~Ratazzi.
\newblock Torsion dans un produit de courbes elliptiques.
\newblock {\em Arxiv preprint arXiv:0804.3031}, 2008.

\bibitem[Hru01]{hrushovski2001manin}
E.~Hrushovski.
\newblock The manin-mumford conjecture and the model theory of difference
  fields.
\newblock {\em Annals of Pure and Applied Logic}, 112(1):43--115, 2001.

\bibitem[Kei70]{keisler1970logic}
H.J. Keisler.
\newblock Logic with the quantifier there exist uncountably many.
\newblock {\em Annals of Mathematical Logic}, 1(1):1--93, 1970.

\bibitem[Kir10]{kirby2010quasiminimal}
J.~Kirby.
\newblock On quasiminimal excellent classes.
\newblock {\em Journal of Symbolic Logic}, 75(2):551--564, 2010.

\bibitem[Lan79]{lang1979elliptic}
S.~Lang.
\newblock {\em Elliptic curves: Diophantine analysis}, volume 231.
\newblock Springer, 1979.

\bibitem[Lan87]{lang1987elliptic}
S.~Lang.
\newblock {\em {Elliptic functions}}.
\newblock Springer, 1987.

\bibitem[Mar02]{marker2002model}
D.~Marker.
\newblock {\em Model theory: an introduction}.
\newblock Springer, 2002.

\bibitem[Mil]{milnemodular}
J.S. Milne.
\newblock Modular functions and modular forms (v1. 30), 2012.

\bibitem[Mil98]{milne1998lectures}
J.S. Milne.
\newblock Lectures on {\'e}tale cohomology.
\newblock {\em Available on-line at http://www. jmilne.
  org/math/CourseNotes/LEC. pdf}, 1998.

\bibitem[Mil06]{milne2006elliptic}
J.S. Milne.
\newblock {\em {Elliptic curves}}.
\newblock Kea books, 2006.

\bibitem[Mir95]{miranda1995algebraic}
R.~Miranda.
\newblock {\em Algebraic curves and Riemann surfaces}, volume~5.
\newblock American Mathematical Society, 1995.

\bibitem[Pil11]{pila2011minimality}
J.~Pila.
\newblock O-minimality and the andr{\'e}-oort conjecture for $\mathbb{C}^n$.
\newblock {\em Ann. of Math.(2)}, 173(3):1779--1840, 2011.

\bibitem[Pin05]{pink2005combination}
R.~Pink.
\newblock A combination of the conjectures of mordell-lang and andr{\'e}-oort.
\newblock {\em Geometric methods in algebra and number theory}, pages 251--282,
  2005.

\bibitem[Rib75]{ribet1975}
K.A. Ribet.
\newblock On -adic representations attached to modular forms.
\newblock {\em Inventiones mathematicae}, 28(3):245--275, 1975.

\bibitem[Ser68]{serre1968abelian}
J.P. Serre.
\newblock {\em Abelian l-Adic Representations and Elliptic Curves; Mcgill
  University Lecture Notes, Written With the Collaboration of Willem Kuyk and
  John Labute}.
\newblock 1968.

\bibitem[Ser71]{serre1971prop}
J.P. Serre.
\newblock {Propri{\'e}t{\'e}s galoisiennes des points d'ordre fini des courbes
  elliptiques}.
\newblock {\em Inventiones mathematicae}, 15(4):259--331, 1971.

\bibitem[Shi71]{shimura1971introduction}
G.~Shimura.
\newblock {\em Introduction to the arithmetic theory of automorphic functions},
  volume~1.
\newblock Princeton Univ Pr, 1971.

\bibitem[Sil94]{silverman1994advanced}
J.H. Silverman.
\newblock {\em {Advanced topics in the arithmetic of elliptic curves}}, volume
  151.
\newblock Springer Graduate Texts in Mathematics, 1994.

\bibitem[Sza09]{szamuely2009galois}
T.~Szamuely.
\newblock {\em Galois groups and fundamental groups}, volume~1.
\newblock Cambridge University Press, 2009.

\bibitem[Zil03]{zilber2003model}
B.~Zilber.
\newblock Model theory, geometry and arithmetic of the universal cover of a
  semi-abelian variety.
\newblock {\em Model Theory and Applications, Quaterna di matematica}, pages
  427--458, 2003.

\bibitem[Zil06]{zilber2006covers}
B.~Zilber.
\newblock Covers of the multiplicative group of an algebraically closed field
  of characteristic zero.
\newblock {\em Journal of the London Mathematical Society}, 74(01):41--58,
  2006.

\end{thebibliography}

\end{document}